\definecolor{eldritch}{RGB}{205,18,29}
\numberwithin{equation}{subsection}
\newtheorem{thm}{Theorem}[subsection]
\newtheorem{prop}[thm]{Proposition}
\newtheorem{lem}[thm]{Lemma}
\newtheorem{cor}[thm]{Corollary}
\theoremstyle{definition}
\newtheorem{dfn}[thm]{Definition}
\newtheorem{ex}[thm]{Example}
\newtheorem{rem}[thm]{Remark}
\newtheorem{obs}[thm]{Observation}
\newcommand{\Def}[1]{\it{#1}\rm{}}
\newcommand{\Z}{\mathbb{Z}}
\newcommand{\R}{\mathbb{R}}
\newcommand{\Q}{\mathbb{Q}}
\newcommand{\ep}{\varepsilon}
\newcommand{\object}{\mathrm{Obj}}
\newcommand{\Hom}[1]{\mathrm{Hom}_{#1}}
\newcommand{\category}[1]{\mathscr{#1}}
\newcommand{\functor}[1]{\mathcal{#1}}
\newcommand{\opposite}{^{\mathrm{op}}}
\newcommand{\id}[1]{\mathrm{id}_{#1}}
\newcommand{\proj}[1]{\mathrm{pr}_{#1}}
\newcommand{\incl}[1]{\mathrm{in}_{#1}}
\newcommand{\Set}{\mathsf{Set}}
\newcommand{\sSet}{\mathsf{sSet}}
\newcommand{\D}{\bm{\Delta}}
\newcommand{\boundary}[1]{\partial\Delta[#1]}
\newcommand{\simplex}[2]{\Delta^{#1}[#2]}
\newcommand{\topsimplex}[1]{\Delta_{#1}}
\newcommand{\K}{\mathbb{K}}
\newcommand{\Der}[3]{\mathrm{Der}_{#1}(#2,#3)}
\newcommand{\qdR}[4]{\Omega^{#2}_{#3}\langle{#1}\rangle_{#4}}
\newcommand{\qC}[4]{\mathcal{C}^{#2}_{#3}\langle{#1}\rangle_{#4}}
\newcommand{\qG}[4]{\mathcal{G}^{#2}_{#3}\langle{#1}\rangle_{#4}}
\newcommand{\dq}{\vartheta}
\newcommand{\df}[1]{\mathrm{d}^{#1}}
\newcommand{\chain}[2]{{#1}_{#2}}
\newcommand{\cact}[2]{^{#1}_{#2}}
\newcommand{\bs}[1]{\mathsf{b}_{#1}}
\newcommand{\fs}[1]{\mathsf{f}_{#1}}
\newcommand{\Fs}[1]{\tilde{\mathsf{f}}_{#1}}
\newcommand{\us}[1]{\mathsf{u}_{#1}}
\newcommand{\F}[1]{\mathsf{F}_{#1}}
\newcommand{\vs}[3]{\mathsf{v}_{#1}(#2)(#3)}
\newcommand{\Rs}[3]{\mathsf{R}_{#1}(#2)(#3)}
\newcommand{\fint}[1]{{#1}_{\ast}}
\newcommand{\bint}[1]{\partial{#1}_{\ast}}
\newcommand{\Ahol}[1]{\mathsf{hol}_{A_{\infty}}^{#1}}
\newcommand{\shol}[2]{\mathscr{H}\mathrm{ol}^{#1}_{#2}}
\begin{document}

\title{Higher Holonomy via a Simplicial viewpoint}
\author{Ryohei Kageyama}
\address{Mathematical Institute, Tohoku University, Sendai 980-8578, Japan}
\email{ryohei.kageyama.s8@dc.tohoku.ac.jp}
\date{\today}
\maketitle
\begin{abstract}
In this paper, we construct an analogy of holonomy of connection to simplicial sets using $A_{\infty}$-categories. To construct it, we develop fiberwise integrals on simplicial sets and define an iterated integral on simplicial sets. It is an analogy to Chen's iterated integral. We also prove an analogy of Stokes's theorem for fiberwise integrals.
\end{abstract}
\tableofcontents 

\section*{Introduction}
The holonomy representation is a kind of representation of the fundamental groupoid of a manifold. More specifically, for each finite-dimensional $\R$-vector space $V$, a connection values in Lie algebra $\mathfrak{gl}(V)$ on a manifold $\mathcal{M}$ gives a representation, that is a functor, $\mathcal{P}_{1}(\mathcal{M})\to\mathrm{GL}(V)$. It is already considered that a generalization of holonomy called $2$-holonomy. It is a strict $2$-functor from path $2$-category $\mathcal{P}_{2}(M)$ to some strict $2$-category. For example, a strict $2$-functor $\mathcal{P}_{2}(M)\to\mathrm{Aut}(\mathcal{V})$ obtained by using a chain complex (of finite type) $\mathcal{V}$ instead of a vector space $V$, a differential crossed module $\mathfrak{gl}(\mathcal{V})$ instead of Lie algebra $\mathfrak{gl}(V)$, a $\mathfrak{gl}(\mathcal{V})$-valued differential form instead of $\mathfrak{gl}(V)$-valued connection is called $2$-holonomy in some papers (\cite{MR3333093},\cite{MR3636693},\cite{MR4151724,MR3571383}).
It is known that these strict ($2$-)functors can be constructed using Chen's iterated integral. (See, for example, \cite{MR3333093} or \cite{MR4151724,MR3571383}.)

All ``homotopical data'' of topological space $\mathcal{M}$ is contained in the singular (stratified) simplicial set $S(\mathcal{M})$. For example, fundamental groupoid $\pi_{1}(\mathcal{M})$ coincides with the homotopy category of $S(\mathcal{M})$. Therefore we would like to consider simplicial sets instead of smooth manifolds or their fundamental groupoids. It is a motivation to construct an analogy of holonomy to simplicial sets.

To construct an analogy of holonomy of connection with values in an $L_{\infty}$-algebra, we use two tools. One of them is iterated integrals. However, the integration on simplicial sets has not been developed. (Fiberwise) integrals are important tools to research spaces, hence developing a fiberwise integral on a simplicial set is expected to be useful. Therefore, in this paper, we also focus on developing them. We define two kinds of integrals in section \ref{sec3}. One is fiberwise integrals along a projection $X\times U\to X$, and the other is fiberwise integral along a projection $X\times U\to X$ on ``boundaries''. We also prove an analogy of Stokes's theorem \ref{Stokes} in section \ref{sec3}. This states the relation between the two integrals.
Another tool is $A_{\infty}$-categories. We construct an $A_{\infty}$-category $\mathcal{P}(X,\Z)$ from arbitrally simplicial set $X$ and construct an $A_{\infty}$-functor form $\mathcal{P}(X,\Z)$ to an $A_{\infty}$-algebra constructed from an $L_{\infty}$-algebra $\mathfrak{g}$ whose underlying chain complex is connected in section \ref{sec4}. They appear to be linearizations of simplicial sets and holonomies, respectively. 
\begin{center}
\begin{tabular}{|c|c|c|c|}\hline
		&holonomy&$2$-holonomy&in this paper\\\hline
		ground ring&field $\mathbb{R}$&feild $\mathbb{R}$&divided power algebra $\mathbb{Z}\langle\dq\rangle$\\\hline
		&finite dimensional&chain complex $\mathcal{V}$&\\
		&vector space $V$&of finite type&\\\hline
	space&(smooth) manifold $\mathcal{M}$&(smooth) manifold $\mathcal{M}$&simplicial set $X$\\\hline
	domain&path groupoid $\mathcal{P}_{1}(\mathcal{M})$&path $2$-categoy $\mathcal{P}_{2}(\mathcal{M})$&$A_{\infty}$-category $\mathcal{P}(X,\Z)$\\\hline
	Lie algebra&Lie algebra $\mathfrak{gl}(V)$&2-Lie algebra $\mathfrak{gl}(\mathcal{V})$&connected $L_{\infty}$-algebra $\mathfrak{g}$\\\hline
	connection&$\mathfrak{gl}(V)$-valued&$\mathfrak{gl}(\mathcal{V})$-valued&simplicial map\\
		&differential $1$-form&differential form&$X\to\Omega^{1}_{\bullet}\langle\dq\rangle_{\mathfrak{g}}^{\wedge}$\\\hline
	codomain&Lie group& strict $2$-category&dg algebra\\
	&$\mathrm{Aut}(V)$&$\mathrm{Aut}(\mathcal{V})$&$\qG{\dq}{}{}{\mathfrak{g}}$\\\hline
\end{tabular}
\end{center}
\section*{Acknowlegdments}
The author would like to thank Yuji Terashima and Ryo Horiuchi for useful communication.
\section{Brief reveiw of $A_{\infty}$-algebras, $A_{\infty}$-categories and $L_{\infty}$-algebras}\label{sec1}
In this section, we fix a commutative ring $\K$.
\subsection{$A_{\infty}$-algebras and $L_{\infty}$-algebras}
The tensor products of graded $\K$-modules $V_{\bullet}$ and $W_{\bullet}$ is defined by $(V\otimes W)_{n}=\underset{s+t=n}{\bigoplus}V_{p}\otimes W_{q}$ and the tensor product of degree $p$ map $f\colon V_{\bullet}\to V'_{\bullet}$ and degree $q$ map $g\colon W_{\bullet}\to W'_{\bullet}$ is defined by $(f\otimes g)(v\otimes w)\coloneqq(-1)^{|v|\cdot q} f(v)\otimes g(w)$. The \Def{graded-tensor algebra $\mathsf{T}V$ of graded $\K$-module $V_{\bullet}$} is defined by
\begin{align*}
	\mathsf{T}V\coloneqq\bigoplus_{r=0}^{\infty}V^{\otimes r}=\K\oplus V\oplus(V\otimes V)\oplus(V\otimes V\otimes V)\oplus\cdots.
\end{align*}
The graded-tensor algebra is a bialgebra. For instance, the product is defined by
\begin{align*}
	(x_{1}\otimes\dots\otimes x_{p})\cdot(y_{1}\otimes\dots\otimes y_{q})=x_{1}\otimes\dots\otimes x_{p}\otimes y_{1}\otimes\dots\otimes y_{q},
\end{align*}
and the coproduct is defined by
\begin{align*}
	\Delta(x_{1}\otimes\dots\otimes x_{r})=\sum_{p+q=r}(x_{1}\otimes\dots\otimes x_{p})\otimes(x_{p+1}\otimes\dots\otimes x_{r}).
\end{align*}
For any $\mathsf{Sym}V$ of a graded $\K$-module $V_{\bullet}$, a quotient of a graded tensor algebra $\mathrm{T}V$ by an ideal $I$ generated by the following elements is called the graded-symmetric algebra:
\begin{itemize}
	\item $x\otimes y-(-1)^{|x|\cdot|y|}y\otimes x$ for each homogeneous elements $x,y\in V_{\bullet}$.
	\item $x\otimes x$ for each homogeneous element $x\in V_{\bullet}$ whose degree is odd.
\end{itemize}
We denote an element $x\otimes y+I$ of $\mathsf{Sym}V$ by $x\wedge y$.
For each positive integer $n>0$, we define a map $\ep\colon\mathfrak{S}_{n}\times \bigcup_{p}V_{p}\times\dots\times\bigcup_{p}V_{p}\to\{\pm1\}$ using the formula
\begin{align*}
	&x_{1}\wedge\dots\wedge x_{n}=\ep(\sigma,x_{1},\dots,x_{n})x_{\sigma(1)}\wedge\dots\wedge x_{\sigma(n)}
\end{align*}
in $\mathsf{Sym}V$. In addition, we define a coproduct $\Delta\colon\mathsf{Sym}V\to\mathsf{Sym}V\otimes\mathsf{Sym}V$ as follows:
\begin{align*}
	\Delta(x_{1}\wedge\dots\wedge x_{r})
		&\coloneqq
	\Delta(x_{1})\cdots\Delta(x_{r})\\
		&=
	(x_{1}\otimes1+1\otimes x_{1})\cdots(x_{r}\otimes1+1\otimes x_{r})\\
		&=
	\underset{\sigma\in\mathrm{Sh}(p,q)}{\sum_{p+q=r}}\ep(\sigma,x_{1},\dots,x_{r})(x_{\sigma(1)}\wedge\dots\wedge x_{\sigma(p)})\otimes(x_{\sigma(p+1)}\wedge\dots\wedge x_{\sigma(n)})
\end{align*}

For arbitrary graded $\K$-modules $V_{\bullet}$, a \Def{coderivation on a graded-tensor algebra $\mathsf{T}V$} is a (degree preserving) linear map $D\colon\mathsf{T}V\to\mathsf{T}V$ satisfying the Leibniz rule
\begin{align*}
	\Delta D=(D\otimes1+1\otimes D)\Delta
\end{align*}
and other two conditions $\proj{0}D=0$ and $D\incl{0}=0$. Similarly, a \Def{coderivation on a graded-tensor algebra $\mathsf{Sym}V$} is a (degree preserving) linear map $D\colon\mathsf{Sym}V\to\mathsf{Sym}V$ satisfying the Leibniz rule
\begin{align*}
	\Delta D=(D\otimes1+1\otimes D)\Delta
\end{align*}
and other two conditions $\proj{0}D=0$ and $D\incl{0}=0$. The \Def{suspension} of a graded $\K$-module $V_{\bullet}$ is a graded $\K$-module $V[1]$ defined by $V[1]_{n}=V_{n-1}$. In general, the graded $\K$-module $V[p]$ is defined by $V[p]_{n}\coloneqq V_{n-p}$ for each integer $p$.

An \Def{$A_{\infty}$-algebra over $\K$} is a pair of a graded $\K$-module $\mathcal{A}_{\bullet}$ and a degree $-1$ coderivation $D$ on $\mathsf{T}\mathcal{A}[1]$ satisfying $D\circ D=0$. For any $A_{\infty}$-algebras $\mathcal{A},\mathcal{A}'$, an \Def{$A_{\infty}$-map} from $\mathcal{A}$ to $\mathcal{A}'$ is an augmented coalgebra homogeneous $f\colon\mathsf{T}\mathcal{A}[1]\to\mathsf{T}\mathcal{A}'[1]$ satisfying $f\circ D=D\circ f$.
\begin{ex}
	Let $(\mathcal{A}_{\bullet},d,\wedge)$ be a dg algebra. We define two degree $-1$ maps $D_{1},D_{2}\colon\mathsf{T}\mathcal{A}[1]\to\mathsf{T}\mathcal{A}[1]$ as
	\begin{align*}
		D_{1}(x_{1}[1]\wedge\dots\wedge x_{r}[1])
			&=
		\sum_{i=1}^{r}(-1)^{\nu_{i-1}}x_{1}[1]\wedge\dots\wedge dx_{i}[1]\wedge\dots\wedge x_{r}[1]\\
		D_{2}(x_{1}[1]\wedge\dots\wedge x_{r}[1])
			&=
		\sum_{i=1}^{r}(-1)^{\nu_{i}}x_{1}[1]\wedge\dots\wedge(x_{i}\wedge x_{i+1)}[1]\wedge\dots\wedge x_{r}[1]
	\end{align*}
	where $\nu_{i}=|x_{1}|+\dots+|x_{i}|+i$. And we define a degree $-1$ map $D\colon\mathsf{T}\mathcal{A}[1]\to\mathsf{T}\mathcal{A}[1]$ as $D=D_{1}+D_{2}$. Then $(\mathcal{A},D)$ is an $A_{\infty}$-algebra.
\end{ex}
On the other hand, a pair of a graded $\K$-module $\mathfrak{g}_{\bullet}$ and a degree $-1$ coderivation $D$ on $\mathsf{Sym}\mathfrak{g}[1]$ satisfying $D\circ D=0$ is called an \Def{$L_{\infty}$-algebra over $\K$}.
\begin{ex}
	The pair of a (trivial) graded $\K$-module $\K$ and the zero map $0\colon\mathsf{Sym}\K[1]\to\mathsf{Sym}\K[1]$ is an $L_{\infty}$-algebra.
\end{ex}
\begin{ex}
Let $(\mathfrak{g}_{\bullet},\partial,[-,-])$ be a dg Lie algebra over $\K$. In the other words, we consider a pair of a chain complex $\mathfrak{g}_{\bullet}=(\mathfrak{g}_{\bullet},\partial_{\bullet})$ of $\K$-modules and a chain map $[-,-]\colon\mathfrak{g}\otimes\mathfrak{g}\to\mathfrak{g}$ satisfing the following conditions:
\begin{align*}
	[x,y]&=-(-1)^{|x|\cdot|y|}[y,x]&&(\text{skew-symmetric}),\\
	\partial[x,y]&=[\partial x,y]+(-1)^{|x|}[x,\partial y]&&(\text{Leibniz rule}),\\
	[x,[y,z]]&=[[x,y],z]+(-1)^{|x|\cdot|y|}[y,[x,z]]&&(\text{Jacobi identity}).
\end{align*}
We define two degree $-1$ maps $D_{1},D_{2}\colon\mathsf{Sym}\mathfrak{g}[1]\to\mathsf{Sym}\mathfrak{g}[1]$ as
\begin{align*}
	&D_{1}(x_{1}[1]\wedge\dots\wedge x_{r}[1])\\
		\coloneqq
	&\sum_{i=1}^{r}(-1)^{\nu_{i-1}}x_{1}[1]\wedge\dots\wedge\partial x_{i}[1]\wedge\dots\wedge x_{r}[1],\\
	&D_{2}(x_{1}[1]\wedge\dots\wedge x_{r}[1])\\
		\coloneqq
	&\sum_{i<j}(-1)^{(|x_{i}|+1)\nu_{i-1}+(|x_{j}|+1)\nu_{j-1}+(|x_{i}|+1)|x_{j}|}[x_{i},x_{j}][1]\wedge x_{1}[1]\wedge\dots\wedge\check{x_{i}}[1]\wedge\dots\wedge\check{x_{j}}[1]\wedge\dots\wedge x_{r}[1]
\end{align*}
where $\nu_{i}=|x_{1}|+\dots+|x_{i}|+i$.
And we define a degree $-1$ map $D\colon\mathsf{Sym}\mathfrak{g}[1]\to\mathsf{Sym}\mathfrak{g}[1]$ as $D=D_{1}+D_{2}$. Then $(\mathfrak{g},D)$ is an $L_{\infty}$-algebra.
\end{ex}
The \Def{universal enveloping algebra $\mathbb{U}_{\infty}\mathfrak{g}$} of an $L_{\infty}$-algebra $(\mathfrak{g},D)$ is a dg algebra defined as follows:
\begin{itemize}
	\item The underlying graded $\K$-algebra is a graded tensor algebra of the desuspension of the kernel of counit $\proj{0}\colon\mathsf{Sym}\mathfrak{g}[1]\to\K$
		\begin{align*}
			\mathsf{T}\mathrm{Ker}(\mathsf{Sym}\mathfrak{g}[1]\xrightarrow{\proj{0}}\K)[-1]=\bigoplus_{r=0}^{\infty}(\mathrm{Ker}(\mathsf{Sym}\mathfrak{g}[1]\xrightarrow{\proj{0}}\K)[-1])^{\otimes r}
		\end{align*}
	\item The differential $\delta$ of $\mathbb{U}_{\infty}\mathfrak{g}$ is deterimed by 
		\begin{align*}
			\delta(x[-1])=D(x)[-1]-\sum_{i}(-1)^{|x_{i}|}x_{i}[-1]\otimes y_{i}[-1]
		\end{align*}
		if $\Delta(x)-x\otimes1-1\otimes x=\sum_{i}x_{i}\otimes y_{i}$.
\end{itemize}
We denote the completion of $\mathbb{U}_{\infty}\mathfrak{g}$
\begin{align*}
	\hat{\mathsf{T}}\mathrm{Ker}(\mathsf{Sym}\mathfrak{g}[1]\xrightarrow{\proj{0}}\K)[-1]=\prod_{r=0}^{\infty}(\mathrm{Ker}(\mathsf{Sym}\mathfrak{g}[1]\xrightarrow{\proj{0}}\K)[-1])^{\otimes r}
\end{align*}
as $\hat{\mathbb{U}}_{\infty}\mathfrak{g}$.

\subsection{$A_{\infty}$-categories and $A_{\infty}$-nerve}
$A_{\infty}$-categories\cite{MR1270931} are the many object version of $A_{\infty}$-algebras. To define $A_{\infty}$-categories, graded $\K$-quiver and their tensor product are used instead of graded $\K$-modules. A \Def{graded $\K$-quiver} $\category{Q}$ is a pair of the following data:
\begin{itemize}
	\item  a small set of \Def{objects}, denoted $\object\category{Q}$;
	\item for each pair $(x,y)$ of objects of $\category{Q}$, a graded $\K$-module, denoted by $\category{Q}(x,y)$.
\end{itemize}
And for any graded $\K$-quivers $\category{Q}_{1},\category{Q}_{2}$, a morphism of $\functor{F}\colon\category{Q}_{1}\to\category{Q}_{2}$ is a pair of the following data:
\begin{itemize}
	\item  a $\object\functor{F}\colon\object\category{Q}_{1}\to\object\category{Q}_{2}$;
	\item for each pair $(x,y)$ of objects of $\category{Q}_{1}$, a morphism $\functor{F}_{x,y}\colon\category{Q}_{1}(x,y)\to\category{Q}_{2}(x,y)$.
\end{itemize}
We call a graded quiver $\category{Q}$ which $\category{Q}(x,y)$ is a chain complex for each pair $(x,y)$ of objects of $\category{Q}$ \Def{dg $\K$-quiver}.

For any pair $(\category{Q}_{1},\category{Q}_{2})$ of graded $\K$-quivers satisfying $\object\category{Q}_{1}=\object\category{Q}_{2}$, define a tensor product $\category{Q}_{1}\otimes\category{Q}_{2}$ as follows:
\begin{align*}
	\object(\category{Q}_{1}\otimes\category{Q}_{2})&\coloneqq\object\category{Q}_{i}\\
		(\category{Q}_{1}\otimes\category{Q}_{2})(x,y)&\coloneqq\bigoplus_{z\in\object\category{Q}_{i}}\category{Q}_{1}(x,z)\otimes\category{Q}_{2}(z,y)
\end{align*}
In addition, we define a (differential) graded $\K$-quiver $\K Q$ for any small sets $Q$ as follows:
\begin{align*}
	\object(kQ)&\coloneqq Q\\
		(\K Q)(x,y)&\coloneqq
			\begin{cases}
				\K&(x=y)\\
				0&(x\neq y)
			\end{cases}.
\end{align*}
A graded $\K$-quiver $\category{Q}$ gives a graded $\K$-quiver
$$\mathsf{T}\category{Q}=\bigoplus_{r=0}^{\infty}\category{Q}^{\otimes r}=(\K\object\category{Q})\oplus\category{Q}\oplus(\category{Q}\otimes\category{Q})\oplus(\category{Q}\otimes\category{Q}\otimes\category{Q})\oplus\cdots$$
and a cocomposition $\Delta\colon\mathsf{T}\category{Q}\to\mathsf{T}\category{Q}\otimes\mathsf{T}\category{Q}$
$$\Delta(f_{1}\otimes\dots\otimes f_{r})=\sum_{p+q=r}(f_{1}\otimes\dots\otimes f_{p})\otimes(f_{p+1}\otimes\dots\otimes f_{r}).$$
Then we obtain an augmented graded cocategory $(\mathsf{T}\category{Q},\Delta,\proj{0},\incl{0})$.
An \Def{$A_{\infty}$-category} is a pair of a graded $\K$-quiver $\category{A}_{\bullet}$ and a degree -1 codervation $D\colon\mathsf{T}\category{A}[1]\to\mathsf{T}\category{A}[1]$ satisfying $D\circ D=0$. For any $A_{\infty}$-categories $\category{A},\category{A}'$, a \Def{strict $A_{\infty}$-functor} from $\category{A}$ to $\category{A}'$ is an augmented cocategory homogeneous $\functor{F}\colon\mathsf{T}\category{A}[1]\to\mathsf{T}\category{A}'[1]$ satisfying $\functor{F}\circ D=D\circ\functor{F}$.
For any $A_{\infty}$-categories $(\category{A},D)$, the underlying graded quiver $\category{A}$ is a dg $\K$-quiver where the differential is given by follows for each pair $(x,y)$ of objects of $\object\category{A}$:
\begin{align*}
	\category{A}(x,y)=\category{A}(x,y)[1][-1]\xrightarrow{\incl{1}[-1]}(\mathsf{T}\category{A}[1])[-1]\xrightarrow{D[-1]}(\mathsf{T}\category{A}[1])[-1]\xrightarrow{\proj{1}[-1]}\category{A}(x,y)[1][-1]=\category{A}(x,y).
\end{align*}
Thus we obtain a forgetful functor from the category of (small) $A_{\infty}$-categories over $\K$ and strict $A_{\infty}$-functors to the category of dg quivers and their morphisms (that is a morphism of graded quivers which preserve differentials.)
\begin{thm}\label{free A infty category}\rm (free $A_{\infty}$-categories \cite{MR2223034})\it
	The above forgetful functor has a left adjoint.
\end{thm}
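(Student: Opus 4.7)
The plan is to construct a left adjoint $F\colon\mathsf{dgQuiv}\to A_{\infty}\text{-}\mathsf{Cat}$ explicitly via planar rooted trees. Given a dg $\K$-quiver $(\category{Q},d)$, I would set $\object F(\category{Q})=\object\category{Q}$, and for each pair of objects $(x,y)$ let $F(\category{Q})(x,y)$ be the graded $\K$-module freely spanned by pairs $(T,\phi)$ where $T$ is a planar rooted tree whose internal vertices (including the root) all have valence at least $3$, and $\phi$ labels the leaves of $T$ by a composable sequence of homogeneous morphisms of $\category{Q}$ going from $x$ to $y$. The degree of $(T,\phi)$ is $\sum_{\ell}|\phi(\ell)|+\sum_{v}(3-\mathrm{val}(v))$. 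The trivial trees (one leaf, no internal vertex) give an embedding $\category{Q}\hookrightarrow F(\category{Q})$ as a dg subquiver once the differential is set up.

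The coderivation $D$ on $\mathsf{T}F(\category{Q})[1]$ would then be specified by its Taylor components $b_{n}=\proj{1}D\incl{n}$. For $n\ge 2$, I define $b_{n}$ to be the \emph{grafting} operation, joining $n$ trees at a common new root. For $n=1$, I set $b_{1}=b_{1}^{\mathrm{leaf}}+b_{1}^{\mathrm{edge}}$, where $b_{1}^{\mathrm{leaf}}$ differentiates each leaf label via $d$ with the appropriate Koszul sign, and $b_{1}^{\mathrm{edge}}$ sums, over internal edges of $T$, the tree obtained by contracting that edge (merging the two endpoint internal vertices into a single larger internal vertex).

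The first main step is to verify $D\circ D=0$. By the coderivation property, this reduces to the usual $A_{\infty}$-relations on the $b_{n}$, and unfolds into three cancellations on trees: (i) $d^{2}=0$ on leaves; (ii) contracting two distinct internal edges in either order produces trees that match up to sign; (iii) applying $d$ to a leaf and then contracting an incident edge commutes with the reverse order up to sign. The second main step is the universal property: given an $A_{\infty}$-category $(\category{A},D_{\category{A}})$ and a dg quiver morphism $f\colon\category{Q}\to U(\category{A})$, I would extend $f$ to a strict $A_{\infty}$-functor $\tilde{f}\colon F(\category{Q})\to\category{A}$ by recursion on tree shape: $\tilde{f}(T,\phi)$ is the element of $\category{A}$ obtained by evaluating the operations $m_{n}^{\category{A}}$ of $\category{A}$ according to the plan encoded by $T$ on the images $f(\phi(\ell))$ at the leaves. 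Uniqueness is forced because a strict $A_{\infty}$-functor is determined by its values on the trivial-tree generators, and compatibility with $D$ reduces to the $A_{\infty}$-relations in $\category{A}$ together with the chain-map condition $f\circ d=b_{1}^{\category{A}}\circ f$.

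The main obstacle will be the sign bookkeeping in verifying $D\circ D=0$. With the shift conventions used throughout the paper, the signs assigned to grafting and to edge-contraction depend on the arities and on the positions of the affected vertices inside the tree, and they must be chosen so as to cancel in exactly the pair-cancellation pattern described above. Conceptually this is the multi-object, $\K$-linear version of the standard operadic fact that the $A_{\infty}$-operad arises as the cobar construction of the associative operad, so the construction should carry through; the technical burden is almost entirely in managing Koszul signs consistently between the tree combinatorics and the coalgebra conventions on $\mathsf{T}F(\category{Q})[1]$.
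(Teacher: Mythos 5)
The paper itself gives no proof of this theorem: it is imported from Lyubashenko--Manzyuk \cite{MR2223034}, and your general plan (morphism spaces spanned by leaf-labelled planar trees with internal vertices of arity $\geq 2$, $b_{n}$ given by grafting for $n\geq 2$, the universal property via evaluation of trees by the operations of the target) is essentially their construction. The genuine error is in your $b_{1}$: the ``tree part'' of the differential of a free $A_{\infty}$-category must \emph{split} vertices (insert a new internal edge, replacing a vertex of arity $a$ by a vertex of arity $p+1+r$ carrying a new child of arity $q$, $p+q+r=a$, $2\leq q\leq a-1$), not \emph{contract} edges. This is forced by the $A_{\infty}$-relations themselves, which your construction must satisfy since $D\circ D=0$ encodes all of them, not just $b_{1}^{2}=0$: on three generators $f,g,h$ the arity-$3$ relation reads $b_{1}b_{3}(f,g,h)\pm b_{3}(df,g,h)\pm\cdots\pm b_{2}(b_{2}(f,g),h)\pm b_{2}(f,b_{2}(g,h))=0$, and with your $b_{1}$ (leaf differentials plus edge contractions) the corolla $b_{3}(f,g,h)$ has no internal edge, so $b_{1}b_{3}(f,g,h)$ contains only leaf terms; the two-vertex trees $b_{2}(b_{2}(f,g),h)$ and $b_{2}(f,b_{2}(g,h))$ are then left uncancelled and $D\circ D\neq 0$. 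Your cancellation checklist (i)--(iii) verifies only the arity-one relation and silently skips exactly these mixed relations between $b_{1}$ and $b_{\geq 2}$, which is where the definition breaks; the same problem destroys the universal property, since in a target $\category{A}$ the relation determines $\proj{1}D_{\category{A}}$ on an iterated $m_{2}$-expression without any corolla-type term, so your $\tilde{f}$ would not commute with the differentials.

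The repair is the standard one: define $b_{1}(T,\phi)$ as the sum of leaf differentials plus the sum over all internal vertices and all splittings of that vertex as above, with Koszul signs; on a corolla this is precisely the expression dictated by the $A_{\infty}$-relation, $D\circ D=0$ then follows from the pairwise cancellation of double splittings (together with the Leibniz terms), and the evaluation map $\tilde{f}$ becomes a strict $A_{\infty}$-functor because splitting a vertex corresponds exactly to the relation holding in $\category{A}$. Note also that your degree formula assigns a vertex of arity $a$ the weight $2-a$, i.e.\ the cohomological convention; with the paper's conventions, where $D$ has degree $-1$ on $\mathsf{T}\category{A}[1]$ and hence $m_{n}$ has degree $n-2$, the weight should be $a-2$. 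That mismatch is only a convention issue, but it is what makes contraction look like it has the degree of a differential, so fixing the grading and reversing the edge operation go hand in hand.
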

An $A_{\infty}$-category $(\category{A},D)$ is \Def{strictly unital} if, for each object $x\in\object\category{A}$, there is an element $\id{x}\in\category{A}(x,x)_{0}$, called a \Def{strict unit}, such that the following conditions are satisfied:
\begin{align*}
	\proj{1}D(f_{1}[1]\otimes\dots f_{p}[1]\otimes\id{x}[1]\otimes f_{p+1}[1]\dots\otimes f_{p+q}[1])
		=
	\begin{cases}
		f_{1}&((p,q)=(1,0),(0,1))\\
		0&(\text{others})
	\end{cases}.
\end{align*}
\begin{prop}\label{unit}
	The forgetful functor from the category of strict unital $A_{\infty}$-categories over $\K$ to the category of $A_{\infty}$-categories has a left adjoint.
\end{prop}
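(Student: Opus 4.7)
The plan is to construct, for each $A_{\infty}$-category $(\category{A},D)$, a strictly unital $A_{\infty}$-category $(\category{A}^{+},D^{+})$ together with a universal strict $A_{\infty}$-functor $\eta_{\category{A}}\colon\category{A}\to U\category{A}^{+}$, where $U$ denotes the forgetful functor. Set $\object\category{A}^{+}\coloneqq\object\category{A}$ and
\[
\category{A}^{+}(x,y)\coloneqq\begin{cases}\category{A}(x,x)\oplus\K\cdot e_{x}&(x=y),\\ \category{A}(x,y)&(x\neq y),\end{cases}
\]
with each $e_{x}$ placed in degree $0$. Since $\mathsf{T}\category{A}^{+}[1]$ is cofree on the cogenerators $\category{A}^{+}[1]$, a coderivation is determined by its projection to $\category{A}^{+}[1]$, so I prescribe $\proj{1}D^{+}$ as follows: on tensors containing no $e_{z}$ it coincides with $\proj{1}D$; on $e_{x}[1]$ alone it is zero; on $f[1]\otimes e_{y}[1]$ and $e_{x}[1]\otimes f[1]$ with $f\in\category{A}$ it returns $f[1]$; and on every other tensor containing some $e_{z}$ it is zero. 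The insertion $\eta_{\category{A}}$ is then the strict $A_{\infty}$-functor whose length-one component is the evident inclusion $\category{A}\hookrightarrow\category{A}^{+}$ and whose higher components vanish; by construction it is compatible with $D$ and $D^{+}$.

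The main step is to verify $(D^{+})^{2}=0$. By cofreeness this reduces to checking $\proj{1}(D^{+})^{2}(f_{1}[1]\otimes\dots\otimes f_{n}[1])=0$ for every input. When no $f_{i}$ is a unit, the identity is exactly $\proj{1}D^{2}=0$ in $\category{A}$. When one or more $f_{i}$ equal $e_{z_{i}}$, expanding $\proj{1}(D^{+})^{2}$ via the coderivation property yields a signed sum indexed by pairs of nested contiguous windows. The prescription of $\proj{1}D^{+}$ on unit-containing inputs kills most of these terms, and the surviving ones pair into two families: in one family an inner length-$2$ window absorbs a unit with its neighbour (returning the neighbour) while the outer window acts on the remaining unit-free string; in the other family the outer window plays the absorbing role. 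Careful tracking of the Koszul signs shows these two families are mutual negatives, giving the cancellation. I expect this sign bookkeeping to be the principal obstacle.

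For the universal property, let $(\category{B},D_{\category{B}})$ be a strictly unital $A_{\infty}$-category with strict units $\id{y}$, and let $F\colon\category{A}\to U\category{B}$ be a strict $A_{\infty}$-functor. Define $\hat F\colon\category{A}^{+}\to\category{B}$ as the unique cocategory homomorphism whose length-one component agrees with $\proj{1}F$ on tensors of elements of $\category{A}$, sends $e_{x}[1]\mapsto\id{F(x)}[1]$, and vanishes on every tensor of length $\geq2$ containing some $e_{z}$. On unit-free tensors, compatibility $\hat F\circ D^{+}=D_{\category{B}}\circ\hat F$ is inherited from $F$; on tensors containing a unit, it reduces to the strict unit axioms in $\category{B}$, matched with the defining rules for $\proj{1}D^{+}$. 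Uniqueness follows because the length-one component of any strict $A_{\infty}$-functor $\category{A}^{+}\to\category{B}$ extending $F$ must send each $e_{x}$ to a strict unit of $\category{B}(F(x),F(x))$ and must vanish on longer unit-containing tensors by the strict unit axioms of $\category{B}$, which together pin down $\hat F$ via the cofree property.
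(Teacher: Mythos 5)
Your overall strategy is the same as the paper's: freely adjoin a degree-$0$ generator $e_{x}$ to each $\category{A}(x,x)$, define the extended coderivation through its corestriction $\proj{1}D^{+}$, check $(D^{+})^{2}=0$ by cases on where the units sit, and read off the adjunction bijection. However, there is a genuine gap in your prescription of $\proj{1}D^{+}$: you declare it zero on \emph{every} unit-containing tensor except $f[1]\otimes e_{y}[1]$ and $e_{x}[1]\otimes f[1]$ with $f\in\category{A}$, so in particular $\proj{1}D^{+}(e_{x}[1]\otimes e_{x}[1])=0$. This has two consequences. First, $e_{x}$ is then not a strict unit in the sense used in the paper, since the unit axiom must hold for all arrows of $\category{A}^{+}$ including $f_{1}=e_{x}$ itself, which forces $\proj{1}D^{+}(e_{x}[1]\otimes e_{x}[1])=\pm e_{x}[1]$. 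Second, and more fatally, the cancellation you invoke for $(D^{+})^{2}=0$ breaks down on inputs with two adjacent units: for a composable string $e_{x}[1]\otimes e_{x}[1]\otimes f[1]$ with $f\in\category{A}$, the expansion of $\proj{1}(D^{+})^{2}$ has exactly one potentially nonzero pair of terms,
\begin{align*}
\pm\,\proj{1}D^{+}\bigl(\proj{1}D^{+}(e_{x}[1]\otimes e_{x}[1])\otimes f[1]\bigr)
\quad\text{and}\quad
\pm\,\proj{1}D^{+}\bigl(e_{x}[1]\otimes\proj{1}D^{+}(e_{x}[1]\otimes f[1])\bigr),
\end{align*}
and with your rule the first vanishes while the second equals $\pm f[1]$, so $(D^{+})^{2}\neq0$. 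The paper avoids this precisely by letting two adjacent units multiply to (a sign times) the unit, namely $\proj{1}\overline{D}(\id{x}[1]\otimes\id{x}[1])=-\id{x}[1]$ in the shifted conventions; with that value the two displayed terms cancel and the case analysis closes up.

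Once this is corrected, the rest of your argument (the case-by-case verification and the bijection $\Hom{\mathsf{u}A_{\infty}\mathsf{Cat}_{\K}}(\overline{\category{A}},\category{B})\cong\Hom{A_{\infty}\mathsf{Cat}_{\K}}(\category{A},\category{B})$) runs along the same lines as the paper's proof. A smaller point: the vanishing of the higher components of $\hat{F}$ on unit-containing tensors should be taken as part of the definition of the morphisms in the category of strictly unital $A_{\infty}$-categories (unit-preserving functors), not deduced from the strict unit axioms of $\category{B}$; as stated, your uniqueness argument does not exclude extensions whose higher components are nonzero on unit-containing tensors yet still commute with the differentials.
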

\begin{proof}
	Let ($\category{A},D)$ be an $A_{\infty}$-category. We define a graded quiver $\overline{\category{A}}$ as follows:
	\begin{align*}
		\object\overline{\category{A}}&\coloneqq\object\category{A},\\
		\overline{\category{A}}(x,y)&\coloneqq
			\begin{cases}
				\category{A}(x,x)\oplus\K\cdot\id{x}&(x=y)\\
				\category{A}(x,y)&(x\neq y)
			\end{cases}.
	\end{align*}
	And we define a degree $-1$ auto morphism $\overline{D}\colon\mathsf{T}\overline{\category{A}}[1]\to\mathsf{T}\overline{\category{A}}[1]$ as
	\begin{align*}
		&\overline{D}(f_{1}[1]\otimes\dots\otimes f_{n}[1])\\
			\coloneqq
		&\sum_{p+r+q=n}(-1)^{|f_{1}|+\dots+|f_{p}|+p}f_{1}[1]\otimes\dots f_{p}[1]\otimes\proj{1}\overline{D}(f_{p+1}[1]\otimes\dots\otimes f_{p+r}[1])\otimes f_{p+r+1}[1]\otimes\dots\otimes f_{n}[1]
	\end{align*}
	where $\proj{1}\overline{D}$ is given as follows for any composable pair $f_{1},\dots,f_{p_{1}+\dots+p_{r}}$ of arrows of $\category{A}$:
	\begin{align*}
		&\proj{1}\overline{D}(f_{1}[1]\otimes\dots\otimes f_{p_{0}}[1]\otimes\id{x_{1}}[1]\otimes\dots\otimes\id{x_{r}}[1]\otimes f_{p_{1}+\dots+p_{r-1}+1}[1]\otimes\dots\otimes f_{p_{1}+\dots p_{r-1}+p_{r}}[1])\\
			=
		&\begin{cases}
			(-1)^{|f_{1}|+1}f_{1}[1]&((p_{0},p_{1})=(1,0)\text{ and }r=1)\\
			-f_{1}[1]&((p_{0},p_{1})=(0,1)\text{ and }r=1)\\
			-\id{x_{0}}[1]&((p_{0},p_{1},p_{2})=(0,0,0)\text{ and }r=2)\\
			0&(\text{others})
		\end{cases}.
	\end{align*}
	For any composable pair $f_{1},\dots,f_{n}$ of arrows of $\overline{\category{A}}$, the following hold:
	\begin{itemize}
		\item If there is no integer $i=1,\dots,n$ which satifies $f_{i}=\id{}$,
			\begin{align*}
				&\sum_{p+r+q=n}(\pm)\proj{1}\overline{D}(f_{1}[1]\otimes\dots\otimes f_{p}[1]\otimes\proj{1}\overline{D}(f_{p+1}[1]\otimes\dots\otimes f_{p+r}[1])\otimes f_{p+r+1}[1]\otimes\dots\otimes f_{n}[1])\\
					=
				&\sum_{p+r+q=n}(\pm)\proj{1}D(f_{1}[1]\otimes\dots\otimes f_{p}[1]\otimes\proj{1}D(f_{p+1}[1]\otimes\dots\otimes f_{p+r}[1])\otimes f_{p+r+1}[1]\otimes\dots\otimes f_{n}[1])\\
					=
				&0
			\end{align*}
			holds.
		\item $f_{1}=\id{}$ implies the following:
			\begin{align*}
				&\sum_{p+r+q=n}(\pm)\proj{1}\overline{D}(f_{1}[1]\otimes\dots\otimes f_{p}[1]\otimes\proj{1}\overline{D}(f_{p+1}[1]\otimes\dots\otimes f_{p+r}[1])\otimes f_{p+r+1}[1]\otimes\dots\otimes f_{n}[1])\\
					=
				&\proj{1}\overline{D}(\proj{1}\overline{D}(\id{}[1]\otimes f_{2}[1])\otimes f_{3}[1]\otimes\dots\otimes f_{n}[1])
				-\proj{1}\overline{D}(\id{}[1]\otimes\proj{1}D(f_{2}[1]\otimes\dots\otimes f_{n}[1]))\\
					=
				&0.
			\end{align*}
		\item $f_{n}=\id{}$ implies the following:
			\begin{align*}
				&\sum_{p+r+q=n}(\pm)\proj{1}\overline{D}(f_{1}[1]\otimes\dots\otimes f_{p}[1]\otimes\proj{1}\overline{D}(f_{p+1}[1]\otimes\dots\otimes f_{p+r}[1])\otimes f_{p+r+1}[1]\otimes\dots\otimes f_{n}[1])\\
					=
				&\proj{1}\overline{D}(\proj{1}\overline{D}(f_{1}[1]\otimes\dots\otimes f_{n-1}[1])\otimes\id{}[1])\\
				&+(-1)^{|f_{1}|+\dots+|f_{n-2}|+n-2}\proj{1}\overline{D}(f_{1}[1]\otimes\dots\otimes f_{n-2}[1]\otimes\proj{1}D(f_{n-1}[1]\otimes\id{}[1]))\\
					=
				&0.
			\end{align*}
		\item If there is an integer $1<i=1<n$ which satifies $f_{i}=\id{}$,
			\begin{align*}
				&\sum_{p+r+q=n}(\pm)\proj{1}\overline{D}(f_{1}[1]\otimes\dots\otimes f_{p}[1]\otimes\proj{1}\overline{D}(f_{p+1}[1]\otimes\dots\otimes f_{p+r}[1])\otimes f_{p+r+1}[1]\otimes\dots\otimes f_{n}[1])\\
					=
				&(-1)^{|f_{1}|+\dots+|f_{i-2}|+i-2}\proj{1}\overline{D}(f_{1}[1]\otimes\dots\otimes f_{i-2}[1]\otimes\proj{1}D(f_{i-1}[1]\otimes\id{}[1])\otimes f_{i+1}[1]\otimes\dots\otimes f_{n}[1])\\
				&+(-1)^{|f_{1}|+\dots+|f_{i-1}|+i-1}\proj{1}\overline{D}(f_{1}[1]\otimes\dots\otimes f_{i-1}[1]\otimes\proj{1}D(\id{}[1]\otimes f_{i+1}[1])\otimes f_{i+2}[1]\otimes\dots\otimes f_{n}[1])\\
					=
				&0
			\end{align*}
			holds.
	\end{itemize}
	In other words, we obtain a strict unital $A_{\infty}$-category $\overline{\category{A}}$.
	And then, for arbitrary strict $A_{\infty}$-functor $\varphi\colon\category{A}\to\category{B}$, we defined a map $\overline{\varphi}\colon\mathsf{T}\overline{A}[1]\to\mathsf{T}\overline{\category{B}}[1]$ as follows for any composable pair $f_{1},\dots,f_{p_{1}+\dots+p_{r}}$ of arrows of $\category{A}$:
	\begin{align*}
		&\overline{\varphi}(f_{1}[1]\otimes\dots\otimes f_{p_{0}}[1]\otimes\id{x_{1}}[1]\otimes\dots\otimes\id{x_{r}}[1]\otimes f_{p_{1}+\dots+p_{r-1}+1}[1]\otimes\dots\otimes f_{p_{1}+\dots p_{r-1}+p_{r}}[1])\\
			=
		&\varphi(f_{1}[1]\otimes\dots\otimes f_{p_{0}}[1])\otimes\id{f(x_{1})}[1]\otimes\dots\otimes\id{f(x_{r})}[1]\otimes\varphi(f_{p_{1}+\dots+p_{r-1}+1}[1]\otimes\dots\otimes f_{p_{1}+\dots p_{r-1}+p_{r}}[1]).
	\end{align*}
	We prove $\overline{\varphi}$ is an $A_{\infty}$-functor by induction on the number of arrows. First, for any arrow $f$ of $\category{A}$, the following hold:
	\begin{align*}
		\overline{\varphi}\overline{D}(f[1])
			&=
		\begin{cases}
			\varphi D(f[1])&(f\neq\id{})\\
			\overline{\varphi}(0)&(f=\id{})
		\end{cases}\\
			&=
		\begin{cases}
			D\varphi(f[1])&(f\neq\id{})\\
			\overline{D}(\id{}[1])&(f=\id{})
		\end{cases}\\
			&=
		\overline{D}\overline{\varphi}(f[1]).
	\end{align*}
	Suppose that the following holds for any pair of $n$ composable arrows $f_{1},\dots,f_{n}$ of $\overline{\category{A}}$:
	\begin{align*}
		\overline{\varphi}\overline{D}(f_{1}[1]\otimes\dots\otimes f_{n}[1])
			=
		\overline{D}\overline{\varphi}(f_{1}[1]\otimes\dots\otimes f_{n}[1]).
	\end{align*}
	Let $(f_{1},\dots,f_{m},f_{m+1},\dots,f_{n})$ be a pair of composable arrows of $\overline{A}$. Then
	\begin{align*}
		&\overline{\varphi}\overline{D}(f_{1}[1]\otimes\dots\otimes f_{m}[1]\otimes\id{}[1]\otimes f_{m+1}[1]\otimes\dots\otimes f_{n}[1])\\
			=
		&\sum_{p+r+q=m}(\pm)\overline{\varphi}(f_{1}[1]\otimes\dots\otimes\overline{D}(f_{p+1}[1]\otimes\dots\otimes f_{p+r}[1])\otimes\dots\otimes\id{}[1]\otimes\dots\otimes f_{n}[1])\\
		&+(-1)^{|f_{1}|+\dots+|f_{m-1}|+m-1}\overline{\varphi}(f_{1}[1]\otimes\dots\otimes f_{m-1}[1]\otimes\overline{D}(f_{m}[1]\otimes\id{}[1])\otimes f_{m+1}[1]\otimes\dots\otimes f_{n}[1])\\
		&+(-1)^{|f_{1}|+\dots+|f_{m}|+m}\overline{\varphi}(f_{1}[1]\otimes\dots\otimes f_{m}[1]\otimes\overline{D}(\id{}[1]\otimes f_{m+1}[1])\otimes f_{m+2}[1]\otimes\dots\otimes f_{n}[1])\\
		&+\sum_{p+r+q=n-m}(\pm)\overline{\varphi}(f_{1}[1]\otimes\dots\otimes\id{}[1]\otimes\dots\otimes\overline{D}(f_{m+P+1}[1]\otimes\dots\otimes f_{m+p+r+1}[1])\otimes\dots\otimes f_{n}[1])\\
			=
		&\overline{D}\overline{\varphi}(f_{1}[1]\otimes\dots\otimes f_{m}[1])\otimes\id{}[1]\otimes\overline{\varphi}(f_{m+1}[1]\otimes\dots\otimes f_{n}[1])\\
		&+(-1)^{|f_{1}|+\dots+|f_{m}|+m+1}\overline{\varphi}(f_{1}[1]\otimes\dots f_{m}[1])\otimes\id{}[1]\otimes\overline{D}\overline{\varphi}(f_{m+1}[1]\otimes\dots\otimes f_{n}[1])\\
			=
		&\overline{D}\overline{\varphi}(f_{1}[1]\otimes\dots\otimes f_{m}[1]\otimes\id{}[1]\otimes f_{m+1}[1]\otimes\dots\otimes f_{n}[1])
	\end{align*}
	holds.
	
	Let $\category{A}$ be an $A_{\infty}$-category and $\category{B}$ be a strictly unital $A_{\infty}$-category. Then any $A_{\infty}$-functor $\category{A}\to\category{B}$ gives a (unit preserving) strict $A_{\infty}$-functor $\overline{\category{A}}\to\category{B}$ in the same way as above, and induces a natural bijection
	$$\Hom{\mathsf{u}A_{\infty}\mathsf{Cat}_{\K}}(\overline{\category{A}},\category{B})\cong\Hom{A_{\infty}\mathsf{Cat}_{\K}}(\category{A},\category{B}).$$
\end{proof}
For each non-negative integer $n\geq0$, a (strictly unital) $A_{\infty}$-category $\category{A}_{\infty}^{n}$ is defined as follows:
\begin{itemize}
	\item $\object\category{A}_{\infty}^{n}=\{0,\dots,n\}$.
	\item For $0\leq i,j\leq n$
		\begin{align*}
			\category{A}_{\infty}^{n}(i,j)=
				\begin{cases}
					\K\cdot(i,j)&(i\leq j)\\
					0&(\text{others})
				\end{cases}.
		\end{align*}
	\item A degree $-1$ coderivation $D\colon\mathsf{T}(\category{A}_{\infty}^{n})[1]\to\mathsf{T}(\category{A}_{\infty}^{n})[1]$ given by 
		\begin{align*}
			D((i_{0},i_{1})[1]\otimes\dots\otimes(i_{n-1},i_{n})[1])
				\coloneqq
			\sum_{p=1}^{n-1}(-1)^{p}(i_{0},i_{1})[1]\otimes\dots\otimes(i_{p-1},i_{p+1})[1]\otimes\dots\otimes(i_{n-1},i_{n})[1]
		\end{align*}
\end{itemize}
In addition, we define a strict $A_{\infty}$-functor $\alpha_{\ast}\colon\category{A}_{\infty}^{m}\to\category{A}_{\infty}^{n}$ for each order-preserving map $\alpha\colon[m]\to[n]$:
\begin{itemize}
	\item $\alpha_{\ast}(i)\coloneqq\alpha(i)$ for each objects $i=0,\dots,m$.
	\item For each elements $(i_{0},i_{1})[1]\otimes\dots\otimes(i_{-1},i_{r})[1]$
		\begin{align*}
			\alpha_{\ast}((i_{0},i_{1})[1]\otimes\dots\otimes(i_{-1},i_{r})[1])
				\coloneqq
			(\alpha(i_{0}),\alpha(i_{1}))[1]\otimes\dots\otimes(\alpha(i_{-1}),\alpha(i_{r}))[1].
		\end{align*}
\end{itemize}
Then we obtain a cosimplicial $A_{\infty}$-category $\category{A}_{\infty}^{\bullet}$. It gives a functor $\mathscr{N}_{A_{\infty}}\coloneqq\Hom{\mathsf{u}A_{\infty}\mathsf{Cat}_{\K}}(\category{A}_{\infty}^{\bullet},-)$ from the category of (small) $A_{\infty}$-categories (with the strict unit) over $\K$ and strict $A_{\infty}$-functors preserving strict unit $\mathsf{u}A_{\infty}\mathsf{Cat}_{\K}$ to the category of simplicial sets $\sSet$, called simplicial nerve of $A_{\infty}$-categories \cite{MR3607208}.

\section{Calculation on Standard simplices}
\subsection{Divided Power de Rham Complexes}
An order-preserving map $\alpha\colon[m]\to[n]$ gives an affine map $\alpha_{\ast}\mathbb{A}_{\R}^{m+1}\to\mathbb{A}_{\R}^{n+1}$
\begin{align*}
	(x_{0},\dots,x_{m})\mapsto(\sum_{\alpha(j)=0}x_{j},\dots,\sum_{\alpha(j)=n}x_{j}),
\end{align*}
and an affine map $\bm{V}(\underset{0\leq i\leq m}{\sum}x_{i}-1)\to\bm{V}(\underset{0\leq i\leq n}{\sum}x_{i}-1)$ between hyperplanes. It induces a map between subspaces
\begin{center}
\begin{tikzpicture}[auto]
	\node (11) at (0, 1.2) {$\triangle^{m}$};
	\node (12) at (4, 1.2) {$\triangle^{n}$};
	\node (21) at (0, 0) {$\bm{V}(\underset{0\leq i\leq m}{\sum}x_{i}-1)$};
	\node (22) at (4, 0) {$\bm{V}(\underset{0\leq i\leq n}{\sum}x_{i}-1)$};
	\path[draw, right hook->] (11) -- (21);
	\path[draw, right hook->] (12) -- (22);
	\path[draw, ->, dashed] (11) --node {$\scriptstyle \alpha_{\ast}$} (12);
	\path[draw, ->] (21) --node {$\scriptstyle \alpha_{\ast}$} (22);
\end{tikzpicture}
\end{center}
which defined as $\triangle^{n}\coloneqq\{(x_{0},\dots,x_{n})\in\bm{V}(\sum_{i}x_{i}-1)|x_{i}\in[0,1]\}$ for each $n\geq0$. For each $n\geq0$, there is an isomorphim $\mathbb{A}_{\R}^{n}\cong\bm{V}(\sum_{i}x_{i}-1)$ defined as follows:
\begin{align*}
	\mathbb{A}_{\R}^{n}\to\bm{V}(\sum_{i}x^{i}-1),\ &\ (t^{1},\dots,t^{n})\mapsto(1-t^{1},t^{1}-t^{2},\dots,t^{n-1}-t^{n},t^{n}-0)\\
	\bm{V}(\sum_{i}x^{i}-1)\to\mathbb{A}_{\R}^{n},\ &\ (x^{0},\dots,x^{n})\mapsto(\sum_{i=1}^{n}x^{i},\dots,\sum_{i=n}^{n}x^{i}).
\end{align*}
The image of $\triangle^{n}$ under the isomorphism is given by
\begin{align*}
	\topsimplex{n}\coloneqq\{(t_{1},\dots,t_{n})|1\geq t_{1}\geq\dots\geq t_{n}\geq0\}.
\end{align*}
For each order-preserving map $\alpha\colon[m]\to[n]$, we obtain a commutative diagram
\begin{center}
\begin{tikzpicture}[auto]
	\node (11) at (0, 1.2) {$\mathbb{A}_{\R}^{m}$};
	\node (21) at (0, 0) {$\mathbb{A}_{\R}^{n}$};
	\node (12) at (2.5, 1.2) {$\bm{V}(\sum_{j}X^{j}-1)$};
	\node (22) at (2.5, 0) {$\bm{V}(\sum_{j}X^{j}-1)$};
	\node (13) at (5, 1.2) {$\mathbb{A}_{\R}^{m+1}$};
	\node (23) at (5, 0) {$\mathbb{A}_{\R}^{n+1}$};
	\path[draw, ->] (11) --node {$\scriptstyle \simeq$} (12);
	\path[draw, right hook->] (12) -- (13);
	\path[draw, ->] (21) --node {$\scriptstyle \simeq$} (22);
	\path[draw, right hook->] (22) -- (23);
	\path[draw, ->] (11) --node[swap] {$\scriptstyle \alpha_{\ast}$} (21);
	\path[draw, ->] (12) --node {$\scriptstyle \alpha_{\ast}$} (22);
	\path[draw, ->] (13) --node {$\scriptstyle \alpha_{\ast}$} (23);
\end{tikzpicture}.
\end{center}
Where $\alpha_{\ast}\colon\mathbb{A}_{\R}^{m}\to\mathbb{A}_{\R}^{n}$ is defined as follows:
\begin{align*}
	\proj{i}\alpha_{\ast}(t_{1},\dots,t_{n})\coloneqq
		\begin{cases}
			t_{\min\{j\in[m]|\alpha(j)\geqslant i\}}&(\alpha(m)\geqslant i)\\
			0&(\alpha(m)<i)
		\end{cases}.
\end{align*}
An affine space $\mathbb{A}_{\Q}^{n}$ corresponds to a polynomial ring $\Q[t_{1},\dots,t_{n}]$ and a hyperplane $\bm{V}(\sum_{i}x_{i}-1)\subset\mathbb{A}_{\Q}^{n+1}$ corresponds to a quotient ring $\Q[x_{0},\dots,x_{n}]/(\sum_{i}x_{i}-1)$.  In addition, the isomorphism $\mathbb{A}_{\Q}^{n}\colon\bm{V}(\sum_{i}x_{i}-1)$ corresponds to a ring isomorphism
\begin{align*}
	\mathbb{Q}[t_{1},\dots,t_{n}]\cong\mathbb{Q}[x_{0},\dots,x_{n}]/(\sum_{i}x_{i}-1).
\end{align*}
The quotient ring $\mathbb{Q}[x_{0},\dots,x_{n}]/(\sum_{i}x_{i}-1)$ just coincide with a ring whose elements are (Sullivan's) differential $0$-form on an  $n$-dimensional standard simplex $\simplex{}{n}$. Therefore it is not unnatural to regard the polynomial ping $\mathbb{Q}[t_{1},\dots,t_{n}]$ as a ring of functions on an $n$-dimensional standard simplex $\simplex{}{n}$.

However, the de Rham complex (which corresponds to this ring) has trivial torsion (as Abelian group). In addition, we must assume the character of the ring we are considering is $0$. Therefore we use a ring that does not contain $\Q$. The most extreme candidate is $\Z$, in which case ``integration'' cannot be defined. So we consider a divided power polynomial algebra over $\Z$, that is a free Abelian group
\begin{align*}
	\Z\langle x_{0},\dots,x_{n}\rangle\coloneqq\bigoplus_{N_{0},\dots,N_{n}\geq0}\Z x_{0}^{[N_{0}]}\dots x_{n}^{[N_{n}]}
\end{align*}
with product defined as
\begin{align*}
	(x_{0}^{[N_{10}]}\dots x_{n}^{[N_{1n}]})(x_{0}^{[N_{20}]}\dots x_{n}^{[N_{2n}]})=\frac{(N_{10}+N_{20})}{N_{10}!N_{20}!}\dots\frac{(N_{1n}+N_{2n})}{N_{1n}!N_{2n}!}x_{0}^{[N_{10}+N_{20}]}\dots x_{n}^{[N_{1n}+N_{2n}]}
\end{align*}
to be the ring of ``functions on an $n$-dimensional standard simplex $\simplex{}{n}$''. We denote $x_{i}^{[1]}$ as $x_{i}$. This ring can be embedded in the polynomial ring $\Q[x_{0},\dots,x_{n}]$ by the canonical way which is given by following morphism :
\begin{align*}
	x_{0}^{[N_{0}]}x_{1}^{[N_{1}]}\dots x_{n}^{[N_{n}]}\mapsto\frac{1}{N_{0}!N_{1}!\dots N_{n}!}x_{0}^{N_{0}}x_{1}^{N_{1}}\dots x_{n}^{N_{n}}.
\end{align*}
Similarly, three kinds of (canonical) morphisms
	\begin{align*}
		\Z\langle x_{0},\dots,x_{n}\rangle&\to\Q\langle x_{1},\dots,x_{n}\rangle\\
		\Z\langle x_{0},\dots,x_{n}\rangle&\to\Z_{(p)}\langle x_{1},\dots,x_{n}\rangle\\
		\Z\langle x_{0},\dots,x_{n}\rangle&\to\Z\langle x_{1},\dots,x_{n}\rangle
	\end{align*}
	are given as follows where $p$ is a prime number:
	\begin{align*}
		x_{0}^{[N_{0}]}x_{1}^{[N_{1}]}\dots x_{n}^{[N_{n}]}
			&\mapsto
		\frac{1}{N_{0}!}x_{1}^{[N_{1}]}\dots x_{n}^{[N_{n}]},\\
		x_{0}^{[N_{0}]}x_{1}^{[N_{1}]}\dots x_{n}^{[N_{n}]}
			&\mapsto
		\frac{1}{N_{0}!}p^{N_{0}}x_{1}^{[N_{1}]}\dots x_{n}^{[N_{n}]},\\
		x_{0}^{[N_{0}]}x_{1}^{[N_{1}]}\dots x_{n}^{[N_{n}]}
			&\mapsto
		\begin{cases}
			x_{1}^{[N_{1}]}\dots x_{n}^{[N_{n}]}&(N_{0}=0)\\
			0&(N_{0}\neq0)
		\end{cases}.
	\end{align*}
	More generally, a divided power polynomial algebra has a universal property like polynomial rings. Therefore, for each map $\ep\colon\{x_{0},x_{1},\dots,x_{n}\}\to\{x_{0},x_{1},\dots,x_{n}\}$, there exists a unique morphism $\overline{\ep}\colon\Z\langle x_{0},\dots,x_{n}\rangle\to\Z\langle x_{0},\dots,x_{n}\rangle$ satisfies $\overline{\ep}(x_{i}^{[N_{i}]})=\ep(x_{i})^{[N_{i}]}$ for each $i=0,\dots,n$.

We define a morphism $\alpha^{\ast}\colon\Z\langle x_{0},\dots,x_{n}\rangle\to\Z\langle x_{0},\dots,x_{m}\rangle$ as
\begin{align*}
	\alpha^{\ast}(x_{i}^{[N]})\coloneqq
		\begin{cases}
			x_{\min\{j|\alpha(j)\geq i\}}^{[N]}&(\alpha(m)\geq i)\\
			0&(\alpha(m)<i)
		\end{cases}
\end{align*}
for each order-preserving maps $\alpha\colon[m]\to[n]$. We obtain a simplicial $\Z\langle x_{0}\rangle$-algebra $\qdR{x_{0}}{0}{\bullet}{}$ by above.
Hereafter we denote $x_{0}$ of these rings as $\dq$, and consider $\dq$ to be an element like the unit of the ring.

For each non-negative integer $n\geq0$ and arbitrary $\qdR{\dq}{0}{n}{}$-modules $M$, an (Abelian) group morphism of $\theta\colon\qdR{\dq}{0}{n}{}\to M$ which satisfies the following is called a divided power $\Z\langle\dq\rangle$-derivation:
\begin{align*}
	\theta(a)&=0&&\text{for all }a\in\Z\langle\dq\rangle,\\
	\theta(fg)&=g\theta(f)+f\theta(g)&&\text{for all }f,g\in\qdR{\dq}{0}{n}{},\\
	\theta(x_{i}^{[N]})&=x_{i}^{[N-1]}\theta(x_{i})&&\text{for all }i=1,\dots,n\text{ and }N\geq1.
\end{align*}
Denote the $\qdR{\dq}{0}{}{}$-module of divided power $\Z\langle\dq\rangle$-derivations of $\qdR{\dq}{0}{n}{}$ into $M$ by $\Der{\Z\langle\dq\rangle}{\qdR{\dq}{0}{n}{}}{M}$. It gives a representable functor $\Der{\Z\langle\dq\rangle}{\qdR{\dq}{0}{n}{}}{-}\colon\mathrm{Mod}_{\qdR{\dq}{0}{n}{}}\to\mathrm{Mod}_{\qdR{\dq}{0}{n}{}}$. It is represented by a free $\Z\langle\dq,x_{1},\dots,x_{n}\rangle$-module $\qdR{\dq}{1}{n}{}$ generated by formal elements $\df{}x_{1},\dots,\df{}x_{n}$. In addition the derivation $\df{0}\colon\qdR{\dq}{0}{n}{}\to\qdR{\dq}{1}{n}{}$ corresponding to the identity $\id{}\colon\qdR{\dq}{1}{n}{}\to\qdR{\dq}{1}{n}{}$ is given as follows:
\begin{align*}
	\df{0}(\sum_{N_{1},\dots,N_{n}}f_{N_{1},\dots,N_{n}}x_{1}^{[N_{1}]}\dots x_{n}^{[N_{n}]})
		\coloneqq
	\sum_{i=1}^{n}(\sum_{N_{1},\dots,N_{n}}f_{N_{1},\dots,N_{n}}x_{1}^{[N_{1}]}\dots x_{i}^{[N_{i}-1]}\dots x_{n}^{[N_{n}]})\df{}x_{i}
\end{align*}
We denote the derivation $\qdR{\dq}{0}{n}{}\to\qdR{\dq}{0}{n}{}$ corresponding to the ``standard dual base'' $\chi_{\df{}x_{i}}\colon\qdR{\dq}{1}{n}{}\to\qdR{\dq}{0}{n}{}$
\begin{align*}
	\chi_{\df{}x_{i}}(\sum_{j}f_{j}\df{}x_{j})\coloneqq f_{i}
\end{align*}
by $\frac{\partial}{\partial x_{i}}$.

They give a graded (commutative) $\qdR{\dq}{0}{n}{}$-algebra
\begin{align*}
	\qdR{\dq}{\bullet}{n}{}\coloneqq\mathsf{Sym}\qdR{\dq}{1}{n}{}[1]=\qdR{\dq}{0}{n}{}\oplus\qdR{\dq}{1}{n}{}\oplus\qdR{\dq}{2}{n}{}\oplus\dots\oplus\qdR{\dq}{n}{n}{}
\end{align*}
and a degree $-1$ derivation $\df{}\colon\qdR{\dq}{\bullet}{n}{}\to\qdR{\dq}{\bullet}{n}{}$. In other words, we obtain a dg (commutative) algebra $\qdR{\dq}{}{n}{}$. We call the dg algebra the \Def{divided power de Rham complex on standard simplex $\simplex{}{n}$}. For each order-preserving map $\alpha\colon[m]\to[n]$, the $\Z\langle\dq\rangle$-algebra morphism $\alpha^{\ast}$ gives a dg algebra morphism $\alpha^{\ast}\colon\qdR{\dq}{}{n}{}\to\qdR{\dq}{}{m}{}$. Therefore we obtain a simplicial dg (commutative) algebra $\qdR{\dq}{}{}{}\colon\D\opposite\to\mathsf{dgA}_{\Z\langle\dq\rangle}$ and a simplicial dg (commutative) algebra $\qdR{\dq}{}{}{\K}\coloneqq\qdR{\dq}{}{}{}\otimes_{\Z}\K$.

\subsection{Formal Differential Forms}
Let $(\mathfrak{g}_{\bullet},D)$ be a connected $L_{\infty}$-algebra (over $\Z$), that is an $L_{\infty}$-algebra whose underlying chain complex is connected.
Then, for each non-negative integer $n\geq0$, we obtain graded $\Z$-module
\begin{align*}
	\qdR{\dq}{\bullet}{n}{\mathfrak{g}}^{\wedge}
		&\coloneqq
	\prod_{p+\bullet=q}\mathfrak{g}_{p}\otimes\qdR{\dq}{q}{n}{}.
\end{align*}
We call a degree $1$ element $\omega\in\qdR{\dq}{1}{n}{\mathfrak{g}}^{\wedge}$ of above graded $\Z$-module a \Def{generalized connection with values in $\mathfrak{g}$ on the standard simplex $\simplex{}{n}$}. 

Roughly speaking, connections with values in an arbitrary Lie algebra $\mathfrak{g}$ are analogous to differential $1$-forms. So we want to define a concept that can be said to be analogous to differential forms. For this purpose, using the universal enveloping (dg) algebra of $L_{\infty}$-algebra. Using this (dg) algebra, we obtain a dg algebra
\begin{align*}
	\qdR{\dq}{\bullet}{n}{\mathbb{U}_{\infty}\mathfrak{g}}^{\wedge}\coloneqq\prod_{p+\bullet=q}\mathbb{U}_{\infty}\mathfrak{g}_{p}\otimes\qdR{\dq}{q}{n}{}
\end{align*}
for each non-negative integer $n\geq0$ where the differential is defined as
\begin{align*}
	\df{}(\sum g\otimes\omega)\coloneqq g\otimes\df{}\omega,
\end{align*}
and obtain a simplicial dg algebra $\qdR{\dq}{}{\bullet}{\mathbb{U}_{\infty}\mathfrak{g}}^{\wedge}$.

\subsection{Integration on Standard Simplices}
To define the integration of formal differential forms, we observe the classical case, in other words, the integration of a polynomial function of real coefficients. For any integer $a\in\R$ and non-negative ingeter $N$, the following (redundant) equation holds:
\begin{align*}
	\int_{\alpha}^{\beta}a\frac{x^{N}}{N!}dx=a\frac{\beta^{N+1}}{(N+1)!}-a\frac{\alpha^{N+1}}{(N+1)!}
\end{align*}
\begin{dfn}(iterated integral of divided power polynomial functions)
	Let $f=\sum_{N_{1},\dots,N_{r}}m_{N_{1},\dots,N_{r}}x_{1}^{[N_{1}]}\cdots x_{r}^{[N_{r}]}$ be an $r$-variable divided power polynomial of integer coefficients, that is an element of $\qdR{\dq}{0}{r}{}=\Z\langle\dq,x_{1},\dots,x_{n}\rangle$. Then we define the \Def{iterated integration} of $f$
	\begin{align*}
		&\int_{\alpha_{p}}^{\beta_{p}}\dots\int_{\alpha_{1}}^{\beta_{1}}f\df{}x_{i_{1}}\cdots\df{}x_{i_{p}}
		&(\alpha_{1},\dots,\alpha_{p},\beta_{1},\dots,\beta_{p}\in\{\dq,x_{1},\dots,x_{r},0\})
	\end{align*}
	inductively as follows:
	\begin{align*}
		\int_{\alpha_{1}}^{\beta_{1}}f\df{}x_{i_{1}}
			&\coloneqq
		\sum_{N_{1},\dots,N_{r}}m_{N_{1},\dots,N_{r}}x_{1}^{[N_{1}]}\cdots(\beta_{1}^{[N_{i_{1}}+1]}-\alpha_{1}^{[N_{i_{1}}+1]})\cdots x_{r}^{[N_{r}]},\\
		\int_{\alpha_{p}}^{\beta_{p}}\dots\int_{\alpha_{1}}^{\beta_{1}}f\df{}x_{i_{1}}\cdots\df{}x_{i_{p}}
			&\coloneqq
		\int_{\alpha_{p}}^{\beta_{p}}(\int_{\alpha_{p-1}}^{\beta_{p-1}}\dots\int_{\alpha_{1}}^{\beta_{1}}f\df{}x_{i_{1}}\cdots\df{}x_{i_{p-1}})\df{}x_{i_{p}}
	\end{align*}
\end{dfn}
\begin{lem}\label{Int Lem11}
	For any elements $X,Y\in\{\dq,x_{1},\dots,x_{r},0\}$ and any divided power polynomial $f\in\qdR{\dq}{0}{r}{}$,
	\begin{align*}
		\int_{X}^{Y}(\frac{\partial}{\partial x_{i}}f)\df{}x_{i}=\overline{\ep_{i,Y}}(f)-\overline{\ep_{i,X}}(f)
	\end{align*}
	holds where the map $\ep_{i,X}\colon\{\dq,x_{1},\dots,x_{n}\}\to\{\dq,x_{1},\dots,x_{n}\}$ is given as follows:
	\begin{align*}
		\ep_{i,X}(\dq)
			&=
		\dq,\\
		\ep_{i,X}(x_{j})
			&=
		\begin{cases}
			X&(j=i)\\
			x_{j}&(j\neq i)
		\end{cases}.
	\end{align*}
\end{lem}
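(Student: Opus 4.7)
The plan is to prove the identity by reducing to divided power monomials via linearity and then computing both sides explicitly.

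First, both sides are $\Z\langle\dq\rangle$-linear in $f$. For the left-hand side this is transparent from the defining formula of the single integral: integration affects only the coefficient attached to $x_{i}$, while scalars in $\Z\langle\dq\rangle$ pass through unchanged. For the right-hand side, $\overline{\ep_{i,X}}$ is (by the universal property of the divided power algebra) the unique morphism of $\Z\langle\dq\rangle$-algebras on $\qdR{\dq}{0}{r}{}$ sending $x_{i}\mapsto X$ and $x_{j}\mapsto x_{j}$ for $j\neq i$; for $X=0$ one extends by $0^{[N]}=0$ for $N\geq 1$ and $0^{[0]}=1$. In particular $\overline{\ep_{i,X}}$ is $\Z\langle\dq\rangle$-linear. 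Hence it is enough to verify the identity on a single monomial $\mu=x_{1}^{[N_{1}]}\cdots x_{r}^{[N_{r}]}$.

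Next I evaluate each side on $\mu$. Using $\frac{\partial}{\partial x_{i}}(x_{i}^{[N]})=x_{i}^{[N-1]}$ for $N\geq 1$ (and $0$ for $N=0$) together with the Leibniz rule, I obtain $\frac{\partial\mu}{\partial x_{i}}=x_{1}^{[N_{1}]}\cdots x_{i}^{[N_{i}-1]}\cdots x_{r}^{[N_{r}]}$ when $N_{i}\geq 1$ and $0$ when $N_{i}=0$. Plugging into the defining formula of the single integral (with the exponent bumped back up from $N_{i}-1$ to $N_{i}$) gives
$$\int_{X}^{Y}\frac{\partial\mu}{\partial x_{i}}\df{}x_{i}=x_{1}^{[N_{1}]}\cdots(Y^{[N_{i}]}-X^{[N_{i}]})\cdots x_{r}^{[N_{r}]}$$
for $N_{i}\geq 1$, and $0$ otherwise. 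On the other side, since $\overline{\ep_{i,X}}$ is the substitution $x_{i}\mapsto X$, I get $\overline{\ep_{i,Y}}(\mu)-\overline{\ep_{i,X}}(\mu)=x_{1}^{[N_{1}]}\cdots(Y^{[N_{i}]}-X^{[N_{i}]})\cdots x_{r}^{[N_{r}]}$, which matches the integral when $N_{i}\geq 1$.

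The only mildly delicate point, and the place I expect the reader to pause, is the boundary case $N_{i}=0$: the derivative annihilates $\mu$, so the left-hand side vanishes, while on the right-hand side one must recognize $Y^{[0]}-X^{[0]}=1-1=0$ so that the substitution difference vanishes as well. With this observation in place the verification is complete; no substantive obstacle arises beyond careful bookkeeping of the divided power exponents.
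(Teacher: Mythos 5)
Your proof is correct and follows essentially the same route as the paper's: reduce to a single divided power monomial, compute $\frac{\partial}{\partial x_{i}}$, apply the defining formula of the single integral, and identify the result with the substitutions $\overline{\ep_{i,Y}}(f)-\overline{\ep_{i,X}}(f)$. Your explicit handling of the boundary case $N_{i}=0$ (and of $X$ or $Y$ equal to $0$) is a small refinement that the paper leaves implicit, but the argument is otherwise the same.
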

\begin{proof}
	We can assume that $f=x_{1}^{[N_{1}]}\cdots x_{r}^{[N_{r}]}$. Then
	\begin{align*}
		\int_{X}^{Y}(\frac{\partial}{\partial x_{i}}f)\df{}x_{i}
			&=
		\int_{X}^{Y}(\chi_{\df{}x_{i}}\df{}(x_{1}^{[N_{1}]}\cdots x_{r}^{[N_{r}]}))\df{}x_{i}\\
			&=
		\int_{X}^{Y}(\sum_{j=1}^{r}x_{1}^{[N_{1}]}\cdots x_{j}^{[N_{j}-1]}\cdots x_{r}^{[N_{r}]}\chi_{\df{}x_{i}}(\df{}x_{j}))\df{}x_{i}\\
			&=
		\int_{X}^{Y}(x_{1}^{[N_{1}]}\cdots x_{i}^{[N_{i}-1]}\cdots x_{r}^{[N_{r}]})\df{}x_{i}\\
			&=
		x_{1}^{[N_{1}]}\cdots Y^{[N_{i}-1]}\cdots x_{r}^{[N_{r}]}
			-x_{1}^{[N_{1}]}\cdots X^{[N_{i}-1]}\cdots x_{r}^{[N_{r}]}\\
			&=
		\overline{\ep_{i,Y}}(f)-\overline{\ep_{i,X}}(f)
	\end{align*}
	holds.
\end{proof}
\begin{cor}\label{Int Lem12}
	For any elements $X,Y\in\{\dq,x_{1},\dots,x_{r},0\}$ and any divided power polynomial $f\in\qdR{\dq}{0}{r}{}$,
	\begin{align*}
		\int_{X}^{Y}f(\frac{\partial}{\partial x_{i}}g)\df{}x_{i}
			=
		(\overline{\ep_{i,Y}}(fg)-\overline{\ep_{i,X}}(fg))
			-\int_{X}^{Y}(\frac{\partial}{\partial x_{i}}f)g\df{}x_{i}
	\end{align*}
	holds where the map $\ep_{i,X}\colon\{\dq,x_{1},\dots,x_{n}\}\to\{\dq,x_{1},\dots,x_{n}\}$ is given as follows:
	\begin{align*}
		\ep_{i,X}(\dq)
			&=
		\dq,\\
		\ep_{i,X}(x_{j})
			&=
		\begin{cases}
			X&(j=i)\\
			x_{j}&(j\neq i)
		\end{cases}.
	\end{align*}
\end{cor}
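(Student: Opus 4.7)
The plan is to deduce this integration-by-parts statement directly from Lemma \ref{Int Lem11} by applying it to the product $fg$ and then invoking the Leibniz rule for the divided power derivation $\frac{\partial}{\partial x_{i}}$.

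First, I would apply Lemma \ref{Int Lem11} with the polynomial $fg \in \qdR{\dq}{0}{r}{}$ in place of $f$, obtaining
\begin{align*}
    \int_{X}^{Y}\Bigl(\frac{\partial}{\partial x_{i}}(fg)\Bigr)\df{}x_{i} = \overline{\ep_{i,Y}}(fg)-\overline{\ep_{i,X}}(fg).
\end{align*}
Next, I would expand the integrand using the Leibniz rule. The map $\frac{\partial}{\partial x_{i}}$ was defined as the $\Z\langle\dq\rangle$-derivation corresponding to the dual element $\chi_{\df{}x_{i}}$, and as such it satisfies
\begin{align*}
    \frac{\partial}{\partial x_{i}}(fg) = \Bigl(\frac{\partial}{\partial x_{i}}f\Bigr)g + f\Bigl(\frac{\partial}{\partial x_{i}}g\Bigr),
\end{align*}
where no sign appears because both $f$ and $g$ sit in degree $0$ and the divided power polynomial algebra is commutative.

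Substituting this decomposition and using $\Z$-linearity of the iterated integral in its integrand (which is immediate from the inductive definition), I would split the left-hand side into two integrals and rearrange to isolate $\int_{X}^{Y}f\bigl(\frac{\partial}{\partial x_{i}}g\bigr)\df{}x_{i}$, which produces exactly the claimed identity.

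I do not expect any genuine obstacle here; the only point requiring mild care is verifying the Leibniz rule for $\frac{\partial}{\partial x_{i}}$ applied to arbitrary divided power monomials, but this is built into the definition of divided power $\Z\langle\dq\rangle$-derivations stated just before Lemma \ref{Int Lem11}. It therefore suffices to write the argument out in one short block; no separate reduction to monomial generators is needed.
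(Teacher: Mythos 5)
Your proposal is correct and follows exactly the paper's own argument: apply Lemma \ref{Int Lem11} to the product $fg$, expand $\frac{\partial}{\partial x_{i}}(fg)$ by the Leibniz rule, and use linearity of the integral to rearrange into the stated identity. No gaps; this matches the paper's proof.
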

\begin{proof}
	The lemma \ref{Int Lem11} implies the following:
	\begin{align*}
		(\overline{\ep_{i,Y}}(fg)-\overline{\ep_{i,X}}(fg))
			=
		\int_{X}^{Y}(\frac{\partial}{\partial x_{i}}(fg))\df{}x_{i}
			=
		\int_{X}^{Y}(\frac{\partial}{\partial x_{i}}f)g\df{}x_{i}
			+\int_{X}^{Y}f(\frac{\partial}{\partial x_{i}}g)\df{}x_{i}
	\end{align*}
\end{proof}
\begin{lem}\label{Int Lem13}
	For any pair of variables $X,Y\in\{x_{1},\dots,x_{r}\}$ which satisfies $X\neq Y$ and any divided power polynomial $f\in\qdR{\dq}{0}{r}{}$ which does not contain $X$ as a variable, the following holds:
	\begin{align*}
		\frac{\partial}{\partial X}\int_{X}^{Y}f\df{}x_{i}=-\overline{\ep_{i,X}}(f)
	\end{align*}
	holds where the map $\ep_{i,X}\colon\{\dq,x_{1},\dots,x_{n}\}\to\{\dq,x_{1},\dots,x_{n}\}$ is given as follows:
	\begin{align*}
		\ep_{i,X}(\dq)
			&=
		\dq,\\
		\ep_{i,X}(x_{j})
			&=
		\begin{cases}
			X&(j=i)\\
			x_{j}&(j\neq i)
		\end{cases}.
	\end{align*}
\end{lem}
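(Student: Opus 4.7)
By $\Z$-linearity of both the iterated integral and the partial derivative, it suffices to verify the identity for a single divided-power monomial $f = x_1^{[N_1]} \cdots x_r^{[N_r]}$. Writing $X = x_j$ for some $j$, the hypothesis that $f$ does not contain $X$ translates to $N_j = 0$ in every monomial contributing to $f$; likewise write $Y = x_k$ with $k \neq j$ (the hypothesis $X \neq Y$).

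Unpacking the definition of the single-variable integral directly yields
\begin{align*}
  \int_X^Y f \df{}x_i = x_1^{[N_1]} \cdots (Y^{[N_i+1]} - X^{[N_i+1]}) \cdots x_r^{[N_r]},
\end{align*}
where the factor in position $i$ is the one carrying the limits of integration. The crucial observation is that, because $N_j = 0$ and $Y \neq X$, the only appearance of the variable $X = x_j$ on the right-hand side is through the summand $-X^{[N_i+1]}$. Applying $\frac{\partial}{\partial X}$ and using the divided-power derivation rule $\frac{\partial}{\partial X}(X^{[N]}) = X^{[N-1]}$ (from the defining property of $\frac{\partial}{\partial x_i}$) then produces
\begin{align*}
  \frac{\partial}{\partial X} \int_X^Y f \df{}x_i = -x_1^{[N_1]} \cdots X^{[N_i]} \cdots x_r^{[N_r]}.
\end{align*}
By the universal property defining $\overline{\ep_{i,X}}$, the substitution $x_i \mapsto X$ (fixing all other $x_\ell$ and $\dq$) applied to $f$ gives exactly $x_1^{[N_1]} \cdots X^{[N_i]} \cdots x_r^{[N_r]}$, so the right-hand side equals $-\overline{\ep_{i,X}}(f)$, as required.

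The one corner case that deserves a separate check is $i = j$, where one integrates in the very variable $X$; the hypothesis then forces $N_i = 0$, the integral collapses to $f \cdot (Y - X)$, its $X$-derivative is $-f$, and $\overline{\ep_{i,X}}$ is the identity on $f$, so the identity still holds. There is no real obstacle; the argument is essentially bookkeeping, with the no-$X$ hypothesis playing the essential role of isolating a unique occurrence of $X$ in the integral so that the derivative is computed cleanly.
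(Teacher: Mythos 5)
Your proof is correct and follows essentially the same route as the paper's: reduce to a single divided-power monomial, evaluate the one-variable integral so that the variable $X$ occurs only in the summand $-X^{[N_i+1]}$, and differentiate using $\frac{\partial}{\partial X}\bigl(X^{[N]}\bigr)=X^{[N-1]}$ to land on $-\overline{\ep_{i,X}}(f)$. Your explicit check of the corner case $i=j$ (where $f$ is free of $x_i$ and the integral collapses to $f\cdot(Y-X)$) is a small gain in explicitness over the paper's monomial computation, but not a different argument.
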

\begin{proof}
	We can assume that $X=x_{j}$ and $f=x_{1}^{[N_{1}]}\cdots x_{i}^{[N_{i}]}\cdots x_{j-1}^{[N_{j-1}]}x_{j+1}^{[N_{j+1}]}\cdots x_{r}^{[N_{r}]}$. Then
	\begin{align*}
		\frac{\partial}{\partial X}\int_{X}^{Y}f\df{}x_{i}
			&=
		\frac{\partial}{\partial X}
		(x_{1}^{[N_{1}]}\cdots Y^{[N_{i}+1]}\cdots x_{j-1}^{[N_{j-1}]}x_{j+1}^{[N_{j+1}]}\cdots x_{r}^{[N_{r}]}
			-
		x_{1}^{[N_{1}]}\cdots X^{[N_{i}+1]}\cdots x_{j-1}^{[N_{j-1}]}x_{j+1}^{[N_{j+1}]}\cdots x_{r}^{[N_{r}]})\\
			&=
		-\overline{\ep_{i,X}}(f)
	\end{align*}
	holds.
\end{proof}
\section{Calculation on Simplicial Sets}\label{sec3}
\subsection{Lemmas for Glueing}
\begin{obs}(glueing)
Let $\category{C}$ be a complete category, $\mathrm{U}\colon\category{C}\to\Set$ be a functor that preserves all limits and $M\colon\D\opposite\to\category{C}$ be a simplicial object. Then the functor $M\opposite\colon\D\to\category{C}\opposite$ gives two functors $\overline{M\opposite}\colon\sSet\to\category{C}\opposite$, $\mathrm{U}\opposite\overline{M\opposite}\colon\sSet\to\Set\opposite$ by left Kan extension along the Yoneda embedding. And then the following holds for each simplicial set $X$:
\begin{align*}
	\Hom{\sSet}(X,\mathrm{U}M)
		\cong\varprojlim_{\simplex{}{n}\to X}\mathrm{U}M_{n}
		\cong\mathrm{U}\opposite\overline{M\opposite}(X)
\end{align*}
Therefore we can regard as follows:
\begin{itemize}
	\item $M$ is elementary pieces or ``model''.
	\item A simplicial map $X\to\mathrm{U}M$ is an element of ``$\overline{M\opposite}(X)$''.
\end{itemize}
\end{obs}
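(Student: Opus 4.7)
The plan is to chain the two stated isomorphisms, where the first one relies on the density of representables in $\sSet$ together with the Yoneda lemma, and the second is a direct unpacking of the pointwise formula for left Kan extension along the Yoneda embedding. Both isomorphisms will be natural in $X$, which is automatic from the universal properties used to produce them.

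For the first isomorphism, I would start from the canonical presentation of any simplicial set as a colimit over its category of simplices,
\[
X \cong \underset{\simplex{}{n}\to X}{\mathrm{colim}}\ \simplex{}{n}
\]
in $\sSet$. Since the contravariant functor $\Hom{\sSet}(-,\mathrm{U}M)$ converts colimits in its first variable into limits, one obtains
\[
\Hom{\sSet}(X,\mathrm{U}M) \cong \underset{\simplex{}{n}\to X}{\varprojlim}\ \Hom{\sSet}(\simplex{}{n},\mathrm{U}M),
\]
and the Yoneda lemma applied to the simplicial set $\mathrm{U}M$ identifies each term $\Hom{\sSet}(\simplex{}{n},\mathrm{U}M)$ with $(\mathrm{U}M)_n = \mathrm{U}M_n$, yielding the first $\cong$.

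For the second isomorphism, I would invoke the standard pointwise formula for left Kan extension along the Yoneda embedding: if $F\colon\D\to\category{D}$ is any functor into a cocomplete category, then the left Kan extension $\overline{F}\colon\sSet\to\category{D}$ satisfies $\overline{F}(X)\cong\mathrm{colim}_{\simplex{}{n}\to X}F([n])$. Applied to $F=M\opposite\colon\D\to\category{C}\opposite$, which is legitimate because $\category{C}$ is complete so that $\category{C}\opposite$ is cocomplete, this gives $\overline{M\opposite}(X)\cong\mathrm{colim}_{\simplex{}{n}\to X}M_n$ in $\category{C}\opposite$. Applying $\mathrm{U}\opposite\colon\category{C}\opposite\to\Set\opposite$, which preserves colimits precisely because $\mathrm{U}$ preserves limits by hypothesis, produces
\[
\mathrm{U}\opposite\overline{M\opposite}(X) \cong \underset{\simplex{}{n}\to X}{\mathrm{colim}}\ \mathrm{U}\opposite M_n = \underset{\simplex{}{n}\to X}{\varprojlim}\ \mathrm{U}M_n,
\]
where the last equality is the translation between colimits in $\Set\opposite$ and limits in $\Set$.

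The statement is essentially a formal consequence of density of representables, Yoneda, and the pointwise Kan extension formula, so there is no genuine obstacle. The only delicate point is bookkeeping of variances: one must interpret the colimit appearing in $\overline{M\opposite}(X)$ as living in $\category{C}\opposite$, and then verify that the hypothesis of limit-preservation on $\mathrm{U}$ is exactly what is needed for $\mathrm{U}\opposite$ to commute past this colimit so that the result descends to a genuine limit of sets indexed by $\simplex{}{n}\to X$.
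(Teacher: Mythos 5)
Your proof is correct and is exactly the formal argument the paper intends: the Observation is stated without a separate proof, and your chain (density of representables plus Yoneda for the first isomorphism, the pointwise Kan extension formula plus the fact that $\mathrm{U}\opposite$ preserves colimits for the second) is the standard verification of the displayed isomorphisms. The only cosmetic caveat is the usual one that the limit is indexed by the opposite of the category of simplices of $X$, an abuse of notation you share with the paper.
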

This observation suggests the following definitions.
\begin{dfn}
	A \Def{generalized connection with values in a connected $L_{\infty}$-algebra $\mathfrak{g}$ on a simplicial set $X$} is a simplicial map $X\to\qdR{\dq}{1}{}{\mathfrak{g}}^{\wedge}$, and the composition $\kappa(\omega)=\kappa\circ\omega$ is called the \Def{curvature of $\omega$}.
\end{dfn}
\begin{dfn}(formal differential form values in an $L_{\infty}$-algebra)
	A \Def{formal differential form values in a connected $L_{\infty}$-algebra $\mathfrak{g}$ on a simplicial set $X$} or simply \Def{$\mathfrak{g}$-valued formal differential form on $X$} is a simplicial map $X\to\qdR{\dq}{}{}{\mathbb{U}_{\infty}\mathfrak{g}}^{\wedge}$. Especially, for each $p$, we call a simplicial map $X\to\qdR{\dq}{p}{}{\mathbb{U}_{\infty}\mathfrak{g}}^{\wedge}$ a \Def{formal differential $p$-form} values in $\mathfrak{g}$ on $X$.
\end{dfn}
Clearly, any generalized connection $X\to\qdR{\dq}{1}{\bullet}{\mathfrak{g}}^{\wedge}$ with values in $L_{\infty}$-algebra $\mathfrak{g}$ on $X$ gives a formal differential $1$-form by a composition 
$X\to\qdR{\dq}{1}{\bullet}{\mathfrak{g}}^{\wedge}\to\qdR{\dq}{1}{\bullet}{\mathbb{U}_{\infty}\mathfrak{g}}^{\wedge}$.

Since $\qdR{\dq}{}{}{\mathbb{U}_{\infty}\mathfrak{g}}^{\wedge}$ has a wedge product defined as
\begin{align*}
	(\sum_{\alpha}v_{1\alpha}\otimes\omega_{1\alpha})\wedge(\sum_{\beta}v_{2\beta}\otimes\omega_{2\beta})\coloneqq\sum_{\alpha,\beta}(-1)^{|\omega_{1\alpha}|\cdot|v_{2\beta}|}(v_{1\alpha}\otimes v_{2\alpha})\otimes(\omega_{1\alpha}\wedge\omega_{2\beta}),
\end{align*}
the $\K$-module $\qdR{\dq}{}{}{}(X,\mathfrak{g})\coloneqq\Hom{\sSet}(X,\qdR{\dq}{}{}{\mathbb{U}_{\infty}\mathfrak{g}}^{\wedge})$ has a canonical wedge product
\begin{align*}
	X\xrightarrow{\text{diagonal}}X\times X
		\xrightarrow{\omega_{1}\times\omega_{2}}
	\qdR{\dq}{}{}{\mathbb{U}_{\infty}\mathfrak{g}}^{\wedge}\times\qdR{\dq}{}{}{\mathbb{U}_{\infty}\mathfrak{g}}^{\wedge}
		\xrightarrow{\wedge}
	\qdR{\dq}{}{}{\mathbb{U}_{\infty}\mathfrak{g}}^{\wedge}.
\end{align*}
In addition, for any formal differential form $\omega\colon X\to\qdR{\dq}{}{}{\mathbb{U}_{\infty}\mathfrak{g}}^{\wedge}$, its derivation $\df{}\omega$ is defined as the composition
\begin{center}
\begin{tikzpicture}[auto]
	\node (11) at (0, 1) {$X$};
	\node (12) at (1.5, 1) {$\qdR{\dq}{}{}{\mathbb{U}_{\infty}\mathfrak{g}}^{\wedge}$};
	\node (13) at (5, 1) {$\qdR{\dq}{}{}{\mathbb{U}_{\infty}\mathfrak{g}}^{\wedge}$};
	\node (22) at (1.5, 0) {$\mathbb{U}_{\infty}\mathfrak{g}_{p}\otimes\qdR{\dq}{q}{}{}$};
	\node (23) at (5, 0) {$\mathbb{U}_{\infty}\mathfrak{g}_{p}\otimes\qdR{\dq}{q+1}{}{}$};
	\path[draw, ->] (11) --node {$\scriptstyle \omega$} (12);
	\path[draw, ->] (12) --node {$\scriptstyle \df{}$} (13);
	\path[draw, ->] (12) --node[swap] {$\scriptstyle \proj{p,q}$} (22);
	\path[draw, ->] (13) --node {$\scriptstyle \proj{p,q+1}$} (23);
	\path[draw, ->] (22) --node {$\scriptstyle \id{}\otimes\df{}$} (23);
\end{tikzpicture}.
\end{center}
They give a dg algebra $\qdR{\dq}{}{}{}(X,\mathfrak{g})$. The \Def{pullback of formal differential form $\omega\colon Y\to\qdR{\dq}{}{}{\mathbb{U}_{\infty}\mathfrak{g}}^{\wedge}$ by a simplicial map $f\colon X\to Y$} is also defined as a composition $\omega\circ f$ and denoted by $f^{\ast}\omega$. It is obvious that any simplicial map $f\colon X\to Y$ gives a morphism of dg algebra $f^{\ast}\colon\qdR{\dq}{}{}{}(Y,\mathfrak{g})\to\qdR{\dq}{}{}{}(X,\mathfrak{g})$ in this way.

In this paper, we define a fiberwise integration along projection $X\times U\to U$ for arbitrary simplicial sets $X,U$. As we can see from the above observation, products and projections $[n]\times[r]\to[n]$ of the (non-empty) finite total ordered sets are important and we need some propositions about them. They are elementary. However, they are so important to this paper that they are reviewed.

For each non-negative integer $n,r\geq0$, we obtain the (categorical) product of the total ordered sets $[n],[r]$ by defining the order as follows:
\begin{align*}
	(i_{1},j_{1})\leqslant(i_{2},j_{2})\text{ iff }i_{1}\leqslant i_{2}, j_{1}\leqslant j_{2}.
\end{align*}
We call an injective order-preserving map $\Gamma\colon[p]\hookrightarrow[n]\times[r]$ \Def{chain}, Since an order-preserving map $\Gamma\colon[p]\to[n]\times[r]$ is injective only if $p\leq n+r$, we call a chain $\Gamma\colon[n+r]\hookrightarrow[n]\times[r]$ \Def{maximal}.
\begin{prop}\label{GC Lem1}
	Let $\Gamma\colon[n+r]\to[n]\times[r]$ be an order-preserving map. If $\Gamma$ is injective, $\proj{1}\Gamma\colon[n+r]\to[n]$ is surjective.
\end{prop}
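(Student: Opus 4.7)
\emph{Proof proposal.} The plan is to turn the hypotheses into a counting argument on successive differences of $\Gamma$. I would write $\Gamma(k)=(\Gamma_{1}(k),\Gamma_{2}(k))$ with $\Gamma_{i}=\proj{i}\Gamma$, and for $k=0,\dots,n+r-1$ set $a_{k}:=\Gamma_{1}(k+1)-\Gamma_{1}(k)$ and $b_{k}:=\Gamma_{2}(k+1)-\Gamma_{2}(k)$. Order-preservation in the product order gives $a_{k},b_{k}\geq 0$; injectivity combined with order-preservation forces $\Gamma(k)<\Gamma(k+1)$ strictly in the product order, hence $a_{k}+b_{k}\geq 1$ for every $k$.

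Next I would sum these inequalities over $k=0,\dots,n+r-1$. The left-hand side telescopes to
\[
(\Gamma_{1}(n+r)-\Gamma_{1}(0))+(\Gamma_{2}(n+r)-\Gamma_{2}(0))\leq n+r,
\]
while the right-hand side equals $n+r$. Saturation forces both coordinate sums to achieve their individual upper bounds: $\sum_{k}a_{k}=n$ and $\sum_{k}b_{k}=r$. This pins down the endpoints $\Gamma_{1}(0)=0$, $\Gamma_{1}(n+r)=n$, $\Gamma_{2}(0)=0$, $\Gamma_{2}(n+r)=r$, and simultaneously forces $a_{k}+b_{k}=1$ for every $k$; in particular each $a_{k}$ lies in $\{0,1\}$.

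Finally I would conclude by a discrete intermediate-value argument. The function $\Gamma_{1}=\proj{1}\Gamma$ is weakly increasing, integer-valued, runs from $0$ to $n$, and has successive jumps of size at most $1$. If some $j\in\{0,\dots,n\}$ were omitted from the image, then there would be a step strictly crossing $j$, i.e.\ of size $\geq 2$, contradicting $a_{k}\leq 1$. Hence $\proj{1}\Gamma$ is surjective onto $[n]$.

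The argument is essentially pure bookkeeping; the only subtle point is that the chain of inequalities $\sum a_{k}\leq n$, $\sum b_{k}\leq r$, $\sum(a_{k}+b_{k})\geq n+r$ forces \emph{each} bound separately to be an equality rather than just the combined total, and it is precisely this separate saturation that yields both the endpoint conditions and the step-size bound $a_{k}\in\{0,1\}$ needed for the concluding surjectivity argument.
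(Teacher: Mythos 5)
Your argument is correct, and it reaches the conclusion by a genuinely different bookkeeping than the paper. The paper partitions $[n+r]$ into the fibers $(\proj{1}\Gamma)^{-1}(i)$, observes that $\proj{2}\Gamma$ is injective on each fiber and that the images of the fibers (minus their minima) are pairwise disjoint in $[r]\setminus\{0\}$, and then compares cardinalities: $n+r+1\leq r+\#\{\text{nonempty fibers}\}$ forces every fiber to be nonempty. You instead work with the successive increments $a_{k}=\Gamma_{1}(k+1)-\Gamma_{1}(k)$, $b_{k}=\Gamma_{2}(k+1)-\Gamma_{2}(k)$, use injectivity to get $a_{k}+b_{k}\geq 1$, and let the telescoping bounds $\sum a_{k}\leq n$, $\sum b_{k}\leq r$ saturate; your observation that the two bounds must saturate \emph{separately} is the correct and essential point, and the discrete intermediate-value step then finishes the proof. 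Both proofs are elementary counting arguments, but yours buys more along the way: the saturation $a_{k}+b_{k}=1$ together with $\Gamma(0)=(0,0)$ gives $\Gamma_{1}(k)+\Gamma_{2}(k)=k$ for all $k$, which is exactly Proposition~\ref{GC Lem1}'s companion statement proved separately in the paper (Proposition~\ref{GC Lem3}), and it also yields surjectivity of $\proj{2}\Gamma$ and the endpoint identities $\Gamma(n+r)=(n,r)$ for free, whereas the paper's fiber count establishes only the surjectivity of $\proj{1}\Gamma$.
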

\begin{proof}
	The set $[n+r]$ can be partitioned into $[n+r]=\bigcup_{i}(\proj{1}\Gamma)^{-1}(i)$. For this partition, the map $\proj{2}\Gamma$ is injective on each subset $(\proj{1}\Gamma)^{-1}(i)$ for each $i=0,\dots,n$. Since $\Gamma$ is an order-preserving and $[n+r]$ is a totally ordered set, for each pair $(l_{i},l_{j})\in(\proj{1}\Gamma)^{-1}(i)\times(\proj{1}^{-1}\Gamma)^{-1}(j)$, $i<j$ implies $l_{i}<l_{j}$ and thus $\proj{2}\Gamma(l_{i})\leq\proj{2}\Gamma(l_{2})$. If $(\proj{1}\Gamma)^{-1}(i)$ and $(\proj{1}\Gamma)^{-1}(j)$ are non-empty sets,
	\begin{align*}
		\bigcap_{h=i,j}\proj{2}\Gamma((\proj{1}\Gamma)^{-1}(h)\setminus\{\min(\proj{1}\Gamma)^{-1}(h)\})
			=
		\emptyset
	\end{align*}
	holds by injectivity of $(\proj{2}\Gamma)|_{(\proj{1}\Gamma)^{-1}(i)}$ and $(\proj{2}\Gamma)|_{(\proj{1}\Gamma)^{-1}(i)}$. Hence
	\begin{align*}
		n+r+1
			&=
		\sum_{i\in[n]}|(\proj{1}\Gamma)^{-1}(i)|\\
			&=
		\sum_{(\proj{1}\Gamma)^{-1}(i)\neq\emptyset}|(\proj{1}\Gamma)^{-1}(i)|\\
			&=
		\sum_{(\proj{1}\Gamma)^{-1}(i)\neq\emptyset}|((\proj{1}\Gamma)^{-1}(i)\setminus\{\min(\proj{1}\Gamma)^{-1}(i)\})\cup\{\min(\proj{1}\Gamma)^{-1}(i)\}|\\
			&=
		\sum_{(\proj{1}\Gamma)^{-1}(i)\neq\emptyset}\left|(\proj{1}\Gamma)^{-1}(i)\setminus\{\min(\proj{1}\Gamma)^{-1}(i)\}\right|+\sum_{(\proj{1}\Gamma)^{-1}(i)\neq\emptyset}|\{\min(\proj{1}\Gamma)^{-1}(i)\}|\\
			&=
		\left|\bigcup_{(\proj{1}\Gamma)^{-1}(i)\neq\emptyset}(\proj{1}\Gamma)^{-1}(i)\setminus\{\min(\proj{1}\Gamma)^{-1}(i)\}\right|+\sum_{(\proj{1}\Gamma)^{-1}(i)\neq\emptyset}|\{\min(\proj{1}\Gamma)^{-1}(i)\}|\\
			&=
		\left|\bigcup_{(\proj{1}\Gamma)^{-1}(i)\neq\emptyset}\proj{2}\Gamma((\proj{1}\Gamma)^{-1}(i)\setminus\{\min(\proj{1}\Gamma)^{-1}(i)\})\right|+\sum_{(\proj{1}\Gamma)^{-1}(i)\neq\emptyset}|\{\min(\proj{1}\Gamma)^{-1}(i)\}|\\
			&\leq
		\left|[r]\setminus\{0\}\right|+|\{i\in[n]|(\proj{1}\Gamma)^{-1}(i)\neq\emptyset\}|\\
			&=
		r+|\{i\in[n]|(\proj{1}\Gamma)^{-1}(i)\neq\emptyset\}|
	\end{align*}
	and thus $n+1\leq|\{i\in[n]|(\proj{1}\Gamma)^{-1}(i)\neq\emptyset\}|\leq|[n]|\leq n+1$ holds.
\end{proof}
\begin{cor}\label{GC Lem2}
	For any maximal chain $\Gamma\colon[n+r]\hookrightarrow[n]\times[r]$, $\bs{\Gamma}$ and $\fs{\Gamma}$ are injective.
\end{cor}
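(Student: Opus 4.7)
The plan is to reduce both injectivity claims to Proposition \ref{GC Lem1} together with the elementary observation that an order-preserving surjection between finite totally ordered sets has pairwise disjoint non-empty fibers.

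First I would note that Proposition \ref{GC Lem1} is symmetric in the two factors of the product $[n]\times[r]$. Applying it directly to $\Gamma$ gives surjectivity of $\proj{1}\Gamma\colon[n+r]\to[n]$; the same argument, with the roles of $[n]$ and $[r]$ swapped (i.e.\ composing $\Gamma$ with the coordinate swap $[n]\times[r]\to[r]\times[n]$), yields surjectivity of $\proj{2}\Gamma\colon[n+r]\to[r]$. So both coordinate projections of a maximal chain are order-preserving surjections, and the rest of the proof will only use the structure of such surjections.

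Next I would unpack the definitions of $\bs{\Gamma}$ and $\fs{\Gamma}$: these maps record, for each index in the target of the relevant projection, the minimum (``back'') and the maximum (``front'') element of the corresponding fiber in $[n+r]$. Because the fibers of a surjection are non-empty and pairwise disjoint, any two distinct fibers have distinct minima and distinct maxima (two disjoint sets cannot share either extremum). This immediately yields injectivity of both $\bs{\Gamma}$ and $\fs{\Gamma}$.

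I do not expect any real obstacle here; the corollary is essentially a bookkeeping consequence of Proposition \ref{GC Lem1}. The only care needed is to match conventions so that $\bs{\Gamma}$ and $\fs{\Gamma}$ are indeed defined with respect to a projection whose surjectivity has just been established, after which disjointness of the fibers does all the work.
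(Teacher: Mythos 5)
Your proposal is correct and is, in substance, the argument the paper leaves implicit (the corollary is stated without a proof; the text only remarks later that injectivity of $\bs{\Gamma}$ and $\fs{\Gamma}$ is easy): Proposition \ref{GC Lem1} applied to $\Gamma$, and to its composite with the order isomorphism $[n]\times[r]\cong[r]\times[n]$, shows that both $\chain{\Gamma}{\bs{}}=\proj{1}\Gamma$ and $\chain{\Gamma}{\fs{}}=\proj{2}\Gamma$ are surjective, and injectivity then follows as you say. One convention to fix: the paper defines $\fs{\Gamma}(i)=\min\{j\in[n+r]\mid\chain{\Gamma}{\fs{}}(j)\geq i\}$, not the maximum of the fiber over $i$; since $\chain{\Gamma}{\fs{}}$ is an order-preserving surjection, this minimum actually lies in the fiber over $i$, so $\chain{\Gamma}{\fs{}}\circ\fs{\Gamma}=\id{[r]}$ (and likewise $\chain{\Gamma}{\bs{}}\circ\bs{\Gamma}=\id{[n]}$), which gives injectivity directly and makes your disjoint-fibers argument go through verbatim with minima in place of maxima. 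Note that surjectivity of $\chain{\Gamma}{\fs{}}$ is genuinely needed here (otherwise $\fs{\Gamma}(i)=\fs{\Gamma}(i+1)$ can occur), so your step of swapping the factors in Proposition \ref{GC Lem1} is the essential point, not mere bookkeeping.
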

Focusing on this property, as a generalization of maximal chain, we call a chain $\Gamma\colon[p]\hookrightarrow[n]\times[r]$ which induces a surjection $\proj{1}\Gamma\colon[p]\to[n]$ \Def{global} chain. For any global chains $\Gamma\colon[p]\hookrightarrow[n]\times[r]$, we denote the map $\proj{1}\Gamma\colon[p]\to[n]$ (resp. $\proj{2}\Gamma\colon[p]\to[r]$) as $\chain{\bs{}}{\Gamma}$ (resp. $\chain{\fs{}}{\Gamma}$).  In addition, a global chain $\Gamma\colon[p]\hookrightarrow[n]\times[r]$ define a two order-preserving maps $\bs{\Gamma}\colon[n]\to[p],\fs{\Gamma}\colon[r]\to[p]$ as follows:
\begin{align*}
	\bs{\Gamma}(i)&\coloneqq\min\{j\in[p]|\chain{\Gamma}{\bs{}}(j)=i\},\\
	\fs{\Gamma}(i)&\coloneqq\min\{j\in[p]|\chain{\Gamma}{\fs{}}(j)\geq i\}.
\end{align*}
It is easy to show that $\bs{\Gamma}$ and $\fs{\Gamma}$ are injective for any maximal chains $\Gamma\colon[n+r]\hookrightarrow[n]\times[r]$. Especially $\bs{\Gamma}$ is injective for any global chains. Thus we obtain an isomorphism
$$\Fs{\Gamma}\colon\{1,\dots,p-n\}\xrightarrow{\simeq}[p]\setminus\mathrm{Im}\bs{\Gamma}$$
for any global chain $\Gamma\colon[p]\hookrightarrow[n+r]$. It is trivial that $\fs{\Gamma}|_{\{1,\dots,r\}}=\Fs{\Gamma}$ holds for any maximal chain $\Gamma\colon[n+r]\hookrightarrow[n]\times[r]$. The order-preserving map define an order-preserving map $\us{\Gamma}\colon\{1,\dots,p-n\}\to[p]$ as
$$\us{\Gamma}(i)\coloneqq\Fs{\Gamma}(\min\{j\in\{1,\dots,p\}|\Fs{\Gamma}(j)-j=\Fs{\Gamma}(i)-i\})-1.$$
There exists a unique pair of a pisitive integer $\mathsf{n}_{\Gamma}$, a surjective order-preserving map $\F{\Gamma}\colon\{1,\dots,p-n-1\}\to\{1,\dots,\mathsf{n}_{\Gamma}\}$ and an injective order-preserving map $\vs{\Gamma}{-}{0}\colon\{1,\dots,\mathsf{n}_{\Gamma}\}\to[p]$ which satisfies $\us{\Gamma}=\vs{\Gamma}{-}{0}\circ\F{\Gamma}$.
\begin{center}
\begin{tikzpicture}[auto]
	\node (0) at (0, 1) {$\{1,\dots,p-n-1\}$};
	\node (1) at (1.5, 0) {$\{1,\dots,\mathsf{n}_{\Gamma}\}$};
	\node (2) at (3, 1) {$[p]$};
	\path[draw, ->] (0) --node {$\scriptstyle \us{\Gamma}$} (2);
	\path[draw, ->>] (0) --node[swap] {$\scriptstyle \F{\Gamma}$} (1);
	\path[draw, right hook->] (1) --node[swap] {$\scriptstyle \vs{\Gamma}{-}{0}$} (2);
\end{tikzpicture}
\end{center}
And then we obtain the following subsets:
\begin{align*}
	[r]\cact{P}{j}
		&=
	\{\vs{P}{j}{0},\dots,\vs{P}{j}{r_{j}}\}\\
		&\coloneqq
	\{\us{P}(i)|i\in\F{P}^{-1}(j)\}\cup\{\fs{P}(i)|i\in\F{P}^{-1}(j)\},\\
	[r]\cact{P}{}
		&=
	\bigcup_{j=1}^{\mathsf{n}_{\Gamma}}[r]\cact{P}{j}\\
		&=
	\{\us{P}(i)|i=1,\dots,r\}\cup\{\fs{P}(i)|i=1,\dots,r\}.
\end{align*}
For any global chain $\Gamma\colon[p]\hookrightarrow[n]\times[r]$, the subset $[r]\cact{\Gamma}{}$ can be partitioned into three subsets
\begin{align*}
	\mathsf{Inn}_{\fs{}}(\Gamma)
		\coloneqq
	&\{v\in[r]\cact{\Gamma}{}|v>0,\ \Gamma(v+1)=(\chain{\Gamma}{\bs{}}(v-1)+1,\chain{\Gamma}{\fs{}}(v-1)+1)=(\chain{\Gamma}{\bs{}}(v)+1,\chain{\Gamma}{\fs{}}(v))\},\\
	\mathsf{Inn}_{\bs{}}(\Gamma)
		\coloneqq
	&\{v\in[r]\cact{\Gamma}{}|v>0,\ \Gamma(v+1)=(\chain{\Gamma}{\bs{}}(v-1)+1,\chain{\Gamma}{\fs{}}(v-1)+1)=(\chain{\Gamma}{\bs{}}(v),\chain{\Gamma}{\fs{}}(v)+1)\},\\
	\mathsf{Out}(\Gamma)
		\coloneqq
	&\{v\in[r]\cact{\Gamma}{}|v\not\in\mathsf{Inn}_{\fs{}}(\Gamma)\cup\mathsf{Inn}_{\bs{}}(\Gamma)\}\\
		=&
	\{v\in[r]\cact{\Gamma}{}|v\geqslant0,\ \Gamma(v+1)\neq(\chain{\Gamma}{\bs{}}(v-1)+1,\chain{\Gamma}{\fs{}}(v-1)+1)\}.
\end{align*}
For any maximal chain $\Gamma\colon[n+r]\hookrightarrow[n]\times[r]$ and any vartex $v\in\mathsf{Inn}_{\fs{}}(\Gamma)\cup\mathsf{Inn}_{\bs{}}(\Gamma)$, we obtain a (unique) maximal chain $\Gamma'\colon[n+r]\hookrightarrow[n]\times[r]$ which satisfies $\Gamma\neq\Gamma'$ and $\Gamma\delta_{v}=\Gamma'\delta_{v}$ as follows:
\begin{align*}
	\Gamma'(i)
		\coloneqq
	\begin{cases}
		\Gamma(i)&(i\neq v)\\
		(\chain{\Gamma}{\bs{}}(i-1)+1,\chain{\Gamma}{\fs{}}(i-1))&(i=v\in\mathsf{Inn}_{\fs{}}(\Gamma))\\
		(\chain{\Gamma}{\bs{}}(i-1),\chain{\Gamma}{\fs{}}(i-1)+1)&(i=v\in\mathsf{Inn}_{\bs{}}(\Gamma))
	\end{cases}.
\end{align*}
By considering the above for any maximal chains, we obtain a limit cone
\begin{center}
\begin{tikzpicture}[auto]
	\node (21) at (0, 0.5) {$[n+r-1]$};
	\node (12) at (2.5, 0) {$[n+r1]$};
	\node (32) at (2.5, 1) {$[n+r1]$};
	\node (23) at (5, 0.5) {$[n]\times[r]$};
	\path[draw, right hook->] (21) -- (12);
	\path[draw, right hook->] (21) -- (32);
	\path[draw, right hook->] (12) --node[swap] {$\scriptstyle \Gamma_{2}$} (23);
	\path[draw, right hook->] (32) --node {$\scriptstyle \Gamma_{1}$} (23);
\end{tikzpicture}.
\end{center}
In other words, there exists a partition
\begin{align}\label{chain partition}
	[n]\times[r]\cong\bigcup_{\Gamma\colon[n+r]\hookrightarrow[n]\times[r]}[n+r].
\end{align}
\begin{rem}(geometrical meanings)
	The geometric realization of the nerve of a poset $[n]\times[r]$ is just the product of topological standard simplices $\topsimplex{n}\times\topsimplex{r}$. The above partition \ref{chain partition} is a canonical wap to partition of the space into topological standard simplices. The intersection of a fiber $\proj{\topsimplex{n}}^{-1}(\bm{x})\subset\topsimplex{n}\times\topsimplex{r}$ of projection $\proj{\topsimplex{n}}\colon\topsimplex{n}\times\topsimplex{r}\to\topsimplex{n}$ and the image of each embedding $\Gamma_{\ast}\colon\topsimplex{n+r}\to\topsimplex{n}\times\topsimplex{r}$ is given as follows:
	\begin{align*}
		\proj{\topsimplex{n}}^{-1}(\bm{x})\cap\mathrm{Im}\Gamma_{\ast}
			&\cong
		(\chain{\Gamma}{\bs{}})_{\ast}^{-1}(\bm{x})\\
			&=
		\{(t_{1},\dots,t_{n+r})\in\topsimplex{n+r}|t_{\min\{j|\chain{\Gamma}{\bs{}}(j)\geq i\}}=x_{i}\}\\
			&=
		\{(t_{1},\dots,t_{n+r})\in\topsimplex{n+r}|t_{\bs{\Gamma}(i)}=x_{i}\}.
	\end{align*}
	For each maximal chain $\Gamma\colon[n+r]\hookrightarrow[n]\times[r]$,
	$\mathrm{Im}\bs{\Gamma}\cap\mathrm{Im}\fs{\Gamma}=\{0\}$, $\mathrm{Im}\bs{\Gamma}\cup\mathrm{Im}\fs{\Gamma}=[n+r]$ hold. Hence we can regard
	\begin{itemize}
		\item $\bs{\Gamma}$ represents the ``base direction''.
		\item $\fs{\Gamma}$ represents the ``fiber direction''.
	\end{itemize}
	(The standard coordinate of $\topsimplex{n+r}$ can be split into two kinds of ``direction'', ``base direction'' and ``fiber direction''.) In addition the following holds:
	$$\proj{\topsimplex{n}}^{-1}(\bm{x})\cap\mathrm{Im}\Gamma_{\ast}\cong(\chain{\Gamma}{\bs{}})^{-1}(\bm{x})\cong\topsimplex{r_{1}}\times\dots\times\topsimplex{r_{\mathsf{n}_{\Gamma}}}.$$
	\begin{center}
	\begin{tikzpicture}[auto]
		\node (1) at (2.5, 3.7) {$\topsimplex{n+r}$};
		\node (2) at (6.5, 3.7) {$\topsimplex{n}\times\topsimplex{r}$};
		\node (3) at (6.5, 0.2) {$\topsimplex{n}$};
		\path[draw] (0, 1.5) -- (0, 3.5) -- (2, 3.5) -- cycle;
		\path[draw] (4, 1.5) -- (4, 3.5) -- (6, 3.5) -- cycle;
		\path[draw] (4, 1.5) -- (6, 1.5) -- (6, 3.5) -- cycle;
		\path[draw] (4, 0) -- (6, 0);
		\path[draw] (1.5, 3) -- (1.5, 3.5);
		\path[draw] (5.5, 1.5) -- (5.5, 3.5);
		\path[draw, dashed] (5.5, 0) -- (5.5, 1.5);
		\path[draw, right hook->] (2+0.2, 2.5) -- (4-0.2, 2.5);
		\path[draw, ->>] (5, 1.5-0.2) -- (5, 0+0.2);
		\node (b) at (0.8, 2.5) {$\scriptstyle \bs{}$};
		\node (f) at (0.2, 3.1) {$\scriptstyle \fs{}$};
		\path[draw, ->] (0.1, 2.5) -- (b);
		\path[draw, ->] (0.2, 2.4) -- (f);
		\draw[black!35, fill=black!35] (9, 0.7) -- (8.3, 2.1) -- (9.7, 3.5) -- (12.5, 2.8) --cycle;
		\draw[black!15, fill=black!15] (9, 0.7) -- (12.5, 2.8) -- (11.8, 0) --cycle;
		\draw[black!40, fill=black!40] (8.3, 2.1) -- (9.35, 2.275) -- (10.4, 3.325) -- (9.7, 3.5) --cycle;
		\draw[black!30, fill=black!30] (9, 0.7) -- (8.3, 2.1) -- (0.5*0.7+9, 3.25*0.7) -- (1.25*0.7+9, 1.75*0.7) -- (0.75*0.7+9, 1.25*0.7) -- cycle;
		\draw[black!20, fill=black!20] (9, 0.7) -- (0.75*0.7+9, 1.25*0.7) -- (1.25*0.7+9, 1.75*0.7) -- (9.7, 0.75*0.7) -- cycle;
		\draw[black!60, fill=black!60] (10.25*0.7-9*0.7+9, 1.75*0.7) -- (9.5*0.7-9*0.7+9, 3.25*0.7) -- (11*0.7-9*0.7+9, 4.75*0.7) --cycle;
		\draw[black!40, fill=black!40] (9.7, 0.75*0.7) -- (0.75*0.7+9, 1.25*0.7) -- (1.25*0.7+9, 1.75*0.7) --cycle;
		\draw[black!50, fill=black!50] (0.75*0.7+9, 1.25*0.7) -- (9, 2.75*0.7) -- (0.5*0.7+9, 3.25*0.7) -- (1.25*0.7+9, 1.75*0.7) --cycle;
		\path[draw, dashed] (9, 1*0.7) -- (9.7, 5*0.7);
		\path[draw] (11.8, 0*0.7) -- (11.1, 2*0.7);
		\path[draw] (8.3, 3*0.7) -- (11.1, 2*0.7);
		\path[draw, dashed] (9, 1*0.7) -- (12.5, 4*0.7);
		\path[draw] (9.7, 0.75*0.7) -- (9, 2.75*0.7) -- (10.4, 4.75*0.7);
		\path[draw, dashed] (9.7, 0.75*0.7) -- (10.4, 4.75*0.7);
		\path[draw] (9, 1*0.7) -- (8.3, 3*0.7) -- (9.7, 5*0.7) -- (12.5, 4*0.7) -- (11.8, 0*0.7) --cycle;
		\path[draw] (8.3, 3*0.7) -- (12.5, 4*0.7);
		\path[draw] (9, 1*0.7) -- (11.1, 2*0.7) -- (12.5, 4*0.7);
	\end{tikzpicture}
	\end{center}
\end{rem}
\begin{prop}\label{GC Lem3}
	Let $\Gamma\colon[n+r]\hookrightarrow[n]\times[r]$ be a maximal chain. Then $\chain{\Gamma}{\bf{}}(i)+\chain{\Gamma}{\fs{}}(i)=i$ holds for each $i\in[n+r]$.
\end{prop}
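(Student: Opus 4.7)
The plan is to exploit the tension between the number of steps available along $[n+r]$ and the room for increments in the product order on $[n]\times[r]$. Write $\Gamma(i)=(a_i,b_i)$ with $a_i=\chain{\Gamma}{\bs{}}(i)$ and $b_i=\chain{\Gamma}{\fs{}}(i)$. Since $\Gamma$ is an injective order-preserving map into the product poset, for each $i=0,\dots,n+r-1$ the relation $(a_i,b_i)<(a_{i+1},b_{i+1})$ in the product order says $a_i\leq a_{i+1}$, $b_i\leq b_{i+1}$, with at least one inequality strict; hence $(a_{i+1}-a_i)+(b_{i+1}-b_i)\geq 1$.

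Summing from $i=0$ to $i=n+r-1$ yields $(a_{n+r}-a_0)+(b_{n+r}-b_0)\geq n+r$. On the other hand, the bounds $0\leq a_0\leq a_{n+r}\leq n$ and $0\leq b_0\leq b_{n+r}\leq r$ force the reverse inequality $(a_{n+r}-a_0)+(b_{n+r}-b_0)\leq n+r$, so equality holds throughout. In particular $a_0=b_0=0$, $a_{n+r}=n$, $b_{n+r}=r$, and $(a_{i+1}-a_i)+(b_{i+1}-b_i)=1$ for every $i$; in other words, each step of the chain increments exactly one coordinate by exactly one.

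The proposition now follows by a one-line induction on $i$: $a_0+b_0=0$ by the base step, and if $a_i+b_i=i$ then $a_{i+1}+b_{i+1}=i+1$. I expect no genuine obstacle here. The whole content is the sharp counting argument that pins down the step size, and maximality of $\Gamma$ (length exactly $n+r$) is precisely what is needed to saturate both inequalities simultaneously. The only subtlety worth noting is the customary one about the product order, namely that $(a,b)<(c,d)$ in principle allows both coordinates to advance in a single step, which is why one works with the sum $(a_{i+1}-a_i)+(b_{i+1}-b_i)$ rather than either increment individually.
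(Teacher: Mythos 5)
Your proof is correct and follows essentially the same route as the paper: both arguments rest on the observation that injectivity plus order-preservation forces the coordinate sum $\chain{\Gamma}{\bs{}}(i)+\chain{\Gamma}{\fs{}}(i)$ to increase strictly at each step, and that the bound $0\leq\text{sum}\leq n+r$ over $n+r$ steps then pins the sum down to $i$. Your telescoping-sum presentation merely makes explicit the pigeonhole step the paper leaves implicit.
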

\begin{proof}
	For each $0\leq i\leq j\leq n+r$,
	$$0\leq\chain{\Gamma}{\bs{}}(i)+\chain{\Gamma}{\fs{}}(i)<\chain{\Gamma}{\bs{}}(j)+\chain{\Gamma}{\fs{}}(j)\leq n+r$$
	holds since $\Gamma$ is injective.
\end{proof}
\begin{prop}\label{GC Lem4}
	Let $\Gamma\colon[n+r]\hookrightarrow[n]\times[r]$ be a maximal chain. Then the following hold for each $j,l=1,\dots,m$ and each $i=0,\dots,r_{j}$:
	\begin{align*}
		\min\{h|\bs{\Gamma}(h)\geq\vs{\Gamma}{l}{r_{l}}+1\}
			=
		\begin{cases}
			\min\{h|\bs{\Gamma\delta_{\vs{\Gamma}{j}{i}}}(h)\geq\vs{\Gamma}{l}{r_{l}}+1\}&(l<j)\\
			\min\{h|\bs{\Gamma\delta_{\vs{\Gamma}{j}{i}}}(h)\geq\vs{\Gamma}{l}{r_{l}}\}&(l\geq j)
		\end{cases}
	\end{align*}
\end{prop}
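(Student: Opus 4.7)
The plan is to reduce both sides of the identity to expressions in $\chain{\Gamma}{\bs{}}$, and then carry out the case split by comparing $v = \vs{\Gamma}{j}{i}$ with $\vs{\Gamma}{l}{r_{l}}$ via the face map $\delta_{v}$.

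First I would establish the elementary reformulation
\[
  \min\{h \mid \bs{\Gamma}(h) \geq w\} \;=\; \chain{\Gamma}{\bs{}}(w-1) + 1,
\]
valid for any global chain $\Gamma$ and any integer $w \geq 1$. This is immediate from the definition $\bs{\Gamma}(h) = \min\{j \mid \chain{\Gamma}{\bs{}}(j) = h\}$ together with the fact that $\chain{\Gamma}{\bs{}}$ is non-decreasing and surjective onto $[n]$: the inequality $\bs{\Gamma}(h) \geq w$ is equivalent to $\chain{\Gamma}{\bs{}}(w-1) < h$, whose smallest integer solution is $\chain{\Gamma}{\bs{}}(w-1) + 1$. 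Taking $w = \vs{\Gamma}{l}{r_{l}} + 1$ shows the left-hand side equals $\chain{\Gamma}{\bs{}}(\vs{\Gamma}{l}{r_{l}}) + 1$. Applying the same identity to $\Gamma' = \Gamma\delta_{v}$ (which is itself a global chain, since $v \in [r]\cact{\Gamma}{}$ and removing a fiber vertex does not shrink the image of the base projection), together with $\chain{\Gamma'}{\bs{}} = \chain{\Gamma}{\bs{}} \circ \delta_{v}$, reduces the right-hand side to $\chain{\Gamma}{\bs{}}\bigl(\delta_{v}(w-1)\bigr) + 1$ for the appropriate choice of $w$ in each branch.

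For $l < j$, the ordering of the fiber runs gives $\vs{\Gamma}{l}{r_{l}} < \vs{\Gamma}{j}{0} \leq v$, so $\delta_{v}$ fixes $\vs{\Gamma}{l}{r_{l}}$ and the right-hand side immediately matches the left-hand side. For $l \geq j$ one has $v \leq \vs{\Gamma}{l}{r_{l}}$: if $v < \vs{\Gamma}{l}{r_{l}}$ then $\delta_{v}(\vs{\Gamma}{l}{r_{l}} - 1) = \vs{\Gamma}{l}{r_{l}}$ and the equality is again immediate, while in the boundary case $v = \vs{\Gamma}{l}{r_{l}}$ (which forces $l = j$ and $i = r_{j}$) the vertex $v$ is a fiber position in $\mathrm{Im}\Fs{\Gamma}$, hence by Proposition \ref{GC Lem3} we have $\chain{\Gamma}{\bs{}}(v) = \chain{\Gamma}{\bs{}}(v-1)$, and the equality follows.

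I expect the main obstacle to be exactly this last boundary case together with the preliminary check that $\Gamma\delta_{v}$ remains a global chain whenever $v \in [r]\cact{\Gamma}{}$; both points require unwinding the definitions of $\vs{\Gamma}{j}{i}$ via $\us{\Gamma}$ and $\Fs{\Gamma}$ to confirm that, for $i \geq 1$, $v$ is always a fiber position, and that $\vs{\Gamma}{j}{0}$ is never the unique preimage of its base value under $\chain{\Gamma}{\bs{}}$. Once this structural observation is in place, the two equalities are just the two branches of the calculation above.
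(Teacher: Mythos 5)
Your proposal is correct and follows essentially the same route as the paper: both arguments reduce the two minima to values of $\chain{\Gamma}{\bs{}}$ at $\vs{\Gamma}{l}{r_{l}}$ and its neighbours, using $\chain{(\Gamma\delta_{v})}{\bs{}}=\chain{\Gamma}{\bs{}}\circ\delta_{v}$ and the position of $v=\vs{\Gamma}{j}{i}$ relative to $\vs{\Gamma}{l}{r_{l}}$, your closed form $\min\{h\mid\bs{\Gamma}(h)\geq w\}=\chain{\Gamma}{\bs{}}(w-1)+1$ being exactly the reduction the paper carries out inline. The points you flag (globality of $\Gamma\delta_{\vs{\Gamma}{j}{i}}$, that $\vs{\Gamma}{j}{i}$ is a fiber position for $i\geq1$, and the boundary case $v=\vs{\Gamma}{l}{r_{l}}$ handled via Proposition \ref{GC Lem3}) are all true and routine to verify.
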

\begin{proof}
	Denote $\min\{h|\bs{\Gamma}(h)\geq\vs{\Gamma}{l}{r_{l}}+1\}$ as $h$. Then $\bs{\Gamma}(h)=\vs{\Gamma}{l}{r_{l}}+1$ holds. Hence $l<j$ implies
	\begin{align*}
		\chain{(\Gamma\delta_{\vs{\Gamma}{j}{i}})}{\bs{}}(\vs{\Gamma}{l}{r_{l}}+1)
			&=
		\chain{\Gamma}{\bs{}}(\vs{\Gamma}{l}{r_{l}}+1)\\
			&=
		\chain{\Gamma}{\bs{}}\bs{\Gamma}(h),\\
			&=
		h\\
		\chain{(\Gamma\delta_{\vs{\Gamma}{j}{i}})}{\bs{}}(\vs{\Gamma}{l}{r_{l}})
			&=
		\chain{\Gamma}{\bs{}}(\vs{\Gamma}{l}{r_{l}})\\
			&<
		h
	\end{align*}
	On the other hand, $l\geq j$ implies
	\begin{align*}
		\chain{(\Gamma\delta_{\vs{\Gamma}{j}{i}})}{\bs{}}(\vs{\Gamma}{l}{r_{l}})
			&=
		\chain{\Gamma}{\bs{}}(\vs{\Gamma}{l}{r_{l}}+1)\\
			&=
		\chain{\Gamma}{\bs{}}\bs{\Gamma}(h),\\
			&=
		h\\
		\chain{(\Gamma\delta_{\vs{\Gamma}{j}{i}})}{\bs{}}(\vs{\Gamma}{l}{r_{l}}-1)
			&=
		\begin{cases}
			\chain{\Gamma}{\bs{}}(\vs{\Gamma}{l}{r_{l}})&(\vs{\Gamma}{j}{i}<\vs{\Gamma}{l}{r_{l}})\\
			\chain{\Gamma}{\bs{}}(\vs{\Gamma}{l}{r_{l}}-1)&(\vs{\Gamma}{j}{i}\geq\vs{\Gamma}{l}{r_{l}})
		\end{cases}\\
			&<
		h
	\end{align*}
	Thus the statement follows.
\end{proof}
\begin{prop}\label{GC Lem5}
	Let $\Gamma\colon[n+r]\hookrightarrow[n]\times[r]$ be a global chain and assume that $v\in\mathsf{Inn}_{\fs{}}(\Gamma)\cup\mathsf{Inn}_{\bs{}}(\Gamma)$.
	\begin{enumerate}
		\item For each $i\leq v$, the following hold:
			$$\min\{j|\bs{\Gamma}(j)\geq i\}=\min\{j|\bs{\Gamma\delta_{v}}(j)\geq i\}.$$
		\item Assume that $v\in\mathsf{Inn}_{\fs{}}(\Gamma)$. Then, for each $i>v$, the following hold:
			$$\min\{j|\bs{\Gamma}(j)\geq i\}=\min\{j|\bs{\Gamma\delta_{v}}(j)\geq i-1\}.$$
		\item Assume that $v\in\mathsf{Inn}_{\bs{}}(\Gamma)$. Then, for each $i>v+1$, the following hold:
			$$\min\{j|\bs{\Gamma}(j)\geq i\}=\min\{j|\bs{\Gamma\delta_{v}}(j)\geq i-1\}.$$
	\end{enumerate}
\end{prop}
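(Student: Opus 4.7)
The plan is to reduce all three claims to an explicit piecewise formula relating $\bs{\Gamma\delta_{v}}$ to $\bs{\Gamma}$. Since the face map $\delta_{v}$ skips the $v$-th vertex, one has $\chain{(\Gamma\delta_{v})}{\bs{}}(k)=\chain{\Gamma}{\bs{}}(k)$ for $k<v$ and $\chain{(\Gamma\delta_{v})}{\bs{}}(k)=\chain{\Gamma}{\bs{}}(k+1)$ for $k\geq v$. The hypothesis $v\in\mathsf{Inn}_{\fs{}}(\Gamma)$ gives $\chain{\Gamma}{\bs{}}(v-1)=\chain{\Gamma}{\bs{}}(v)$, so the surjectivity of $\chain{\Gamma}{\bs{}}$ is preserved and $\Gamma\delta_{v}$ is still a global chain; the case $v\in\mathsf{Inn}_{\bs{}}(\Gamma)$ is symmetric via $\chain{\Gamma}{\bs{}}(v)=\chain{\Gamma}{\bs{}}(v+1)$.

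Next, I translate this into a formula for $\bs{\Gamma\delta_{v}}(j)=\min\{k\mid\chain{(\Gamma\delta_{v})}{\bs{}}(k)=j\}$. In the case $v\in\mathsf{Inn}_{\fs{}}(\Gamma)$, a short case split on whether the minimum $k$ realizing $\chain{\Gamma}{\bs{}}(k)=j$ lies below or above $v$ yields
\[
\bs{\Gamma\delta_{v}}(j)=\begin{cases}\bs{\Gamma}(j) & (\bs{\Gamma}(j)<v),\\ \bs{\Gamma}(j)-1 & (\bs{\Gamma}(j)>v),\end{cases}
\]
the intermediate value $\bs{\Gamma}(j)=v$ being impossible because $v-1$ already maps to $\chain{\Gamma}{\bs{}}(v)$ under $\chain{\Gamma}{\bs{}}$. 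For $v\in\mathsf{Inn}_{\bs{}}(\Gamma)$ the same piecewise shape holds with the break shifted by one: $\bs{\Gamma\delta_{v}}(j)=\bs{\Gamma}(j)$ when $\bs{\Gamma}(j)\leq v$, and $\bs{\Gamma\delta_{v}}(j)=\bs{\Gamma}(j)-1$ when $\bs{\Gamma}(j)>v+1$, the value $\bs{\Gamma}(j)=v+1$ being excluded because $\bs{\Gamma}(\chain{\Gamma}{\bs{}}(v))=v$ already lies below $v+1$.

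Finally, I verify each claim by showing that the sets $\{j\mid\bs{\Gamma}(j)\geq i\}$ and $\{j\mid\bs{\Gamma\delta_{v}}(j)\geq i'\}$ coincide, with $i'=i$ in (1) and $i'=i-1$ in (2) and (3); their minima then agree automatically. For (1), the hypothesis $i\leq v$ makes both membership conditions hold on the shifted region (since $\bs{\Gamma}(j)-1\geq v\geq i$) and leaves the unshifted region completely unaltered. For (2) and (3), the hypotheses $i>v$ and $i>v+1$ force the unshifted region to be excluded from both sets, while on the shifted region the identity $\bs{\Gamma\delta_{v}}(j)=\bs{\Gamma}(j)-1$ makes $\bs{\Gamma\delta_{v}}(j)\geq i-1$ literally equivalent to $\bs{\Gamma}(j)\geq i$. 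The only bookkeeping hazard is keeping the two breakpoints ($v$ versus $v+1$) correctly associated with the two $\mathsf{Inn}$-cases; there is no deeper obstacle.
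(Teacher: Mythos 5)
Your proof is correct; the only point to be careful about --- that $\Gamma\delta_{v}$ is again a global chain so that $\bs{\Gamma\delta_{v}}$ is defined, and that the breakpoint sits at $v$ for $v\in\mathsf{Inn}_{\fs{}}(\Gamma)$ but at $v+1$ for $v\in\mathsf{Inn}_{\bs{}}(\Gamma)$ --- is exactly the one you flag and handle. Your route is organized differently from the paper's. The paper argues directly on the minima: in each of three separate computations ($i\leq v$; $i>v+1$; and the boundary case $i=v+1$ with $v\in\mathsf{Inn}_{\fs{}}(\Gamma)$) it names $m=\min\{j|\bs{\Gamma}(j)\geq i\}$ and $m'=\min\{j|\bs{\Gamma\delta_{v}}(j)\geq i'\}$ and proves the two inequalities between them by monotonicity manipulations with $\chain{\Gamma}{\bs{}}$, $\bs{\Gamma}$ and $\chain{(\Gamma\delta_{v})}{\bs{}}$. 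You instead first extract the closed-form comparison $\bs{\Gamma\delta_{v}}(j)=\bs{\Gamma}(j)$ below the deleted corner and $\bs{\Gamma}(j)-1$ above it, with the intermediate value never attained by $\bs{\Gamma}$ (using $\chain{\Gamma}{\bs{}}(v-1)=\chain{\Gamma}{\bs{}}(v)$, resp. $\chain{\Gamma}{\bs{}}(v)=\chain{\Gamma}{\bs{}}(v+1)$), and then all three parts follow from equality of the index sets $\{j|\bs{\Gamma}(j)\geq i\}$ and $\{j|\bs{\Gamma\delta_{v}}(j)\geq i'\}$, so the minima agree for free and the $i=v+1$ boundary case needs no separate treatment. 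The ingredients are the same combinatorial facts, but your intermediate lemma is more uniform and reusable, and it makes explicit the globality of $\Gamma\delta_{v}$, which the paper uses silently; the paper's version is more direct but more case-heavy. No gaps.
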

\begin{proof}
	First, assume that $i\leq v$. Denote $\min\{j|\bs{\Gamma\delta_{v}}(j)\geq i\}$ as $m'$. Then
	\begin{align*}
		\chain{(\Gamma\delta_{v})}{\bs{}}(\bs{\Gamma}(m'))
			&=
		\begin{cases}
			\chain{\Gamma}{\bs{}}(\bs{\Gamma}(m'))&(\bs{\Gamma}(m')<v)\\
			\chain{\Gamma}{\bs{}}(\bs{\Gamma}(m')+1)&(\bs{\Gamma}(m')\geq v)
		\end{cases}\\
			&\geq
		\chain{\Gamma}{\bs{}}(\bs{\Gamma}(m'))\\
			&=
		m'
	\end{align*}
	holds, thus $\bs{\Gamma}(m')\geq\bs{\Gamma\delta_{v}}(m')\geq i$ holds. Therefore $\min\{j|\bs{\Gamma}(j)\geq i\}\leq\min\{j|\bs{\Gamma\delta_{v}}(j)\geq i\}$ holds. Now denote $\min\{j|\bs{\Gamma}(j)\geq i\}$ as $m$. Since $i\leq v$, $\chain{(\Gamma\delta_{v})}{\bs{}}(i-1)=\chain{\Gamma}{\bs{}}(i-1)$ holds. Since $\chain{\Gamma}{\bs{}}(i-1)\geq m$ implies
	\begin{align*}
		i-1
			\geq
		\bs{\Gamma}\chain{\Gamma}{\bs{}}(i-1)
			\geq
		\bs{\Gamma}(m)
			\geq
		i,
	\end{align*}
	$\chain{\Gamma}{\bs{}}(i-1)<m$ holds. Thus $\chain{(\Gamma\delta_{v})}{\bs{}}(i-1)<m$ hold. Threfore $\bs{\Gamma\delta_{v}}(m)\geq i$ holds, and we obtain $\min\{j|\bs{\Gamma}(j)\geq i\}\geq\min\{j|\bs{\Gamma\delta_{v}}(j)\geq i\}$.
	
	Next, assume that $i>v+1$. Denote $\min\{j|\bs{\Gamma\delta_{v}}(j)\geq i-1\}$ as $m'$. Then $\bs{\Gamma\delta_{v}}(m')\geq i-1>v$ holds thus the following hold:
	\begin{align*}
		\bs{\Gamma\delta_{v}}(m')
			&=
		\min\{l|\chain{(\Gamma\delta_{v})}{\bs{}}(l)=m'\}\\
			&=
		\min\{l\geq v|\chain{(\Gamma\delta_{v})}{\bs{}}(l)=m'\}\\
			&=
		\min\{l\geq v|\chain{\Gamma}{\bs{}}(l+1)=m'\}\\
			&=
		\min\{l>v|\chain{\Gamma}{\bs{}}(l)=m'\}-1,\\
		m'
			&>
		\chain{(\Gamma\delta_{v})}{\bs{}}(v)\\
			&=
		\chain{\Gamma}{\bs{}}(v+1)\\
			&\geq
		\chain{\Gamma}{\bs{}}(v).
	\end{align*}
	Thus
	$$i=(i-1)+1\leq\bs{\Gamma\delta_{v}}(m')+1=\min\{l|\chain{\Gamma}{\bs{}}(l)=m'\}-1+1=\bs{\Gamma}(m')$$
	holds. Hence $\min\{j|\bs{\Gamma}(j)\geq i\}\leq\min\{j|\bs{\Gamma\delta_{v}}(j)\geq i-1\}$ holds. Now denote $\min\{j|\bs{\Gamma}(j)\geq i\}$ as $m$. Since $v\leq i-2$,
	$$\chain{(\Gamma\delta_{v})}{\bs{}}(i-2)=\chain{\Gamma}{\bs{}}(i-1)\leq\chain{\Gamma}{\bs{}}(i)\leq\chain{\Gamma}{\bs{}}\bs{\Gamma}(m)=m$$
	holds. Since $\chain{\Gamma}{\bs{}}(i-1)=m$ implies
	$$i\leq\bs{\Gamma}(m)=\bs{\Gamma}\chain{\Gamma}{\bs{}}(i-1)\leq i-1,$$
	$\chain{\Gamma}{\bs{}}(i-1)<m$ holds. Therefore $\bs{\Gamma\delta_{v}}(m)\geq i-1$ holds, we obtain $\min\{j|\bs{\Gamma}(j)\geq i\}\geq\min\{j|\bs{\Gamma\delta_{v}}(j)\geq i-1\}$.
	
	Finally, assume that $v\in\mathsf{Inn}_{\fs{}}(\Gamma)$. Denote $\min\{j|\bs{\Gamma\delta_{v}}(j)\geq v\}$ as $m'$. Then $\bs{\Gamma\delta_{v}}(m')\geq v$ and
	$$\bs{\Gamma\delta_{v}}(m')=\min\{l>v|\chain{\Gamma}{\bs{}}(l)=m'\}-1$$
	hold. Since $v\in\mathsf{Inn}_{\fs{}}(\Gamma)$,
	$$\chain{\Gamma}{\bs{}}(v)<\chain{\Gamma}{\bs{}}(v+1)=\chain{(\Gamma\delta_{v})}{\bs{}}(v)\leq\chain{(\Gamma\delta_{v})}{\bs{}}\bs{\Gamma\delta_{v}}(m')=m'$$
	holds. Thus
	$$v+1\leq\bs{\Gamma\delta_{v}}(m')+1=\min\{l|\chain{\Gamma}{\bs{}}(l)=m'\}-1+1=\bs{\Gamma}(m')$$
	holds. Hence $\min\{j|\bs{\Gamma}(j)\geq v+1\}\leq\min\{j|\bs{\Gamma\delta_{v}}(j)\geq v\}$ holds. Now denote $\min\{j|\bs{\Gamma}(j)\geq v+1\}$ as $m$. Then
	$$\chain{(\Gamma\delta_{v})}{\bs{}}(v-1)=\chain{\Gamma}{\bs{}}(v-1)\leq\chain{\Gamma}{\bs{}}(v)<\chain{\Gamma}{\bs{}}(v+1)\leq\chain{\Gamma}{\bs{}}\bs{\Gamma}(m)=m$$
	thus $\bs{\Gamma\delta_{v}}(m)\geq v$. Therefore $\min\{j|\bs{\Gamma}(j)\geq v+1\}\geq\min\{j|\bs{\Gamma\delta_{v}}(j)\geq v\}$ holds.
\end{proof}

To consider the partition of the product $[n]\times[r]$ into (maximal) chains, it is important to consider the ``pullback of a chain'', that is, the following (commutative) diagram:
\begin{center}
\begin{tikzpicture}[auto]
	\node (11) at (0, 1) {$[p]$};
	\node (12) at (3, 1) {$[n+r]$};
	\node (21) at (0, 0) {$[m]\times[r]$};
	\node (22) at (3, 0) {$[n]\times[r]$};
	\path[draw, ->] (11) --node {$\scriptstyle \beta$} (12);
	\path[draw, right hook->] (11) --node[swap] {$\scriptstyle \Gamma_{2}$} (21);
	\path[draw, right hook->] (12) --node {$\scriptstyle \Gamma_{1}$} (22);
	\path[draw, ->] (21) --node {$\scriptstyle \alpha\times\id{}$} (22);
\end{tikzpicture}.
\end{center}
We check properties of this diagram.
\begin{prop}\label{Gl Lem1}
	Consider the following pullback diagram of a maximal chain $\Gamma_{1}\colon[n+r]\hookrightarrow[n]\times[r]$ along an order-preserving map $\alpha\times\id{}\colon[m]\times[r]\to[n]\times[r]$ where $\alpha\colon[m]\to[n]$ is injective:
	\begin{center}
	\begin{tikzpicture}[auto]
		\node (11) at (0, 1) {$[p]$};
		\node (12) at (3, 1) {$[n+r]$};
		\node (21) at (0, 0) {$[m]\times[r]$};
		\node (22) at (3, 0) {$[n]\times[r]$};
		\path[draw, ->] (11) --node {$\scriptstyle \beta$} (12);
		\path[draw, right hook->] (11) --node[swap] {$\scriptstyle \Gamma_{2}$} (21);
		\path[draw, right hook->] (12) --node {$\scriptstyle \Gamma_{1}$} (22);
		\path[draw, ->] (21) --node {$\scriptstyle \alpha\times\id{}$} (22);
		\path[draw] (0.2, 0.5) -- (0.5, 0.5) -- (0.5, 0.8);
	\end{tikzpicture}.
	\end{center}
	Then $\beta\bs{\Gamma_{2}}=\bs{\Gamma_{1}}\alpha$ holds.
\end{prop}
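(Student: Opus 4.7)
The plan is to unpack the pullback explicitly and then argue that $\beta$ sets up an order-preserving bijection between the relevant fibers. First I would observe that since $\alpha$ is injective, so is $\alpha\times\id{}$, and hence the pullback $[p]$ may be identified set-theoretically with $\{k\in[n+r]\mid\chain{\Gamma_{1}}{\bs{}}(k)\in\mathrm{Im}\,\alpha\}$, with $\beta$ being the resulting inclusion into $[n+r]$ and $\Gamma_{2}$ determined by $\Gamma_{2}(j)=(\alpha^{-1}(\chain{\Gamma_{1}}{\bs{}}(\beta(j))),\chain{\Gamma_{1}}{\fs{}}(\beta(j)))$. In particular $\beta$ is injective and order-preserving, and commutativity of the outer square gives the key identity $\alpha\circ\chain{\Gamma_{2}}{\bs{}}=\chain{\Gamma_{1}}{\bs{}}\circ\beta$.

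Next, for a fixed $i\in[m]$, I would consider the fibers $A=\chain{\Gamma_{2}}{\bs{}}^{-1}(i)\subseteq[p]$ and $B=\chain{\Gamma_{1}}{\bs{}}^{-1}(\alpha(i))\subseteq[n+r]$. From the identity above together with injectivity of $\alpha$, one has $j\in A$ if and only if $\beta(j)\in B$, so $\beta$ maps $A$ injectively into $B$. To see that the image equals $B$, take any $k\in B$: then $\Gamma_{1}(k)=(\alpha(i),\chain{\Gamma_{1}}{\fs{}}(k))$ lies in the image of $\alpha\times\id{}$ via the element $(i,\chain{\Gamma_{1}}{\fs{}}(k))\in[m]\times[r]$, and the universal property of the pullback supplies a $j\in[p]$ with $\beta(j)=k$ and $\Gamma_{2}(j)=(i,\chain{\Gamma_{1}}{\fs{}}(k))$, whence $j\in A$.

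Finally, since $\beta$ is order-preserving and injective, it carries $\min A$ to $\min\beta(A)=\min B$. By the defining formula for $\bs{}$ this is exactly $\beta(\bs{\Gamma_{2}}(i))=\bs{\Gamma_{1}}(\alpha(i))$, as required. The only step with any content is the identification of the pullback and the verification that $\beta$ restricts to a bijection $A\to B$; the rest is direct from the definitions and the order-preservation of $\beta$. I therefore do not anticipate a significant obstacle, provided one is comfortable reading off the concrete description of the pullback from the injectivity of $\alpha$.
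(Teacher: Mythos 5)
Your proof is correct and rests on the same core mechanism as the paper's: use the pullback's universal property to lift elements of the fiber $\chain{\Gamma_{1}}{\bs{}}^{-1}(\alpha(i))$ to $[p]$ (which in particular makes $\chain{\Gamma_{2}}{\bs{}}$ surjective, so $\bs{\Gamma_{2}}$ is defined) and then compare minima using order-preservation and minimality. You merely package this as an order-bijection of entire fibers $A\to B$ (after concretely identifying $[p]$ with $\{k\in[n+r]\mid\chain{\Gamma_{1}}{\bs{}}(k)\in\mathrm{Im}\,\alpha\}$), whereas the paper lifts only $\bs{\Gamma_{1}}\alpha(i)$ and closes with a two-sided inequality; the difference is organizational, not conceptual.
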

\begin{proof}
	For each $i\in[m]$,
	$$\Gamma_{1}\bs{\Gamma_{1}}\alpha(i)=(\chain{\Gamma_{1}}{\bs{}}\bs{\Gamma_{1}}\alpha(i),\chain{\Gamma_{1}}{\fs{}}\bs{\Gamma_{1}}\alpha(i))=(\alpha(i),\chain{\Gamma_{1}}{\fs{}}\bs{\Gamma_{1}}\alpha(i))$$
	holds. Thus there is a elements $j\in[p]$ satisfies $\beta(j)=\bs{\Gamma_{1}}\alpha(i)$ and $\Gamma_{2}(j)=(i,\chain{\Gamma_{1}}{\fs{}}\bs{\Gamma_{1}}\alpha(i))$. Especially $\chain{\Gamma_{2}}{\bs{}}$ is surjective. Since
	$$\chain{\Gamma_{1}}{\bs{}}\beta\bs{\Gamma_{2}}(i)=\alpha\chain{\Gamma_{2}}{\bs{}}\bs{\Gamma_{2}}(i)=\alpha(i)$$
	holds by surjectivity of $\chain{\Gamma_{2}}{\bs{}}$, $\bs{\Gamma_{1}}\alpha(i)\leq\beta\bs{\Gamma_{2}}(i)$ holds. Therefore 
	$$\chain{\Gamma_{2}}{\fs{}}(j)=\chain{\Gamma_{1}}{\fs{}}\bs{\Gamma_{1}}\alpha(i)\leq\chain{\Gamma_{1}}{\fs{}}\beta\bs{\Gamma_{2}}(i)=\chain{\Gamma_{2}}{\fs{}}\bs{\Gamma_{2}}(i)$$
	holds, and $\Gamma_{2}(j)\leq\Gamma_{2}\bs{\Gamma_{2}}(i)$ follows. Hence $j\leq\bs{\Gamma_{2}}(i)$ holds.
\end{proof}
\begin{prop}\label{Gl Lem2}
	Let $\Gamma_{1}\colon[n+r]\hookrightarrow[n]\times[r],\Gamma_{2}\colon[m+r]\hookrightarrow[m]\times[r]$ be maximal chains and $\alpha\colon[m]\to[n],\beta\colon[m+r]\to[n+r]$ be order-preserving maps, and assume that $(\alpha\times\id{})\Gamma_{2}=\Gamma_{1}\beta$ holds.
	\begin{center}
	\begin{tikzpicture}[auto]
		\node (11) at (0, 1) {$[m+r]$};
		\node (12) at (3, 1) {$[n+r]$};
		\node (21) at (0, 0) {$[m]\times[r]$};
		\node (22) at (3, 0) {$[n]\times[r]$};
		\path[draw, ->] (11) --node {$\scriptstyle \beta$} (12);
		\path[draw, right hook->] (11) --node[swap] {$\scriptstyle \Gamma_{2}$} (21);
		\path[draw, right hook->] (12) --node {$\scriptstyle \Gamma_{1}$} (22);
		\path[draw, ->] (21) --node {$\scriptstyle \alpha\times\id{}$} (22);
	\end{tikzpicture}.
	\end{center}
	Then $\beta\fs{\Gamma_{2}}=\fs{\Gamma_{1}}$ holds.
\end{prop}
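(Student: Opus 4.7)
The plan is to identify $\fs{\Gamma_2}(i)$, for $i\geq1$, as the position of the $i$-th ``$\fs{}$-step'' of $\Gamma_2$ (a unit step in the fiber direction), to observe that applying $\alpha\times\id{}$ carries $\fs{}$-steps to $\fs{}$-steps, and then to transfer this identification to $\Gamma_1$ using its maximality.

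First I would project the hypothesis $(\alpha\times\id{})\Gamma_2=\Gamma_1\beta$ onto the second factor of $[n]\times[r]$, which is fixed by $\alpha\times\id{}$, obtaining the identity $\chain{\Gamma_1}{\fs{}}\circ\beta=\chain{\Gamma_2}{\fs{}}$. Fixing $i\in\{1,\dots,r\}$ and setting $\ell\coloneqq\fs{\Gamma_2}(i)$, by definition $\chain{\Gamma_2}{\fs{}}(\ell-1)=i-1$ and $\chain{\Gamma_2}{\fs{}}(\ell)=i$. Because $\Gamma_2$ is a maximal chain, consecutive elements differ by either $(1,0)$ or $(0,1)$, so the jump of $\chain{\Gamma_2}{\fs{}}$ at position $\ell$ forces $\chain{\Gamma_2}{\bs{}}(\ell)=\chain{\Gamma_2}{\bs{}}(\ell-1)$, i.e.\ $\Gamma_2(\ell)-\Gamma_2(\ell-1)=(0,1)$.

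Applying $\alpha\times\id{}$ leaves this difference unchanged, so by commutativity $\Gamma_1(\beta(\ell))-\Gamma_1(\beta(\ell-1))=(0,1)$. The hard part will be deducing that $\beta(\ell)=\beta(\ell-1)+1$, so that this single step under $\beta$ is itself a genuine $\fs{}$-step of $\Gamma_1$. The argument here leans on the maximality of $\Gamma_1$: each consecutive pair of elements of $\Gamma_1$ differs by $(1,0)$ or $(0,1)$, and a total displacement of $(0,1)$ between $\Gamma_1(\beta(\ell-1))$ and $\Gamma_1(\beta(\ell))$ cannot accommodate any intermediate $\bs{}$-step (that would alter the $\bs{}$-coordinate) nor any extra $\fs{}$-step (that would overshoot the $\fs{}$-coordinate); hence $\beta(\ell)-\beta(\ell-1)=1$ and this step is an $\fs{}$-step of $\Gamma_1$.

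With $\beta(\ell)=\beta(\ell-1)+1$ in hand, we have $\chain{\Gamma_1}{\fs{}}(\beta(\ell)-1)=i-1$ and $\chain{\Gamma_1}{\fs{}}(\beta(\ell))=i$. Monotonicity of $\chain{\Gamma_1}{\fs{}}$ rules out any earlier position attaining the value $i$, and therefore $\fs{\Gamma_1}(i)=\beta(\ell)=\beta\fs{\Gamma_2}(i)$, which is the desired equality.
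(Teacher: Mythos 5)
Your proposal is correct and follows essentially the same route as the paper: both arguments rest on the unit-step structure of maximal chains (Proposition \ref{GC Lem3}) to show that $\beta(\fs{\Gamma_{2}}(i)-1)=\beta\fs{\Gamma_{2}}(i)-1$ and that the step of $\Gamma_{1}$ at that position is a fiber step, and then conclude $\fs{\Gamma_{1}}(i)=\beta\fs{\Gamma_{2}}(i)$ by monotonicity and minimality. The only cosmetic difference is that you count unit steps of $\Gamma_{1}$ over the displacement $(0,1)$, while the paper reaches the same equality by the coordinate-sum identity $\beta(j)=\alpha\chain{\Gamma_{2}}{\bs{}}(j)+\chain{\Gamma_{2}}{\fs{}}(j)$.
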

\begin{proof}
	Let $i$ be a positive integer $1,\dots,r$. $\chain{\Gamma_{2}}{\fs{}}(\fs{\Gamma_{2}}(i)-1)=i-1$ holds by assumption therefore
	$$\chain{\Gamma_{2}}{\bs{}}(\fs{\Gamma_{2}}(i)-1)=(\fs{\Gamma_{2}}(i)-1)-\chain{\Gamma_{2}}{\fs{}}(\fs{\Gamma_{2}}(i)-1)=(\fs{\Gamma_{2}}(i)-1)-(i-1)=\fs{\Gamma_{2}}(i)-\chain{\Gamma_{2}}{\fs{}}\fs{\Gamma_{2}}(i)=\chain{\Gamma_{2}}{\bs{}}\fs{\Gamma_{2}}(i)$$
	holds. Thus
	$$\beta(\fs{\Gamma_{2}}(i)-1)=\alpha\chain{\Gamma_{2}}{\bs{}}(\fs{\Gamma_{2}}(i)-1)+\chain{\Gamma_{2}}{\fs{}}(\fs{\Gamma_{2}}(i)-1)=\alpha\chain{\Gamma_{2}}{\bs{}}\fs{\Gamma_{2}}(i)+\chain{\Gamma_{2}}{\fs{}}\fs{\Gamma_{2}}(i)-1=\beta\fs{\Gamma_{2}}(i)-1$$
	holds (by proposition \ref{GC Lem3}). Hence
	$$\chain{\Gamma_{1}}{\fs{}}(\beta\fs{\Gamma_{2}}(i)-1)=\chain{\Gamma_{1}}{\fs{}}\beta(\fs{\Gamma_{2}}(i)-1)=\chain{\Gamma_{2}}{\fs{}}(\fs{\Gamma_{2}}(i)-1)=i-1$$
	holds, and $\beta\fs{\Gamma_{2}}=\fs{\Gamma_{1}}$ follows.
\end{proof}
\begin{prop}\label{Gl Lem3}
	Let $\Gamma_{1}\colon[n+r]\hookrightarrow[n]\times[r],\Gamma_{2}\colon[m+r]\hookrightarrow[m]\times[r]$ be maximal chains and $\alpha\colon[m]\to[n],\beta\colon[m+r]\to[n+r]$ be order-preserving maps, and assume that $(\alpha\times\id{})\Gamma_{2}=\Gamma_{1}\beta$ holds.
	\begin{center}
	\begin{tikzpicture}[auto]
		\node (11) at (0, 1) {$[m+r]$};
		\node (12) at (3, 1) {$[n+r]$};
		\node (21) at (0, 0) {$[m]\times[r]$};
		\node (22) at (3, 0) {$[n]\times[r]$};
		\path[draw, ->] (11) --node {$\scriptstyle \beta$} (12);
		\path[draw, right hook->] (11) --node[swap] {$\scriptstyle \Gamma_{2}$} (21);
		\path[draw, right hook->] (12) --node {$\scriptstyle \Gamma_{1}$} (22);
		\path[draw, ->] (21) --node {$\scriptstyle \alpha\times\id{}$} (22);
	\end{tikzpicture}.
	\end{center}
	Then $\beta\us{\Gamma_{2}}=\us{\Gamma_{1}}$ holds.
\end{prop}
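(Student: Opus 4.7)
\emph{Plan.} The plan is to parallel the proof of Proposition~\ref{Gl Lem2}, reducing the identity for $\us{}$ to the identity for $\fs{}$ together with a matching of the minima appearing in the definition of $\us{}$. By definition $\us{\Gamma}(i)=\fs{\Gamma}(j_{\ast}^{\Gamma})-1$, where $j_{\ast}^{\Gamma}=\min\{j\mid\fs{\Gamma}(j)-j=\fs{\Gamma}(i)-i\}$, so it is enough to establish (a)~$j_{\ast}^{\Gamma_{1}}=j_{\ast}^{\Gamma_{2}}$, and (b)~the shift identity $\beta(\fs{\Gamma_{2}}(k)-1)=\fs{\Gamma_{1}}(k)-1$ at $k=j_{\ast}^{\Gamma_{2}}$.

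Step (b) requires nothing new: since $\fs{\Gamma_{2}}(k)$ is a fiber-step position in $\Gamma_{2}$, the $\Gamma_{2}$-base coordinates at $\fs{\Gamma_{2}}(k)$ and $\fs{\Gamma_{2}}(k)-1$ coincide, and combining the hypotheses $\chain{\Gamma_{1}}{\bs{}}\beta=\alpha\chain{\Gamma_{2}}{\bs{}}$ and $\chain{\Gamma_{1}}{\fs{}}\beta=\chain{\Gamma_{2}}{\fs{}}$ with Proposition~\ref{GC Lem3} gives $\beta(\fs{\Gamma_{2}}(k)-1)=\beta\fs{\Gamma_{2}}(k)-1=\fs{\Gamma_{1}}(k)-1$, the last equality by Proposition~\ref{Gl Lem2}; this is exactly the computation performed inside the proof of Proposition~\ref{Gl Lem2}.

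Step (a) is where the real work sits. The idea is to recognise the run label $\fs{\Gamma}(j)-j$: Proposition~\ref{GC Lem3} applied at $\fs{\Gamma}(j)$ yields $\fs{\Gamma}(j)-j=\chain{\Gamma}{\bs{}}\fs{\Gamma}(j)$, so the run label is simply the base coordinate at the $j$th fiber step. Combined with $\chain{\Gamma_{1}}{\bs{}}\beta=\alpha\chain{\Gamma_{2}}{\bs{}}$ and Proposition~\ref{Gl Lem2}, this produces the key identity
\begin{equation*}
\fs{\Gamma_{1}}(j)-j=\alpha\bigl(\fs{\Gamma_{2}}(j)-j\bigr).
\end{equation*}
From this, $\fs{\Gamma_{2}}(j)-j=\fs{\Gamma_{2}}(i)-i$ immediately forces $\fs{\Gamma_{1}}(j)-j=\fs{\Gamma_{1}}(i)-i$, giving the easy inequality $j_{\ast}^{\Gamma_{1}}\leq j_{\ast}^{\Gamma_{2}}$. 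The opposite inequality is the main obstacle: it requires that $\alpha$ not identify base coordinates belonging to distinct $\Gamma_{2}$-runs, which is automatic in the pullback framing inherited from Proposition~\ref{Gl Lem1}, where $\alpha$ is injective. Once (a)~and (b)~are both in hand, the conclusion is a one-line chase: $\beta\us{\Gamma_{2}}(i)=\beta(\fs{\Gamma_{2}}(j_{\ast}^{\Gamma_{2}})-1)=\fs{\Gamma_{1}}(j_{\ast}^{\Gamma_{2}})-1=\fs{\Gamma_{1}}(j_{\ast}^{\Gamma_{1}})-1=\us{\Gamma_{1}}(i)$.
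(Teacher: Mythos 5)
Your step (b) and your ``key identity'' $\fs{\Gamma_{1}}(j)-j=\alpha(\fs{\Gamma_{2}}(j)-j)$ are both sound, and they are exactly the paper's ingredients: the shift $\beta(\fs{\Gamma_{2}}(k)-1)=\beta\fs{\Gamma_{2}}(k)-1$ is the computation inside the proof of Proposition~\ref{Gl Lem2}, and identifying the run label $\fs{\Gamma}(j)-j$ with the base coordinate $\chain{\Gamma}{\bs{}}\fs{\Gamma}(j)$ via Proposition~\ref{GC Lem3} is how the paper argues as well. The divergence is at your step (a). The paper never proves $j_{\ast}^{\Gamma_{1}}=j_{\ast}^{\Gamma_{2}}$: it establishes only the inequality $j_{\ast}^{\Gamma_{1}}\leq j_{\ast}^{\Gamma_{2}}$ (your easy direction, obtained by showing the $\Gamma_{2}$-run of $i$ maps into a single $\Gamma_{1}$-run), observes that consequently $\fs{\Gamma_{1}}(j_{\ast}^{\Gamma_{1}})$ and $\fs{\Gamma_{1}}(j_{\ast}^{\Gamma_{2}})$ carry the same $\Gamma_{1}$-base coordinate, and then closes with the retraction identities $\us{\Gamma_{1}}(i)=\bs{\Gamma_{1}}\chain{\Gamma_{1}}{\bs{}}\us{\Gamma_{1}}(i)$ and $\us{\Gamma_{2}}(i)=\bs{\Gamma_{2}}\chain{\Gamma_{2}}{\bs{}}\us{\Gamma_{2}}(i)$ together with $\bs{\Gamma_{1}}\alpha=\beta\bs{\Gamma_{2}}$.

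The genuine gap is your justification of the reverse inequality $j_{\ast}^{\Gamma_{1}}\geq j_{\ast}^{\Gamma_{2}}$. The statement assumes only that $\alpha$ is order-preserving; there is no pullback and no injectivity hypothesis, so injectivity is not ``automatic'' --- Proposition~\ref{Gl Lem1} is a separate lemma with its own hypotheses and is not part of the framing here. Moreover the equality of minima genuinely fails without injectivity: take $m=1$, $n=0$, $r=2$, $\alpha\colon[1]\to[0]$ the unique map, $\Gamma_{2}$ the maximal chain $(0,0)<(0,1)<(1,1)<(1,2)$, $\Gamma_{1}$ the unique maximal chain of $[0]\times[2]$, and $\beta(0)=0$, $\beta(1)=\beta(2)=1$, $\beta(3)=2$; the square commutes, yet for $i=2$ one finds $j_{\ast}^{\Gamma_{2}}=2$ while $j_{\ast}^{\Gamma_{1}}=1$ (and in fact $\beta\us{\Gamma_{2}}(2)=1\neq0=\us{\Gamma_{1}}(2)$, so the proposition itself needs injectivity of $\alpha$; note the paper's own final step silently invokes $\bs{\Gamma_{1}}\alpha=\beta\bs{\Gamma_{2}}$, i.e.\ Proposition~\ref{Gl Lem1}, which is only proved for injective $\alpha$, and the lemma is only ever applied to the injective map $\delta_{\sigma\delta_{\alpha}}$ in Lemma~\ref{Int Lem21}). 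So your instinct about where the difficulty sits is correct, but as written the step is unjustified: either injectivity of $\alpha$ must be added as an explicit hypothesis, or you must argue as the paper does, using only the inequality plus the matching of base coordinates, which the retraction $\bs{\Gamma_{1}}\chain{\Gamma_{1}}{\bs{}}$ then absorbs. Granting $\alpha$ injective, your argument is correct and is a slightly shorter route than the paper's.
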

\begin{proof}
	Let $i$ be a positive integer $1,\dots,r$. For any positive integer $j$ satisfies $\min\{j|\fs{\Gamma_{2}}(j)-j=\fs{\Gamma_{2}}(i)-i\}\leq j<i$, we can show $\beta\fs{\Gamma_{2}}(j)+1=\beta\fs{\Gamma_{2}}(j+1)$ in the same way as above (part of the proof of proposition \ref{Gl Lem2}) since $\chain{\Gamma_{2}}{\bs{}}\fs{\Gamma_{2}}(j)=\chain{\Gamma_{2}}{\bs{}}\fs{\Gamma_{2}}(j+1)$ holds. Therefore
	$$\fs{\Gamma_{1}}(j)+1=\beta\fs{\Gamma_{2}}(j)+1=\beta\fs{\Gamma_{2}}(j+1)=\fs{\Gamma_{1}}(j+1)$$
	holds. Thus
	$\min\{j|\fs{\Gamma_{1}}(j)-j=\fs{\Gamma_{1}}(i)-i\}\leq\min\{j|\fs{\Gamma_{2}}(j)-j=\fs{\Gamma_{2}}(i)-i\}$
	holds (as elements of $[r]$). And then
	\begin{align*}
		&\chain{\Gamma_{1}}{\bs{}}\fs{\Gamma_{1}}(\min\{j|\fs{\Gamma_{1}}(j)-j=\fs{\Gamma_{1}}(i)-i\})\\
			=
		&\fs{\Gamma_{1}}(\min\{j|\fs{\Gamma_{1}}(j)-j=\fs{\Gamma_{1}}(i)-i\})-\chain{\Gamma_{1}}{\fs{}}\fs{\Gamma_{1}}(\min\{j|\fs{\Gamma_{1}}(j)-j=\fs{\Gamma_{1}}(i)-i\})\\
			=
		&\fs{\Gamma_{1}}(\min\{j|\fs{\Gamma_{1}}(j)-j=\fs{\Gamma_{1}}(i)-i\})-\min\{j|\fs{\Gamma_{1}}(j)-j=\fs{\Gamma_{1}}(i)-i\}\\
			=
		&\fs{\Gamma_{1}}(i)-i\\
			=
		&\fs{\Gamma_{1}}(\min\{j|\fs{\Gamma_{2}}(j)-j=\fs{\Gamma_{2}}(i)-i\})-\min\{j|\fs{\Gamma_{2}}(j)-j=\fs{\Gamma_{2}}(i)-i\}\\
			=
		&\chain{\Gamma_{1}}{\bs{}}\fs{\Gamma_{1}}(\min\{j|\fs{\Gamma_{2}}(j)-j=\fs{\Gamma_{2}}(i)-i\})
	\end{align*}
	hols. Hence the following holds:
	\begin{align*}
		\us{\Gamma_{1}}(i)
			&=
		\bs{\Gamma_{1}}\chain{\Gamma_{1}}{\bs{}}\us{\Gamma_{1}}(i)\\
			&=
		\bs{\Gamma_{1}}\chain{\Gamma_{1}}{\bs{}}(\fs{\Gamma_{1}}(\min\{j|\fs{\Gamma_{1}}(j)-j=\fs{\Gamma_{1}}(i)-i\})-1)\\
			&=
		\bs{\Gamma_{1}}\chain{\Gamma_{1}}{\bs{}}\fs{\Gamma_{1}}(\min\{j|\fs{\Gamma_{1}}(j)-j=\fs{\Gamma_{1}}(i)-i\})\\
			&=
		\bs{\Gamma_{1}}\chain{\Gamma_{1}}{\bs{}}\beta\fs{\Gamma_{2}}(\min\{j|\fs{\Gamma_{2}}(j)-j=\fs{\Gamma_{2}}(i)-i\})\\
			&=
		\bs{\Gamma_{1}}\alpha\chain{\Gamma_{2}}{\bs{}}\fs{\Gamma_{2}}(\min\{j|\fs{\Gamma_{2}}(j)-j=\fs{\Gamma_{2}}(i)-i\})\\
			&=
		\beta\bs{\Gamma_{2}}\chain{\Gamma_{2}}{\bs{}}\fs{\Gamma_{2}}(\min\{j|\fs{\Gamma_{2}}(j)-j=\fs{\Gamma_{2}}(i)-i\})\\
			&=
		\beta\bs{\Gamma_{2}}\chain{\Gamma_{2}}{\bs{}}\us{\Gamma_{2}}(i)\\
			&=
		\beta\us{\Gamma_{2}}(i).
	\end{align*}
	Therefore, the statement holds.
\end{proof}
\begin{prop}\label{Gl Lem4}
	Let $\Gamma_{1}\colon[n+r]\hookrightarrow[n]\times[r],\Gamma_{2}\colon[m+r]\hookrightarrow[m]\times[r]$ be maximal chains and $\alpha\colon[m]\to[n],\beta\colon[m+r]\to[n+r]$ be injective order-preserving maps, and assume that $(\alpha\times\id{})\Gamma_{2}=\Gamma_{1}\beta$ holds.
	\begin{center}
	\begin{tikzpicture}[auto]
		\node (11) at (0, 1) {$[m+r]$};
		\node (12) at (3, 1) {$[n+r]$};
		\node (21) at (0, 0) {$[m]\times[r]$};
		\node (22) at (3, 0) {$[n]\times[r]$};
		\path[draw, ->] (11) --node {$\scriptstyle \beta$} (12);
		\path[draw, right hook->] (11) --node[swap] {$\scriptstyle \Gamma_{2}$} (21);
		\path[draw, right hook->] (12) --node {$\scriptstyle \Gamma_{1}$} (22);
		\path[draw, ->] (21) --node {$\scriptstyle \alpha\times\id{}$} (22);
	\end{tikzpicture}.
	\end{center}
	Then the following holds for any $i=1,\dots,r$:
	$$\min\{j|\beta(j)\geq\fs{\Gamma_{1}}(i)+1\}=\fs{\Gamma_{2}}(i)+1.$$
\end{prop}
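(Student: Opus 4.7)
The plan is to reduce the statement directly to Proposition \ref{Gl Lem2}, which has just been established and tells us that $\beta\fs{\Gamma_2} = \fs{\Gamma_1}$. In other words, $\beta\bigl(\fs{\Gamma_2}(i)\bigr) = \fs{\Gamma_1}(i)$ for every $i = 1, \dots, r$. The rest is a short two-sided argument exploiting the injectivity and order-preservation of $\beta$.

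First I would verify $\min\{j \mid \beta(j) \geq \fs{\Gamma_1}(i) + 1\} \leq \fs{\Gamma_2}(i) + 1$. Since $\beta$ is injective and order-preserving, $\beta(\fs{\Gamma_2}(i) + 1) > \beta(\fs{\Gamma_2}(i)) = \fs{\Gamma_1}(i)$; because the codomain $[n+r]$ is discrete this strict inequality upgrades to $\beta(\fs{\Gamma_2}(i)+1) \geq \fs{\Gamma_1}(i)+1$. Thus $\fs{\Gamma_2}(i)+1$ lies in the set whose minimum we are computing.

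Next I would establish the reverse inequality by ruling out any $j \leq \fs{\Gamma_2}(i)$. For such $j$, order-preservation of $\beta$ gives $\beta(j) \leq \beta(\fs{\Gamma_2}(i)) = \fs{\Gamma_1}(i)$, so $\beta(j) < \fs{\Gamma_1}(i) + 1$, meaning no such $j$ is a candidate. Combining both inequalities yields $\min\{j \mid \beta(j) \geq \fs{\Gamma_1}(i)+1\} = \fs{\Gamma_2}(i)+1$, as required.

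There is no real obstacle here; the proposition is essentially a formal corollary of Proposition \ref{Gl Lem2} once one notices that the only ingredient needed from the hypotheses beyond that identity is the injectivity of $\beta$ (the hypothesis that was added for this proposition). The one subtle point to state carefully is the transition from the strict inequality $\beta(\fs{\Gamma_2}(i)+1) > \fs{\Gamma_1}(i)$ to $\beta(\fs{\Gamma_2}(i)+1) \geq \fs{\Gamma_1}(i)+1$, which uses that all values lie in $\mathbb{Z}$.
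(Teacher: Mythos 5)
Your argument is correct and is essentially the paper's own proof: both deduce $\beta\fs{\Gamma_{2}}(i)=\fs{\Gamma_{1}}(i)$ from Proposition \ref{Gl Lem2}, use injectivity and monotonicity of $\beta$ (with integrality) to see $\beta(\fs{\Gamma_{2}}(i)+1)\geq\fs{\Gamma_{1}}(i)+1$, and rule out all $j\leq\fs{\Gamma_{2}}(i)$. No differences worth noting.
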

\begin{proof}
	Proposition \ref{Gl Lem2} and the injectivity of $\beta$ implies
	$$\fs{\Gamma_{1}}(i)+1=\beta\fs{\Gamma_{2}}(i)+1\leq\beta(\fs{\Gamma_{2}}(i)+1).$$
	And, for each $j$ which satisfies $\fs{\Gamma_{1}}(i)+1\leq\beta(j)\leq\beta(\fs{\Gamma_{2}}(i)+1)$,
	$$\fs{\Gamma_{2}}(i)<j\leq\fs{\Gamma_{2}}(i)+1$$
	holds.
\end{proof}
\begin{prop}\label{Gl Lem5}
	Let $\Gamma_{2}\colon[p]\hookrightarrow[m]\times[r]$ be a pullback of a maximal chain $\Gamma_{1}\colon[n+r]\hookrightarrow[n]\times[r]$ along an order-preserving map $\alpha\times\id{}\colon[m]\times[r]\to[n]\times[r]$, where $\alpha$ is injective.
	\begin{center}
	\begin{tikzpicture}[auto]
		\node (11) at (0, 1) {$[p]$};
		\node (12) at (3, 1) {$[n+r]$};
		\node (21) at (0, 0) {$[m]\times[r]$};
		\node (22) at (3, 0) {$[n]\times[r]$};
		\path[draw, ->] (11) --node {$\scriptstyle \beta$} (12);
		\path[draw, right hook->] (11) --node[swap] {$\scriptstyle \Gamma_{2}$} (21);
		\path[draw, right hook->] (12) --node {$\scriptstyle \Gamma_{1}$} (22);
		\path[draw, ->] (21) --node {$\scriptstyle \alpha\times\id{}$} (22);
		\path[draw] (0.2, 0.5) -- (0.5, 0.5) -- (0.5, 0.8);
	\end{tikzpicture}.
	\end{center}
	If $\Gamma_{2}$ is not maximal, there exists an element $l\not\in\mathrm{Im}\alpha$ such that $\bs{\Gamma_{1}}(l)+1\not\in\mathrm{Im}\bs{\Gamma}$.
\end{prop}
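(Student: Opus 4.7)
The plan is to prove the contrapositive by a cardinality count on the fibers of $\proj{1}\Gamma_{1}\colon[n+r]\to[n]$. Two preliminary observations: first, because $\Gamma_{1}$ is injective and $\alpha\times\id{}$ is a bijection from $[m]\times[r]$ onto $\mathrm{Im}\,\alpha\times[r]$, the pullback identifies $[p]$ (via $\beta$) with the subset
\begin{align*}
    S\coloneqq\{j\in[n+r]\mid\proj{1}\Gamma_{1}(j)\in\mathrm{Im}\,\alpha\}\subseteq[n+r];
\end{align*}
second, because $\proj{1}\Gamma_{1}$ is an order-preserving map between totally ordered sets, each fiber $F_{i}\coloneqq(\proj{1}\Gamma_{1})^{-1}(i)$ is an interval whose minimum is $\bs{\Gamma_{1}}(i)$.

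Now I carry out the count. By Proposition \ref{GC Lem1}, each $F_{i}$ is non-empty for $i\in[n]$; writing $f_{i}\coloneqq|F_{i}|$ this gives $\sum_{i\in[n]}f_{i}=n+r+1$, equivalently $\sum_{i\in[n]}(f_{i}-1)=r$. The identification $[p]\cong S$ yields $|[p]|=\sum_{i\in\mathrm{Im}\,\alpha}f_{i}=(m+1)+\sum_{i\in\mathrm{Im}\,\alpha}(f_{i}-1)$. Thus $\Gamma_{2}$ is maximal (i.e.\ $p=m+r$) if and only if $\sum_{i\in\mathrm{Im}\,\alpha}(f_{i}-1)=r$, equivalently $\sum_{i\in[n]\setminus\mathrm{Im}\,\alpha}(f_{i}-1)=0$, equivalently $f_{i}=1$ for every $i\in[n]\setminus\mathrm{Im}\,\alpha$. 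Failure of maximality therefore produces some $l\in[n]\setminus\mathrm{Im}\,\alpha$ with $f_{l}\geq2$.

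Finally, by the interval property, $F_{l}$ contains $\bs{\Gamma_{1}}(l)+1$; hence $\proj{1}\Gamma_{1}(\bs{\Gamma_{1}}(l)+1)=l$ and $\bs{\Gamma_{1}}(l)+1>\min F_{l}=\bs{\Gamma_{1}}(l)$. So $\bs{\Gamma_{1}}(l)+1$ is a non-minimal element of its fiber $F_{l}$, and therefore is the minimum of no fiber, giving $\bs{\Gamma_{1}}(l)+1\notin\mathrm{Im}\,\bs{\Gamma_{1}}$, as required. I do not foresee a real obstacle: the argument reduces to a clean pigeonhole count once the pullback is unpacked and the ``fibers are intervals'' remark is in hand; the only care needed is the off-by-one bookkeeping between the ordinal numbers $n,m,p$ and the cardinalities $n+1,m+1,p+1$.
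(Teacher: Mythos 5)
Your proof is correct, and it takes a mildly but genuinely different route from the paper's. You work entirely upstairs in $[n+r]$: identifying $[p]$ via $\beta$ with $\{j\in[n+r]\mid\chain{\Gamma_{1}}{\bs{}}(j)\in\mathrm{Im}\,\alpha\}$ and counting the fibers of $\chain{\Gamma_{1}}{\bs{}}$ (each a nonempty interval with minimum $\bs{\Gamma_{1}}(i)$, using Proposition \ref{GC Lem1} and order-preservation), you obtain $p+1=(m+1)+\sum_{i\in\mathrm{Im}\alpha}(f_{i}-1)$ together with $\sum_{i\in[n]}(f_{i}-1)=r$ for the fiber cardinalities $f_{i}$, hence the exact characterization that $\Gamma_{2}$ is maximal if and only if $f_{l}=1$ for every $l\notin\mathrm{Im}\,\alpha$; non-maximality then directly yields some $l\notin\mathrm{Im}\,\alpha$ for which $\bs{\Gamma_{1}}(l)+1$ is a non-minimal element of its own fiber, hence not in $\mathrm{Im}\,\bs{\Gamma_{1}}$. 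The paper instead argues by contradiction inside the pullback: assuming $\bs{\Gamma_{1}}(l)+1\in\mathrm{Im}\,\bs{\Gamma_{1}}$ for all $l\notin\mathrm{Im}\,\alpha$, it uses each jump point $\fs{\Gamma_{1}}(i)$, $i=1,\dots,r$ (which lies strictly between consecutive values of $\bs{\Gamma_{1}}$) to force its base index into $\mathrm{Im}\,\alpha$, lifts the consecutive points with second coordinates $i-1$ and $i$ into $[p]$, and so produces $r$ elements outside $\mathrm{Im}\,\bs{\Gamma_{2}}$, contradicting $p<m+r$ via the injectivity of $\bs{\Gamma_{2}}$ (which the paper gets from Proposition \ref{Gl Lem1}). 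Both arguments are the same pigeonhole count at heart, but yours is more elementary and self-contained (no appeal to $\fs{\Gamma_{1}}$, $\bs{\Gamma_{2}}$, or Proposition \ref{Gl Lem1}) and delivers an equivalence rather than a single implication, while the paper's version exhibits the offending vertical steps inside the pullback, which is closer to how the proposition is later applied in Lemma \ref{Int Lem21}. You also resolved the notational ambiguity correctly: the $\mathrm{Im}\,\bs{\Gamma}$ in the statement is to be read as $\mathrm{Im}\,\bs{\Gamma_{1}}$, exactly as the paper's own proof does.
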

\begin{proof}
	$\bs{\Gamma_{2}}$ is injective since proposition \ref{Gl Lem1} implies $\beta\bs{\Gamma_{2}}=\bs{\Gamma_{1}}\alpha$. Assume that $\bs{\Gamma_{1}}(l)+1\in\mathrm{Im}\bs{\Gamma_{1}}$ holds for any $l\not\in\mathrm{Im}\alpha$. Since, for each positive integer $i=1,\dots,r$,
	$$\bs{\Gamma_{1}}\chain{\Gamma_{1}}{\bs{}}\fs{\Gamma_{1}}(i)+1\leq\fs{\Gamma_{1}}(i)<\bs{\Gamma_{1}}(\chain{\Gamma_{1}}{\bs{}}\fs{\Gamma_{1}}(i)+1)$$
	holds, there exists an element $j\in[m]$ satisfies $\alpha(j)=\chain{\Gamma_{1}}{\bs{}}\fs{\Gamma_{1}}(i)$ by above assumption. Thus there exists an element $h\in[p]$ satisfies $\Gamma_{2}(h)=(j,i)$. Especially $\Gamma_{2}(h-1)=(j,i-1)$ holds therefore $h\not\in\mathrm{Im}\bs{\Gamma_{2}}$. It contradicts $p<m+r$ since $\bs{\Gamma_{2}}$ is injective.
\end{proof}
Then we will see how such a commutative diagram is given.
\begin{prop}\label{Gl Lem6}
	Let $\Gamma_{1}\colon[n+r]\hookrightarrow[n]\times[r]$ be a maximal chain and $\alpha\colon[m]\to[n]$ be an injective order-preserving map. Then the pullback P of $\Gamma_{1}$ along $\alpha\times\id{}\colon[m]\times[r]\to[n]\times[r]$ is a total ordered set.
	\begin{center}
	\begin{tikzpicture}[auto]
		\node (11) at (0, 1) {$P$};
		\node (12) at (3, 1) {$[n+r]$};
		\node (21) at (0, 0) {$[m]\times[r]$};
		\node (22) at (3, 0) {$[n]\times[r]$};
		\path[draw, ->, dashed] (11) --node {$\scriptstyle \beta$} (12);
		\path[draw, right hook->, dashed] (11) --node[swap] {$\scriptstyle \Gamma_{2}$} (21);
		\path[draw, right hook->] (12) --node {$\scriptstyle \Gamma_{1}$} (22);
		\path[draw, ->] (21) --node {$\scriptstyle \alpha\times\id{}$} (22);
		\path[draw] (0.2, 0.5) -- (0.5, 0.5) -- (0.5, 0.8);
	\end{tikzpicture}.
	\end{center}
\end{prop}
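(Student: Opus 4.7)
The plan is to compute the pullback explicitly inside the category of posets and show that, because all the maps involved are injective, the order inherited from $[n+r]$ already makes the pullback totally ordered.

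First I would unpack the pullback. The category of posets has all small limits, and they are computed at the level of underlying sets with the product order restricted to the equalising subset. Thus
\[
P=\{(j,(i,k))\in[n+r]\times([m]\times[r])\mid\Gamma_{1}(j)=(\alpha(i),k)\},
\]
ordered by $(j_{1},(i_{1},k_{1}))\leq(j_{2},(i_{2},k_{2}))$ iff $j_{1}\leq j_{2}$, $i_{1}\leq i_{2}$, and $k_{1}\leq k_{2}$. The map $\beta$ is the first projection and $\Gamma_{2}$ is the second.

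Next I would use injectivity to collapse this description. Since $\alpha$ is injective, the value of $\chain{\Gamma_{1}}{\bs{}}(j)$ determines $i$ uniquely, and $k$ equals $\chain{\Gamma_{1}}{\fs{}}(j)$; so the assignment $(j,(i,k))\mapsto j$ identifies $P$ with the subset
\[
Q\coloneqq\{\,j\in[n+r]\mid\chain{\Gamma_{1}}{\bs{}}(j)\in\mathrm{Im}\,\alpha\,\}\subseteq[n+r].
\]
What remains is to check that the pullback order on $P$ coincides with the order that $Q$ inherits from the total order on $[n+r]$.

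For this, I would observe that an injective order-preserving map between totally ordered sets is order-reflecting: $\alpha(i_{1})\leq\alpha(i_{2})$ forces $i_{1}\leq i_{2}$, since otherwise $\alpha(i_{2})\leq\alpha(i_{1})$ gives equality and then injectivity of $\alpha$ contradicts $i_{1}>i_{2}$. The same holds for $\Gamma_{1}$ into the product order on $[n]\times[r]$ by Proposition \ref{GC Lem3} (and injectivity). Combining these, for points of $P$ the three conditions $j_{1}\leq j_{2}$, $i_{1}\leq i_{2}$, $k_{1}\leq k_{2}$ are mutually equivalent: each of them is equivalent to $\Gamma_{1}(j_{1})\leq\Gamma_{1}(j_{2})$ in the product order, which is equivalent to $j_{1}\leq j_{2}$. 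Hence the pullback order on $P$ is exactly the restriction of the total order on $[n+r]$ to $Q$, which is therefore total.

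I do not expect a real obstacle here; the only subtle step is verifying that the poset pullback agrees with the set-level pullback endowed with the restricted product order, and that $\alpha$ (hence $\Gamma_{1}$ and, via them, the forgetful identification $P\cong Q$) reflects order. Once those routine observations are in place the conclusion is immediate.
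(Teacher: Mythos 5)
Your route is essentially the paper's: both arguments reduce to comparing two elements of the pullback, using that $[n+r]$ is totally ordered, transporting the comparison through the commuting square, and using that the injective order-preserving $\alpha$ reflects order; your explicit set-level description of the poset pullback and the identification of $P$ with $Q\subseteq[n+r]$ is just a more concrete packaging of what the paper does directly with $\beta$ and $\Gamma_{2}$. One assertion in your last step is false, however: the three conditions $j_{1}\leq j_{2}$, $i_{1}\leq i_{2}$, $k_{1}\leq k_{2}$ are \emph{not} mutually equivalent on $P$, and in particular $i_{1}\leq i_{2}$ does not imply $\Gamma_{1}(j_{1})\leq\Gamma_{1}(j_{2})$. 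For a counterexample take $m=n=r=1$, $\alpha$ the identity of $[1]$ (so $P\cong[2]$), and the maximal chain $\Gamma_{1}(0)=(0,0)$, $\Gamma_{1}(1)=(0,1)$, $\Gamma_{1}(2)=(1,1)$: for $j_{1}=1$, $j_{2}=0$ one has $i_{1}=i_{2}=0$, hence $i_{1}\leq i_{2}$, but $k_{1}=1>k_{2}=0$; the same chain with $j_{1}=2$, $j_{2}=1$ shows $k_{1}\leq k_{2}$ also fails to imply the rest. Fortunately your conclusion only needs the single implication $j_{1}\leq j_{2}\Rightarrow(i_{1}\leq i_{2}\text{ and }k_{1}\leq k_{2})$, which your own observations already give: $\Gamma_{1}$ order-preserving yields $\alpha(i_{1})\leq\alpha(i_{2})$ and $k_{1}\leq k_{2}$, and order-reflection of $\alpha$ yields $i_{1}\leq i_{2}$; this shows the pullback order on $P$ is exactly the order inherited from $[n+r]$, hence total. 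With the overstated ``mutual equivalence'' weakened to that one implication, your proof is correct and coincides with the paper's.
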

\begin{proof}
	Let $(i,j)$ be a pair of elements of $P$. We can assume that $\beta(i)\leq\beta(j)$ holds. Then
	\begin{align*}
		\chain{\Gamma_{2}}{\fs{}}(i)&\leq\chain{\Gamma_{2}}{\fs{}}(j),\\
		\alpha\chain{\Gamma_{2}}{\bs{}}(i)&\leq\alpha\chain{\Gamma_{2}}{\bs{}}(j)
	\end{align*}
	holds. Since $\alpha$ is injective, $\chain{\Gamma_{2}}{\bs{}}(i)\leq\chain{\Gamma_{2}}{\bs{}}(j)$ holds in $[m]$. Thus $i\leq j$ holds.
\end{proof}
\begin{lem}\label{Gl Lem7}
	Let $\alpha\colon[m]\to[n]$ be an order-preserving map and $\Gamma\colon[m+r]\hookrightarrow[m]\times[r]$ be a maximal chain. Then there exists a unique pair $(\alpha_{\ast}\Gamma\colon[n+r]\hookrightarrow[n]\times[r],\Gamma_{\ast}\alpha\colon[m+r]\to[n+r])$ of a maximal chain and an order-preserving map which satisfies
	$$(\alpha\times\id{})\Gamma=(\alpha_{\ast}\Gamma)(\Gamma^{\ast}\alpha).$$
	\begin{center}
	\begin{tikzpicture}[auto]
		\node (11) at (0, 1) {$[m+r]$};
		\node (12) at (3, 1) {$[n+r]$};
		\node (21) at (0, 0) {$[m]\times[r]$};
		\node (22) at (3, 0) {$[n]\times[r]$};
		\path[draw, ->, dashed] (11) --node {$\scriptstyle \Gamma^{\ast}\alpha$} (12);
		\path[draw, right hook->] (11) --node[swap] {$\scriptstyle \Gamma$} (21);
		\path[draw, right hook->, dashed] (12) --node {$\scriptstyle \alpha_{\ast}\Gamma$} (22);
		\path[draw, ->] (21) --node {$\scriptstyle \alpha\times\id{}$} (22);
	\end{tikzpicture}
	\end{center}
\end{lem}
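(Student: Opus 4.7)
The plan is to construct the pair $(\alpha_{\ast}\Gamma,\Gamma^{\ast}\alpha)$ explicitly by ``filling in'' the image of the composite $f\coloneqq(\alpha\times\id{})\Gamma\colon[m+r]\to[n]\times[r]$ to a maximal chain, and then to establish uniqueness by a rigidity argument. First I would observe that, since $\Gamma$ is maximal, consecutive values $\Gamma(i),\Gamma(i+1)$ differ by $(1,0)$ or $(0,1)$, and hence $f(i+1)-f(i)$ is either $(\alpha(\chain{\Gamma}{\bs{}}(i)+1)-\alpha(\chain{\Gamma}{\bs{}}(i)),0)$ (a base jump of nonnegative size, possibly zero when $\alpha$ fails to be injective) or $(0,1)$. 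In particular, the fiber coordinate of $f$ increases by at most $1$ from one step to the next.

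For existence, I would define the candidate maximal chain $\alpha_{\ast}\Gamma\colon[n+r]\hookrightarrow[n]\times[r]$ by concatenating three blocks of unit steps: $\alpha(0)$ horizontal steps from $(0,0)$ to $(\alpha(0),0)$; then, for each $i=0,\dots,m+r-1$, either a single $(0,1)$ step (when $\Gamma$ takes a fiber step at $i$) or $\alpha(\chain{\Gamma}{\bs{}}(i)+1)-\alpha(\chain{\Gamma}{\bs{}}(i))$ horizontal steps (when $\Gamma$ takes a base step); and finally $n-\alpha(m)$ horizontal steps from $(\alpha(m),r)$ to $(n,r)$. A direct count gives $\alpha(0)+(\alpha(m)-\alpha(0))+r+(n-\alpha(m))=n+r$ total steps, confirming this is indeed a maximal chain. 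I would then set $\Gamma^{\ast}\alpha(i)\coloneqq\alpha(\chain{\Gamma}{\bs{}}(i))+\chain{\Gamma}{\fs{}}(i)$; order-preservation is immediate from the case analysis above, and Proposition \ref{GC Lem3} applied to $\alpha_{\ast}\Gamma$ shows that $(\alpha_{\ast}\Gamma)(\Gamma^{\ast}\alpha)(i)=f(i)$ for every $i$.

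Uniqueness will rest on the single observation that, in any maximal chain of $[n]\times[r]$, a segment joining two points with equal fiber coordinate must consist entirely of $(1,0)$ steps, because each step changes the fiber coordinate by $0$ or $1$ and the total fiber displacement is $0$. Given any factorization $f=g\circ h$ with $g$ a maximal chain and $h$ order-preserving, the image of $g$ must pass through $(0,0)$, every $f(i)$, and $(n,r)$ in that order. Applying the observation to the initial segment from $(0,0)$ to $(\alpha(0),0)$, to each internal segment from $f(i)$ to $f(i+1)$ whose fiber coordinate is unchanged, and to the terminal segment from $(\alpha(m),r)$ to $(n,r)$, forces $g$ to agree with the chain constructed above; uniqueness of $h$ then follows from injectivity of $g$.

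The main subtlety I expect is not conceptual but notational bookkeeping: one must carefully verify that the three-block concatenation really defines an injective order-preserving map $[n+r]\to[n]\times[r]$, and match this against the closed form $\Gamma^{\ast}\alpha(i)=\alpha(\chain{\Gamma}{\bs{}}(i))+\chain{\Gamma}{\fs{}}(i)$. Once the ``no fiber jumps'' property inherited from maximality of $\Gamma$ is in hand, there is essentially no freedom left in choosing $\alpha_{\ast}\Gamma$, and the argument reduces to checking an index-by-index identity.
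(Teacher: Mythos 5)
Your proposal is correct and turns on the same combinatorial mechanism as the paper's proof: the fiber steps are pinned (the paper's formula $\beta\fs{\Gamma}(j)=\alpha\chain{\Gamma}{\bs{}}\fs{\Gamma}(j)+j$ is exactly your closed form $\Gamma^{\ast}\alpha(i)=\alpha(\chain{\Gamma}{\bs{}}(i))+\chain{\Gamma}{\fs{}}(i)$ evaluated at $i=\fs{\Gamma}(j)$), and the stretches of constant fiber coordinate are forced to be horizontal runs, which rigidly determines the maximal chain. The only difference is organizational: the paper phrases this as a recovery/uniqueness argument via the partition of $[n+r]$ cut out by $\beta\fs{\Gamma}$ and leaves the existence construction implicit, whereas you build $\alpha_{\ast}\Gamma$ explicitly first and then run the same rigidity argument, which is a harmless and in fact clarifying reorganization.
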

\begin{proof}
	Assume that there exists a pair $(\Gamma',\beta)$ satisfies the above conditions. For each $j>0$,
	$$\chain{\Gamma'}{\fs{}}\beta(\fs{\Gamma}(j)-1)=\chain{\Gamma}{\fs{}}(\fs{\Gamma}(j)-1)<\chain{\Gamma}{\fs{}}\fs{\Gamma}(j)=\chain{\Gamma'}{\fs{}}\beta\fs{\Gamma}(j)$$
	holds thus $\beta(\fs{\Gamma}(j)-1)<\beta\fs{\Gamma}(j)$ holds. Assume that $\beta(\fs{\Gamma}(j)-1)+1<\beta\fs{\Gamma}(j)$ holds. Then
	$$\beta(\fs{\Gamma}(j)-1)<\beta(\fs{\Gamma}(j)-1)+1<\beta\fs{\Gamma}(j)$$
	holds. Hence
	$$(\alpha\times\id{})\Gamma(\fs{\Gamma}(j)-1)<\Gamma'(\beta(\fs{\Gamma}(j)-1)+1)<(\alpha\times\id{})\Gamma\fs{\Gamma}(j)$$
	follows from injectivity of $\Gamma'$. Therefore
	$$\alpha\chain{\Gamma}{\bs{}}(\fs{\Gamma}(j)-1)\leq\chain{\Gamma'}{\bs{}}(\beta(\fs{\Gamma}(j)-1)+1)\leq\alpha\chain{\Gamma}{\bs{}}\fs{\Gamma}(j)=\alpha\chain{\Gamma}{\bs{}}(\fs{\Gamma}(j)-1)$$
	holds. On the other hands,
	$$j-1=\chain{\Gamma}{\fs{}}(\fs{\Gamma}(j)-1)\leq\chain{\Gamma'}{\fs{}}(\beta(\fs{\Gamma}(j)-1)+1)\leq\chain{\Gamma}{\fs{}}\fs{\Gamma}(j)=j$$
	holds. It contradicts injectivity of $\Gamma'$. Hence $\beta(\fs{\Gamma}(j)-1)=\beta\fs{\Gamma}(j)-1$ holds. Define $\beta\fs{\Gamma}(r+1)$ as $n+r+1$. Then we obtain a partition
	\begin{align}\label{partition}
		[n+r]=\bigcup_{j=0}^{r}\{i\in[n+r]|\beta\fs{\Gamma}(j)\leq i<\beta\fs{\Gamma}(j+1)\}.
	\end{align}
	Let $j$ be a non-negative integer that satisfies $j\leq r$. Since, for each $i\in[n+r]$ satisfying $\beta\fs{\Gamma}(j)\leq i<\beta\fs{\Gamma}(j+1)$,
	\begin{align*}
		j
			=
		\chain{\Gamma}{\fs{}}\fs{\Gamma}(j)
			=
		\chain{\Gamma'}{\fs{}}\beta\fs{\Gamma}(j)
			\leq
		\chain{\Gamma'}{\fs{}}(i)
			\leq
		\chain{\Gamma'}{\fs{}}(\beta\fs{\Gamma}(j+1)-1)
			=
		\chain{\Gamma'}{\fs{}}\beta(\fs{\Gamma}(j+1)-1)
			=
		\chain{\Gamma}{\fs{}}(\fs{\Gamma}(j+1)-1)
			=
		j
	\end{align*}
	holds, $\chain{\Gamma'}{\fs{}}(i)=j$ holds. Hence
	$$\chain{\Gamma'}{\bs{}}\beta\fs{\Gamma}(j)\leq\chain{\Gamma'}{\bs{}}(i)<\chain{\Gamma'}{\bs{}}(\beta\fs{\Gamma}(j+1)-1)\leq\chain{\Gamma'}{\bs{}}\beta\fs{\Gamma}(j+1)$$
	holds for ecah $i\in[n+r]$ satisfies $\beta\fs{\Gamma}(j)\leq i<\beta\fs{\Gamma}(j+1)-1$. In addition,
	\begin{align*}
		\left|\bigcup_{j=0}^{r}\{i\in[n+r]|\beta\fs{\Gamma}(j)\leq i<\beta\fs{\Gamma}(j+1)-1\}\cup\{n+r\}\right|
			&=
		\sum_{j=0}^{r}|\{i\in[n+r]|\beta\fs{\Gamma}(j)\leq i<\beta\fs{\Gamma}(j+1)-1\}|+1\\
			&=
		\sum_{j=0}^{r}((\beta\fs{\Gamma}(j+1)-1)-\beta\fs{\Gamma}(j))+1\\
			&=
		(n+r+1)-(r+1)+1\\
			&=
		n+1
	\end{align*}
	holds. Therefore $(\Gamma',\beta)$ can be recovered from the partition \ref{partition} of $[n+r]$ which is determined by $\beta\fs{\Gamma}$. Furthermore
	\begin{align*}
		\beta\fs{\Gamma}(j)
			&=
		\sum_{l=0}^{j-1}|\{i\in[n+r]|\beta\fs{\Gamma}(j)\leq i<\beta\fs{\Gamma}(j+1)\}|\\
			&=
		\sum_{l=0}^{j-1}|\{\chain{\Gamma'}{\bs{}}(i)\in[n]|\beta\fs{\Gamma}(j)\leq i<\beta\fs{\Gamma}(j+1)\}|\\
			&=
		\sum_{l=0}^{j-1}(\alpha\chain{\Gamma}{\bs{}}\fs{\Gamma}(l+1)+1-\alpha\chain{\Gamma}{\bs{}}\fs{\Gamma}(l))\\
			&=
		\alpha\chain{\Gamma}{\bs{}}\fs{\Gamma}(j)+j
	\end{align*}
	holds, therefore $(\Gamma',\beta)$ is determined by $\Gamma$ and $\alpha$.
\end{proof}
\begin{prop}\label{Gl Lem8}
	let $\Gamma\colon[n+r]\hookrightarrow[n]\times[r]$ be a maximal chain. Then, for each $v\in\mathbb{O}_{\Gamma}$, there exists a unique pair of a maximal chain $\Gamma_{v}\colon[n+r-1]\to[n]\times[r-1]$ and an element $h\in[r]$ which satisfies
	$$\Gamma\delta_{v}=(1\times\delta_{h})\Gamma_{v}.$$
	\begin{center}
	\begin{tikzpicture}[auto]
		\node (11) at (0, 1) {$[n+r-1]$};
		\node (12) at (3, 1) {$[n+r]$};
		\node (21) at (0, 0) {$[n]\times[r-1]$};
		\node (22) at (3, 0) {$[n]\times[r]$};
		\path[draw, ->] (11) --node {$\scriptstyle \delta_{v}$} (12);
		\path[draw, right hook->] (11) --node[swap] {$\scriptstyle \Gamma_{v}$} (21);
		\path[draw, right hook->] (12) --node {$\scriptstyle \Gamma$} (22);
		\path[draw, ->] (21) --node {$\scriptstyle \id{}\times\delta_{h}$} (22);
	\end{tikzpicture}.
	\end{center}
\end{prop}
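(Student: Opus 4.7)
The plan is to identify the ``missing fiber coordinate'' $h$ directly from $v$, and then construct $\Gamma_v$ by deleting that coordinate from $\Gamma\delta_v$. The crucial observation is that for $v\in\mathbb{O}_\Gamma$, both transitions of $\Gamma$ adjacent to $v$ increase the fiber coordinate, hence $\chain{\Gamma}{\fs{}}^{-1}(\chain{\Gamma}{\fs{}}(v))=\{v\}$, so exactly one fiber value disappears after passing to $\Gamma\delta_v$.

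First, I would verify this key characterization. Interpreting $\mathbb{O}_\Gamma$ as $\mathsf{Out}(\Gamma)$, for an interior vertex $v\in\mathsf{Out}(\Gamma)\subset[r]\cact{\Gamma}{}$ the hypothesis $v\notin\mathsf{Inn}_{\fs{}}(\Gamma)\cup\mathsf{Inn}_{\bs{}}(\Gamma)$ rules out the mixed ``fiber-then-base'' and ``base-then-fiber'' configurations at $v$, while membership $v\in[r]\cact{\Gamma}{}$ rules out the purely base configuration (an interior vertex of a base-run lies neither in the image of $\fs{\Gamma}$ nor in that of $\us{\Gamma}$, by direct inspection of those definitions). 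Thus both adjacent transitions are fiber steps, i.e.\ $\chain{\Gamma}{\fs{}}(v-1)<\chain{\Gamma}{\fs{}}(v)<\chain{\Gamma}{\fs{}}(v+1)$. For the boundary vertices $v=0$ and $v=n+r$, the same conclusion follows by unwinding the definitions: $0\in[r]\cact{\Gamma}{}$ forces the very first step to be a fiber step, and dually at the top.

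Next, set $h\coloneqq\chain{\Gamma}{\fs{}}(v)$. The previous step gives $\chain{\Gamma}{\fs{}}^{-1}(h)=\{v\}$, so $\chain{\Gamma}{\fs{}}\circ\delta_v\colon[n+r-1]\to[r]$ is order-preserving with image $[r]\setminus\{h\}$. Consequently there is a unique order-preserving $g\colon[n+r-1]\to[r-1]$ with $\delta_h\circ g=\chain{\Gamma}{\fs{}}\delta_v$, and I define $\Gamma_v\coloneqq(\chain{\Gamma}{\bs{}}\delta_v,\,g)\colon[n+r-1]\to[n]\times[r-1]$. By construction $(1\times\delta_h)\Gamma_v=(\chain{\Gamma}{\bs{}}\delta_v,\,\delta_h g)=(\chain{\Gamma}{\bs{}}\delta_v,\,\chain{\Gamma}{\fs{}}\delta_v)=\Gamma\delta_v$.

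Finally, $\Gamma_v$ is a maximal chain: order-preservation is immediate; injectivity follows since $\Gamma\delta_v$ is injective (as $\Gamma$ and $\delta_v$ are) and $1\times\delta_h$ is a monomorphism; and since $\Gamma_v$ is then an injective order-preserving map from a set of size $(n+r-1)+1=n+(r-1)+1$ into $[n]\times[r-1]$, maximality is automatic. Uniqueness of the pair is immediate from the construction: $h$ must equal the unique element of $[r]\setminus\mathrm{Im}(\chain{\Gamma}{\fs{}}\delta_v)$, and $\Gamma_v$ is then forced by factoring through the monomorphism $1\times\delta_h$. I expect the main obstacle to be the bookkeeping in Step 1 at the boundary vertices $v\in\{0,n+r\}$, since the definition of $\mathsf{Out}(\Gamma)$ literally refers to $\chain{\Gamma}{\fs{}}(v-1)$ and $\Gamma(v+1)$, which require careful reinterpretation at the endpoints.
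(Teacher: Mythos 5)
Your proposal is correct and follows essentially the same route as the paper: in both arguments the point is that $v\in\mathbb{O}_{\Gamma}$ forces the fiber value $h=\chain{\Gamma}{\fs{}}(v)$ to be attained only at $v$, so $\chain{\Gamma}{\fs{}}\delta_{v}$ misses exactly $h$ and the factorization through $\id{}\times\delta_{h}$ (equivalently, the paper's explicit formula for $\Gamma_{v}$) exists and is forced, with uniqueness and maximality automatic. Your first step, unwinding the definition of $\mathsf{Out}(\Gamma)$ to see that the steps adjacent to $v$ are fiber steps, is precisely the justification the paper leaves implicit when it asserts $\chain{\Gamma}{\fs{}}\delta_{v}(i)\neq\chain{\Gamma}{\fs{}}(v)$.
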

\begin{proof}
	For each element $i\in[n+r-1]$, the folowing holds:
	\begin{align*}
		\chain{\Gamma}{\fs{}}\delta_{v}(i)
			&=
		\begin{cases}
			\chain{\Gamma}{\fs{}}(i)&(i<v)\\
			\chain{\Gamma}{\fs{}}(i+1)&(i\geq v)
		\end{cases}\\
			&\neq
		\chain{\Gamma}{\fs{}}(v)
	\end{align*}
	Thus, for any pair $(\Gamma_{v},h)$ which satisfies the above condition, $h=\chain{\Gamma}{\fs{}}(v)$ and
	\begin{align*}
		\Gamma_{v}(i)
			=
		\begin{cases}
			\Gamma(i)&(i<v)\\
			(\chain{\Gamma}{\bs{}}(i),\chain{\Gamma}{\fs{}}(i+1)-1)&(i\geq v)
		\end{cases}
	\end{align*}
	hold.
\end{proof}

Let $n$ and $r$ be non-negative integers, $\mathfrak{g}$ be a connected $L_{\infty}$-algebra and $\Gamma\colon[n+r]\hookrightarrow[n]\times[r]$ be a maximal chain. Since $\mathrm{Im}\bs{\Gamma}\cap\mathrm{Im}\fs{\Gamma}=\emptyset$ and $\mathrm{Im}\bs{\Gamma}\cup\mathrm{Im}\fs{\Gamma}=[n+r]$ hold, we can define a (differential graded) ring morphism
$$\qdR{\dq}{}{n+r}{\mathbb{U}_{\infty}\mathfrak{g}}^{\wedge}\xhookrightarrow{\incl{\Gamma}}\qdR{\dq}{}{n,r}{\mathbb{U}_{\infty}\mathfrak{g}}^{\wedge}\coloneqq\prod_{p+\bullet=q}\mathbb{U}_{\infty}\mathfrak{g}_{p}\otimes\mathsf{Sym}(\langle\df{}\bs{1},\dots,\df{}\bs{n},\df{}\fs{1},\dots,\df{}\fs{r}\rangle_{\Z\langle\dq,\bs{1},\dots,\bs{n},\fs{1},\dots,\fs{r}\rangle}[1])^{q}$$
as follows:
\begin{align*}
	x_{i}^{[N]}
		&\mapsto
	\begin{cases}
		\bs{j}^{[N]}&(\bs{\Gamma}(j)=i)\\
		\fs{j}^{[N]}&(\fs{\Gamma}(j)=i)
	\end{cases},&
	\df{}x_{i}
		&\mapsto
	\begin{cases}
		\df{}\bs{j}&(\bs{\Gamma}(j)=i)\\
		\df{}\fs{j}&(\fs{\Gamma}(j)=i)
	\end{cases}.
\end{align*}
On the other hand, we obtain a retraction $\mathrm{re}_{\Gamma}\colon\qdR{\dq}{}{n,r}{\mathbb{U}_{\infty}\mathfrak{g}}^{\wedge}\hookrightarrow\qdR{\dq}{}{n+r}{\mathbb{U}_{\infty}\mathfrak{g}}^{\wedge}$ as
\begin{align*}
	\bs{i}^{[N]}
		&\mapsto
	x_{\bs{\Gamma}(i)}^{[N]},&
	\df{}\bs{i}
		&\mapsto
	\df{}x_{\bs{\Gamma}(i)},\\
	\fs{i}^{[N]}
		&\mapsto
	x_{\fs{\Gamma}(i)}^{[N]},&
	\df{}\fs{i}
		&\mapsto
	\df{}x_{\fs{\Gamma}(i)}.
\end{align*}
They give morphisms as follows:
\begin{align*}
	\prod_{\Gamma\colon[n+r]\hookrightarrow[n]\times[r]}\incl{\Gamma}&\colon\prod_{\Gamma}\qdR{\dq}{}{n+r}{\mathbb{U}_{\infty}\mathfrak{g}}^{\wedge}\to\prod_{\Gamma}\qdR{\dq}{}{n,r}{\mathbb{U}_{\infty}\mathfrak{g}}^{\wedge}&(\omega_{\Gamma})_{\Gamma}&\mapsto(\incl{\Gamma}(\omega_{\Gamma}))_{\Gamma},\\
	\prod_{\Gamma\colon[n+r]\hookrightarrow[n]\times[r]}\mathrm{re}_{\Gamma}&\colon\prod_{\Gamma}\qdR{\dq}{}{n,r}{\mathbb{U}_{\infty}\mathfrak{g}}^{\wedge}\to\prod_{\Gamma}\qdR{\dq}{}{n+r}{\mathbb{U}_{\infty}\mathfrak{g}}^{\wedge}&(\omega_{\Gamma})_{\Gamma}&\mapsto(\mathrm{re}_{\Gamma}(\omega_{\Gamma}))_{\Gamma}.
\end{align*}
In addition, by using partition \ref{chain partition}, we obtain an embedding
$$[\simplex{}{r},\qdR{\dq}{}{}{\mathbb{U}_{\infty}\mathfrak{g}}^{\wedge}]_{n}\cong\Hom{}(\bigcup_{\Gamma}\simplex{}{n+r},\qdR{\dq}{}{}{\mathbb{U}_{\infty}\mathfrak{g}}^{\wedge})\subset\prod_{\Gamma}\qdR{\dq}{}{n+r}{\mathbb{U}_{\infty}\mathfrak{g}}^{\wedge}\subset\prod_{\Gamma}\qdR{\dq}{}{n,r}{\mathbb{U}_{\infty}\mathfrak{g}}^{\wedge}.$$
Each order-preserving map $\alpha\colon[m]\to[n]$ gives a dg algebra morphism $\alpha\colon\qdR{\dq}{}{n,r}{\mathbb{U}_{\infty}\mathfrak{g}}^{\wedge}\to\qdR{\dq}{}{m,r}{\mathbb{U}_{\infty}\mathfrak{g}}^{\wedge}$ as
\begin{align*}
	\bs{i}^{[N]}
		&\mapsto
	\begin{cases}
		\bs{\min\{j|\alpha(j)\geq i\}}^{[N]}&(\alpha(m)\geq i)\\
		0&(\alpha(m)<i)
	\end{cases},&
	\df{}\bs{i}
		&\mapsto
	\begin{cases}
		\df{}\bs{\min\{j|\alpha(j)\geq i\}}&(\alpha(m)\geq i)\\
		0&(\alpha(m)<i)
	\end{cases},\\
	\fs{i}^{[N]}
		&\mapsto
	\fs{i}^{[N]},&
	\df{}\fs{i}
		&\mapsto
	\df{}\fs{i}.
\end{align*}
Furthermore, by using Lemma \ref{Gl Lem7}, we obtain a morphism
\begin{center}
\begin{tikzpicture}[auto]
	\node (11) at (0, 1) {$\prod_{\Gamma}\qdR{\dq}{}{n,r}{\mathbb{U}_{\infty}\mathfrak{g}}^{\wedge}$};
	\node (12) at (3, 1) {$\prod_{\Gamma}\qdR{\dq}{}{m,r}{\mathbb{U}_{\infty}\mathfrak{g}}^{\wedge}$};
	\node (21) at (0, 0) {$\qdR{\dq}{}{n,r}{\mathbb{U}_{\infty}\mathfrak{g}}^{\wedge}$};
	\node (22) at (3, 0) {$\qdR{\dq}{}{m,r}{\mathbb{U}_{\infty}\mathfrak{g}}^{\wedge}$};
	\path[draw, ->] (21) --node[swap] {$\scriptstyle \alpha$} (22);
	\path[draw, ->] (11) --node[swap] {$\scriptstyle \proj{\alpha_{\ast}\Gamma}$} (21);
	\path[draw, ->] (12) --node {$\scriptstyle \proj{\Gamma}$} (22);
	\path[draw, ->, dashed] (11) --node {$\scriptstyle \alpha$} (12);
\end{tikzpicture}.
\end{center}
\begin{prop}\label{Stb Lem1}
	Let $\alpha\colon[m]\to[n]$ be an order-preserving map and $\textcolor{eldritch}{P}\colon[m+r]\hookrightarrow[m]\times[r]$ be a maximal chain. Furthermore, let $(\alpha_{\ast}P\colon[n+r]\hookrightarrow[n]\times[r],P_{\ast}\alpha\colon[m+r]\to[n+r])$ be a pair of a maximal chain and an order-preserving map such that the following diagram commute:
	\begin{center}
	\begin{tikzpicture}[auto]
		\node (11) at (0, 1) {$[m+r]$};
		\node (12) at (3, 1) {$[n+r]$};
		\node (21) at (0, 0) {$[m]\times[r]$};
		\node (22) at (3, 0) {$[n]\times[r]$};
		\path[draw, ->, dashed] (11) --node {$\scriptstyle P^{\ast}\alpha$} (12);
		\path[draw, right hook->] (11) --node[swap] {$\scriptstyle P$} (21);
		\path[draw, right hook->, dashed] (12) --node {$\scriptstyle \alpha_{\ast}P$} (22);
		\path[draw, ->] (21) --node {$\scriptstyle \alpha\times\id{}$} (22);
	\end{tikzpicture}.
	\end{center}
	Then the following holds for each $1\leq i\leq n+r$:
	\begin{align*}
		\alpha^{\ast}\incl{\alpha_{\ast}P}(X_{i}^{[N]})=\incl{P}(P^{\ast}\alpha)^{\ast}(X_{i}^{[N]}).
	\end{align*}
\end{prop}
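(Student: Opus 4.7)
The plan is to verify the identity directly on the generator $x_{i}^{[N]}$, case by case in $i$. The commutativity $(\alpha\times\id{})P=(\alpha_{\ast}P)(P^{\ast}\alpha)$ is equivalent to the two coordinatewise identities
\begin{align*}
\chain{(\alpha_{\ast}P)}{\bs{}}\circ(P^{\ast}\alpha)&=\alpha\circ\chain{P}{\bs{}},\\
\chain{(\alpha_{\ast}P)}{\fs{}}\circ(P^{\ast}\alpha)&=\chain{P}{\fs{}},
\end{align*}
which, together with Proposition \ref{Gl Lem2} and the monotonicity/injectivity of $\bs{\Gamma}$, $\fs{\Gamma}$, $\chain{\Gamma}{\bs{}}$ for maximal chains, are the main tools. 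Since $\alpha_{\ast}P$ is a maximal chain, $\mathrm{Im}\,\bs{\alpha_{\ast}P}\cup\mathrm{Im}\,\fs{\alpha_{\ast}P}=[n+r]$ with intersection $\{0\}$, so each $i\geq 1$ lies in exactly one of the two images, yielding two disjoint cases.

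In the base case $i=\bs{\alpha_{\ast}P}(j)$ with $j\geq 1$, the left-hand side unfolds to $\alpha^{\ast}(\bs{j}^{[N]})$, which by definition equals $\bs{k_{0}}^{[N]}$ with $k_{0}=\min\{k\mid\alpha(k)\geq j\}$ when $\alpha(m)\geq j$, and equals $0$ otherwise. For the right-hand side, I would set $l_{0}:=\min\{l\mid(P^{\ast}\alpha)(l)\geq i\}$ and prove $l_{0}=\bs{P}(k_{0})$ in two steps: (i) applying $\alpha_{\ast}P$ to $(P^{\ast}\alpha)(\bs{P}(k_{0}))$ yields a point whose $\chain{(\alpha_{\ast}P)}{\bs{}}$-value is $\alpha(k_{0})\geq j$, so the minimality of $i=\bs{\alpha_{\ast}P}(j)$ in the fiber of $\chain{(\alpha_{\ast}P)}{\bs{}}$ over $j$ forces $(P^{\ast}\alpha)(\bs{P}(k_{0}))\geq i$; (ii) conversely, any $l<\bs{P}(k_{0})$ with $(P^{\ast}\alpha)(l)\geq i$ would give $\alpha\chain{P}{\bs{}}(l)\geq j$, hence $\chain{P}{\bs{}}(l)\geq k_{0}$ and therefore $l\geq\bs{P}(\chain{P}{\bs{}}(l))\geq\bs{P}(k_{0})$, a contradiction. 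Then $\incl{P}(x_{\bs{P}(k_{0})}^{[N]})=\bs{k_{0}}^{[N]}$ matches the LHS. The subcase $j>\alpha(m)$ is treated separately: the first coordinate identity at $l=m+r$ gives $\chain{(\alpha_{\ast}P)}{\bs{}}(P^{\ast}\alpha)(m+r)=\alpha(m)<j=\chain{(\alpha_{\ast}P)}{\bs{}}(i)$, which by monotonicity forces $(P^{\ast}\alpha)(m+r)<i$, so both sides vanish.

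The fiber case $i=\fs{\alpha_{\ast}P}(j)$ with $j\geq 1$ is simpler. The LHS equals $\fs{j}^{[N]}$ since $\alpha^{\ast}$ fixes fiber generators. Proposition \ref{Gl Lem2} gives $(P^{\ast}\alpha)\fs{P}=\fs{\alpha_{\ast}P}$, so $(P^{\ast}\alpha)(\fs{P}(j))=i$; minimality of $\fs{P}(j)$ among $\{l\mid(P^{\ast}\alpha)(l)\geq i\}$ follows from the second coordinate identity, via a contradiction argument parallel to step (ii) in the base case: any smaller $l$ with $(P^{\ast}\alpha)(l)\geq i$ would give $\chain{P}{\fs{}}(l)\geq j$, forcing $l\geq\fs{P}(j)$. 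Thus $l_{0}=\fs{P}(j)$ and $\incl{P}(x_{\fs{P}(j)}^{[N]})=\fs{j}^{[N]}$ matches.

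The main obstacle is the base case for general (non-injective) $\alpha$: one must carefully pin down $l_{0}=\bs{P}(k_{0})$ through the interplay of $\bs{P}$, $\chain{P}{\bs{}}$, $\bs{\alpha_{\ast}P}$, and $\alpha$, and also handle the corner subcase $j>\alpha(m)$ where the formulas truncate to $0$ on both sides. Once these are managed, both sides reduce to the same monomial and the identity follows.
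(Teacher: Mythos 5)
Your proof is correct and takes essentially the same route as the paper: both split according to whether $i$ lies in $\mathrm{Im}\,\bs{\alpha_{\ast}P}$ or $\mathrm{Im}\,\fs{\alpha_{\ast}P}$, identify $\min\{l\mid(P^{\ast}\alpha)(l)\geq i\}$ with $\bs{P}(k_{0})$ resp. $\fs{P}(j)$ using the coordinatewise form of the commuting square (with Proposition \ref{Gl Lem2} for the fiber direction), and handle the truncated subcase $\alpha(m)<j$ so that both sides vanish. Your minimality argument in the base case, ruling out all $l<\bs{P}(k_{0})$ at once, is in fact slightly more careful than the paper's check at the single point $\bs{P}(k_{0}-1)$.
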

\begin{proof}
	Recall that there is a partition of $[n+r]$ into $\mathrm{Im}\bs{\alpha_{\ast}\Gamma}\cup\mathrm{Im}\fs{\alpha_{\ast}\Gamma}$ since $\alpha_{\ast}\Gamma$ is a maximal chain.
	
	First, assume that there exists an element $j\in[n]$ satisfying $\bs{\alpha_{\ast}\Gamma}(j)=i$. Then
	\begin{align*}
		\chain{(\alpha_{\ast}\Gamma)}{\bs{}}(\Gamma^{\ast}\alpha)\bs{\Gamma}(\min\{h|\alpha(h)\geq j\})
			&=
		\alpha\chain{\Gamma}{\bs{}}\bs{\Gamma}(\min\{h|\alpha(h)\geq j\})\\
			&=
		\alpha(\min\{h|\alpha(h)\geq j\})\\
			&\geq
		j\\
		\chain{(\alpha_{\ast}\Gamma)}{\bs{}}(\Gamma^{\ast}\alpha)\bs{\Gamma}(\min\{h|\alpha(h)\geq j\}-1)
			&=
		\alpha\chain{\Gamma}{\bs{}}\bs{\Gamma}(\min\{h|\alpha(h)\geq j\}-1)\\
			&=
		\alpha(\min\{h|\alpha(h)\geq j\}-1)\\
			&<
		j
	\end{align*}
	hold, therefore
	\begin{align*}
		(\Gamma^{\ast}\alpha)\bs{\Gamma}(\min\{h|\alpha(h)\geq j\})
			&\geq
		\bs{\alpha_{\ast}\Gamma}(j)\\
			&=
		i\\
		(\Gamma^{\ast}\alpha)\bs{\Gamma}(\min\{h|\alpha(h)\geq j\}-1)
			&<
		\bs{\alpha_{\ast}\Gamma}(j)\\
			&=
		i
	\end{align*}
	hold. Hence $\bs{\Gamma}(\min\{h|\alpha(h)\geq j\})=\min\{h|(\Gamma^{\ast}\alpha)(h)\geq i\}$ holds.
	
	Next, assume that there exists an element $j\in[n]$ satisfying $\fs{\alpha_{\ast}\Gamma}(j)=i$. Then
	$$(\Gamma^{\ast}\alpha)\fs{\Gamma}(j)=\fs{\alpha_{\ast}\Gamma}(j)=i$$
	follows from Proposition \ref{Gl Lem2}. On the other hands
	$$\chain{(\alpha_{\ast}\Gamma)}{\fs{}}(\Gamma^{\ast}\alpha)(\fs{\Gamma}(j)-1)=\chain{\Gamma}{\fs{}}(\fs{\Gamma}(j)-1)=j-1$$
	hold therefore
	$$(\Gamma^{\ast}\alpha)(\fs{\Gamma}(j)-1)<\fs{\alpha_{\ast}\Gamma}(j)=i$$
	hold. Thus $\fs{\Gamma}(j)=\min\{h|(\Gamma^{\ast}\alpha)(h)\geq i\}$ follows.
	
	Therefore the following follows:
	\begin{align*}
		\alpha^{\ast}\incl{\alpha_{\ast}\Gamma}(x_{i}^{[N]})
			&=
		\begin{cases}
			\alpha^{\ast}(\bs{j}^{[N]})&(\bs{\alpha_{\ast}\Gamma}(j)=i)\\
			\alpha^{\ast}(\fs{j}^{[N]})&(\fs{\alpha_{\ast}\Gamma}(j)=i)
		\end{cases}\\
			&=
		\begin{cases}
			\bs{\min\{h|\alpha(h)\geq j\}}^{[N]}&(\bs{\alpha_{\ast}\Gamma}(j)=i,\alpha(m)\geq j)\\
			0&(\bs{\alpha_{\ast}\Gamma}(j)=i,\alpha(m)<j)\\
			\fs{j}^{[N]}&(\fs{\alpha_{\ast}\Gamma}(j)=i)
		\end{cases}\\
			&=
		\begin{cases}
			\bs{j}^{[N]}&(\bs{\Gamma}(j)=\min\{h|(\Gamma^{\ast}\alpha)(h)\geq i\})\\
			\fs{j}^{[N]}&(\fs{\Gamma}(j)=\min\{h|(\Gamma^{\ast}\alpha)(h)\geq i\})\\
			0&((\Gamma^{\ast}\alpha)(m)<i)
		\end{cases}\\
			&=
		\begin{cases}
			x_{\min\{h|(\Gamma^{\ast}\alpha)(h)\geq i\}}^{[N]}&((\Gamma^{\ast}\alpha)(m)\geq i)\\
			0&((\Gamma^{\ast}\alpha)(m)<i)
		\end{cases}\\
			&=
		\incl{\Gamma}(\Gamma^{\ast}\alpha)^{\ast}(x_{i}^{[N]}).
	\end{align*}
\end{proof}
\begin{lem}\label{Stb Lem2}
	Let $\mathfrak{g}$ be a connected $L_{\infty}$-algebra and $\omega$ be an $n$-simplex of $[\simplex{}{r},\qdR{\dq}{}{}{\mathbb{U}_{\infty}\mathfrak{g}}^{\wedge}]$. Then, if $\omega$ is non-degenerate, the $i$th face $d_{i}\omega$ is also non-degenerate for each integer $i=0,\dots,n$.
\end{lem}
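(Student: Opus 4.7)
The plan is to prove the contrapositive: assuming $d_i\omega$ is degenerate, construct a degeneracy for $\omega$ itself. Write $d_i\omega=s_j\tau$ for some $j$ and $(n-2)$-simplex $\tau$. Since an $n$-simplex of $[\simplex{}{r},\qdR{\dq}{}{}{\mathbb{U}_{\infty}\mathfrak{g}}^{\wedge}]$ is a simplicial map $\simplex{}{n}\times\simplex{}{r}\to\qdR{\dq}{}{}{\mathbb{U}_{\infty}\mathfrak{g}}^{\wedge}$, and degeneracy corresponds to factoring through $\sigma_k\times\id{}$ for some $k$, the goal is to produce an index $k$ and an $(n-1)$-simplex $\omega'$ with $\omega=s_k\omega'$.

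First I would unpack the hypothesis componentwise via the maximal-chain decomposition \ref{chain partition}. For each maximal chain $\Gamma\colon[n-1+r]\hookrightarrow[n-1]\times[r]$, Lemma \ref{Gl Lem7} yields factorizations
\begin{align*}
(\delta_i\times\id{})\Gamma=(\delta_i)_{\ast}\Gamma\circ\Gamma^{\ast}\delta_i,\qquad(\sigma_j\times\id{})\Gamma=(\sigma_j)_{\ast}\Gamma\circ\Gamma^{\ast}\sigma_j,
\end{align*}
where size counts force $\Gamma^{\ast}\delta_i=\delta_{i^{\sharp}(\Gamma)}\colon[n-1+r]\hookrightarrow[n+r]$ to be a face map and $\Gamma^{\ast}\sigma_j=\sigma_{j^{\sharp}(\Gamma)}\colon[n-1+r]\twoheadrightarrow[n-2+r]$ to be a degeneracy. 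Applying Lemma \ref{Stb Lem1} (which covers arbitrary order-preserving $\alpha$), the equation $d_i\omega=s_j\tau$ becomes, for every such $\Gamma$,
\begin{align*}
\delta_{i^{\sharp}(\Gamma)}^{\ast}\omega_{(\delta_i)_{\ast}\Gamma}=\sigma_{j^{\sharp}(\Gamma)}^{\ast}\tau_{(\sigma_j)_{\ast}\Gamma}.
\end{align*}
Since $\sigma_{k''}^{\ast}$ on $\qdR{\dq}{}{n-1+r}{}$ is precisely the map that omits the variable $x_{k''+1}$, the right-hand side does not involve $x_{j^{\sharp}(\Gamma)+1}$; chasing through $\delta_{i^{\sharp}(\Gamma)}^{\ast}$ then produces a missing-variable condition on $\omega_{(\delta_i)_{\ast}\Gamma}$ itself.

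Next I would choose the candidate degeneracy index $k$ for $\omega$ by matching simplicial identities: take $k=j$ when $j<i$ and $k=j+1$ when $j\geq i$, so that $d_i s_k$ composed with the appropriate face agrees with $s_j$. For each maximal chain $\Gamma'\colon[n+r]\hookrightarrow[n]\times[r]$, degeneracy of $\omega$ via $s_k$ is equivalent to $\omega_{\Gamma'}$ not involving the variable $x_{\bs{\Gamma'}(k+1)}$, which corresponds to the degeneracy $\sigma_{\bs{\Gamma'}(k+1)-1}$ produced by $(\Gamma')^{\ast}\sigma_k$. For $\Gamma'=(\delta_i)_{\ast}\Gamma$, this condition follows from the componentwise equation above, using Proposition \ref{Gl Lem4} to match $\bs{\Gamma'}(k+1)$ with the variable suppressed by the hypothesis. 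For the remaining $\Gamma'$, one uses that any two maximal chains of $[n]\times[r]$ are connected by a sequence of swap moves at inner vertices (the discussion surrounding \ref{chain partition} and Corollary \ref{GC Lem2}): Proposition \ref{Gl Lem5} tracks how $\bs{\Gamma'}(k+1)$ changes under a swap, while the compatibility of $\omega$ across the shared codimension-one face propagates the missing-variable condition from chain to neighboring chain. Once every $\omega_{\Gamma'}$ is shown to miss the appropriate variable, setting $\omega'_{\Gamma''}\coloneqq\omega_{(\delta_{k+1})_{\ast}\Gamma''}$ for maximal chains $\Gamma''\colon[n-1+r]\hookrightarrow[n-1]\times[r]$ yields an $(n-1)$-simplex with $s_k\omega'=\omega$.

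The main obstacle is the combinatorial bookkeeping in the propagation step. One must verify that the missing variable $x_{\bs{\Gamma'}(k+1)}$ transforms correctly under each swap at an inner vertex, so that the missing-variable condition on neighboring chains is equivalent (not merely analogous) after the swap; this relies on the explicit formulas in Proposition \ref{Gl Lem5}. The choice of $k$ in terms of $i$ and $j$ is essential for this matching to succeed uniformly across the connected swap graph on maximal chains, so that the condition initially established on chains of the form $(\delta_i)_{\ast}\Gamma$ spreads to every maximal chain of $[n]\times[r]$.
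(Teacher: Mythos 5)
Your reduction to a componentwise missing-variable statement is essentially the same strategy as the paper's (the paper works in the separated base/fiber coordinates via $\incl{\Gamma}$ and $\mathrm{re}_{\Gamma}$ rather than your chain-by-chain factorizations, but the shape is identical), and the decisive step is the same in both: from $\delta_{i^{\sharp}(\Gamma)}^{\ast}\omega_{(\delta_{i})_{\ast}\Gamma}=\sigma_{j^{\sharp}(\Gamma)}^{\ast}\tau_{(\sigma_{j})_{\ast}\Gamma}$, whose right-hand side omits a variable, you want to ``chase through $\delta^{\ast}$'' and conclude that $\omega_{(\delta_{i})_{\ast}\Gamma}$ itself omits a corresponding variable. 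That inference is invalid: the face maps on divided power algebras are not injective --- for interior $v$, $\delta_{v}^{\ast}$ merges $x_{v}$ and $x_{v+1}$, while $\delta_{0}^{\ast}$ sends $x_{1}\mapsto\dq$ and $\delta_{p}^{\ast}$ sends $x_{p}\mapsto 0$ --- so an element whose image omits a variable need not omit the preimage variables (cancellations can occur, or the variable is simply killed). The same objection undermines your propagation step: compatibility of two components along a shared codimension-one face again only constrains their images under such non-injective face maps, so the missing-variable condition does not transfer from one maximal chain to its neighbour.

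In fact no repair is possible, because the statement is false as written. Take $r=0$, so $[\simplex{}{0},\qdR{\dq}{}{}{\mathbb{U}_{\infty}\mathfrak{g}}^{\wedge}]\cong\qdR{\dq}{}{}{\mathbb{U}_{\infty}\mathfrak{g}}^{\wedge}$, let $n=2$ and $\omega=1\otimes(2x_{1}^{[2]}-x_{1}x_{2})$ with $1\in\mathbb{U}_{\infty}\mathfrak{g}_{0}$. The images of $s_{0}$ and $s_{1}$ consist of divided power polynomials in $\dq,x_{2}$ and in $\dq,x_{1}$ respectively, so $\omega$ is non-degenerate; yet $d_{1}\omega=1\otimes(2x_{1}^{[2]}-x_{1}\cdot x_{1})=0=s_{0}(0)$ is degenerate, and similarly $d_{2}(1\otimes x_{1}x_{2})=0$. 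Pulling these back along $\proj{\simplex{}{n}}$ produces counterexamples for every $r\geq 1$ as well. Note that the paper's own proof makes the same unjustified leap at its ``Thus'' (which moreover ignores the $x\mapsto 0$ and $x\mapsto\dq$ boundary cases and only ever sees the components indexed by chains in the image of $(\delta_{i})_{\ast}$), so your difficulty is not a missing trick on your side: the lemma needs additional hypotheses on $\omega$ before either your argument or the paper's can be completed.
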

\begin{proof}
	Proposition \ref{Stb Lem1} gives the following commutative diagram for each order-preserving map $\alpha\colon[m]\to[n]$:
	\begin{center}
	\begin{tikzpicture}[auto]
		\node (11) at (0, 1) {$[\simplex{}{r},\qdR{\dq}{}{}{\mathbb{U}_{\infty}\mathfrak{g}}^{\wedge}]_{n}$};
		\node (12) at (3, 1) {$\prod_{\Gamma}\qdR{\dq}{}{n+r}{\mathbb{U}_{\infty}\mathfrak{g}}^{\wedge}$};
		\node (13) at (7, 1) {$\prod_{\Gamma}\qdR{\dq}{}{n,r}{\mathbb{U}_{\infty}\mathfrak{g}}^{\wedge}$};
		\node (14) at (11, 1) {$\prod_{\Gamma}\qdR{\dq}{}{n+r}{\mathbb{U}_{\infty}\mathfrak{g}}^{\wedge}$};
		\node (21) at (0, 0) {$[\simplex{}{r},\qdR{\dq}{}{}{\mathbb{U}_{\infty}\mathfrak{g}}^{\wedge}]_{m}$};
		\node (22) at (3, 0) {$\prod_{\Gamma}\qdR{\dq}{}{m+r}{\mathbb{U}_{\infty}\mathfrak{g}}^{\wedge}$};
		\node (23) at (7, 0) {$\prod_{\Gamma}\qdR{\dq}{}{m,r}{\mathbb{U}_{\infty}\mathfrak{g}}^{\wedge}$};
		\node (24) at (11, 0) {$\prod_{\Gamma}\qdR{\dq}{}{m+r}{\mathbb{U}_{\infty}\mathfrak{g}}^{\wedge}$};
		\path[draw, ->] (11) --node[swap] {$\scriptstyle \alpha^{\ast}$} (21);
		\path[draw, ->] (12) --node[swap] {$\scriptstyle \alpha^{\ast}$} (22);
		\path[draw, ->] (13) --node {$\scriptstyle \alpha^{\ast}$} (23);
		\path[draw, ->] (14) --node {$\scriptstyle \alpha^{\ast}$} (24);
		\path[draw, right hook->] (11) -- (12);
		\path[draw, right hook->] (12) --node {$\scriptstyle \prod_{\Gamma}\incl{\Gamma}$} (13);
		\path[draw, ->>] (13) --node {$\scriptstyle \prod_{\Gamma}\mathrm{re}_{\Gamma}$} (14);
		\path[draw, right hook->] (21) -- (22);
		\path[draw, right hook->] (22) --node[swap] {$\scriptstyle \prod_{\Gamma}\incl{\Gamma}$} (23);
		\path[draw, ->>] (23) --node[swap] {$\scriptstyle \prod_{\Gamma}\mathrm{re}_{\Gamma}$} (24);
	\end{tikzpicture}.
	\end{center}
	Let $\omega$ be an $n$-simplices of $[\simplex{}{r},\qdR{\dq}{}{}{\mathbb{U}_{\infty}\mathfrak{g}}^{\wedge}]$ and $i$ be a non-negative integer satisfying $0\leq i\leq n$. In addition, assume that there exists a pair of an $(n-2)$-simplices $\tilde{\omega}\in[\simplex{}{r},\qdR{\dq}{}{}{\mathbb{U}_{\infty}\mathfrak{g}}^{\wedge}]_{n-2}$ and a non-negative integer satisfying $0\leq i\leq n-2$ which satisfies $d_{i}\omega=s_{j}\tilde{\omega}$. Since
	\begin{align*}
		\min\{l|\delta_{i}(l)\geq h\}
			&=
		\begin{cases}
			h&(h\leq i)\\
			h-1&(h>i)
		\end{cases},\\
		\min\{l|\delta_{i}(l)\geq h\}
			&=
		\begin{cases}
			h&(h\leq i)\\
			h+1&(h>i)
		\end{cases}\\
			&\neq
		j+1
	\end{align*}
	hold,
	\begin{align*}
		\{\bs{h}^{[N]}|\delta_{i}^{\ast}(\bs{h}^{[N]})\in\mathrm{Im}\sigma_{j}^{\ast}\}\cup\{\df{}\bs{h}|\delta_{i}^{\ast}(\df{}\bs{h})\in\mathrm{Im}\sigma_{j}^{\ast}\}
			&=
		\{\bs{h}^{[N]}|\delta_{i}^{\ast}(\bs{h}^{[N]})\neq\bs{j+1}^{[N]}\}\cup\{\df{}\bs{h}|\delta_{i}^{\ast}(\df{}\bs{h})\neq\df{}\bs{j+1}\}\\
			&=
		\begin{cases}
			\{\bs{h}^{[N]},\df{}\bs{h}|h\neq j+1\}&(j+1<i)\\
			\{\bs{h}^{[N]},\df{}\bs{h}|h\neq j+1,j+2\}&(j+1=i)\\
			\{\bs{h}^{[N]},\df{}\bs{h}|h\neq j+2\}&(j+1>i)
		\end{cases}
	\end{align*}
	hold. Thus
	\begin{align*}
		(\incl{\Gamma}(\omega))_{\Gamma}
			\in
		\begin{cases}
			\prod_{\Gamma}\underset{p+\bullet=q}{\prod}\mathbb{U}_{\infty}\mathfrak{g}_{p}\otimes\mathsf{Sym}(\langle\df{}\bs{1},\dots,\check{\df{}\bs{j+1}},\dots,\df{}\bs{n},\df{}\fs{1},\dots,\df{}\fs{r}\rangle_{\Z\langle\dq,\bs{1},\dots,\bs{n},\fs{1},\dots,\check{\bs{j+1}},\dots,\fs{r}\rangle}[1])^{q}&(j+1\leq i)\\
			\prod_{\Gamma}\underset{p+\bullet=q}{\prod}\mathbb{U}_{\infty}\mathfrak{g}_{p}\otimes\mathsf{Sym}(\langle\df{}\bs{1},\dots,\check{\df{}\bs{j+2}},\dots,\df{}\bs{n},\df{}\fs{1},\dots,\df{}\fs{r}\rangle_{\Z\langle\dq,\bs{1},\dots,\bs{n},\fs{1},\dots,\check{\bs{j+2}},\dots,\fs{r}\rangle}[1])^{q}&(j+1\geq i)
		\end{cases}
	\end{align*}
	holds. Hence, if we define
	\begin{align*}
		h
			\coloneqq
		\begin{cases}
			j&(j+1\leq i)\\
			j+1&(j+1\geq i)
		\end{cases},
	\end{align*}
	then
	$$s_{h}d_{h}(\omega)=(\prod_{\Gamma}\mathrm{re}_{\Gamma})(\prod_{\Gamma}\incl{\Gamma})(s_{h}d_{h}\omega)=(\prod_{\Gamma}\mathrm{re}_{\Gamma})\sigma_{h}^{\ast}\delta_{h}^{\ast}(\prod_{\Gamma}\incl{\Gamma})(\omega)=(\prod_{\Gamma}\mathrm{re}_{\Gamma})(\prod_{\Gamma}\incl{\Gamma})(\omega)=\omega$$
	follows.
\end{proof}
\subsection{Fiberwise Integration}
Let $\mathfrak{g}$ be a connected $L_{\infty}$-algebra, $\Gamma\colon[p]\hookrightarrow[n]\times[r]$ be a global chain and $\omega\in\qdR{\dq}{}{p}{\mathbb{U}_{\infty}\mathfrak{g}}^{\wedge}$ be a $\mathfrak{g}$-valued formal differential form. Then there exists an essentially unique decomposition
\begin{align*}
	\omega=\sum_{i}\omega_{\Gamma,i,\fs{}}\wedge\chain{\Gamma}{\bs{}}^{\ast}\omega_{\Gamma,i,\bs{}}
\end{align*}
where $\omega_{\Gamma,i,\bs{}}$ is an element of $\qdR{\dq}{}{n}{\mathbb{U}_{\infty}\mathfrak{g}}^{\wedge}$ and $\omega_{\Gamma,i,\fs{}}$ is an element of $\qdR{\dq}{}{p}{\mathbb{U}_{\infty}\mathfrak{g}}^{\wedge}$ which  does not contain
$$x_{\bs{\Gamma}(1)}^{[N_{1}]},\dots,x_{\bs{\Gamma}(n)}^{[N_{n}]},\df{}x_{\bs{\Gamma}(1)},\dots,\df{}x_{\bs{\Gamma}(n)}.$$
In addition, there is a unique decomposition
\begin{align*}
	\omega_{\Gamma,i,\fs{}}=\omega_{\Gamma,i,\fs{}}^{(p-(n+1))}+\dots+\omega_{\Gamma,i,\fs{}}^{(0)}
\end{align*}
where $\omega_{\Gamma,i,\fs{}}^{(j)}$ is an element of $\prod_{\bullet}\mathbb{U}_{\infty}\mathfrak{g}_{j-\bullet}\otimes\qdR{\dq}{j}{p}{}$. Especially there is a decomposition
\begin{align*}
	\omega_{\Gamma,i,\fs{}}^{(p-(n+1))}=\sum_{\lambda=0}^{\infty}\sum_{j}g_{\Gamma,i,\lambda,j}\otimes f_{\Gamma,i,\lambda,j}\df{}x_{\Fs{\Gamma}(1)}\wedge\dots\wedge\df{}x_{\Fs{\Gamma}(p-(n+1))}
\end{align*}
where $g_{\Gamma,i,\lambda,j}\in\mathbb{U}_{\infty}\mathfrak{g}_{\lambda}$ and $f_{\Gamma,i,\lambda,j}\in\qdR{\dq}{0}{p}{}$. Using this essentially unique representation, we obtain the following (where $x_{0}\coloneqq\dq$ and $x_{n+r+1}\coloneqq0$):
\begin{align*}
	\int_{\simplex{}{r}\cact{\Gamma}{}}\omega
		\coloneqq
	\sum_{\lambda=0}^{\infty}\sum_{i,j}g_{\Gamma,i,\lambda,j}\otimes\bs{\Gamma}^{\ast}(\int_{x_{\Fs{\Gamma}(p-(n+1))+1}}^{x_{\us{\Gamma}(p-(n+1))}}\dots\int_{x_{\Fs{\Gamma}(1)+1}}^{x_{\us{\Gamma}(1)}}f_{\Gamma,i,\lambda,j}\df{}x_{\Fs{\Gamma}(1)}\cdots\df{}x_{\Fs{\Gamma}(p-(n+1))})\omega_{\Gamma,i,\bs{}}
\end{align*}
It converges at $\qdR{\dq}{}{n}{\mathbb{U}_{\infty}\mathfrak{g}}^{\wedge}$.

Let $\omega\colon\simplex{}{n}\times\simplex{}{r}\to\qdR{\dq}{}{}{\mathbb{U}_{\infty}\mathfrak{g}}^{\wedge}$ be a formal differential form with values in a connected $L_{\infty}$-algebra $\mathfrak{g}$ on $\simplex{}{n}\times\simplex{}{r}$. It gives an $n$-simplex $\omega^{\wedge}$ of $[\simplex{}{r},\qdR{\dq}{}{}{\mathbb{U}_{\infty}\mathfrak{g}}^{\wedge}]$. Hence, by the Eilenberg-Zilber lemma, we obtain a unique decomposition $\omega=(\sigma\times\id{})^{\ast}\tilde{\omega}$ where $\sigma\colon[n]\to[m]$ is a surjection and $\tilde{\omega}^{\wedge}$ is a non-degenerate $m$-simplex of $[\simplex{}{r},\qdR{\dq}{}{}{\mathbb{U}_{\infty}\mathfrak{g}}^{\wedge}]$. Using this unique decomposition, we define as
\begin{align*}
	\fint{\proj{\simplex{}{n}}}\omega\coloneqq\sum_{\Gamma\colon[n+r]\hookrightarrow[n]\times[r]}\sigma^{\ast}(\int_{\simplex{}{r}\cact{\Gamma}{}}\Gamma^{\ast}\tilde{\omega}).
\end{align*}
\begin{lem}\label{Int Lem21}
	let $\mathfrak{g}$ be a connected $L_{\infty}$-algebra. Then, for each $\mathfrak{g}$-valued differential form $\omega\colon\simplex{}{n}\times\simplex{}{r}\to\qdR{\dq}{}{}{\mathbb{U}_{\infty}\mathfrak{g}}^{\wedge}$ and order-preserving map $\alpha\colon[m]\to[n]$, the following holds:
	\begin{align*}
		\alpha^{\ast}\fint{\proj{\simplex{}{n}}}\omega=\fint{\proj{\simplex{}{m}}}((\alpha\times\id{})^{\ast}\omega).
	\end{align*}
\end{lem}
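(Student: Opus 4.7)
The plan is to first reduce to two special cases by factoring $\alpha = \tau\beta$ as an epimorphism $\beta\colon[m]\twoheadrightarrow[k]$ followed by a monomorphism $\tau\colon[k]\hookrightarrow[n]$, so that it suffices to prove the identity when $\alpha$ is surjective and when $\alpha$ is injective. Using the Eilenberg--Zilber decomposition $\omega=(\sigma\times\id{})^{\ast}\tilde{\omega}$ with $\tilde{\omega}^{\wedge}$ non-degenerate, the definition gives $\fint{\proj{\simplex{}{n}}}\omega=\sum_{\Gamma}\sigma^{\ast}(\int_{\simplex{}{r}\cact{\Gamma}{}}\Gamma^{\ast}\tilde{\omega})$. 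The surjective case $\alpha=\beta$ is then essentially formal: $(\beta\times\id{})^{\ast}\omega=((\sigma\beta)\times\id{})^{\ast}\tilde{\omega}$ still has non-degenerate representative $\tilde{\omega}$, so both sides compute as $\sum_{\Gamma}(\sigma\beta)^{\ast}(\int_{\simplex{}{r}\cact{\Gamma}{}}\Gamma^{\ast}\tilde{\omega})$.

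Next I would handle the injective case $\alpha=\tau$. By Lemma \ref{Stb Lem2}, since $\tilde{\omega}^{\wedge}$ is non-degenerate and faces of non-degenerate simplices are non-degenerate, $(\tau\times\id{})^{\ast}\tilde{\omega}$ corresponds again to a non-degenerate simplex, so its fiberwise integral is also computed via the defining formula without further Eilenberg--Zilber adjustment. Using Lemma \ref{Gl Lem7}, every maximal chain $\Gamma'\colon[m+r]\hookrightarrow[m]\times[r]$ produces a pair $(\tau_{\ast}\Gamma',\Gamma'^{\ast}\tau)$ satisfying $(\tau\times\id{})\Gamma'=(\tau_{\ast}\Gamma')(\Gamma'^{\ast}\tau)$, and so $\Gamma'^{\ast}(\tau\times\id{})^{\ast}\tilde{\omega}=(\Gamma'^{\ast}\tau)^{\ast}(\tau_{\ast}\Gamma')^{\ast}\tilde{\omega}$.

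Then I would verify a change-of-variable identity of the form
\begin{align*}
\int_{\simplex{}{r}\cact{\Gamma'}{}}(\Gamma'^{\ast}\tau)^{\ast}\eta=\tau^{\ast}\int_{\simplex{}{r}\cact{\tau_{\ast}\Gamma'}{}}\eta
\end{align*}
for any form $\eta$ on $\simplex{}{n+r}$. This is a bookkeeping computation in the iterated integral using Proposition \ref{Stb Lem1} to track the substitutions $\incl{\Gamma}$ and $\mathrm{re}_{\Gamma}$, Proposition \ref{Gl Lem2} to identify the fiber coordinates $\fs{\Gamma'}$ with $\fs{\tau_{\ast}\Gamma'}$, Proposition \ref{Gl Lem3} to identify the upper limits $\us{\Gamma'}$ with $\us{\tau_{\ast}\Gamma'}$, and Proposition \ref{Gl Lem4} to match the lower endpoints $\Fs{\Gamma'}+1$ with $\Fs{\tau_{\ast}\Gamma'}+1$ under $\Gamma'^{\ast}\tau$. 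With this at hand, the right hand side of the lemma becomes $\sum_{\Gamma'}\tau^{\ast}\int_{\simplex{}{r}\cact{\tau_{\ast}\Gamma'}{}}(\tau_{\ast}\Gamma')^{\ast}\tilde{\omega}$, indexed by maximal chains of $[m]\times[r]$.

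The main obstacle is the final matching of the index sets: I must show that the map $\Gamma'\mapsto\tau_{\ast}\Gamma'$ bijects onto those maximal chains $\Gamma$ of $[n]\times[r]$ whose pullback along $\tau\times\id{}$ is itself maximal, and that the remaining maximal chains $\Gamma$ contribute zero after applying $\tau^{\ast}$. By Proposition \ref{Gl Lem6} the pullback is always a totally ordered chain, and by Proposition \ref{Gl Lem5}, when it fails to be maximal there exists $l\notin\mathrm{Im}\,\tau$ with $\bs{\Gamma}(l)+1\notin\mathrm{Im}\,\bs{\Gamma}$, i.e.\ a fiber-direction edge whose base coordinate $x_{l}$ is deleted by $\tau^{\ast}$. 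Because the iterated integral defining $\int_{\simplex{}{r}\cact{\Gamma}{}}\Gamma^{\ast}\tilde{\omega}$ retains a factor involving $x_{l}$ or $x_{l+1}$ as an endpoint $x_{\us{\Gamma}(\cdot)}$ or $x_{\Fs{\Gamma}(\cdot)+1}$ in the $\bs{\Gamma}$-direction, the application of $\tau^{\ast}$ collapses these endpoints and forces the resulting integral to vanish. Making this vanishing precise, by chasing the definitions of $\us{\Gamma},\Fs{\Gamma}$ through $\tau^{\ast}$, is the one place where the argument is not purely formal and requires careful case analysis using Proposition \ref{Gl Lem5}.
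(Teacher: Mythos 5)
Your proposal is correct and follows essentially the same route as the paper's proof: Eilenberg--Zilber decomposition of $\omega$ together with the epi--mono factorization of $\alpha$, Lemma \ref{Gl Lem7} to set up the correspondence of maximal chains, Propositions \ref{Gl Lem1}--\ref{Gl Lem4} (with \ref{Stb Lem1}) to show the pullback preserves base direction, fiber direction and integral range when the pulled-back chain is maximal, and Proposition \ref{Gl Lem5} to force the vanishing of the contributions from chains whose pullback is not maximal. The only organizational difference is that the paper handles the interaction of the injective part of $\alpha$ with the surjection $\sigma$ from the Eilenberg--Zilber decomposition in a single combined factorization $\sigma\delta_{\alpha}=\delta_{\sigma\delta_{\alpha}}\sigma_{\sigma\delta_{\alpha}}$ rather than treating the injective case of $\alpha$ in isolation, but this does not change the substance of the argument.
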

\begin{proof}
	From the Eilenberg-Zilber lemma, we obtain decompositions
	\begin{align*}
		\omega
			&=
		(\sigma\times\id{})^{\ast}\tilde{\omega},\\
		\alpha
			&=
		\delta_{\alpha}\sigma_{\alpha}
	\end{align*}
	where $\tilde{\omega}^{\wedge}\in[\simplex{}{r},\qdR{\dq}{}{}{\mathbb{U}_{\infty}\mathfrak{g}}^{\wedge}]_{p}$ is a non-degenerate $p$-simplex, $\sigma$ and $\sigma_{\alpha}$ are surjective and $\delta_{\alpha}$ is injective. In addition, there is a unique decomposition $\sigma\delta_{\alpha}=\delta_{\sigma\delta_{\alpha}}\sigma_{\sigma\delta_{\alpha}}$ where $\sigma_{\sigma\delta_{\alpha}}$ is surjective and $\delta_{\sigma\delta_{\alpha}}$ is injective. For these decompositions,
	\begin{align*}
		(\alpha\times\id{})^{\ast}\omega=(\sigma_{\alpha}\times\id{})^{\ast}(\delta_{\alpha}\times\id{})^{\ast}(\sigma\times\id{})^{\ast}\tilde{\omega}=(\sigma_{\alpha}\times\id{})^{\ast}((\sigma\delta_{\alpha})\times\id{})^{\ast}\tilde{\omega}=((\sigma_{\sigma\delta_{\alpha}}\sigma_{\alpha})\times\id{})^{\ast}(\delta_{\sigma\delta_{\alpha}}\times\id{})^{\ast}\tilde{\omega}
	\end{align*}
	holds. Especially $((\delta_{\sigma\delta_{\alpha}}\times\id{})^{\ast}\tilde{\omega})^{\wedge}$ is non-degenerate.
	\begin{center}
	\begin{tikzpicture}[auto]
		\node (11) at (0, 1.2) {$\simplex{}{m}\times\simplex{}{r}$};
		\node (12) at (3, 1.2) {$\simplex{}{n}\times\simplex{}{r}$};
		\node (13) at (6, 1.2) {$\qdR{\dq}{}{}{\mathbb{U}_{\infty}\mathfrak{g}}^{\wedge}$};
		\node (21) at (0, 0) {$\simplex{}{l}\times\simplex{}{r}$};
		\node (22) at (3, 0) {$\simplex{}{q}\times\simplex{}{r}$};
		\node (23) at (6, 0) {$\simplex{}{p}\times\simplex{}{r}$};
		\path[draw, ->] (11) --node {$\scriptstyle \alpha\times\id{}$} (12);
		\path[draw, ->] (12) --node {$\scriptstyle \omega$} (13);
		\path[draw, ->] (21) --node[swap] {$\scriptstyle \sigma_{\sigma\delta_{\alpha}}\times\id{}$} (22);
		\path[draw, ->] (22) --node[swap] {$\scriptstyle \delta_{\sigma\delta_{\alpha}}\times\id{}$} (23);
		\path[draw, ->] (11) --node[swap] {$\scriptstyle \sigma_{\alpha}\times\id{}$} (21);
		\path[draw, ->] (21) --node {$\scriptstyle \delta_{\alpha}\times\id{}$} (12);
		\path[draw, ->] (12) --node {$\scriptstyle \sigma\times\id{}$} (23);
		\path[draw, ->] (23) --node[swap] {$\scriptstyle \tilde{\omega}$} (13);
	\end{tikzpicture}
	\end{center}
	Since $\alpha^{\ast}$ is a ring morphism,
	\begin{align*}
		\alpha\fint{\simplex{}{n}}\omega
			=
		\sum_{\Gamma\colon[p+r]\hookrightarrow[p]\times[r]}(\sigma_{\sigma\delta_{\alpha}}\sigma_{\alpha})^{\ast}
		\delta_{\sigma\delta_{\alpha}}^{\ast}(\int_{\simplex{}{r}\cact{\Gamma}{}}\Gamma^{\ast}\tilde{\omega})
		\end{align*}
	holds. Fix a maximal chain $\Gamma\colon[p+r]\hookrightarrow[p]\times[r]$ and consider the pullback diagram
	\begin{center}
	\begin{tikzpicture}[auto]
		\node (11) at (0, 1.2) {$[h]$};
		\node (12) at (3, 1.2) {$[p+r]$};
		\node (21) at (0, 0) {$[q]\times[r]$};
		\node (22) at (3, 0) {$[p]\times[r]$};
		\path[draw, ->] (11) --node {$\scriptstyle \beta$} (12);
		\path[draw, right hook->] (11) --node[swap] {$\scriptstyle \tilde{\Gamma}$} (21);
		\path[draw, ->] (21) --node[swap] {$\scriptstyle \delta_{\sigma\delta_{\alpha}}\times\id{}$} (22);
		\path[draw, right hook->] (12) --node {$\scriptstyle \Gamma$} (22);
		\path[draw] (0.2, 0.6) -- (0.6, 0.6) -- (0.6, 1);
	\end{tikzpicture}
	\end{center}
	We can assume that there is a following decomposition:
	\begin{align*}
		\Gamma^{\ast}\tilde{\omega}
			=
	\sum_{\lambda=0}^{\infty}\sum_{i,j}g_{i,\lambda,j}\otimes \chain{\Gamma}{\fs{}}^{\ast}(f_{i,\lambda,j}\df{}x_{1}\wedge\dots\wedge\df{}x_{r})\wedge\chain{\Gamma}{\bs{}}^{\ast}\omega_{i,\bs{}}+\omega_{\text{others}}
	\end{align*}
	Then the following follows from Proposition \ref{Gl Lem1}:
	\begin{align*}
		\delta_{\sigma\delta_{\alpha}}^{\ast}(\int_{\simplex{}{r}\cact{\Gamma}{}}\Gamma^{\ast}\tilde{\omega})
			&=
		\delta_{\sigma\delta_{\alpha}}^{\ast}(\sum_{\lambda=0}^{\infty}\sum_{i,j}g_{i,\lambda,j}\otimes\bs{\Gamma}^{\ast}(\int_{x_{\fs{\Gamma}(r)+1}}^{x_{\us{\Gamma}(r)}}\dots\int_{x_{\fs{\Gamma}(1)+1}}^{x_{\us{\Gamma}(1)}}(\chain{\Gamma}{\fs{}}^{\ast}f_{i,\lambda,j})\df{}x_{\fs{\Gamma}(1)}\wedge\dots\wedge\df{}x_{\fs{\Gamma}(r)})\omega_{i,\bs{}})\\
			&=
		\sum_{\lambda=0}^{\infty}\sum_{i,j}g_{i,\lambda,j}\otimes\delta_{\sigma\delta_{\alpha}}^{\ast}\bs{\Gamma}^{\ast}(\int_{x_{\fs{\Gamma}(r)+1}}^{x_{\us{\Gamma}(r)}}\dots\int_{x_{\fs{\Gamma}(1)+1}}^{x_{\us{\Gamma}(1)}}(\chain{\Gamma}{\fs{}}^{\ast}f_{i,\lambda,j})\df{}x_{\fs{\Gamma}(1)}\wedge\dots\wedge\df{}x_{\fs{\Gamma}(r)})(\delta_{\sigma\delta_{\alpha}}^{\ast}\omega_{i,\bs{}})\\
			&=
		\sum_{\lambda=0}^{\infty}\sum_{i,j}g_{i,\lambda,j}\otimes\bs{\tilde{\Gamma}}^{\ast}\beta^{\ast}(\int_{x_{\fs{\Gamma}(r)+1}}^{x_{\us{\Gamma}(r)}}\dots\int_{x_{\fs{\Gamma}(1)+1}}^{x_{\us{\Gamma}(1)}}(\chain{\Gamma}{\fs{}}^{\ast}f_{i,\lambda,j})\df{}x_{\fs{\Gamma}(1)}\wedge\dots\wedge\df{}x_{\fs{\Gamma}(r)})(\delta_{\sigma\delta_{\alpha}}^{\ast}\omega_{i,\bs{}}).
	\end{align*}
	In the case of $h<q+r$, there exists a pair $(l_{1},l_{2})$ of element satisfying $l_{1}\not\in\mathrm{Im}\delta_{\sigma\delta_{\alpha}}$ and $\bs{\Gamma}(l_{1})+1=\fs{\Gamma}(l_{2})$ from Proposition \ref{Gl Lem5}. Then
	\begin{align*}
		\min\{l\in[h]|\beta(l)\geq\fs{\Gamma}(l_{2})+1\}
			=
		\min\{l\in[h]|\beta(l)\geq\fs{\Gamma}(l_{2})\}
			=
		\min\{l\in[h]|\beta(l)\geq\us{\Gamma}(l_{2})\}
	\end{align*}
	holds, thus
	\begin{align*}
		\beta^{\ast}(\int_{x_{\fs{\Gamma}(r)+1}}^{x_{\us{\Gamma}(r)}}\dots\int_{x_{\fs{\Gamma}(1)+1}}^{x_{\us{\Gamma}(1)}}(\chain{\Gamma}{\fs{}}^{\ast}f_{i,\lambda,j})\df{}x_{\fs{\Gamma}(1)}\wedge\dots\wedge\df{}x_{\fs{\Gamma}(r)})=0
	\end{align*}
	holds. In the case of $h=q+r$, it follows from Proposition \ref{Gl Lem4} and \ref{Gl Lem4} that $\beta^{\ast}$ preserves ``integral range'', from  Proposition \ref{Gl Lem1} that $\beta^{\ast}$ preserves  ``base direction'', and from Proposition \ref{Gl Lem2} that $\beta^{\ast}$ preserves ``fiber direction''. Therefore the following holds:
	\begin{align*}
		\beta^{\ast}(\int_{x_{\fs{\Gamma}(r)+1}}^{x_{\us{\Gamma}(r)}}\dots\int_{x_{\fs{\Gamma}(1)+1}}^{x_{\us{\Gamma}(1)}}(\chain{\Gamma}{\fs{}}^{\ast}f_{i,\lambda,j})\df{}x_{\fs{\Gamma}(1)}\wedge\dots\wedge\df{}x_{\fs{\Gamma}(r)})
			=
		\int_{x_{\fs{\tilde{\Gamma}}(r)+1}}^{x_{\us{\tilde{\Gamma}}(r)}}\dots\int_{x_{\fs{\tilde{\Gamma}}(1)+1}}^{x_{\us{\tilde{\Gamma}}(1)}}(\chain{\tilde{\Gamma}}{\fs{}}^{\ast}f_{i,\lambda,j})\df{}x_{\fs{\tilde{\Gamma}}(1)}\wedge\dots\wedge\df{}x_{\fs{\tilde{\Gamma}}(r)}.
	\end{align*}
	On the other hand,
	\begin{align*}
		\tilde{\Gamma}^{\ast}(\delta_{\sigma\delta_{\alpha}}\times\id{})^{\ast}\tilde{\omega}
			&=
		\beta^{\ast}\Gamma^{\ast}\tilde{\omega}\\
			&=
		\beta^{\ast}(\sum_{\lambda=0}^{\infty}\sum_{i,j}g_{i,\lambda,j}\otimes \chain{\Gamma}{\fs{}}^{\ast}(f_{i,\lambda,j}\df{}x_{1}\wedge\dots\wedge\df{}x_{r})\wedge\chain{\Gamma}{\bs{}}^{\ast}\omega_{i,\bs{}}+\omega_{\text{others}})\\
			&=
		\sum_{\lambda=0}^{\infty}\sum_{i,j}g_{i,\lambda,j}\otimes \chain{(\Gamma\beta)}{\fs{}}^{\ast}(f_{i,\lambda,j}\df{}x_{1}\wedge\dots\wedge\df{}x_{r})\wedge\chain{(\Gamma\beta)}{\bs{}}^{\ast}\omega_{i,\bs{}}+\beta^{\ast}\omega_{\text{others}}\\
			&=
		\sum_{\lambda=0}^{\infty}\sum_{i,j}g_{i,\lambda,j}\otimes \chain{\tilde{\Gamma}}{\fs{}}^{\ast}(f_{i,\lambda,j}\df{}x_{1}\wedge\dots\wedge\df{}x_{r})\wedge\chain{\tilde{\Gamma}}{\bs{}}^{\ast}\delta_{\sigma\delta_{\alpha}}^{\ast}\omega_{i,\bs{}}+\beta^{\ast}\omega_{\text{others}}
	\end{align*}
	holds. Thus
	\begin{align*}
		\delta_{\sigma\delta_{\alpha}}^{\ast}(\int_{\simplex{}{r}\cact{\Gamma}{}}\Gamma^{\ast}\tilde{\omega})
			=
		\int_{\simplex{}{r}\cact{\tilde{\Gamma}}{}}\tilde{\Gamma}^{\ast}\delta_{\sigma\delta_{\alpha}}^{\ast}(\delta_{\sigma\delta_{\alpha}}\times\id{})^{\ast}\tilde{\omega}
	\end{align*}
	holds. Hence
	\begin{align*}
		\alpha^{\ast}\fint{\simplex{}{n}}\omega
			&=
		\sum_{\Gamma\colon[p+r]\hookrightarrow[p]\times[r]}(\sigma_{\sigma\delta_{\alpha}}\sigma_{\alpha})^{\ast}\delta_{\sigma\delta_{\alpha}}^{\ast}(\int_{\simplex{}{r}\cact{\Gamma}{}}\Gamma^{\ast}\tilde{\omega})\\
			&=
		\sum_{\Gamma\colon[q+r]\hookrightarrow[q]\times[r]}(\sigma_{\sigma\delta_{\alpha}}\sigma_{\alpha})^{\ast}(\int_{\simplex{}{r}\cact{\Gamma}{}}\Gamma^{\ast}(\delta_{\sigma\delta_{\alpha}}\times\id{})^{\ast}\tilde{\omega})\\
			&=
		\fint{\simplex{}{n}}((\alpha\times\id{})^{\ast}\omega)
	\end{align*}
	holds from Lemma \ref{Gl Lem7}.
\end{proof}
\begin{lem}\label{Int Lem22}
	Let $\mathfrak{g}$ be a connected $L_{\infty}$-algebra. Then, for each $\mathfrak{g}$-valued differential form $\omega\colon\simplex{}{n}\times\simplex{}{r}\to\qdR{\dq}{}{}{\mathbb{U}_{\infty}\mathfrak{g}}^{\wedge}$ and surjective order-preserving map $\sigma_{h}\colon[r+1]\to[r]$, $\fint{\proj{\simplex{}{m}}}((\id{}\times\sigma_{h})^{\ast}\omega)=0$ holds.
\end{lem}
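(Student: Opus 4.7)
The plan is to use the Eilenberg--Zilber lemma to reduce $(\id{}\times\sigma_{h})^{\ast}\omega$ to a non-degenerate $m$-simplex, then for each maximal chain $\Gamma\colon[m+r+1]\hookrightarrow[m]\times[r+1]$ in the resulting sum, show that the composition $(\id{}\times\sigma_{h})\circ\Gamma$ factors uniquely as a standard degeneracy followed by a maximal chain in $[m]\times[r]$; this factorization places the integrand of $\int_{\simplex{}{r+1}\cact{\Gamma}{}}$ in the image of a degeneracy pullback which suppresses precisely one fiber differential needed by the iterated-integral formula.

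Concretely, applying Eilenberg--Zilber to the $n$-simplex $\omega^{\wedge}$ of $[\simplex{}{r},\qdR{\dq}{}{}{\mathbb{U}_{\infty}\mathfrak{g}}^{\wedge}]$ gives $\omega=(\sigma\times\id{})^{\ast}\tilde{\omega}$ with $\tilde{\omega}^{\wedge}$ non-degenerate of dimension $m$. Since $\sigma_{h}$ is a split epimorphism in $\D$, the pullback $(\id{}\times\sigma_{h})^{\ast}$ is injective and hence $(\id{}\times\sigma_{h})^{\ast}\tilde{\omega}$ is again a non-degenerate $m$-simplex of $[\simplex{}{r+1},\qdR{\dq}{}{}{\mathbb{U}_{\infty}\mathfrak{g}}^{\wedge}]$; the defining formula now reads
\[
\fint{\proj{\simplex{}{n}}}\bigl((\id{}\times\sigma_{h})^{\ast}\omega\bigr)=\sum_{\Gamma}\sigma^{\ast}\Bigl(\int_{\simplex{}{r+1}\cact{\Gamma}{}}\Gamma^{\ast}(\id{}\times\sigma_{h})^{\ast}\tilde{\omega}\Bigr),
\]
the sum ranging over maximal chains $\Gamma\colon[m+r+1]\hookrightarrow[m]\times[r+1]$. (Here I am reading the statement with $\proj{\simplex{}{n}}$ in place of the $\proj{\simplex{}{m}}$ appearing in the lemma, which is evidently a typo.)

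Fix such a $\Gamma$. By maximality $\chain{\Gamma}{\fs{}}$ takes a unique step from fiber coordinate $h$ to $h+1$, at position $i_{0}=\fs{\Gamma}(h+1)-1$. The degeneracy $\sigma_{h}$ identifies the pair $(h,h+1)$, so $\beta\coloneqq(\id{}\times\sigma_{h})\circ\Gamma\colon[m+r+1]\to[m]\times[r]$ satisfies $\beta(i_{0})=\beta(i_{0}+1)$ and is injective on every other consecutive pair; hence there is a unique factorization $\beta=\beta'\circ\sigma_{i_{0}}$ where $\sigma_{i_{0}}\colon[m+r+1]\to[m+r]$ is the $i_{0}$-th standard degeneracy and $\beta'\colon[m+r]\hookrightarrow[m]\times[r]$ is again a maximal chain. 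Therefore
\[
\Gamma^{\ast}(\id{}\times\sigma_{h})^{\ast}\tilde{\omega}=\sigma_{i_{0}}^{\ast}(\beta')^{\ast}\tilde{\omega},
\]
and the image of $\sigma_{i_{0}}^{\ast}$ inside $\qdR{\dq}{}{m+r+1}{\mathbb{U}_{\infty}\mathfrak{g}}^{\wedge}$ avoids both the variable $x_{i_{0}+1}=x_{\fs{\Gamma}(h+1)}$ and its differential $\df{}x_{\fs{\Gamma}(h+1)}$. In the chain-relative decomposition at $\Gamma$, the fiber piece $\omega_{\Gamma,i,\fs{}}$ then lives in the subalgebra generated by $\dq$ together with those $x_{\fs{\Gamma}(j)}$ and $\df{}x_{\fs{\Gamma}(j)}$ with $j\neq h+1$. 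The top fiber-degree piece singled out by the iterated-integral definition of $\int_{\simplex{}{r+1}\cact{\Gamma}{}}$ involves the full wedge of the fiber differentials $\df{}x_{\fs{\Gamma}(j)}$ for $j=1,\dots,r+1$, so the missing direction $\df{}x_{\fs{\Gamma}(h+1)}$ forces the coefficient of this wedge to vanish. Every summand is zero, and the lemma follows.

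The main obstacle is the factorization step: verifying that $\beta'$, obtained by collapsing $\Gamma$ at position $i_{0}$, is maximal (not merely global) in $[m]\times[r]$. This is a combinatorial bookkeeping check---$\beta$ starts at $(0,0)$, ends at $(m,r)$, and loses exactly one up-step relative to $\Gamma$, so $\beta'$ has $(m+r)+1$ vertices and is again a maximal chain---that can be carried out using Propositions~\ref{Gl Lem1}--\ref{Gl Lem5} from Section~\ref{sec3}.
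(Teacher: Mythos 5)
Your proposal is correct and follows essentially the same route as the paper: factor $(\id{}\times\sigma_{h})\circ\Gamma$ through the degeneracy $\sigma_{\fs{\Gamma}(h+1)-1}$ and observe that its pullback omits $x_{\fs{\Gamma}(h+1)}$ and $\df{}x_{\fs{\Gamma}(h+1)}$, so the coefficient of the full fiber wedge, and hence each chainwise integral, vanishes. The only cosmetic difference is that the paper obtains the factorization by invoking Lemma~\ref{Gl Lem7} (applied in the fiber direction) rather than by your direct combinatorial check, and it leaves the Eilenberg--Zilber bookkeeping implicit.
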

\begin{proof}
	For each maximal chain $\Gamma\colon[n+r]\to[n]\times[r]$, there exists a (unique) maximal chain ${\sigma_{h}}_{\ast}\Gamma$ satisfying $(\id{}\times\sigma_{h})\Gamma=({\sigma_{h}}_{\ast}\Gamma)\sigma_{\fs{\Gamma}(h+1)-1}$ from Lemma \ref{Gl Lem7}. Since
	\begin{align*}
		\sigma_{\fs{\Gamma}(h+1)-1}x_{i}
			&=
		\begin{cases}
			x_{i}&(i\leq\fs{\Gamma}(h+1)-1)\\
			x_{i+1}&(i>\fs{\Gamma}(h+1)-1)
		\end{cases}\\
			&\neq
		x_{\fs{\Gamma}(h+1)}
	\end{align*}
	holds, $\int_{\simplex{}{r}\cact{\Gamma}{}}\Gamma^{\ast}\tilde{\omega}=0$ holds by definition.
\end{proof}
\begin{dfn}
	We say that a formal differential form with values in a connected $L_{\infty}$-algebra on a simplicial set $X\times U$ has
	\Def{the finite support in the direction along the projection $\proj{X}\colon X\times U\to X$}
	if the set
	\begin{align*}
		\mathrm{supp}_{\proj{X}}(\omega)\coloneqq\bigcup_{r}\{u\in U_{r}|((\id{}\times u)^{\ast}\omega)^{\wedge}\in[X,\qdR{\dq}{}{}{\mathbb{U}_{\infty}\mathfrak{g}}^{\wedge}]_{r}\text{ is non-degenerate.}\}
	\end{align*}
	is a finite set.
\end{dfn}
We define an order on the set $\mathrm{supp}_{\proj{X}}(\omega)$ as follows:
\begin{align*}
	u_{1}\leq u_{2}\text{ iff }u_{1}=\delta^{\ast}u_{2}\text{ for some order-preserving map $\delta\colon[r_{1}]\to[r_{2}]$}.
\end{align*}
If $\mathrm{supp}_{\proj{X}}(\omega)$ is finite, we can consider a set of maximal elements of $\mathrm{supp}_{\proj{X}}(\omega)$. We denote the set as $\mathrm{part}_{\omega}(U)$.

Let $\omega\colon X\times U\to\qdR{\dq}{}{}{\mathbb{U}_{\infty}\mathfrak{g}}^{\wedge}$ be a formal differential form with values in a connected $L_{\infty}$-algebra $\mathfrak{g}$ on $X\times U$. Each simplices $u\in U_{r}$ determine a $\mathfrak{g}$-valued formal differential form $(\id{}\times u)^{\ast}\omega$. From Lemma \ref{Int Lem21}, we obtain a cocone
\begin{align*}
	\fint{\proj{\simplex{}{\bullet}}}((-\times u)^{\ast}\omega)\colon\simplex{}{\bullet}\to\qdR{\dq}{}{}{\mathbb{U}_{\infty}\mathfrak{g}}^{\wedge}
\end{align*}
and obtain a $\mathfrak{g}$-valued formal differential form $\fint{\proj{\simplex{}{\bullet}}}((\id{}\times u)^{\ast}\omega)\colon X\to\qdR{\dq}{}{}{\mathbb{U}_{\infty}\mathfrak{g}}^{\wedge}$. For each simplices $u\in U_{r}$ which the simplex $((\id{}\times u)^{\ast}\omega)^{\wedge}$ is degenerate (as a simplex $[X,\qdR{\dq}{}{}{\mathbb{U}_{\infty}\mathfrak{g}}^{\wedge}]$), $\fint{\proj{X}}((\id{}\times u)^{\ast}\omega)=0$ holds since $\fint{\proj{\simplex{}{n}}}((x\times u)^{\ast}\omega)=0$ holds for any simpleis $x\in X_{n}$ from Lemma \ref{Int Lem22}.
\begin{dfn}(simplicial integration)
	Let $\omega\colon X\times U\to\qdR{\dq}{}{}{\mathbb{U}_{\infty}\mathfrak{g}}^{\wedge}$ be a formal differential form with values in a connected $L_{\infty}$-algebra $\mathfrak{g}$ on $X\times U$ which has the finite support in the direction along the projection $\proj{X}$. The $\mathfrak{g}$-valued formal differential form on $X$
	\begin{align*}
		\fint{\proj{X}}\omega\coloneqq\sum_{u\in\mathrm{part}_{\omega}(U)}\fint{\proj{X}}((\id{}\times u)^{\ast}\omega)
	\end{align*}
	is called \Def{the fiberwise integration of $\omega$ along the projection $\proj{X}\colon X\times U\to X$}.
\end{dfn}
\subsection{Stokes's theorem}
One of the important theorems for integrals on smooth manifolds is Stokes's theorem. This is a theorem that connects the integration of closed form with the integration on the boundary, and it follows that the integration gives a chain map from the de Rham complex to the singular cochain complex. We would like to consider this analogy for fiberwise integration on simplicial sets, but roughly speaking, the following obstacles exist:
\begin{itemize}
	\item The boundary of simplicial set $U$ is unknown in general.
	\item For example, the boundary of standard $2$-simplex $\simplex{}{2}$ is already known as $\boundary{2}$, but the integration $\fint{\proj{X}}(\omega|_{X\times\boundary{2}})$ does not coincide with what we seek.
\end{itemize}
The second problem is considered to be caused by the fact that, unlike the case of smooth manifolds, orientation is not taken into account. In light of simplicial homology, it is presumed that it is suitable to consider the linear combination $\sum_{i=0}^{n}(-1)^{i}\Delta\{0,\dots,\check{i},\dots,n\}$ as ``the boundary of standard $n$-simplex with orientation taken into account''. Since fiberwise integration on a simplicial set is the sum of integration on each simplex, we can consider the following ``integration''.
\begin{dfn}
	Let $\omega\colon X\times U\to\qdR{\dq}{}{}{\mathbb{U}_{\infty}\mathfrak{g}}^{\wedge}$ be a formal differential form with values in a connected $L_{\infty}$-algebra $\mathfrak{g}$ on $X\times U$ which has the finite support in the direction along the projection $\proj{X}$. The $\mathfrak{g}$-valued formal differential form on $X$
	\begin{align*}
		\bint{\proj{X}}\omega\coloneqq\sum_{(\simplex{}{r}\xrightarrow{u}U)\in\mathrm{part}_{\omega}(U)}\sum_{i=0}^{r}\fint{\proj{X}}((\id{}\times u\delta_{i})^{\ast}\omega)
	\end{align*}
	is called \Def{the boundary fiberwise integration of $\omega$ along the projection $\proj{X}\colon X\times U\to X$}.
\end{dfn}
\begin{lem}\label{Stokes Lem}
	Let $\Gamma\colon[n+r]\hookrightarrow[n]\times[r]$ be a maximal chain. For any pair of integer $1\leq j\leq\mathsf{n}_{\Gamma}$ and $0\leq i\leq r_{j}$, denote
	\begin{align*}
		\Rs{\Gamma}{j}{i}\coloneqq r_{1}+\dots+r_{j-1}+i.
	\end{align*}
	In addition, $(\omega_{\Gamma,\fs{}}^{(r-1)},\omega_{\Gamma,\bs{}})\in(\qdR{\dq}{}{n}{\mathbb{U}_{\infty}\mathfrak{g}}^{\wedge})\times(\prod_{\bullet}\mathbb{U}_{\infty}\mathfrak{g}_{(r-1)-\bullet}\otimes\qdR{\dq}{r-1}{r}{})$ be a pair of $\mathfrak{g}$-valued formal differential forms. Then
	\begin{align*}
		&\int_{\simplex{}{r}\cact{\Gamma}{}}((\chain{\Gamma}{\fs{}}^{\ast}\df{}\omega_{\Gamma,\fs{}}^{(r-1)})\wedge(\chain{\Gamma}{\bs{}}^{\ast}\omega_{\Gamma,\bs{}}))\\
			=
		&\sum_{1\leq j\leq\mathsf{n}_{\Gamma}}(-1)^{\Rs{\Gamma}{j}{0}}\int_{\simplex{}{r}\cact{\Gamma\delta_{\vs{\Gamma}{j}{0}}}{}}(\Gamma\delta_{\vs{\Gamma}{j}{0}})^{\ast}((\proj{\simplex{}{r}}^{\ast}\omega_{\Gamma,\fs{}}^{(r-1)})\wedge(\proj{\simplex{}{n}}^{\ast}\omega_{\Gamma,\bs{}}))\\
			&+
		\underset{0<i<r_{j}}{\sum_{1\leq j\leq\mathsf{n}_{\Gamma}}}(-1)^{\Rs{\Gamma}{j}{i}}\int_{\simplex{}{r-1}\cact{\Gamma_{\vs{\Gamma}{j}{i}}}{}}\Gamma_{\vs{\Gamma}{j}{i}}^{\ast}(\id{}\times\delta_{\Rs{\Gamma}{j}{i}})^{\ast}((\proj{\simplex{}{r}}^{\ast}\omega_{\Gamma,\fs{}}^{(r-1)})\wedge(\proj{\simplex{}{n}}^{\ast}\omega_{\Gamma,\bs{}}))\\
			&+
		\sum_{1\leq j\leq\mathsf{n}_{\Gamma}}(-1)^{\Rs{\Gamma}{j}{r_{j}}}\int_{\simplex{}{r}\cact{\Gamma\delta_{\vs{\Gamma}{j}{r_{j}}}}{}}(\Gamma\delta_{\vs{\Gamma}{j}{r_{j}}})^{\ast}((\proj{\simplex{}{r}}^{\ast}\omega_{\Gamma,\fs{}}^{(r-1)})\wedge(\proj{\simplex{}{n}}^{\ast}\omega_{\Gamma,\bs{}}))
	\end{align*}
	holds where $\Gamma_{\vs{\Gamma}{j}{i}}\colon[n+r-1]\hookrightarrow[n-1]\times[r]$ is a maximal chain satisfying the following:
	\begin{align*}
		\Gamma\delta_{\vs{\Gamma}{j}{i}}=(\id{}\times\delta_{\chain{\Gamma}{\fs{}}(\vs{\Gamma}{j}{i})})\Gamma_{\vs{\Gamma}{j}{i}}=(\id{}\times\delta_{\Rs{\Gamma}{j}{i}})\Gamma_{\vs{\Gamma}{j}{i}}.
	\end{align*}
	(Existence and uniqueness of such a maximal chain follow from Prposition \ref{Gl Lem8}.)
\end{lem}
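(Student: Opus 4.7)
The plan is to compute the left-hand side directly from the definitions, apply the divided-power Stokes identity of Lemma \ref{Int Lem11} to the integration variable that gets differentiated, and match each resulting endpoint evaluation to one of the three families of face integrals on the right-hand side. By linearity I may first assume $\omega_{\Gamma,\fs{}}^{(r-1)} = g \otimes f\,\df{}x_{1}\wedge\cdots\wedge\widehat{\df{}x_{k}}\wedge\cdots\wedge\df{}x_{r}$ for some $g\in\mathbb{U}_{\infty}\mathfrak{g}$, $f\in\qdR{\dq}{0}{r}{}$, and a fixed $k\in\{1,\ldots,r\}$; then $\df{}\omega_{\Gamma,\fs{}}^{(r-1)} = (-1)^{k-1+|g|}\,g\otimes(\partial f/\partial x_{k})\,\df{}x_{1}\wedge\cdots\wedge\df{}x_{r}$. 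Pulling back along $\chain{\Gamma}{\fs{}}$ and applying the defining iterated-integral formula for $\int_{\simplex{}{r}\cact{\Gamma}{}}$ rewrites the left-hand side, up to the overall factor $(-1)^{k-1+|g|}\,g$, as a tensor of $\omega_{\Gamma,\bs{}}$ with $\bs{\Gamma}^{\ast}$ of an $r$-fold iterated integral $\Phi_{k}$ of $\partial\tilde f/\partial x_{\Fs{\Gamma}(k)}$ over the nested limits prescribed by $\Gamma$, where $\tilde f = \chain{\Gamma}{\fs{}}^{\ast}f$.

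The second step is to apply Lemma \ref{Int Lem11} to the $k$-th nested integration of $\Phi_{k}$: the slot $\int_{x_{\Fs{\Gamma}(k)+1}}^{x_{\us{\Gamma}(k)}}(\partial\tilde f/\partial x_{\Fs{\Gamma}(k)})\,\df{}x_{\Fs{\Gamma}(k)}$ collapses to $\overline{\ep_{\Fs{\Gamma}(k),x_{\us{\Gamma}(k)}}}(\tilde f) - \overline{\ep_{\Fs{\Gamma}(k),x_{\Fs{\Gamma}(k)+1}}}(\tilde f)$. Writing $k$ as the $l$-th element of its block $B_{j}$, each of the two endpoint substitutions is identified as a face pullback: the upper endpoint $x_{\us{\Gamma}(k)} = x_{\vs{\Gamma}{j}{0}}$ produces, when $l=1$, the face $\Gamma\delta_{\vs{\Gamma}{j}{0}}$ and contributes to the first family; the lower endpoint $x_{\Fs{\Gamma}(k)+1}$ produces, when $l=r_{j}$, the face $\Gamma\delta_{\vs{\Gamma}{j}{r_{j}}}$ of the third family, and when $0<l<r_{j}$ the identification $x_{\Fs{\Gamma}(k)} = x_{\Fs{\Gamma}(k+1)}$ that Proposition \ref{Gl Lem8} realises as the factored face integral $\int_{\simplex{}{r-1}\cact{\Gamma_{\vs{\Gamma}{j}{l}}}{}}\Gamma_{\vs{\Gamma}{j}{l}}^{\ast}(\id{}\times\delta_{\Rs{\Gamma}{j}{l}})^{\ast}$ of the middle family. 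The compatibility of the remaining $r-1$ nested limits with those dictated by $\Gamma_{v}$ follows from Propositions \ref{GC Lem1}--\ref{GC Lem5}. Summing over all monomial choices of $\omega_{\Gamma,\fs{}}^{(r-1)}$ (equivalently, over $k$) therefore reproduces the three families on the right-hand side.

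The remaining task is the sign bookkeeping. Each surviving term carries the product of $(-1)^{k-1+|g|}$ from the first step, the sign from Lemma \ref{Int Lem11}, and a combinatorial sign that appears when the face $\delta_{v}$ is reindexed through the factorisation $\Gamma\delta_{v} = (\id{}\times\delta_{h})\Gamma_{v}$ for interior $v$. Using the identity $\chain{\Gamma}{\fs{}}(\vs{\Gamma}{j}{i}) = \Rs{\Gamma}{j}{i}$, which is immediate from Proposition \ref{GC Lem3} and the block decomposition of the chain, the product collapses uniformly to $(-1)^{\Rs{\Gamma}{j}{i}}$ across the three families. The main obstacle is precisely this coordinated sign calculation coupled with the interior-case identification: one must verify, case by case, that the endpoint substitutions really do pair with the face maps $\Gamma\delta_{\vs{\Gamma}{j}{0}}$, $\Gamma\delta_{\vs{\Gamma}{j}{r_{j}}}$, and $(\id{}\times\delta_{\Rs{\Gamma}{j}{i}})\Gamma_{\vs{\Gamma}{j}{i}}$ with the asserted sign, relying only on the glueing propositions established earlier.
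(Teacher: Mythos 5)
There is a genuine gap at the central step. In the defining formula for $\int_{\simplex{}{r}\cact{\Gamma}{}}$ the integrations are nested in a fixed order, and the slot for $x_{\Fs{\Gamma}(k)}$ is in general not the innermost one: by the time you reach it, the integrand is the iterated integral of $\frac{\partial}{\partial x_{\Fs{\Gamma}(k)}}(\chain{\Gamma}{\fs{}}^{\ast}f)$ over the inner variables, not the raw partial derivative, and the limits of those inner integrals themselves involve $x_{\Fs{\Gamma}(k)}$ (inside a block the lower limit of the slot for $x_{\vs{\Gamma}{j}{i-1}}$ is exactly $x_{\vs{\Gamma}{j}{i}}$). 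Hence Lemma \ref{Int Lem11} cannot be applied at that slot in the way you propose: the derivative does not commute past the inner integrations without boundary corrections, and this is precisely what Corollary \ref{Int Lem12} (integration by parts) and Lemma \ref{Int Lem13} (the derivative of an integral with respect to one of its limits) are for --- neither of which your argument invokes. Concretely, a naive fundamental-theorem application at that slot would produce an upper-endpoint substitution $x_{\vs{\Gamma}{j}{i}}\mapsto x_{\vs{\Gamma}{j}{0}}$ for every $i$; for $i>1$ this identifies non-adjacent coordinates, so it corresponds to no face of $\simplex{}{n+r}$ and to no term on the right-hand side. Your matching reflects this: you only assign the upper endpoint when $l=1$ and silently leave the upper-endpoint terms with $l>1$ unaccounted for.

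The paper's proof avoids this by splitting into $i=1$ and $i>1$: for $i>1$ it first performs the inner $i-2$ integrations (the quantity $J_{i,j}$) and then applies Corollary \ref{Int Lem12} together with Lemma \ref{Int Lem13} to the two-variable configuration in $x_{\vs{\Gamma}{j}{i-1}},x_{\vs{\Gamma}{j}{i}}$, which produces the adjacent substitutions $x_{\vs{\Gamma}{j}{i-1}}\mapsto x_{\vs{\Gamma}{j}{i}}$ and $x_{\vs{\Gamma}{j}{i}}\mapsto x_{\vs{\Gamma}{j}{i+1}}$ instead of any substitution by $x_{\vs{\Gamma}{j}{0}}$. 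With that bookkeeping each interior face $\vs{\Gamma}{j}{i}$, $0<i<r_{j}$, receives two contributions, coming from the monomials missing $\df{}x_{\Rs{\Gamma}{j}{i}}$ and $\df{}x_{\Rs{\Gamma}{j}{i}+1}$, which is exactly the $a=0,1$ sum hidden in $(\id{}\times\delta_{\Rs{\Gamma}{j}{i}})^{\ast}\omega^{(r-1)}_{\Gamma,\fs{}}$; your scheme supplies only one of the two. Until the interior case is redone using the integration-by-parts lemmas, the proposed argument does not establish the identity, and the sign discussion built on it cannot be checked.
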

\begin{proof}
	We can assume that
\begin{align*}
	\omega_{\Gamma,\fs{}}^{(r-1)}=\sum_{\lambda=0}^{\infty}\sum_{h=1}^{r}g_{\lambda,h}\otimes f_{\lambda,h}\df{}x_{1}\wedge\cdots\check{\df{}x_{h}}\cdots\wedge\df{}x_{r}.
\end{align*}
Then
	\begin{align*}
		&\int_{\simplex{}{r}\cact{\Gamma}{}}((\chain{\Gamma}{\fs{}}^{\ast}\df{}\omega_{\Gamma,\fs{}}^{(r-1)})\wedge(\chain{\Gamma}{\bs{}}^{\ast}\omega_{\Gamma,\bs{}}))\\
			=
		&\sum_{\lambda=0}^{\infty}\sum_{h=1}^{r}(-1)^{h-1}g_{\lambda,h}\otimes\bs{\Gamma}^{\ast}(\int_{x_{\fs{\Gamma}(r)+1}}^{x_{\us{\Gamma}(r)}}\cdots\int_{x_{\fs{\Gamma}(1)+1}}^{x_{\us{\Gamma}(1)}}\frac{\partial}{\partial x_{\fs{\Gamma}(h)}}(\chain{\Gamma}{\fs{}}^{\ast}f_{\lambda,h})\df{}x_{\fs{\Gamma}(1)}\cdots\df{}x_{\fs{\Gamma}(r)})\omega_{\Gamma,\bs{}}
	\end{align*}
	holds by definition. Define maps $\ep_{h}^{+},\ep_{h}^{-}\colon\{\dq,x_{1},\dots,x_{n+r}\}\to\{\dq,x_{1},\dots,x_{n+r}\}$ as follows for each $h=1,\dots,n+r$
	\begin{align*}
		\ep_{i}^{\pm}(\dq)
			&=
		\dq,\\
		\ep_{i}^{\pm}(x_{h})
			&=
		\begin{cases}
			x_{h}&(h\neq\vs{\Gamma}{j}{i})\\
			x_{\vs{\Gamma}{j}{i\pm1}}&(h=\vs{\Gamma}{j}{i})
		\end{cases}.
	\end{align*}
	Denote the following as $I_{i,j}$ for any pair $(i,j)$ of integers $1\leq j\leq m$ and $1\leq i\leq r_{j}$:
	\begin{align*}
		\int_{x_{\vs{\Gamma}{j-1}{r_{j-1}}+1}}^{x_{\vs{\Gamma}{j-1}{0}}}\cdots\int_{x_{\fs{\Gamma}(1)+1}}^{x_{\us{\Gamma}(1)}}(\chain{\Gamma}{\fs{}}^{\ast}f_{\lambda,\Rs{\Gamma}{j}{i}})\df{}x_{\fs{\Gamma}(1)}\cdots\df{}x_{\vs{\Gamma}{j-1}{r_{j-1}}}.
	\end{align*}
	Since $1\leq\Rs{\Gamma}{j}{i}\leq r$ holds from the definition,
	\begin{align*}
		&\int_{x_{\vs{\Gamma}{j}{r_{j}}+1}}^{x_{\vs{\Gamma}{j}{0}}}\cdots\int_{x_{\fs{\Gamma}(1)+1}}^{x_{\us{\Gamma}(1)}}\frac{\partial}{\partial x_{\vs{\Gamma}{j}{i}}}(\chain{\Gamma}{\fs{}}^{\ast}f_{\lambda,\Rs{\Gamma}{j}{i}})\df{}x_{\fs{\Gamma}(1)}\cdots\df{}x_{\vs{\Gamma}{j}{r_{j}}}\\
			=
		&\int_{x_{\vs{\Gamma}{j}{r_{j}}+1}}^{x_{\vs{\Gamma}{j}{0}}}\cdots\int_{x_{\vs{\Gamma}{j}{1}+1}}^{x_{\vs{\Gamma}{j}{0}}}\frac{\partial}{\partial x_{\vs{\Gamma}{j}{i}}}I_{i,j}\df{}x_{\vs{\Gamma}{j}{1}}\cdots\df{}x_{\vs{\Gamma}{j}{r_{j}}}
	\end{align*}
	holds. In the case of $i=1$,
	\begin{align*}
		&\int_{x_{\vs{\Gamma}{j}{r_{j}}+1}}^{x_{\vs{\Gamma}{j}{0}}}\cdots\int_{x_{\vs{\Gamma}{j}{1}+1}}^{x_{\vs{\Gamma}{j}{0}}}\frac{\partial}{\partial x_{\vs{\Gamma}{j}{1}}}I_{1,j}\df{}x_{\vs{\Gamma}{j}{1}}\cdots\df{}x_{\vs{\Gamma}{j}{r_{j}}}\\
			=
		&\int_{x_{\vs{\Gamma}{j}{r_{j}}+1}}^{x_{\vs{\Gamma}{j}{0}}}\cdots\int_{x_{\vs{\Gamma}{j}{2}+1}}^{x_{\vs{\Gamma}{j}{0}}}(\overline{\ep_{\vs{\Gamma}{j}{1}}^{-}}(I_{1,j})-\overline{\ep_{\vs{\Gamma}{j}{1}}^{+}}(I_{1,j}))\df{}x_{\vs{\Gamma}{j}{2}}\cdots\df{}x_{\vs{\Gamma}{j}{r_{j}}}
	\end{align*}
	follows from lemma \ref{Int Lem11}. Then
	\begin{align*}
		&\sum_{\lambda=0}^{\infty}g_{\lambda,\Rs{\Gamma}{j}{1}}\otimes\bs{\Gamma}^{\ast}(\int_{x_{\fs{\Gamma}(r)+1}}^{x_{\us{\Gamma}(r)}}\cdots\int_{x_{\vs{\Gamma}{j}{2}+1}}^{x_{\vs{\Gamma}{j}{0}}}\overline{\ep_{1}^{-}}(I_{1,j})\df{}x_{\vs{\Gamma}{j}{2}}\cdots\df{}x_{\vs{\Gamma}{j}{r_{j}}})\omega_{\Gamma,\bs{}}\\
			=
		&\int_{\simplex{}{r}\cact{\Gamma\delta_{\vs{\Gamma}{j}{0}}}{}}((\chain{(\Gamma\delta_{\vs{\Gamma}{j}{0}})}{\fs{}}^{\ast}(\sum_{\lambda=0}^{\infty}g_{\lambda,\Rs{\Gamma}{j}{1}}\otimes f_{\lambda,\Rs{\Gamma}{j}{1}}\df{}x_{1}\wedge\dots\wedge\check{\df{}x_{\Rs{\Gamma}{j}{1}}}\wedge\dots\wedge\df{}x_{r}))\wedge(\chain{(\Gamma\delta_{\vs{\Gamma}{j}{0}})}{\fs{}}^{\ast}\omega_{\Gamma,\bs{}}))\\
			=
		&\int_{\simplex{}{r}\cact{\Gamma\delta_{\vs{\Gamma}{j}{0}}}{}}((\chain{(\Gamma\delta_{\vs{\Gamma}{j}{0}})}{\fs{}}^{\ast}\omega_{\Gamma,\fs{}}^{(r-1)})\wedge(\chain{(\Gamma\delta_{\vs{\Gamma}{j}{0}})}{\fs{}}^{\ast}\omega_{\Gamma,\bs{}}))
	\end{align*}
	follows from Proposition \ref{GC Lem5}.
	In the case of $i>1$, denote the following as $J_{i,j}$:
	\begin{align*}
		J_{i,j}
			=
		\int_{x_{\vs{\Gamma}{j}{i-2}+1}}^{x_{\vs{\Gamma}{j}{0}}}\cdots\int_{x_{\vs{\Gamma}{j}{1}+1}}^{x_{\vs{\Gamma}{j}{0}}}I_{i,j}\df{}x_{\vs{\Gamma}{j}{1}}\cdots\df{}x_{\vs{\Gamma}{j}{i-2}}.
	\end{align*}
	Then
	\begin{align*}
		&\int_{x_{\vs{\Gamma}{j}{i}+1}}^{x_{\vs{\Gamma}{j}{0}}}\cdots\int_{x_{\vs{\Gamma}{j}{1}+1}}^{x_{\vs{\Gamma}{j}{0}}}\frac{\partial}{\partial x_{\vs{\Gamma}{j}{i}}}I_{i,j}\df{}x_{\vs{\Gamma}{j}{1}}\cdots\df{}x_{\vs{\Gamma}{j}{r_{j}}}\\
			=
		&\int_{x_{\vs{\Gamma}{j}{i}+1}}^{x_{\vs{\Gamma}{j}{0}}}\int_{x_{\vs{\Gamma}{j}{i-1}+1}}^{x_{\vs{\Gamma}{j}{0}}}\frac{\partial}{\partial x_{\vs{\Gamma}{j}{i}}}J_{i,j}\df{}x_{\vs{\Gamma}{j}{i-1}}\df{}x_{\vs{\Gamma}{j}{i}}\cdots\df{}x_{\vs{\Gamma}{j}{r_{j}}}\\
			=
		&\int_{x_{\vs{\Gamma}{j}{i}+1}}^{x_{\vs{\Gamma}{j}{0}}}\overline{\ep_{\vs{\Gamma}{j}{i-1}}^{+}}(J_{i,j})\df{}x_{\vs{\Gamma}{j}{i}}
			-
		\int_{x_{\vs{\Gamma}{j}{i}+1}}^{x_{\vs{\Gamma}{j}{0}}}\overline{\ep_{\vs{\Gamma}{j}{i}}^{+}}(J_{i,j})\df{}x_{\vs{\Gamma}{j}{i-1}}
	\end{align*}
	follows from Corollary \ref{Int Lem12} and Lemma \ref{Int Lem13}. Then
	\begin{align*}
		&\sum_{\lambda=0}^{\infty}g_{\lambda,\Rs{\Gamma}{j}{r_{j}}}\otimes\bs{\Gamma}^{\ast}(\int_{x_{\fs{\Gamma}(r)+1}}^{x_{\us{\Gamma}(r)}}\cdots\int_{x_{\vs{\Gamma}{j}{r_{j}}+1}}^{x_{\vs{\Gamma}{j}{0}}}\overline{\ep_{\vs{\Gamma}{j}{r_{j}}}^{+}}(J_{r_{j},j})\df{}x_{\vs{\Gamma}{j}{r_{j}-1}}\df{}x_{\vs{\Gamma}{j+1}{1}}\cdots\df{}x_{\vs{\Gamma}{j}{r_{j}}})\omega_{\Gamma,\bs{}}\\
			=
		&\int_{\simplex{}{r}\cact{\Gamma\delta_{\vs{\Gamma}{j}{r_{j}}}}{}}((\chain{(\Gamma\delta_{\vs{\Gamma}{j}{r_{j}}})}{\fs{}}^{\ast}(\sum_{\lambda=0}^{\infty}g_{\lambda,\Rs{\Gamma}{j}{r_{j}}}\otimes f_{\lambda,\Rs{\Gamma}{j}{r_{j}}}\df{}x_{1}\wedge\dots\wedge\check{\df{}x_{\Rs{\Gamma}{j}{r_{j}}}}\wedge\dots\wedge\df{}x_{r}))\wedge(\chain{(\Gamma\delta_{\vs{\Gamma}{j}{r_{j}}})}{\fs{}}^{\ast}\omega_{\Gamma,\bs{}}))\\
			=
		&\int_{\simplex{}{r}\cact{\Gamma\delta_{\vs{\Gamma}{j}{r_{j}}}}{}}((\chain{(\Gamma\delta_{\vs{\Gamma}{j}{r_{j}}})}{\fs{}}^{\ast}\omega_{\Gamma,\fs{}}^{(r-1)})\wedge(\chain{(\Gamma\delta_{\vs{\Gamma}{j}{r_{j}}})}{\fs{}}^{\ast}\omega_{\Gamma,\bs{}}))
	\end{align*}
	follows from Proposition \ref{GC Lem5}. This equation hold in the case of $r_{j}=1$.
	
	For any integer $i$ satisfying $0<i<r_{j}$, $\vs{\Gamma}{j}{i}$ is an element of $\mathsf{Out}(\Gamma)$. Therefore there is a maximal chain $\Gamma_{i,j}\colon[n+r-1]\hookrightarrow[n]\times[r-1]$ satisfying
	\begin{align*}
		\Gamma\delta_{\vs{\Gamma}{j}{i}}=(\id{}\times\delta_{\chain{\Gamma}{\fs{}}(\vs{\Gamma}{j}{i})})\Gamma_{i,j}=(\id{}\times\delta_{\Rs{\Gamma}{j}{i}})\Gamma_{i,j}.
	\end{align*}
	For this maximal chain,
	\begin{align*}
		&\int_{\simplex{}{r-1}\cact{\Gamma_{i,j}}{}}\Gamma_{i,j}^{\ast}(\id{}\times\delta_{\Gamma_{\Rs{\Gamma}{j}{i}}})^{\ast}((\proj{\simplex{}{r}}^{\ast}\omega_{\Gamma,\fs{}}^{(r-1)})\wedge(\proj{\simplex{}{n}}^{\ast}\omega_{\Gamma,\bs{}}^{(r-1)}))\\
			=
		&\sum_{h=1}^{r}\int_{\simplex{}{r-1}\cact{\Gamma_{i,j}}{}}((\chain{(\Gamma_{i,j})}{\fs{}}^{\ast}\delta_{\Rs{\Gamma}{j}{i}}^{\ast}(\sum_{\lambda=0}^{\infty}g_{\lambda,h}\otimes f_{h}\df{}x_{1}\wedge\dots\wedge\check{\df{}x_{h}}\wedge\dots\wedge\df{}x_{r}))\wedge(\chain{(\Gamma_{i,j})}{\bs{}}^{\ast}\omega_{\Gamma,\bs{}}))\\
			=
		&\sum_{a=0,1}\int_{\simplex{}{r-1}\cact{\Gamma_{i,j}}{}}((\chain{(\Gamma_{i,j})}{\fs{}}^{\ast}(\sum_{\lambda=0}^{\infty}g_{\lambda,h}\otimes(\delta_{\Rs{\Gamma}{j}{i}}^{\ast}f_{\Rs{\Gamma}{j}{i}+a})\df{}x_{1}\wedge\dots\wedge\df{}x_{r-1}))\wedge(\chain{(\Gamma_{i,j})}{\bs{}}^{\ast}\omega_{\Gamma,\bs{}}))\\
			=
		&\sum_{a=0,1}\sum_{\lambda=0}^{\infty}g_{\lambda,\Rs{\Gamma}{j}{i}+a}\otimes\bs{\Gamma_{i,j}}^{\ast}(\int_{x_{\fs{\Gamma_{i,j}}(r-1)+1}}^{x_{\us{\Gamma_{i,j}}(r-1)}}\cdots\int_{x_{\fs{\Gamma_{i,j}}(1)+1}}^{x_{\us{\Gamma_{i,j}}(1)}}(\Gamma_{i,j}^{\ast}\delta_{\Rs{\Gamma}{j}{i}}^{\ast}f_{\Rs{\Gamma}{j}{i}+a})\df{}x_{\fs{\Gamma_{i,j}(1)}}\dots\df{}x_{\fs{\Gamma_{i,j}(r-1)}})\omega_{\Gamma,\bs{}}\\
			=
		&\sum_{a=0,1}\sum_{\lambda=0}^{\infty}g_{\lambda,\Rs{\Gamma}{j}{i}+a}\otimes\bs{\Gamma_{i,j}}^{\ast}(\int_{x_{\fs{\Gamma_{i,j}}(r-1)+1}}^{x_{\us{\Gamma_{i,j}}(r-1)}}\cdots\int_{x_{\fs{\Gamma_{i,j}}(1)+1}}^{x_{\us{\Gamma_{i,j}}(1)}}(\delta_{\vs{\Gamma}{j}{i}}^{\ast}\chain{\Gamma}{\fs{}}^{\ast}f_{\Rs{\Gamma}{j}{i}+a})\df{}x_{\fs{\Gamma_{i,j}(1)}}\dots\df{}x_{\fs{\Gamma_{i,j}(r-1)}})\omega_{\Gamma,\bs{}}
	\end{align*}
	holds.
	
	From the above,
	\begin{align*}
		&\int_{\simplex{}{r}\cact{\Gamma}{}}((\chain{\Gamma}{\fs{}}^{\ast}\df{}\omega_{\Gamma,\fs{}}^{(r-1)})\wedge(\chain{\Gamma}{\bs{}}^{\ast}\omega_{\Gamma,\bs{}}))\\
			=
		&\sum_{\lambda=0}^{\infty}\sum_{1\leq j\leq\mathsf{n}_{\Gamma}}(-1)^{\Rs{\Gamma}{j}{0}}g_{\lambda,\Rs{\Gamma}{j}{1}}\otimes\bs{\Gamma}^{\ast}(\text{integration of $\overline{\ep_{\Rs{\Gamma}{j}{1}}^{-}}(I_{1,j})$})\omega_{\Gamma,\bs{}}\\
			+
		&\sum_{\lambda=0}^{\infty}\underset{1=r_{j}}{\sum_{1\leq j\leq\mathsf{n}_{\Gamma}}}(-1)^{\Rs{\Gamma}{j}{1}}g_{\lambda,\Rs{\Gamma}{j}{1}}\otimes\bs{\Gamma}^{\ast}(\text{integration of $\overline{\ep_{\Rs{\Gamma}{j}{1}}^{+}}(I_{1,j})$})\omega_{\Gamma,\bs{}}\\
			+
		&\sum_{\lambda=0}^{\infty}\underset{1<r_{j}}{\sum_{1\leq j\leq\mathsf{n}_{\Gamma}}}(-1)^{\Rs{\Gamma}{j}{1}}g_{\lambda,\Rs{\Gamma}{j}{1}}\otimes\bs{\Gamma}^{\ast}(\text{integration of $\overline{\ep_{\Rs{\Gamma}{j}{1}}^{+}}(I_{1,j})$})\omega_{\Gamma,\bs{}}\\
			+
		&\sum_{\lambda=0}^{\infty}\underset{1\leq i-1<r_{j}}{\sum_{1\leq j\leq\mathsf{n}_{\Gamma}}}(-1)^{\Rs{\Gamma}{j}{i-1}}g_{\lambda,\Rs{\Gamma}{j}{i}}\otimes\bs{\Gamma}^{\ast}(\text{integration of $\overline{\ep_{\Rs{\Gamma}{j}{i-1}}^{+}}(J_{(i-1)+1,j})$})\omega_{\Gamma,\bs{}}\\
			+
		&\sum_{\lambda=0}^{\infty}\underset{1<i<r_{j}}{\sum_{1\leq j\leq\mathsf{n}_{\Gamma}}}(-1)^{\Rs{\Gamma}{j}{i}}g_{\lambda,\Rs{\Gamma}{j}{i}}\otimes\bs{\Gamma}^{\ast}(\text{integration of $\overline{\ep_{\Rs{\Gamma}{j}{i}}^{+}}(J_{i,j})$})\omega_{\Gamma,\bs{}}\\
			+
		&\sum_{\lambda=0}^{\infty}\underset{1<r_{j}}{\sum_{1\leq j\leq\mathsf{n}_{\Gamma}}}(-1)^{\Rs{\Gamma}{j}{r_{j}}}g_{\lambda,\Rs{\Gamma}{j}{r_{j}}}\otimes\bs{\Gamma}^{\ast}(\text{integration of $\overline{\ep_{\Rs{\Gamma}{j}{r_{j}}}^{+}}(J_{r_{j},j})$})\omega_{\Gamma,\bs{}}
	\end{align*}
	holds. Furthermore
	\begin{align*}
		&\int_{\simplex{}{r}\cact{\Gamma}{}}((\chain{\Gamma}{\fs{}}^{\ast}\df{}\omega_{\Gamma,\fs{}}^{(r-1)})\wedge(\chain{\Gamma}{\bs{}}^{\ast}\omega_{\Gamma,\bs{}}))\\
			=
		&\sum_{1\leq j\leq\mathsf{n}_{\Gamma}}(-1)^{\Rs{\Gamma}{j}{0}}\int_{\simplex{}{r}\cact{\Gamma\delta_{\vs{\Gamma}{j}{0}}}{}}(\Gamma\delta_{\vs{\Gamma}{j}{0}})^{\ast}((\proj{\simplex{}{r}}^{\ast}\omega_{\Gamma,\fs{}}^{(r-1)})\wedge(\proj{\simplex{}{n}}^{\ast}\omega_{\Gamma,\bs{}}))\\
			&+
		\underset{0<i<r_{j}}{\sum_{1\leq j\leq\mathsf{n}_{\Gamma}}}(-1)^{\Rs{\Gamma}{j}{i}}\int_{\simplex{}{r-1}\cact{\Gamma_{\vs{\Gamma}{j}{i}}}{}}\Gamma_{\vs{\Gamma}{j}{i}}^{\ast}(\id{}\times\delta_{\Rs{\Gamma}{j}{i}})^{\ast}((\proj{\simplex{}{r}}^{\ast}\omega_{\Gamma,\fs{}}^{(r-1)})\wedge(\proj{\simplex{}{n}}^{\ast}\omega_{\Gamma,\bs{}}))\\
			&+
		\sum_{1\leq j\leq\mathsf{n}_{\Gamma}}(-1)^{\Rs{\Gamma}{j}{r_{j}}}\int_{\simplex{}{r}\cact{\Gamma\delta_{\vs{\Gamma}{j}{r_{j}}}}{}}(\Gamma\delta_{\vs{\Gamma}{j}{r_{j}}})^{\ast}((\proj{\simplex{}{r}}^{\ast}\omega_{\Gamma,\fs{}}^{(r-1)})\wedge(\proj{\simplex{}{n}}^{\ast}\omega_{\Gamma,\bs{}}))
	\end{align*}
	holds.
\end{proof}
\begin{thm}\label{Stokes}
	Let $\omega\colon X\times U\to\qdR{\dq}{}{}{\mathbb{U}_{\infty}\mathfrak{g}}^{\wedge}$ be a formal differential form with values in a connected $L_{\infty}$-algebra $\mathfrak{g}$ on $X\times U$ which has the finite support in the direction along the projection $\proj{X}$. Then the following holds:
	\begin{align*}
		\fint{\proj{X}}\df{}\omega-\bint{\proj{X}}\omega=\sum_{(\simplex{}{r}\xrightarrow{u}U)\in\mathrm{part}_{\omega}(U)}(-1)^{r}\df{}\fint{\proj{X}}((\id{}\times u)^{\ast}\omega)
	\end{align*}
\end{thm}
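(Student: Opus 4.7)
The strategy is to reduce to a pointwise statement on a single non-degenerate simplex of $U$, unfold the fiberwise integration via the chain partition \eqref{chain partition}, apply the Leibniz rule and Lemma \ref{Stokes Lem} chain-by-chain, and verify that the resulting boundary and base-derivative terms reorganise into the two sides of the identity. Because $\fint{\proj{X}}$ and $\bint{\proj{X}}$ are both sums indexed by $\mathrm{part}_{\omega}(U)$, it is enough to show, for each $u\colon\simplex{}{r}\to U$ in $\mathrm{part}_{\omega}(U)$ and $\omega_{u}\coloneqq(\id{}\times u)^{\ast}\omega$,
\[\fint{\proj{X}}\df{}\omega_{u}-\sum_{i=0}^{r}\fint{\proj{X}}(\id{}\times\delta_{i})^{\ast}\omega_{u}=(-1)^{r}\df{}\fint{\proj{X}}\omega_{u};\]
by Lemma \ref{Int Lem21}, Lemma \ref{Stb Lem2}, and the Eilenberg-Zilber lemma, one may further reduce to the case $X=\simplex{}{m}$ with $\omega_{u}^{\wedge}$ non-degenerate.

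Under this reduction I would fix a maximal chain $\Gamma\colon[m+r]\hookrightarrow[m]\times[r]$ and use the essentially unique decomposition $\Gamma^{\ast}\omega_{u}=\sum_{i}\omega_{\Gamma,i,\fs{}}\wedge\chain{\Gamma}{\bs{}}^{\ast}\omega_{\Gamma,i,\bs{}}$ together with the fiber-degree splitting of $\omega_{\Gamma,i,\fs{}}$. Since $\omega_{\Gamma,i,\fs{}}$ contains only fiber variables, the Leibniz rule gives
\[\df{}\bigl(\omega_{\Gamma,i,\fs{}}\wedge\chain{\Gamma}{\bs{}}^{\ast}\omega_{\Gamma,i,\bs{}}\bigr)=\df{}\omega_{\Gamma,i,\fs{}}\wedge\chain{\Gamma}{\bs{}}^{\ast}\omega_{\Gamma,i,\bs{}}+(-1)^{|\omega_{\Gamma,i,\fs{}}|}\omega_{\Gamma,i,\fs{}}\wedge\chain{\Gamma}{\bs{}}^{\ast}\df{}\omega_{\Gamma,i,\bs{}},\]
and only the fiber-top-degree components survive the integration $\int_{\simplex{}{r}\cact{\Gamma}{}}$. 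The \emph{$\df{}$-on-fiber} summand, whose surviving component lies at fiber degree $r-1$, is exactly the hypothesis of Lemma \ref{Stokes Lem} and so expands into a signed sum indexed by the endpoint vertices $\vs{\Gamma}{j}{0},\vs{\Gamma}{j}{r_{j}}$ and the interior vertices $\vs{\Gamma}{j}{i}$ ($0<i<r_{j}$). The \emph{$\df{}$-on-base} summand, whose surviving component lies at fiber degree $r$, collects --- after summing over $\Gamma$ and commuting the base-direction $\df{}$ past the iterated fiber integration --- into $(-1)^{r}\df{}\fint{\proj{\simplex{}{m}}}\omega_{u}$, modulo a ``FTC leftover'' coming from differentiating the integration bounds $x_{\us{\Gamma}(i)},x_{\Fs{\Gamma}(i)+1}$ via Lemma \ref{Int Lem11}.

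It remains to show that, after summing over all maximal chains, the Stokes-lemma endpoint and interior contributions combine with the FTC leftover to yield exactly $\sum_{i=0}^{r}\fint{\proj{\simplex{}{m}}}(\id{}\times\delta_{i})^{\ast}\omega_{u}$. The interior contributions group naturally by the face index $k=\Rs{\Gamma}{j}{i}$; Proposition \ref{Gl Lem8} identifies the relevant chain $\Gamma_{\vs{\Gamma}{j}{i}}$ with a maximal chain of $[m]\times[r-1]$, and via the pullback-of-chains machinery of Propositions \ref{Gl Lem1}--\ref{Gl Lem7} the sum over interior vertices reassembles, with the Stokes-lemma signs, into a combination of the $\fint{\proj{\simplex{}{m}}}(\id{}\times\delta_{k})^{\ast}\omega_{u}$. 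The endpoint contributions must then pair across adjacent chains: whenever $v\in\mathsf{Inn}_{\fs{}}(\Gamma)\cup\mathsf{Inn}_{\bs{}}(\Gamma)$, the flip construction preceding Proposition \ref{GC Lem3} provides a unique $\Gamma'\neq\Gamma$ with $\Gamma\delta_{v}=\Gamma'\delta_{v}$, and one must verify that the attached signs $(-1)^{\Rs{\Gamma}{j}{0}}$ and $(-1)^{\Rs{\Gamma'}{j'}{r_{j'}}}$ are opposite; the remaining unpaired endpoints correspond to faces landing on a genuine boundary of $[m]\times[r]$ and should match the FTC leftover through the identification of $\partial/\partial x$ of an iterated integral with its endpoint evaluations (Lemmas \ref{Int Lem11}--\ref{Int Lem13}). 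Carrying out this sign-and-cancellation bookkeeping --- tracking all Koszul signs, matching boundary evaluations of $\omega_{\Gamma,i,\fs{}}^{(r-1)}$ with those arising from $\omega_{\Gamma,i,\fs{}}^{(r)}$ through the integration bounds, and confirming the pair-cancellation across flips --- is the most delicate and technically demanding part of the argument, and it is where I expect the main obstacle to lie; the combinatorial Propositions \ref{GC Lem3}--\ref{GC Lem5} and \ref{Gl Lem1}--\ref{Gl Lem8} are precisely designed to provide the identities needed to execute it.
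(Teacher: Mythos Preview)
Your proposal follows essentially the same route as the paper's own proof: reduce to a single $u\in\mathrm{part}_\omega(U)$ and then to $X=\simplex{}{n}$ with $\omega^{\wedge}$ non-degenerate, expand over the chain partition, apply the Leibniz rule on each $\Gamma^{\ast}\omega$, feed the fiber-degree-$(r-1)$ piece into Lemma~\ref{Stokes Lem}, and use the flip $\Gamma\leftrightarrow\Gamma'$ with $\Gamma\delta_v=\Gamma'\delta_v$ to handle the endpoint terms.

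The one organisational difference is your ``FTC leftover.'' The paper does \emph{not} carry such a term: it asserts directly that
\[
\sum_{\Gamma}\int_{\simplex{}{r}\cact{\Gamma}{}}(\chain{\Gamma}{\fs{}}^{\ast}\omega_{\Gamma,\fs{}}^{(r)})\wedge(\chain{\Gamma}{\bs{}}^{\ast}\df{}\omega_{\Gamma,\bs{}})=\df{}\fint{\proj{\simplex{}{n}}}\omega,
\]
which amounts to saying that the extra terms you anticipate --- obtained by applying $\df{}$ to the base-variable bounds $x_{\us{\Gamma}(i)},x_{\Fs{\Gamma}(i)+1}$ --- telescope to zero once summed over \emph{all} maximal chains (the upper bound for one $\Gamma$ is the lower bound for the adjacent $\Gamma'$, and the base parts $\omega_{\Gamma,\bs{}}$ agree because both come from the same $\omega$ on $\simplex{}{n}\times\simplex{}{r}$). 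Consequently, in the paper's accounting \emph{every} Stokes-lemma endpoint term ($i=0$ and $i=r_j$) is paired and cancelled via the flip, and only the interior terms $0<i<r_j$ survive to give $\bint{\proj{\simplex{}{n}}}\omega$ through Proposition~\ref{Gl Lem8} and Lemma~\ref{Gl Lem7}. Your plan to match FTC leftovers (which arise from the fiber-degree-$r$ piece) against ``unpaired'' Stokes endpoints (which come from the fiber-degree-$(r-1)$ piece) mixes contributions from different fiber degrees and makes the cancellation harder to organise; the paper's separation --- leftovers cancel among themselves, then endpoints cancel among themselves --- is cleaner.
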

\begin{proof}
	It is sufficient to show that for $\mathfrak{g}$-valued formal differential form $\omega$ on simplicial set $\simplex{}{n}\times\simplex{}{r}$. Since for any non-degenerate $n$-simplex $\omega\in[\simplex{}{r},\qdR{\dq}{}{}{\mathbb{U}_{\infty}\mathfrak{g}}^{\wedge}]_{n}$ and surjection $\sigma\colon[m]\to[n]$, 
	\begin{align*}
		&\fint{\proj{\simplex{}{n}}}(\df{}(\sigma\times\id{})^{\ast}\omega^{\vee})
			-
		\bint{\proj{\simplex{}{n}}}((\sigma\times\id{})^{\ast}\omega^{\vee})
			+
		(-1)^{r}\df{}\fint{\proj{\simplex{}{n}}}((\sigma\times\id{})^{\ast}\omega^{\vee})\\
			=
		&\sigma^{\ast}(\fint{\proj{\simplex{}{n}}}(\df{}\omega^{\vee})
			-
		\bint{\proj{\simplex{}{n}}}\omega^{\vee}
			+
		(-1)^{r}\df{}\fint{\proj{\simplex{}{n}}}\omega^{\vee})
	\end{align*}
	holds, we can assume that $\omega\colon\simplex{}{n}\times\simplex{}{r}\to\qdR{\dq}{}{}{\mathbb{U}_{\infty}\mathfrak{g}}^{\wedge}$ gives a non-degenerated $n$-simplex $\omega^{\wedge}$ of $[\simplex{}{r},\qdR{\dq}{}{}{\mathbb{U}_{\infty}\mathfrak{g}}^{\wedge}]$.
	
	For each maximal chain $\Gamma\colon[n+r]\hookrightarrow[n]\times[r]$, we have a decomposition
	\begin{align*}
		\Gamma^{\ast}\omega
			=
		\sum(\chain{\Gamma}{\fs{}}^{\ast}\omega_{\Gamma,\fs{}})\wedge(\chain{\Gamma}{\bs{}}^{\ast}\omega_{\Gamma,\bs{}})
	\end{align*}
	where $\omega_{\Gamma,\fs{}}$ and $\omega_{\Gamma,\bs{}}$ are $\mathfrak{g}$-valued formal differential forms $\omega_{\Gamma,\fs{}}\colon\simplex{}{r}\to\qdR{\dq}{}{}{\mathbb{U}_{\infty}\mathfrak{g}}^{\wedge}$ and $\omega_{\Gamma,\bs{}}\colon\simplex{}{n}\to\qdR{\dq}{}{}{\mathbb{U}_{\infty}\mathfrak{g}}^{\wedge}$, respectively. And $\omega_{\Gamma,\fs{}}$ can be decomposed into ``homogeneous'' elements
	\begin{align*}
		\omega_{\Gamma,\fs{}}=\omega_{\Gamma,\fs{}}^{(r)}+\omega_{\Gamma,\fs{}}^{(r-1)}+\sum_{p=0}^{r-2}\omega_{\Gamma,\fs{}}^{(p)}.
	\end{align*}
	Using this decomposition, we obtain the following:
	\begin{align*}
		\fint{\proj{\simplex{}{n}}}\df{}\omega
			&=
		\sum_{\Gamma}\int_{\simplex{}{r}\cact{\Gamma}{}}\Gamma^{\ast}\df{}\omega\\
			&=
		\sum_{\Gamma}\int_{\simplex{}{r}\cact{\Gamma}{}}\df{}\Gamma^{\ast}\omega\\
			&=
		\sum_{\Gamma}\int_{\simplex{}{r}\cact{\Gamma}{}}\df{}\sum(\chain{\Gamma}{\fs{}}^{\ast}\omega_{\Gamma,\fs{}}^{(r)})\wedge(\chain{\Gamma}{\bs{}}^{\ast}\omega_{\Gamma,\bs{}})
			+
		\sum_{\Gamma}\int_{\simplex{}{r}\cact{\Gamma}{}}\df{}\sum(\chain{\Gamma}{\fs{}}^{\ast}\omega_{\Gamma,\fs{}}^{(r-1)})\wedge(\chain{\Gamma}{\bs{}}^{\ast}\omega_{\Gamma,\bs{}})\\
			&=
		(-1)^{r}\sum_{\Gamma}\int_{\simplex{}{r}\cact{\Gamma}{}}\sum(\chain{\Gamma}{\fs{}}^{\ast}\omega_{\Gamma,\fs{}}^{(r)})\wedge(\chain{\Gamma}{\bs{}}^{\ast}\df{}\omega_{\Gamma,\bs{}})
			+
		\sum_{\Gamma}\int_{\simplex{}{r}\cact{\Gamma}{}}\sum(\chain{\Gamma}{\fs{}}^{\ast}\df{}\omega_{\Gamma,\fs{}}^{(r-1)})\wedge(\chain{\Gamma}{\bs{}}^{\ast}\omega_{\Gamma,\bs{}})\\
			&=
		(-1)^{r}\df{}\fint{\proj{\simplex{}{n}}}\omega
			+
		\sum_{\Gamma}\int_{\simplex{}{r}\cact{\Gamma}{}}\sum(\chain{\Gamma}{\fs{}}^{\ast}\df{}\omega_{\Gamma,\fs{}}^{(r-1)})\wedge(\chain{\Gamma}{\bs{}}^{\ast}\omega_{\Gamma,\bs{}})
	\end{align*}
	Let $\Gamma\colon[n+r]\hookrightarrow[n]\times[r]$ be a maximal chain. Then any elements $v\in\mathsf{Inn}_{\fs{}}$ can be represented as $v=\vs{\Gamma}{j}{r_{j}}$, on the other hand any elements $v\in\mathsf{Inn}_{\bs{}}$ can be represented as $v=\vs{\Gamma}{j}{0}$. For each $j=1,\dots,m$, we obtain a maximal chain $\Gamma_{j}\colon[n+r]\hookrightarrow[n]\times[r]$ satisfying $\Gamma\neq\Gamma_{j}$ and $\Gamma\delta_{\vs{\Gamma}{j}{r_{j}}}=\Gamma_{j}\delta_{\vs{\Gamma}{j}{r_{j}}}$ as follows (where $m=|\mathrm{Im}\Fs{\Gamma}|$):
	\begin{align*}
		\Gamma_{j}(h)
			\coloneqq
		\begin{cases}
			\Gamma(h)&(h\neq\vs{\Gamma}{j}{r_{j}})\\
			(\chain{\Gamma}{\bs{}}(h-1)+1,\chain{\Gamma}{\fs{}}(h-1))&(h=\vs{\Gamma}{j}{r_{j}})
		\end{cases}.
	\end{align*}
	For these maximal chains,
	\begin{align*}
		\vs{\Gamma}{j}{r_{j}}
			=
		\begin{cases}
			\vs{\Gamma_{j}}{j+1}{0}&(r_{j}>1)\\
			\vs{\Gamma_{j}}{j}{0}&(r_{j}=1)
		\end{cases}
		\Rs{\Gamma}{j}{r_{j}}
			=
		\begin{cases}
			\Rs{\Gamma_{j}}{j+1}{0}+1&(r_{j}>1)\\
			\Rs{\Gamma_{j}}{j}{0}+1&(r_{j}=1)
		\end{cases}
	\end{align*}
	holds. Thus
	\begin{align*}
		\sum_{\Gamma\colon[n+r]\hookrightarrow[n]\times[r]}\int_{\simplex{}{r}\cact{\Gamma}{}}\sum(\chain{\Gamma}{\fs{}}^{\ast}\df{}\omega_{\Gamma,\fs{}}^{(r-1)})\wedge(\chain{\Gamma}{\bs{}}^{\ast}\omega_{\Gamma,\bs{}})
			=
		\sum_{h=0}^{r}(-1)^{i}\sum_{\Gamma\colon[n+r]\hookrightarrow[n]\times[r]}\int_{\simplex{}{r}\cact{\Gamma}{}}\Gamma^{\ast}(\id{}\times\delta_{h})^{\ast}\omega
			=
		\bint{\proj{\simplex{}{n}}}\omega
	\end{align*}
	holds from Lemma \ref{Stokes Lem} and Lemma \ref{Gl Lem7}.
\end{proof}
\section{Simplicial Holonomy}\label{sec4}
\subsection{Iterated Integral}
Let $\mathfrak{g}$ be a connected $L_{\infty}$-algebra, $X$ be a simplicial set and $\omega_{1},\dots,\omega_{r}\colon X\to\qdR{\dq}{}{}{\mathbb{U}_{\infty}\mathfrak{g}}^{\wedge}$ be $\mathfrak{g}$-valued formal differential forms on $X$. Then we obtain a $\mathfrak{g}$-valued formal differential form on $X^{r}\coloneqq\underbrace{X\times\dots\times X}_{r}$ as $\proj{1}^{\ast}\omega_{1}\wedge\dots\wedge\proj{r}^{\ast}\omega_{r}$. It gives a $\mathfrak{g}$-valued formal differential form on $[\simplex{}{1},X]^{r}\times\simplex{}{1}^{r}$ by using a counit $\mathrm{ev}\colon\simplex{}{1}\times[\simplex{}{1},-]$ of the adjoint pair $\simplex{}{1}\times-\dashv[\simplex{}{1},-]$. In addition, by using a simplicial map $\iota_{r}\colon\simplex{}{r}\to\simplex{}{1}^{r}$ obtained from an order-preserving map $[r]\to[1]^{r}$ defined as $i\mapsto(\underbrace{1,\dots,1}_{i},0,\dots,0)$ and the diagonal map $[\simplex{}{1},X]\to[\simplex{}{1},X]^{r}$, we obtain a $\mathfrak{g}$-valued formal differential form $\omega_{1}\times\dots\times\omega_{r}$ on $[\simplex{}{1},X]\times\simplex{}{r}$. Then we obtain a $\mathfrak{g}$-valued formal differential form on path simplicial set $[\simplex{}{1},X]$ as a fiberwise integration of $\omega_{1}\times\dots\times\omega_{r}$ along the projection $[\simplex{}{1},X]\times\simplex{}{r}\to[\simplex{}{1},X]$. We call it the \Def{iterated integral of $\omega_{1},\dots,\omega_{r}$} and denote it as $\int\omega_{1}\cdots\omega_{r}$. It is precisely an analogy of Chen's iterated integral.
\begin{center}
\begin{tikzpicture}[auto]
	\node (11) at (0, 3) {$[\simplex{}{1},X]^{r}\times\simplex{}{1}^{r}$};
	\node (12) at (3.5, 3) {$(\simplex{}{1}\times[\simplex{}{1},X])^{r}$};
	\node (21) at (0, 2) {$[\simplex{}{1},X]\times\simplex{}{r}$};
	\node (22) at (3.5, 2) {$X^{r}$};
	\node (23) at (5.5, 2) {$X^{r}$};
	\node (24) at (7.5, 2) {$X$};
	\node (31) at (0, 1) {$[\simplex{}{1},X]\times\simplex{}{r}$};
	\node (32) at (3.5, 1) {$\qdR{\dq}{}{}{\mathbb{U}_{\infty}\mathfrak{g}}^{\wedge}$};
	\node (33) at (5.5, 1) {$(\qdR{\dq}{}{}{\mathbb{U}_{\infty}\mathfrak{g}}^{\wedge})^{r}$};
	\node (34) at (7.5, 1) {$\qdR{\dq}{}{}{\mathbb{U}_{\infty}\mathfrak{g}}^{\wedge}$};
	\node (41) at (0, 0) {$[\simplex{}{1},X]$};
	\node (42) at (3.5, 0) {$\qdR{\dq}{}{}{\mathbb{U}_{\infty}\mathfrak{g}}^{\wedge}$};
	\path[draw, transform canvas={yshift=1pt}] (11) -- (12);
	\path[draw, transform canvas={yshift=-1pt}] (11) -- (12);
	\path[draw, ->] (21) --node {$\scriptstyle \text{diagonal}\times\iota_{r}$} (11);
	\path[draw, ->] (12) --node {$\scriptstyle (\mathrm{ev})^{r}$} (22);
	\path[draw, ->] (21) --node {$\scriptstyle \phi_{r}$} (22);
	\path[draw, transform canvas={yshift=1pt}] (22) -- (23);
	\path[draw, transform canvas={yshift=-1pt}] (22) -- (23);
	\path[draw, ->] (23) --node {$\scriptstyle \proj{i}$} (24);
	\path[draw, transform canvas={xshift=1pt}] (21) -- (31);
	\path[draw, transform canvas={xshift=-1pt}] (21) -- (31);
	\path[draw, ->] (22) --node[swap] {$\scriptstyle \proj{1}^{\ast}\omega_{1}\wedge\dots\wedge\proj{r}^{\ast}\omega_{r}$} (32);
	\path[draw, ->] (23) --node {$\scriptstyle \sqcap_{i}\proj{i}^{\ast}\omega_{i}$} (33);
	\path[draw, ->] (24) --node {$\scriptstyle \omega_{i}$} (34);
	\path[draw, ->] (31) --node[swap] {$\scriptstyle \omega_{1}\times\dots\times\omega_{r}$} (32);
	\path[draw, ->] (33) --node {$\scriptstyle \wedge$} (32);
	\path[draw, ->] (33) --node[swap] {$\scriptstyle \proj{i}$} (34);
	\path[draw, ->] (41) --node {$\scriptstyle \int\omega_{1}\dots\omega_{r}$} (42);
\end{tikzpicture}
\end{center}
We obtain a degree $1$ map $\mathsf{C}\colon\mathsf{T}\qdR{\dq}{}{}{}(X,\mathfrak{g})[-1]\to\mathsf{T}\qdR{\dq}{}{}{}([\simplex{}{1},X],\mathfrak{g})[-1]$ as
\begin{align*}
	\mathsf{C}(\omega_{1}[-1]\otimes\dots\otimes\omega_{r}[-1])\coloneqq(-1)^{\overset{r}{\underset{i=1}{\sum}}(r-i)(|\omega_{i}|-1)}(\int\omega_{1}\dots\omega_{r})[-1].
\end{align*}
\begin{prop}
	We define a degree $1$ map $\overline{\df{}}\colon\mathsf{T}\qdR{\dq}{}{}{}(X,\mathfrak{g})[-1]\to\mathsf{T}\qdR{\dq}{}{}{}(X,\mathfrak{g})[-1]$ as 
	\begin{align*}
		\overline{\df{}}(\omega_{1}[-1]\otimes\dots\otimes\omega_{r}[-1])
			\coloneqq&
		\sum_{i=1}^{r}(-1)^{|\omega_{1}|+\dots+|\omega_{i-1}|+i}\omega_{1}[-1]\otimes\dots\otimes\df{}\omega_{i}[-1]\otimes\dots\otimes\omega_{r}[-1]\\
			&+
		\sum_{i=1}^{r-1}(-1)^{|\omega_{1}|+\dots+|\omega_{i}|+i}\omega_{1}[-1]\otimes\dots\otimes(\omega_{i}\wedge\omega_{i+1})[-1]\otimes\dots\otimes\omega_{r}[-1].
	\end{align*}
	Then, for each homogeneous $\mathfrak{g}$-valued formal differential form $\omega_{1},\dots,\omega_{r}$ on $X$,
	\begin{align*}
		\df{}\mathsf{C}(\omega_{1}[-1]\otimes\dots\otimes\omega_{r}[-1])
			=&
		\mathsf{C}\overline{\df{}}(\omega_{1}[-1]\otimes\dots\otimes\omega_{r}[-1])\\
			&+
		(\mathrm{E}_{1}^{\ast}\omega_{1}\wedge(-1)^{\overset{r-1}{\underset{i=1}{\sum}}(r-1-i)(|\omega_{i+1}|-1)}(\int\omega_{2}\dots\omega_{r}))[-1]\\
			&-
		(-1)^{|\omega_{1}|+\dots+|\omega_{i-1}|-(i-1)}((-1)^{\overset{r-1}{\underset{i=1}{\sum}}(r-1-i)(|\omega_{i}|-1)}(\int\omega_{1}\dots\omega_{r-1})\wedge\mathrm{E}_{0}^{\ast}\omega_{r})[-1]
	\end{align*}
	holds where $\mathrm{E}_{\ep}\colon[\simplex{}{1},X]\to X$ is obtained as a composition $[\simplex{}{1},X]\to[\Delta\{\ep\},X]\cong\simplex{}{0}\times[\simplex{}{0},X]\xrightarrow{\mathrm{ev}}X$ for each $\ep=0,1$.
\end{prop}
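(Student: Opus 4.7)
The plan is to derive this proposition as a direct consequence of the simplicial Stokes theorem (Theorem \ref{Stokes}) applied to the $\mathfrak{g}$-valued form $\Omega \coloneqq \omega_{1}\times\cdots\times\omega_{r}$ on the product $[\simplex{}{1},X]\times\simplex{}{r}$, since by construction $\int\omega_{1}\cdots\omega_{r}=\fint{\proj{[\simplex{}{1},X]}}\Omega$. First I would verify that $\Omega$ has finite support along $\proj{[\simplex{}{1},X]}$ with $\mathrm{part}_{\Omega}(\simplex{}{r})$ consisting essentially of the identity $r$-simplex $\id{\simplex{}{r}}$ (other pullbacks factor through strict surjections $\simplex{}{r}\to\simplex{}{p}$ for $p<r$ and hence vanish under the fiberwise integral by Lemma \ref{Int Lem22}). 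Then Stokes yields
\begin{equation*}
\df{}\int\omega_{1}\cdots\omega_{r}=(-1)^{r}\bigl(\fint{\proj{[\simplex{}{1},X]}}\df{}\Omega-\bint{\proj{[\simplex{}{1},X]}}\Omega\bigr).
\end{equation*}

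Second, I would compute $\df{}\Omega$ using the Leibniz rule on the wedge of pulled-back forms, obtaining $\df{}\Omega=\sum_{i=1}^{r}(-1)^{|\omega_{1}|+\cdots+|\omega_{i-1}|}(\omega_{1}\times\cdots\times\df{}\omega_{i}\times\cdots\times\omega_{r})$. By Lemma \ref{Int Lem21}, fiberwise integration commutes with the construction, so each summand produces the iterated integral $\int\omega_{1}\cdots\df{}\omega_{i}\cdots\omega_{r}$ weighted with the stated Koszul sign. These contributions will account for the first half of $\mathsf{C}\overline{\df{}}(\omega_{1}[-1]\otimes\cdots\otimes\omega_{r}[-1])$.

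Third, I would analyze the face pullbacks appearing in $\bint\Omega$ by tracing the map $\iota_{r}\colon\simplex{}{r}\to\simplex{}{1}^{r}$ through each face $\delta_{i}\colon\simplex{}{r-1}\hookrightarrow\simplex{}{r}$. For an interior face $0<i<r$, the face pins $t_{i}=t_{i+1}$, identifying the $i$-th and $(i+1)$-th $\simplex{}{1}$-factors, so $\proj{i}^{\ast}\omega_{i}\wedge\proj{i+1}^{\ast}\omega_{i+1}$ restricts to a single slot containing $\omega_{i}\wedge\omega_{i+1}$; the remaining fiberwise integral delivers $\int\omega_{1}\cdots(\omega_{i}\wedge\omega_{i+1})\cdots\omega_{r}$, accounting for the second half of $\mathsf{C}\overline{\df{}}$. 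For the extreme face $\delta_{0}$, which pins $t_{1}=1$, the first $\simplex{}{1}$-factor is constant at $1$ and evaluation gives $\mathrm{E}_{1}^{\ast}\omega_{1}$, a form on $[\simplex{}{1},X]$ that is constant along the fiber and hence factors out of the remaining integral to yield $\mathrm{E}_{1}^{\ast}\omega_{1}\wedge\int\omega_{2}\cdots\omega_{r}$. Symmetrically, $\delta_{r}$ pins $t_{r}=0$ and produces $(\int\omega_{1}\cdots\omega_{r-1})\wedge\mathrm{E}_{0}^{\ast}\omega_{r}$. These are exactly the two endpoint terms in the statement.

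The main obstacle is sign bookkeeping. The prefactor $(-1)^{\sum_{i=1}^{r}(r-i)(|\omega_{i}|-1)}$ encoded in $\mathsf{C}$, the Koszul signs from Leibniz for $\df{}$, the alternating signs on the faces of $\simplex{}{r}$ (as inherited from the proof of Theorem \ref{Stokes} via Lemma \ref{Stokes Lem}), the overall $(-1)^{r}$ from Stokes, and the desuspension shift on $[-1]$ must all be reconciled against the sign conventions in $\overline{\df{}}$. I would carry out the comparison termwise: for each $i$, I would compare the prefactor of $\mathsf{C}$ evaluated on the original $r$-tuple with its value on the $(r-1)$-tuple obtained by merging positions $i$ and $i+1$ (for the wedge terms) and on the $r$-tuple with $\omega_{i}$ replaced by $\df{}\omega_{i}$ (for the differential terms), noting that the exponent changes by a predictable multiple of $|\omega_{i}|-1$ that precisely absorbs the $(-1)^{r}$ and face signs and recovers the exponents $|\omega_{1}|+\cdots+|\omega_{i-1}|+i$ and $|\omega_{1}|+\cdots+|\omega_{i}|+i$ demanded by $\overline{\df{}}$. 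The boundary-term signs are handled analogously using the $r=0$ and $r=r$ cases of the face decomposition.
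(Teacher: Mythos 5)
Your proposal follows essentially the same route as the paper: apply Theorem \ref{Stokes} to $\phi_{r}^{\ast}(\proj{1}^{\ast}\omega_{1}\wedge\dots\wedge\proj{r}^{\ast}\omega_{r})$ on $[\simplex{}{1},X]\times\simplex{}{r}$, expand $\df{}$ by the Leibniz rule, and identify the boundary contributions by tracking $\proj{j}\iota_{r}\delta_{i}$ (interior faces merging adjacent slots into $\omega_{i}\wedge\omega_{i+1}$, the extreme faces producing the $\mathrm{E}_{1}^{\ast}\omega_{1}$ and $\mathrm{E}_{0}^{\ast}\omega_{r}$ terms). The only part left schematic is the termwise sign reconciliation, which is exactly the bookkeeping the paper also carries out at the end of its proof.
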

\begin{proof}
	From Stokes's theorem \ref{Stokes}, the following holds:
	\begin{align*}
		(-1)^{r}\df{}\int\omega_{1}\dots\omega_{r}
			=&
		\sum_{i=1}^{r}(-1)^{|\omega_{1}|+\dots+|\omega_{i-1}|}\fint{\proj{[\simplex{}{1},X]}}\phi_{r}^{\ast}(\proj{1}^{\ast}\omega_{1}\wedge\dots\wedge\proj{i}^{\ast}\df{}\omega_{i}\wedge\dots\wedge\proj{r}^{\ast}\omega_{r})\\
			&+
		\sum_{i=0}^{r}(-1)^{i+1}\fint{\proj{[\simplex{}{1},X]}}(\id{}\times\delta_{i})^{\ast}\phi_{r}^{\ast}(\proj{1}^{\ast}\omega_{1}\wedge\dots\wedge\proj{r}^{\ast}\omega_{r})
	\end{align*}
	\begin{center}
	\begin{tikzpicture}[auto]
		\node (11) at (0, 4) {$[\simplex{}{1},X]\times\simplex{}{r-1}$};
		\node (13) at (8, 4) {$\simplex{}{r-1}\times[\simplex{}{1},X]$};
		\node (21) at (0, 3) {$[\simplex{}{1},X]\times\simplex{}{r}$};
		\node (22) at (4, 3) {$[\simplex{}{1},X]\times\simplex{}{r}$};
		\node (23) at (8, 3) {$\simplex{}{r}\times[\simplex{}{1},X]$};
		\node (31) at (0, 2) {$[\simplex{}{1},X]^{r}\times\simplex{}{1}^{r}$};
		\node (32) at (4, 2) {$[\simplex{}{1},X]\times\simplex{}{1}^{r}$};
		\node (33) at (8, 2) {$\simplex{}{1}^{r}\times[\simplex{}{1},X]$};
		\node (41) at (0, 1) {$(\simplex{}{1}\times[\simplex{}{1},X])^{r}$};
		\node (43) at (8, 1) {$\simplex{}{1}\times[\simplex{}{1},X]$};
		\node (51) at (0, 0) {$X^{r}$};
		\node (53) at (8, 0) {$X$};
		\path[draw, ->] (11) --node[swap] {$\scriptstyle \id{}\times\delta_{i}$} (21);
		\path[draw, ->] (21) --node[swap] {$\scriptstyle \mathrm{diagonal}\times\iota_{r}$} (31);
		\path[draw, transform canvas={xshift=1pt}] (31) -- (41);
		\path[draw, transform canvas={xshift=-1pt}] (31) --node[swap] {\rotatebox{90}{$\scriptstyle \sim$}} (41);
		\path[draw, ->] (41) --node[swap] {$\scriptstyle \mathrm{ev}$} (51);
		\path[draw, ->] (21) -- (-2, 3) --node[swap] {$\scriptstyle \phi_{r}$} (-2, 0) -- (51);
		\path[draw, ->] (22) --node[swap] {$\scriptstyle \id{}\times\iota_{r}$}  (32);
		\path[draw, ->] (13) --node {$\scriptstyle \delta_{i}\times\id{}$} (23);
		\path[draw, ->] (23) --node {$\scriptstyle \iota_{r}\times\id{}$}  (33);
		\path[draw, ->] (33) --node {$\scriptstyle \proj{j}\times\id{}$} (43);
		\path[draw, ->] (43) --node {$\scriptstyle \mathrm{ev}$} (53);
		\path[draw, transform canvas={yshift=1pt}] (21) -- (22);
		\path[draw, transform canvas={yshift=-1pt}] (21) -- (22);
		\path[draw, transform canvas={yshift=1pt}] (22) --node {$\scriptstyle \sim$} (23);
		\path[draw, transform canvas={yshift=-1pt}] (22) -- (23);
		\path[draw, ->] (31) --node {$\scriptstyle \proj{j}\times\id{}$} (32);
		\path[draw, transform canvas={yshift=1pt}] (32) --node {$\scriptstyle \sim$} (33);
		\path[draw, transform canvas={yshift=-1pt}] (32) -- (33);
		\path[draw, transform canvas={yshift=1pt}] (11) --node {$\scriptstyle \sim$} (13);
		\path[draw, transform canvas={yshift=-1pt}] (11) -- (13);
		\path[draw, ->] (41) --node {$\scriptstyle \proj{j}$} (43);
		\path[draw, ->] (51) --node {$\scriptstyle \proj{j}$} (53);
	\end{tikzpicture}
	\end{center}
	For each pair of $i=0,\dots,r$ and $j=1,\dots,r$, respectively, the following holds:
	\begin{align*}
		\proj{j}\iota_{r}\delta_{i}
			=
		\begin{cases}
			\text{constant $1$}&((i,j)=(0,1))\\
			\proj{j-1}\iota_{r-1}&((i,j)\neq(0,1)\text{ and }i<j)\\
			\proj{j}\iota_{r-1}&((i,j)\neq(r,r)\text{ and }i\geq j)\\
			\text{constant $0$}&((i,j)=(r,r))
		\end{cases}.
	\end{align*}
	In addition, the following diagram is commutative:
	\begin{center}
	\begin{tikzpicture}[auto]
		\node (11) at (0, 2) {$\simplex{}{r-1}\times[\simplex{}{1},X]$};
		\node (12) at (4, 2) {$\simplex{}{0}\times[\simplex{}{1},X]$};
		\node (13) at (8, 2) {$\simplex{}{1}\times[\simplex{}{1},X]$};
		\node (21) at (0, 1) {$\simplex{}{r-1}\times[\simplex{}{1},X]$};
		\node (22) at (4, 1) {$\simplex{}{0}\times[\simplex{}{0},X]$};
		\node (23) at (8, 1) {$X$};
		\node (31) at (0, 0) {$[\simplex{}{1},X]$};
		\node (32) at (4, 0) {$[\simplex{}{0},X]$};
		\node (33) at (8, 0) {$X$};
		\path[draw, ->] (11) -- (12);
		\path[draw, ->] (31) --node[swap] {$\scriptstyle [\delta_{\ep},X]$} (32);
		\path[draw, double, double distance=2pt] (11) -- (21);
		\path[draw, ->] (21) --node[swap] {$\scriptstyle \proj{[\simplex{}{1},X]}$} (31);
		\path[draw, ->] (22) --node[swap] {$\scriptstyle \proj{[\simplex{}{1},X]}$} (32);
		\path[draw, ->] (21) -- (22);
		\path[draw, ->] (12) --node {$\scriptstyle \delta_{\ep}\times\id{}$} (13);
		\path[draw, ->] (12) --node[swap] {$\scriptstyle \simplex{}{0}\times[\delta_{\ep},X]$} (22);
		\path[draw, ->] (13) --node {$\scriptstyle \mathrm{ev}$} (23);
		\path[draw, ->] (22) --node[swap] {$\scriptstyle \mathrm{ev}$} (23);
		\path[draw, ->] (32) -- (33);
		\path[draw, double, double distance=2pt] (23) -- (33);
	\end{tikzpicture}.
	\end{center}
	Therefore
	\begin{align*}
		&\sum_{i=0}^{r}(-1)^{i+1}\fint{\proj{[\simplex{}{1},X]}}(\id{}\times\delta_{i})^{\ast}\phi_{r}^{\ast}(\proj{1}^{\ast}\omega_{1}\wedge\dots\wedge\proj{r}^{\ast}\omega_{r})\\
			=&
		-\fint{\proj{[\simplex{}{1},X]}}\proj{[\simplex{}{1},X]}^{\ast}\mathrm{E}_{1}^{\ast}\omega_{1}\wedge\phi_{r-1}^{\ast}(\proj{1}^{\ast}\omega_{2}\wedge\dots\wedge\proj{r-1}^{\ast}\omega_{r})\\
			&+
		\sum_{i=1}^{r-1}(-1)^{i+1}\fint{\proj{[\simplex{}{1},X]}}\phi_{r-1}^{\ast}(\proj{1}^{\ast}\omega_{1}\wedge\dots\wedge\proj{i}^{\ast}(\omega_{i}\wedge\omega_{i+1})\wedge\dots\wedge\proj{r}^{\ast}\omega_{r})\\
			&+
		(-1)^{r+1}\fint{\proj{[\simplex{}{1},X]}}(\phi_{r-1}^{\ast}(\proj{1}^{\ast}\omega_{1}\wedge\dots\wedge\proj{r-1}^{\ast}\omega_{r-1})\wedge\proj{[\simplex{}{1},X]}^{\ast}\mathrm{E}_{0}^{\ast}\omega_{r})\\
			=&
		\sum_{i=1}^{r-1}(-1)^{i+1}\int\omega_{1}\dots(\omega_{i}\wedge\omega_{i+1})\dots\omega_{r})\\
			&+
		(-1)^{|\omega_{1}|(|\omega_{2}|+\dots+|\omega_{r}|)+1}(\int\omega_{2}\dots\omega_{r})\wedge\mathrm{E}_{1}^{\ast}\omega_{1}
			-
		(-1)^{r}(\int\omega_{1}\dots\omega_{r-1})\wedge\mathrm{E}_{0}^{\ast}\omega_{r}\\
			=&
		\sum_{i=1}^{r-1}(-1)^{i+1}\int\omega_{1}\dots(\omega_{i}\wedge\omega_{i+1})\dots\omega_{r})
			+
		(-1)^{1-|\omega_{1}|(r-1)}\mathrm{E}_{1}^{\ast}\omega_{1}\wedge(\int\omega_{2}\dots\omega_{r})
			-
		(-1)^{r}(\int\omega_{1}\dots\omega_{r-1})\wedge\mathrm{E}_{0}^{\ast}\omega_{r}
	\end{align*}
	holds.
\end{proof}
\begin{cor}
For each homogeneous $\mathfrak{g}$-valued formal differential form $\omega_{1},\dots,\omega_{r}$ on $X$, the following holds:
\begin{align*}
	\df{}\int\omega_{1}\dots\omega_{r}
		=&
	\sum_{i=1}^{r}(-1)^{|\omega_{1}|+\dots+|\omega_{i-1}|+r}(\int\omega_{1}\dots\df{}\omega_{i}\dots\omega_{r})
		+
	\sum_{i=1}^{r-1}(-1)^{r-1-i}(\int\omega_{1}\dots(\omega_{i}\wedge\omega_{i+1})\dots\omega_{r})\\
		&+
	(-1)^{(r-1)(|\omega_{1}|-1)}\mathrm{E}_{1}^{\ast}\omega_{1}\wedge(\int\omega_{2}\dots\omega_{r})
		-
	(\int\omega_{1}\dots\omega_{r-1})\wedge\mathrm{E}_{0}^{\ast}\omega_{r}.
\end{align*}
\end{cor}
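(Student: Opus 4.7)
The plan is to derive the corollary by unwinding the definition of $\mathsf{C}$ in the previous proposition and cancelling the common sign factor. Write $s_{0}\coloneqq\sum_{i=1}^{r}(r-i)(|\omega_{i}|-1)$, so that $\mathsf{C}(\omega_{1}[-1]\otimes\dots\otimes\omega_{r}[-1])=(-1)^{s_{0}}(\int\omega_{1}\cdots\omega_{r})[-1]$ and therefore $\df{}\mathsf{C}(\omega_{1}[-1]\otimes\dots\otimes\omega_{r}[-1])=(-1)^{s_{0}}(\df{}\textstyle\int\omega_{1}\cdots\omega_{r})[-1]$. Applying the proposition, it suffices to show that each term on the right-hand side reduces to $(-1)^{s_{0}}$ times the corresponding term in the corollary; equivalently, one may read off the corollary directly from the penultimate display in the proof of the previous proposition, which is already the Stokes expression for $(-1)^{r}\df{}\int\omega_{1}\cdots\omega_{r}$, and multiply through by $(-1)^{r}$.

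I would verify the term-by-term reduction summand by summand. For a contribution $(-1)^{|\omega_{1}|+\dots+|\omega_{i-1}|+i}\omega_{1}[-1]\otimes\dots\otimes\df{}\omega_{i}[-1]\otimes\dots\otimes\omega_{r}[-1]$ in $\overline{\df{}}$, applying $\mathsf{C}$ yields the sign $(-1)^{s_{0}+(r-i)}$, since replacing $|\omega_{i}|-1$ by $|\df{}\omega_{i}|-1=|\omega_{i}|$ in $s_{0}$ shifts the exponent by $r-i$; combining this with the prefactor and using $(r-i)+i\equiv r\pmod{2}$ produces the claimed sign $(-1)^{s_{0}+|\omega_{1}|+\dots+|\omega_{i-1}|+r}$. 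For the shuffle contribution $(-1)^{|\omega_{1}|+\dots+|\omega_{i}|+i}\omega_{1}[-1]\otimes\dots\otimes(\omega_{i}\wedge\omega_{i+1})[-1]\otimes\dots\otimes\omega_{r}[-1]$, the tensor length drops to $r-1$ and the collapse of the $i$-th and $(i+1)$-st factors produces a shift of $s_{0}$ by $(r-i)(|\omega_i|-1)+(r-1-i)(|\omega_{i+1}|-1)-(r-1-i)(|\omega_i|+|\omega_{i+1}|-1)\pmod{2}$; combining this with the prefactor and simplifying modulo $2$ gives the stated $(-1)^{s_{0}+r-1-i}$.

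For the two boundary contributions, the internal signs $(-1)^{\sum_{i=1}^{r-1}(r-1-i)(|\omega_{i+1}|-1)}$ and $(-1)^{\sum_{i=1}^{r-1}(r-1-i)(|\omega_{i}|-1)}$ that appear in the proposition are compared with $(-1)^{s_{0}}$ after a re-indexing; this yields the net sign $(-1)^{(r-1)(|\omega_{1}|-1)}$ for the term $\mathrm{E}_{1}^{\ast}\omega_{1}\wedge\int\omega_{2}\cdots\omega_{r}$ and simply $-1$ for $\int\omega_{1}\cdots\omega_{r-1}\wedge\mathrm{E}_{0}^{\ast}\omega_{r}$. The main obstacle is therefore not conceptual but combinatorial: a careful but error-prone tracking of Koszul signs under desuspension, insertion of $\df{}$, and wedge collapse. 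This bookkeeping is best sidestepped by appealing directly to the final identity of Stokes type already obtained in the proof of the previous proposition.
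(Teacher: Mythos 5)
Your proposal is correct and matches the paper's (implicit) derivation: the paper gives no separate proof of this corollary, and the intended argument is exactly to take the final Stokes-type display in the proof of the preceding proposition, which computes $(-1)^{r}\df{}\int\omega_{1}\cdots\omega_{r}$, and multiply through by $(-1)^{r}$, checking the resulting exponents agree modulo $2$ with those stated. Your instinct to sidestep the route through $\mathsf{C}$ and $\overline{\df{}}$ is sound, since the proposition's stated formula contains sign/index infelicities that make that path less reliable than reading the identity off the proof directly.
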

\subsection{de Rham's Map}
For any simplicial set $X$, we obtain a chain complex $\Z[X]$
$$\cdots\to\Z[X]_{n}\xrightarrow{\overset{n}{\underset{i=0}{\sum}}(-1)^{i}d_{i}}\Z[X]_{n-1}\to\dots\to\Z[X]_{0}\to0\to\cdots.$$
Using the Alexander-Whitney map, We can define a coproduct $\cup^{\ast}$ on $\Z[X]$ as follows:
\begin{align*}
	\cup^{\ast}_{n}(\sum_{i}m_{i}x_{i})\coloneqq\sum_{i}\sum_{p+q=n}m_{i}(x_{i}|_{\Delta\{0,\dots,p\}})\otimes(x_{i}|_{\Delta\{p,\dots,p+q\}})
\end{align*}
In addition, the unique map $X\to\simplex{}{0}$ determines a chain map $\ep\colon\Z[X]\to\Z$. They give a dg coalgebra $(\Z[X],\cup^{\ast},\ep)$. Hence, for any connected $L_{\infty}$-algebra $\mathfrak{g}$, we obtain a dg algebra
$$\qC{\dq}{\bullet}{}{}(X,\mathfrak{g})\coloneqq\prod_{p+\bullet=q}\mathbb{U}_{\infty}\mathfrak{g}_{p}\otimes\Hom{\Z}(\Z[X]_{q},\Z\langle\dq\rangle).$$
\begin{lem}
	Let $X$ be a simplicial set and $\mathfrak{g}$ be an $L_{\infty}$-algebra. For each $\mathfrak{g}$-valued formal differential form on X $\omega\colon X\to\qdR{\dq}{}{}{\mathbb{U}_{\infty}\mathfrak{g}}^{\wedge}$ and a linear combination of simplices of $X$ $\sum_{x}m_{x}x$, we define $\langle\omega,\sum_{x}m_{x}x\rangle$ as
	\begin{align*}
		\langle\omega,\sum_{x}m_{x}x\rangle=\int_{\sum_{x}m_{x}x}\omega\coloneqq\sum_{x}m_{x}\fint{\proj{\simplex{}{0}}}x^{\ast}\omega.
	\end{align*}
	The we obtain a chain map $\int\colon\qdR{\dq}{\bullet}{}{}(X,\mathfrak{g})\to\qC{\dq}{\bullet}{}{}(X,\mathfrak{g})$.
\end{lem}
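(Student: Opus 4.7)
The plan is to reduce the statement to Stokes's theorem (Theorem \ref{Stokes}) applied simplex-by-simplex, after first verifying the degree bookkeeping on both sides.

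First I would unpack the target. For an internal degree $d$ element $\omega\in\qdR{\dq}{d}{}{}(X,\mathfrak{g})$ and an $r$-simplex $x\colon\simplex{}{r}\to X$, the pullback $x^{\ast}\omega$ is a $\mathfrak{g}$-valued formal differential form on $\simplex{}{r}$, which I view as a form on $\simplex{}{0}\times\simplex{}{r}$. Since $\simplex{}{r}$ has only finitely many non-degenerate simplices, $x^{\ast}\omega$ automatically has finite support along the projection $\proj{\simplex{}{0}}$, so $\fint{\proj{\simplex{}{0}}}x^{\ast}\omega$ is defined and lands in $\qdR{\dq}{d}{0}{\mathbb{U}_{\infty}\mathfrak{g}}^{\wedge}\cong\mathbb{U}_{\infty}\mathfrak{g}_{r-d}\otimes\Z\langle\dq\rangle$: taking the top fiber form degree $r$ forces the $\mathbb{U}_{\infty}\mathfrak{g}$-grading to be $r-d$, which is exactly the $q=r$ component of $\qC{\dq}{d}{}{}(X,\mathfrak{g})$. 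Extending $\Z$-linearly in the chain, this assembles into a degree-preserving map $\int\colon\qdR{\dq}{\bullet}{}{}(X,\mathfrak{g})\to\qC{\dq}{\bullet}{}{}(X,\mathfrak{g})$; the vanishing on degenerate simplices required to make this a legitimate element of the $\Hom{\Z}(\Z[X]_{\bullet},\Z\langle\dq\rangle)$-factor is supplied by Lemma \ref{Int Lem22}.

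Next I would invoke Stokes. Applied to $x^{\ast}\omega$ on $\simplex{}{0}\times\simplex{}{r}$, Theorem \ref{Stokes} yields
\begin{align*}
\fint{\proj{\simplex{}{0}}}\df{}(x^{\ast}\omega)-\bint{\proj{\simplex{}{0}}}(x^{\ast}\omega)=(-1)^{r}\df{}\fint{\proj{\simplex{}{0}}}(x^{\ast}\omega).
\end{align*}
The crucial simplification is that the right hand side vanishes: in $\qdR{\dq}{}{0}{\mathbb{U}_{\infty}\mathfrak{g}}^{\wedge}$ every $\Z\langle\dq\rangle$-derivation is zero on $\Z\langle\dq\rangle$ by definition, so $\qdR{\dq}{1}{0}{}=0$ and $\df{}$ on $\qdR{\dq}{}{0}{\mathbb{U}_{\infty}\mathfrak{g}}^{\wedge}$ is identically zero. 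Using $\df{}(x^{\ast}\omega)=x^{\ast}\df{}\omega$ on the left and $\mathrm{part}_{x^{\ast}\omega}(\simplex{}{r})=\{\id_{\simplex{}{r}}\}$ (since the top fiber piece lives at the unique non-degenerate top simplex), the equation collapses to
\begin{align*}
\int_{x}\df{}\omega=\sum_{i=0}^{r}(-1)^{i}\int_{d_{i}x}\omega,
\end{align*}
where the signs are those implicit in the proof of Theorem \ref{Stokes} (the alternating $(-1)^{i}$ appearing in the final formula of that proof). The right hand side is precisely $(\int\omega)(\partial x)$, i.e.\ the image of $\int\omega$ under the cochain coboundary $\Hom{\Z}(\partial,\Z\langle\dq\rangle)$ on $\Hom{\Z}(\Z[X]_{\bullet},\Z\langle\dq\rangle)$. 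This is the chain map identity.

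The routine obstacle — and the only thing that really needs care — is sign bookkeeping: one must match the alternating signs from the simplicial boundary on $\Z[X]$ with the signs appearing in $\bint{\proj{\simplex{}{0}}}$ as generated by Lemma \ref{Stokes Lem}, and confirm that the Koszul signs built into the tensor product $\mathbb{U}_{\infty}\mathfrak{g}_{p}\otimes\Hom{\Z}(\Z[X]_{q},\Z\langle\dq\rangle)$ reappear consistently when $\int$ is written as a collection of components indexed by $p$. Once these sign conventions are aligned, the displayed equality above is exactly the assertion that $d\circ\int=\int\circ\df{}$ on each simplex, and hence on each linear combination of simplices.
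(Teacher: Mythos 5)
Your proposal takes essentially the same route as the paper's proof: apply Stokes's theorem \ref{Stokes} to $x^{\ast}\omega$ viewed over the projection $\simplex{}{0}\times\simplex{}{r}\to\simplex{}{0}$, kill the exact term, and identify the boundary fiberwise integration with the alternating face sum $\sum_{i}(-1)^{i}\int_{x\delta_{i}}\omega=\int_{\partial x}\omega$. You additionally make explicit the degree bookkeeping and the reason the exact term vanishes (namely $\qdR{\dq}{1}{0}{}=0$, so $\df{}$ is zero on $\qdR{\dq}{}{0}{\mathbb{U}_{\infty}\mathfrak{g}}^{\wedge}$), points the paper leaves implicit behind the sign $(\pm)$, so the argument is correct.
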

\begin{proof}
	From Stokes's theorem \ref{Stokes}, the following follows:
	\begin{align*}
		\int_{x}\df{}\omega=\fint{\proj{\simplex{}{0}}}x^{\ast}\df{}\omega=(\pm)\df{}\fint{\proj{\simplex{}{0}}}x^{\ast}\omega+\bint{\proj{}{\simplex{}{0}}}x^{\ast}\omega=\sum_{i}(-1)^{i}\fint{\proj{\simplex{}{0}}}\delta_{i}^{\ast}x^{\ast}\omega=\sum_{i}(-1)^{i}\int_{x\delta_{i}}\omega=\int_{\partial x}\omega
	\end{align*}
\end{proof}
\subsection{Simplicial Holonomy}
Let $\mathfrak{g}$ be a connected $L_{\infty}$-algebra and $\hat{\mathbb{U}}_{\infty}\mathfrak{g}$ be the completion of universal enveloping algebra $\mathbb{U}_{\infty}\mathfrak{g}$ of $\mathfrak{g}$. They obtain the following (dg) algebra for each non-negative integer $n\geq0$:
\begin{align*}
	\qG{\dq}{\bullet}{n}{\mathfrak{g}}\coloneqq\prod_{p+\bullet=q}\hat{\mathbb{U}}_{\infty}\mathfrak{g}_{p}\otimes\Hom{\Z}(\Z[\simplex{}{n}]_{q},\Z\langle\dq\rangle).
\end{align*}
It is obvious that there is an embedding $\qC{\dq}{\bullet}{}{}(\simplex{}{n},\mathfrak{g})\hookrightarrow\qG{\dq}{\bullet}{}{\mathfrak{g}}$ as a simplicial set for each non-negative integer $n$.
\begin{thm}
	A generalized connection $\nabla$ with values in connected $L_{\infty}$-algebra $\mathfrak{g}$ (over $\Z$) on simplicial set $X$ gives a simplicial map $\shol{\nabla}{}\colon[\simplex{}{1},X]\to\qG{\dq}{}{}{\mathfrak{g}}$.
\end{thm}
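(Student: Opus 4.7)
The plan is to mirror Chen's holonomy construction using the iterated integral of \S4.1 and the de Rham map of \S4.2. First, compose $\nabla$ with the canonical inclusion $\qdR{\dq}{1}{}{\mathfrak{g}}^\wedge \to \qdR{\dq}{1}{}{\mathbb{U}_\infty\mathfrak{g}}^\wedge$ so that it becomes a $\mathfrak{g}$-valued formal differential $1$-form on $X$. For each $r \ge 0$, the iterated integral $\int \underbrace{\nabla \cdots \nabla}_{r}$ (with the convention that the $r=0$ term is $1 \in \mathbb{U}_\infty\mathfrak{g}$) is a $\mathfrak{g}$-valued formal differential form on $[\simplex{}{1}, X]$. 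Given an $n$-simplex $\gamma \in [\simplex{}{1}, X]_{n}$, its pullback along $\gamma \colon \simplex{}{n} \to [\simplex{}{1}, X]$ yields a formal form on $\simplex{}{n}$, which can then be fed into the de Rham map of \S4.2 to land in $\qC{\dq}{}{n}{\mathfrak{g}} \subset \qG{\dq}{}{n}{\mathfrak{g}}$. I would declare
\[
\shol{\nabla}{}(\gamma) \;\coloneqq\; \sum_{r=0}^{\infty} \varepsilon_{r}\, \int_{(-)} \gamma^{\ast}\!\left(\int \nabla \cdots \nabla\right) \;\in\; \qG{\dq}{}{n}{\mathfrak{g}},
\]
with Chen-style signs $\varepsilon_{r} \in \{\pm 1\}$.

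Three checks are needed. The first is convergence in the completion $\hat{\mathbb{U}}_\infty \mathfrak{g}$. Because $\mathfrak{g}$ is connected, $\mathfrak{g}_{0} = 0$, so every summand of the degree-$1$ form $\nabla$ contributes a tensor factor in $\mathfrak{g}_{p}$ with $p \ge 1$. A summand of the $r$-fold iterated integral therefore lies in $\mathbb{U}_\infty \mathfrak{g}_{p_{1} + \cdots + p_{r}}$ with each $p_{i} \ge 1$, so its $\mathbb{U}_\infty \mathfrak{g}$-degree is at least $r$. Consequently, in the product decomposition defining $\hat{\mathbb{U}}_\infty \mathfrak{g}$, only finitely many $r$ contribute to each fixed grading, and the series gives a well-defined element of $\qG{\dq}{}{n}{\mathfrak{g}}$.

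The second check is simpliciality: for an order-preserving $\alpha \colon [m] \to [n]$, the required identity $\alpha^{\ast} \shol{\nabla}{}(\gamma) = \shol{\nabla}{}(\gamma \circ \alpha_{\ast})$ decomposes termwise into (i) simpliciality of $\int \nabla \cdots \nabla$ as a map out of $[\simplex{}{1}, X]$, which is built into its construction as a fiberwise integral of a simplicial map; (ii) compatibility of the fiberwise integral with pullback along $\alpha$ on $\simplex{}{n}$, which is precisely Lemma \ref{Int Lem21}; and (iii) naturality of the de Rham evaluation $\int$ on chains, which is immediate from its definition.

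The main obstacle is the convergence step; it is exactly the place where the connectedness hypothesis on $\mathfrak{g}$ is essential, since otherwise the degree estimate $p_1 + \cdots + p_r \ge r$ fails and the series need not converge even in the completed enveloping algebra. The remaining naturality checks are direct applications of results already compiled in Sections 3 and 4.1, so once the definition is in place and convergence is established, no essentially new difficulty arises.
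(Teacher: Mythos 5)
Your proposal follows essentially the same route as the paper: form the iterated integrals $\int\nabla\cdots\nabla$ as formal differential forms on $[\simplex{}{1},X]$, post-compose with the simplicial de Rham map $\int$ into $\qC{\dq}{}{}{}(\simplex{}{-},\mathfrak{g})$, and sum over $r$ into the completion $\qG{\dq}{}{}{\mathfrak{g}}$. The paper states this in two lines without spelling out the convergence and simpliciality checks; your filling-in of those (connectedness forcing the $\mathbb{U}_{\infty}\mathfrak{g}$-degree of the $r$-fold term to be at least $r$, and Lemma \ref{Int Lem21} for compatibility with pullbacks) is correct and consistent with the paper's construction.
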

\begin{proof}
For each non-negative integer $r\geq0$, we obtain a simplicial map
\begin{align*}
	\int\circ\int\underbrace{\nabla\cdots\nabla}_{r}\colon[\simplex{}{1},X]\to\qC{\dq}{}{}{}(\simplex{}{-},\mathfrak{g})
\end{align*}
using the iterated integral and a (simplicial) chain map $\int\colon\qdR{\dq}{}{}{\mathbb{U}_{\infty}\mathfrak{g}}^{\wedge}\to\qC{\dq}{}{}{}(\simplex{}{-},\mathfrak{g})$. Furthermore, we obtain a simplicial map
\begin{align*}
	\shol{\nabla}{}\coloneqq\sum_{r=0}^{\infty}\int\int\underbrace{\nabla\cdots\nabla}_{r}\colon[\simplex{}{1},X]\to\qG{\dq}{}{}{\mathfrak{g}}.
\end{align*}
\end{proof}
\subsection{Path $A_{\infty}$-categories}
Fix a commutative ring $\K$. Let $X$ be a simplicial set. A family of simplicial sets $\{X(x,y)\}_{x,y\in X_{0}}$ is obtained by assigning the following pullback to each pair $(x,y)$ of $0$-simplices of $X$:
\begin{center}
\begin{tikzpicture}[auto]
	\node (11) at (0, 3) {$X(x,y)$};+
	\node (13) at (5, 3) {$[\simplex{}{1},X]$};+
	\node (23) at (5, 2) {$[\simplex{}{1},X]\times[\simplex{}{1},X]$};
	\node (33) at (5, 1) {$[\Delta\{0\},X]\times[\Delta\{1\},X]$};
	\node (41) at (0, 0) {$\simplex{}{0}$};+
	\node (42) at (2.5, 0) {$\simplex{}{0}\times\simplex{}{0}$};+
	\node (43) at (5, 0) {$X\times X$};
	\path[draw, ->] (11) -- (41);
	\path[draw, right hook->] (11) -- (13);
	\path[draw, ->] (13) -- (23);
	\path[draw, ->] (23) -- (33);
	\path[draw, transform canvas={xshift=1pt}] (33) -- (43);
	\path[draw, transform canvas={xshift=-1pt}] (33) -- (43);
	\path[draw, ->] (41) -- (42);
	\path[draw, ->] (42) -- (43);
	\path[draw] (0.2, 2) -- (1, 2) -- (1, 2.8);
\end{tikzpicture}.
\end{center}
\begin{ex}\label{path space of standard simplices}
	Let $n$ be a non-negative integer and $(i,j)$ be a pair of integers satisfying $0\leq i,j\leq n$. For each $p\geq0$, a $p$-simplex $\simplex{}{p}\to\simplex{}{n}(i,j)$ corresponds to an order-preserving map $\gamma\colon[1]\times[p]\to[n]$ satisfying $\gamma(-,0)=i$ and $\gamma(-,1)=j$. In other words,
	\begin{align*}
		\simplex{}{n}(i,j)
			\cong
		\{\gamma\colon[1]\to[n]|\gamma(0)=i\text{ and }\gamma(1)=j\}
			\cong
		\begin{cases}
			\{\ast\}&(i\leq j)\\
			\emptyset&(i>j)
		\end{cases}
	\end{align*}
	holds.
\end{ex}
And then a dg quiver $\mathcal{Q}(X,\K)$ is obtained by assigning a chain complex $\K[X(x,y)]$
\begin{align*}
	\cdots\to\K[X(x,y)]_{n}\xrightarrow{\overset{n}{\underset{i=0}{\sum}}(-1)^{i}d_{i}}\K[X(x,y)]_{n-1}\to\dots\to\K[X(x,y)]_{0}\to0\to\cdots
\end{align*}
to each pair $(x,y)$ of $0$-simplices of $X$. In addition, we obtain an $A_{\infty}$-category $\mathcal{F}\mathcal{Q}(X,\K)$ as a free $A_{\infty}$-category generated by a dg quiver $\mathcal{Q}(X,\K)$.
\begin{prop}
	There exists a canonical natural transformation $\pi\colon\mathcal{F}\mathcal{Q}(-,\K)\to\category{A}_{\infty}\colon\D\to\mathsf{u}A_{\infty}\mathsf{Cat}_{\K}$.
\end{prop}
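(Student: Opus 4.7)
The plan is to reduce the construction to an elementary dg-quiver morphism via the universal properties established in Theorem \ref{free A infty category} and Proposition \ref{unit}. Composing those two adjunctions, strict unital $A_\infty$-functors $\mathcal{F}\mathcal{Q}(\simplex{}{n},\K)\to\category{A}_\infty^n$ correspond naturally with dg quiver morphisms $\mathcal{Q}(\simplex{}{n},\K)\to U\category{A}_\infty^n$, where $U$ denotes the forgetful functor into dg quivers. So it suffices, for each $[n]\in\D$, to produce a dg-quiver morphism $\tilde{\pi}_n$ and to verify naturality in $[n]$.

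First I would define $\tilde{\pi}_n$ to act as the identity on the common object set $\{0,1,\ldots,n\}$. On hom complexes, Example \ref{path space of standard simplices} identifies $\simplex{}{n}(i,j)$ with the terminal simplicial set when $i\leq j$ and with $\emptyset$ otherwise. Hence when $i\leq j$ the source $\K[\simplex{}{n}(i,j)]$ is $\K$ in every nonnegative degree with alternating boundaries $0,1,0,1,\ldots$, while the target $\category{A}_\infty^n(i,j)=\K\cdot(i,j)$ is concentrated in degree $0$ (both sides vanish when $i>j$). I would take $\tilde{\pi}_n$ on each nonzero hom to be the augmentation: the unique $0$-simplex maps to the generator $(i,j)$, and every simplex of positive degree maps to $0$. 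This is a chain map because the target carries the zero differential and the degree-$1$ boundary $d_0-d_1$ in the source already vanishes, so all higher-degree contributions are killed on both sides.

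Next I would check naturality in $[n]$. Given an order-preserving map $\alpha\colon[m]\to[n]$, the induced morphism on the dg-quiver side sends $i\mapsto\alpha(i)$ on objects and transports the unique simplex of $\simplex{}{m}(i,j)$ to the unique simplex of $\simplex{}{n}(\alpha(i),\alpha(j))$ on nonzero homs. On the $\category{A}_\infty^\bullet$-side, $\alpha_\ast$ sends $(i,j)\mapsto(\alpha(i),\alpha(j))$ by definition. Consequently both composites around the naturality square agree on the distinguished $0$-simplex and annihilate simplices of higher degree, which gives naturality at the level of dg quivers. Transporting back through the two adjunctions yields the required natural transformation $\pi\colon\mathcal{F}\mathcal{Q}(-,\K)\to\category{A}_\infty^\bullet$ in $\mathsf{u}A_\infty\mathsf{Cat}_\K$.

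The only subtlety worth flagging is the treatment of strict units. The formal unit $\id{i}$ adjoined by the unitalization procedure of Proposition \ref{unit} must map to the strict unit of $\category{A}_\infty^n$, which coincides with the generator $(i,i)$; this is automatically encoded in the universal property used above, so no separate verification is needed. I do not anticipate any real obstacle in writing out this proof; everything reduces to unpacking the two adjunctions and observing that Example \ref{path space of standard simplices} leaves essentially no freedom in the definition of $\tilde{\pi}_n$.
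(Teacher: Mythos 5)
Your proposal is correct and follows essentially the same route as the paper: both reduce, via the free $A_{\infty}$-category adjunction of Theorem \ref{free A infty category} and the unitalization adjunction of Proposition \ref{unit}, to exhibiting a natural transformation of dg quivers $\mathcal{Q}(-,\K)\to\category{A}_{\infty}$, which is then forced by the identification of $\simplex{}{n}(i,j)$ with the terminal simplicial set or $\emptyset$ from Example \ref{path space of standard simplices}. The paper leaves the "canonical family of maps" unspecified, and your augmentation map (with the verification that the alternating-sum differential makes it a chain map) is exactly the intended filling-in of that detail.
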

\begin{proof}
	By theorem \ref{free A infty category} and proposition \ref{unit}, it suffices to show the existence of a natural transformation $\pi\colon\mathcal{Q}(-,\K)\to\category{A}_{\infty}$. Since the simplicial set $\simplex{}{n}(i,j)$ is not empty if and only if $i\leq j$ for each integers $i,j\in[n]$, we obtain a canonical family of maps $\{\pi^{n}_{i,j}\colon\mathcal{Q}(\simplex{}{n},\K)(i,j)\to\category{A}_{\infty}^{n}(i,j)\}_{i,j}$. It define a natural transformation $\pi\colon\mathcal{Q}(-,\K)\to\category{A}_{\infty}$.
\end{proof}
We obtain functors and natural transformations
\begin{center}
\begin{tikzpicture}[auto]
	\node (11) at (0, 2) {$\mathfrak{N}_{A_{\infty}}(-)_{\bullet}$};
	\node (12) at (5.5, 2) {$\tilde{\mathfrak{N}}_{A_{\infty}}(-)_{\bullet}$};
	\node (13) at (10, 2) {$\mathscr{N}_{A_{\infty}}(-)_{\bullet}$};
	\node (21) at (0, 1) {$\Hom{A_{\infty}\mathsf{Cat}_{\K}}(\mathcal{F}\mathcal{Q}(\mathrm{Ex}^{\infty}\simplex{}{\bullet},\K),-)$};
	\node (22) at (5.5, 1) {$\Hom{A_{\infty}\mathsf{Cat}_{\K}}(\mathcal{F}\mathcal{Q}(\simplex{}{\bullet},\K),-)$};
	\node (23) at (10, 1) {$\Hom{A_{\infty}\mathsf{Cat}_{\K}}(\category{A}_{\infty}^{\bullet},-)$};
	\node (31) at (0, 0) {$\Hom{\mathsf{dgQ}}(\mathcal{Q}(\mathrm{Ex}^{\infty}\simplex{}{\bullet},\K),-)$};
	\node (32) at (5.5, 0) {$\Hom{\mathsf{dgQ}}(\mathcal{Q}(\simplex{}{\bullet},\K),-)$};
	\path[draw, transform canvas={xshift=1pt}] (11) -- (21);
	\path[draw, transform canvas={xshift=-1pt}] (11) -- (21);
	\path[draw, transform canvas={xshift=1pt}] (12) -- (22);
	\path[draw, transform canvas={xshift=-1pt}] (12) -- (22);
	\path[draw, transform canvas={xshift=1pt}] (13) -- (23);
	\path[draw, transform canvas={xshift=-1pt}] (13) -- (23);
	\path[draw, transform canvas={xshift=1pt}] (21) --node {\rotatebox{-90}{$\scriptstyle \sim$}} (31);
	\path[draw, transform canvas={xshift=-1pt}] (21) -- (31);
	\path[draw, transform canvas={xshift=1pt}] (22) --node {\rotatebox{-90}{$\scriptstyle \sim$}} (32);
	\path[draw, transform canvas={xshift=-1pt}] (22) -- (32);
	\path[draw, ->] (11) -- (12);
	\path[draw, ->] (13) -- (12);
	\path[draw, ->] (21) -- (22);
	\path[draw, ->] (23) -- (22);
	\path[draw, ->] (31) -- (32);
\end{tikzpicture}
\end{center}
which are similar to $A_{\infty}$-nerve but ``laxer''.

For each simplicial set $X$, we call the free $A_{\infty}$-category $\mathcal{F}\category{Q}(\mathrm{Ex}^{\infty}X,\K)$ the \Def{$\K$-coefficient path $A_{\infty}$-category of simplicial set $X$} and denote $\mathcal{P}(X,\K)$. It is an invariant since the above assignment defines a functor from the category of simplicial sets $\sSet$ to the category of dg quivers. 

Let $\nabla\colon X\to\qG{\dq}{}{}{\mathfrak{g}}$ be a generalized connection with values in connected $L_{\infty}$-algebra $\mathfrak{g}$. Since $\qG{\dq}{}{}{\mathfrak{g}}$ is a Kan complex, there is a lift $\tilde{\nabla}\colon\mathrm{Ex}^{\infty}X\to\qG{\dq}{}{}{\mathfrak{g}}$ of $\nabla$.
\begin{center}
\begin{tikzpicture}[auto]
	\node (11) at (0, 1) {$X$};
	\node (12) at (2, 1) {$\qG{\dq}{}{}{\mathfrak{g}}$};
	\node (21) at (0, 0) {$\mathrm{Ex}^{\infty}X$};
	\path[draw, ->] (11) --node {$\scriptstyle \nabla$} (12);
	\path[draw, ->, dashed] (21) --node[swap] {$\scriptstyle \tilde{\nabla}$} (12);
	\path[draw, right hook->] (11) --node[swap] {\rotatebox{90}{$\scriptstyle \sim$}} (21);
\end{tikzpicture}
\end{center}
For each $0$-simplex $x,y\in\mathrm{Ex}^{\infty}X_{0}$, the map gives a simplicial map
\begin{align*}
	\mathrm{Ex}^{\infty}X(x,y)\hookrightarrow[\simplex{}{1},\mathrm{Ex}^{\infty}X]\xrightarrow{\shol{\tilde{\nabla}}{}}\qG{\dq}{}{}{\mathfrak{g}},
\end{align*}
thus we obtain a simplicial linear map $\Z[\mathrm{Ex}^{\infty}X(x,y)]\to\qG{\dq}{}{}{\mathfrak{g}}$ and a morphism of dg quiver $\mathcal{Q}(\mathrm{Ex}^{\infty}X,\Z)\to\qG{\dq}{}{}{\mathfrak{g}}$. Since $\qG{\dq}{}{}{\mathfrak{g}}$ is a simplicial algebra, we can regard $\qG{\dq}{}{}{\mathfrak{g}}$ as a (strict unital) $A_{\infty}$-algebra. Therefore we obtain an $A_{\infty}$-functor $\Ahol{\nabla}\colon\mathcal{P}(\mathrm{Ex}^{\infty}X,\Z)\to\qG{\dq}{}{}{\mathfrak{g}}$.
\begin{rem}
	The $A_{\infty}$-functor $\Ahol{\nabla}$ depends on the choice of lift $\tilde{\nabla}$.
\end{rem}
We hope that the $A_{\infty}$-category $\mathcal{P}(X,\K)$ is a $\K$-linearization of a simplicial set $X$ and the $A_{\infty}$-functor $\Ahol{\nabla}$ is a linearization of the simplicial map $\shol{\nabla}{}\colon[\simplex{}{1},X]\to\qG{\dq}{}{}{\mathfrak{g}}$. However, there are several problems. These are discussed in the next section.
\subsection{Comparison with Known Results and Future Problems}
For each $m\geq0$, we denote the subposet $\{U\subset\R^{n}|U\supset\topsimplex{n}\}\subset\mathfrak{O}(\R^{n})$ of the poset of open subsets of Euclidian space $\R^{n}$ as $\mathfrak{O}(\R^{n},\topsimplex{n})$. Then any smooth manifold $\mathcal{M}$ gives a (canonical) presheaf $\tilde{S}^{\infty}_{n}(\mathcal{M})\colon\mathfrak{O}(\R^{n},\topsimplex{n})\opposite\to\Set$ as
\begin{align*}
	\tilde{S}^{\infty}_{n}(\mathcal{M})(U)
		&\coloneqq
	\{\gamma\colon U\to\mathcal{M}|\gamma\text{ is a smooth map.}\}.
\intertext{For each positive integer $n>0$, we obtain a subpresheaf $tS^{\infty}_{n}(\mathcal{M})$ as follows:}
	tS^{\infty}_{n}(\mathcal{M})(U)
		&\coloneqq
	\{\gamma\colon U\to\mathcal{M}|\mathrm{Ker}(d\gamma_{x})\neq0\text{ for some $x\in \topsimplex{n}$}\}.
\end{align*}
Any order-preserving map $\alpha\colon[m]\to[n]$ gives an affine map $\alpha_{\ast}\colon\R^{m}\to\R^{n}$ satisfying $\alpha_{\ast}(\topsimplex{m})\subset\topsimplex{n}$, we obtain an order-preserving map $\alpha_{\ast}^{-1}\colon\mathfrak{O}(\R^{n},\topsimplex{n})\to\mathfrak{O}(\R^{m},\topsimplex{m})$. In addition, we obtain a presheaf $(\alpha_{\ast}^{-1})^{\ast}\tilde{S}^{\infty}_{m}(\mathcal{M})$ as
\begin{align*}
	(\alpha_{\ast}^{-1})^{\ast}\tilde{S}^{\infty}_{m}(\mathcal{M})(U)\coloneqq\tilde{S}^{\infty}_{m}(\mathcal{M})(\alpha_{\ast}^{-1}(U))
\end{align*}
and obtain a morphism $\alpha_{\ast}^{\ast}\colon\tilde{S}^{\infty}_{n}(\mathcal{M})\to(\alpha_{\ast}^{-1})^{\ast}\tilde{S}^{\infty}_{m}(\mathcal{M})$ as $\alpha_{\ast}^{\ast}(\gamma)\coloneqq\gamma\circ\alpha_{\ast}$. Since presheaves determine an inductive system, we obtain colimits.
\begin{center}
\begin{tikzpicture}[auto]
	\node (11) at (0, 1) {$tS^{\infty}_{n}(\mathcal{M})(U)$};
	\node (12) at (3, 1) {$\tilde{S}^{\infty}_{n}(\mathcal{M})(U)$};
	\node (13) at (7, 1) {$(\alpha_{\ast}^{-1})^{\ast}\tilde{S}^{\infty}_{m}(\mathcal{M})(U)$};
	\node (14) at (12, 1) {$\tilde{S}^{\infty}_{m}(\mathcal{M})(\alpha_{\ast}^{-1}(U))$};
	\node (21) at (0, 0) {$tS^{\infty}_{n}(\mathcal{M})$};
	\node (22) at (3, 0) {$\tilde{S}^{\infty}_{n}(\mathcal{M})$};
	\node (23) at (7, 0) {$\varinjlim_{\topsimplex{n}\subset U}(\alpha_{\ast}^{-1})^{\ast}\tilde{S}^{\infty}_{m}(\mathcal{M})(U)$};
	\node (24) at (12, 0) {$\tilde{S}^{\infty}_{m}(\mathcal{M})$};
	\path[draw, right hook->] (11) -- (12);
	\path[draw, ->] (12) --node {$\scriptstyle \alpha_{\ast}^{\ast}$} (13);
	\path[draw, transform canvas={yshift=1pt}] (13) -- (14);
	\path[draw, transform canvas={yshift=-1pt}] (13) -- (14);
	\path[draw, right hook->, dashed] (21) -- (22);
	\path[draw, dashed, ->] (22) -- (23);
	\path[draw, dashed, ->] (23) -- (24);
	\path[draw, ->] (11) -- (21);
	\path[draw, ->] (12) -- (22);
	\path[draw, ->] (13) -- (23);
	\path[draw, ->] (14) -- (24);
\end{tikzpicture}
\end{center}
They give a stratified simplicial set. We call the stratified simplicial set the \Def{$C^{\infty}$-singular stratified simplicial set} and denote it as $S^{\infty}(\mathcal{M})$. The homotopy category $\tau_{1}\tilde{S}^{\infty}(\mathcal{M})$ of the (underlying) simplicial set coincides with the fundamental groupoid $\pi_{1}(\mathcal{M})$. On the other hand, we can consider a presheaf $(\Omega_{\mathsf{sm}})_{n}\colon\mathfrak{O}(\R^{n},\topsimplex{n})\opposite\to\Set$ defined as 
\begin{align*}
	(\Omega_{\mathsf{sm}})_{n}(U)\coloneqq\Omega^{\bullet}(U)=\{\text{smooth differential forms on $U$}\}.
\end{align*}
It gives a simplicial set $\Omega_{\mathsf{sm}}$ in the same way as above.
\begin{center}
\begin{tikzpicture}[auto]
	\node (11) at (0, 1) {$(\Omega_{\mathsf{sm}})_{m}(U)$};
	\node (12) at (4, 1) {$(\alpha_{\ast}^{-1})^{\ast}(\Omega_{\mathsf{sm}})_{n}(U)$};
	\node (13) at (9, 1) {$(\Omega_{\mathsf{sm}})_{m}(\alpha_{\ast}^{-1}(U))$};
	\node (21) at (0, 0) {$(\Omega_{\mathsf{sm}})_{n}$};
	\node (22) at (4, 0) {$\varinjlim_{\topsimplex{n}\subset U}(\alpha_{\ast}^{-1})^{\ast}(\Omega_{\mathsf{sm}})_{m}(U)$};
	\node (23) at (9, 0) {$(\Omega_{\mathrm{sm}})_{m}$};
	\path[draw, ->] (11) --node {$\scriptstyle \alpha_{\ast}^{\ast}$} (12);
	\path[draw, transform canvas={yshift=1pt}] (12) -- (13);
	\path[draw, transform canvas={yshift=-1pt}] (12) -- (13);
	\path[draw, dashed, ->] (21) -- (22);
	\path[draw, dashed, ->] (22) -- (23);
	\path[draw, ->] (11) -- (21);
	\path[draw, ->] (12) -- (22);
	\path[draw, ->] (13) -- (23);
\end{tikzpicture}
\end{center}
Then any smooth differential form on $\mathcal{M}$ gives a simplicial map $\omega\colon\tilde{S}^{\infty}(\mathcal{M})\to\Omega_{\mathsf{sm}}$ as $\omega([\gamma])\coloneqq[\gamma^{\ast}\omega]$. Chen's iterated integral makes a pair of smooth differential forms on $\mathcal{M}$ $(\omega_{1},\dots,\omega_{r})$ corresponds to a differential form on the path space $C^{\infty}(\topsimplex{1},\mathcal{M})$, that is a family of differential forms $\{(\int\omega_{1}\dots\omega_{r})_{\alpha}\in\Omega(U)|\alpha\colon U\times\topsimplex{1}\to\mathcal{M}\text{: smooth}\}$. For each smooth map $\alpha\colon U\times\topsimplex{1}\to\mathcal{M}$, a differential form $(\int\omega_{1}\dots\omega_{r})_{\alpha}\in\Omega(U)$ is given as a fiberwise integration of a differential form $\phi_{\alpha}^{\ast}(\proj{1}^{\ast}\omega_{1}\wedge\dots\wedge\proj{r}^{\ast}\omega_{r})$ along the projection $\proj{U}\colon U\times\topsimplex{r}\to U$ where smooth map $\phi_{\alpha}\colon U\to\mathcal{M}^{r}$ is defined as $\phi_{\alpha}(x,t_{1},\dots,t_{r})\coloneqq(\alpha(x,t_{1}),\dots,\alpha(x,t_{r}))$.
\begin{center}
\begin{tikzpicture}[auto]
	\node (11) at (3, 1) {$C^{\infty}(\topsimplex{1},\mathcal{M})^{r}\times(\topsimplex{1})^{r}$};
	\node (12) at (7, 1) {$(\topsimplex{1}\times C^{\infty}(\topsimplex{1},\mathcal{M}))^{r}$};
	\node (20) at (0, 0) {$U\times\topsimplex{r}$};
	\node (21) at (3, 0) {$C^{\infty}(\topsimplex{1},\mathcal{M})\times\topsimplex{r}$};
	\node (22) at (7, 0) {$\mathcal{M}^{r}$};
	\node (23) at (10, 0) {$\mathcal{M}$};
	\path[draw, transform canvas={yshift=1pt}] (11) -- (12);
	\path[draw, transform canvas={yshift=-1pt}] (11) -- (12);
	\path[draw, ->] (21) --node {$\scriptstyle \text{diagonal}\times\iota_{r}$} (11);
	\path[draw, ->] (12) --node {$\scriptstyle (\mathrm{ev})^{r}$} (22);
	\path[draw, ->] (20) -- (0, -1) --node {$\scriptstyle \phi_{\alpha}$} (7, -1) -- (22);
	\path[draw, ->] (20) -- (21);
	\path[draw, ->] (21) -- (22);
	\path[draw, ->] (22) --node {$\scriptstyle \proj{i}$} (23);
\end{tikzpicture}
\end{center}
Let $V$ be a finite-dimensional $\R$-vector space and $\omega$ be a $\mathfrak{gl}(V)$-valued flat connection on $\mathcal{M}$. Then the holonomy $\mathsf{Hol}_{\omega}\colon\pi_{1}(\mathcal{M})\to\mathrm{GL}(V)$ is given by
\begin{align*}
	\gamma
		\mapsto
	\sum_{r=0}^{\infty}\int_{\topsimplex{1}}(\int\underbrace{\omega\cdots\omega}_{r})_{\gamma}
		=
	1+\int_{\topsimplex{1}}(\int\omega)_{\gamma}+\int_{\topsimplex{1}}(\int\omega\omega)_{\gamma}+\cdots.
\end{align*}
The simplicial holonomy is an analogy to classical holonomy in the above sense.

We construct an $A_{\infty}$-category $\mathcal{P}(X,\Z)$ and an $A_{\infty}$-functor $\Ahol{\nabla}\colon\mathcal{P}(X,\Z)\to\qG{\dq}{}{}{\mathfrak{g}}$. We can regard the path $A_{\infty}$-category $\mathcal{P}(X,\Z)$ as the linearization of a (stratified) simplicial set $X$ and we expect that (the analogy of) Chen's fundamental theorem and Hain's theorem \cite{MR727818} induced the $A_{\infty}$-functor $\Ahol{\nabla}\colon\mathcal{P}(X,\Z)\to\qG{\dq}{}{}{\mathfrak{g}}$. 

Chen's fundamental theorem (resp. Hain's theorem \cite{MR727818}) state existence of isomorpshim of $\R$-algebra (resp. Lie algebra over $\R$) using (ordinaly) de Rham complex, de Rham's theorem and real coefficient homology groups. Therefore it seems that it is impossible to obtain data on torsion (as Abelian group) using these theorems. On the other hand, we expect that it is possible to obtain data on torsion (as Abelian group) using the functor $\Ahol{\nabla}$.

\begin{bibdiv}
\begin{biblist}

\bib{MR3333093}{article}{
	author={Arias Abad, Camilo}
	author={Sch\"{a}tz, Florian},
	title={Higher holonomies: comparing two constructions},
	journal={Differential Geom. Appl.},
	Fjournal={Differential Geometry and its Applications},
	volume={40},
	year={2015},
	pages={14--42},
	issn={0926-2245},
	mrclass={53C07 (58H05)},
	mrnumber={3333093},
	mrreviewer={Miroslav Doupovec},
	doi={10.1016/j.difgeo.2015.02.003},
	url={https://doi.org/10.1016/j.difgeo.2015.02.003},
}

\bib{MR3090711}{article}{,
	author={Arias Abad, Camilo}
	author={Sch\"{a}tz, Florian},
	title={The {$A_\infty$} de {R}ham theorem and integration of representations up to homotopy},
	journal={Int. Math. Res. Not. IMRN},
	fjournal={International Mathematics Research Notices. IMRN},
	year={2013},
	number={16},
	pages={3790--3855},
	issn={1073-7928},
	mrclass={53D17 (11H06 58H05)},
	mrnumber={3090711},
	mrreviewer={Zhuo Chen},
	doi={10.1093/imrn/rns166},
	url={https://doi.org/10.1093/imrn/rns166},
}

\bib{MR3192766}{article}{,
	author={Arias Abad, Camilo}
	author={Sch\"{a}tz, Florian},
	title={Holonomies for connections with values in {$L_\infty$}-algebras},
	journal={Homology Homotopy Appl.},
	fjournal={Homology, Homotopy and Applications},
	volume={16},
	wear={2014},
	number={1},
	pages={89--118},
	issn={1532-0073},
	mrclass={55R65 (18G55 55R80)},
	mrnumber={3192766},
	mrreviewer={Matthias Wendt},
	doi={10.4310/HHA.2014.v16.n1.a6},
	url={https://doi.org/10.4310/HHA.2014.v16.n1.a6},
}


\bib{MR2825807}{article}{
	author={Baez, John C.}
	author={Huerta, John},
	title={An invitation to higher gauge theory},
	journal={Gen. Relativity Gravitation},
	fjournal={General Relativity and Gravitation},
	volume={43},
	year={2011},
	number={9},
	pages={2335--2392},
	issn={0001-7701},
	mrclass={53C08 (18D05 53C80 81T30)},
	mrmunber={2825807},
	mrreviewer={Christopher L. Rogers},
	doi={10.1007/s10714-010-1070-9},
	url={https://doi.org/10.1007/s10714-010-1070-9},
}

\bib{MR2342821}{incollection}{,
    AUTHOR = {Baez, John C.}
    AUTHOR = {Schreiber, Urs},
     TITLE = {Higher gauge theory},
 BOOKTITLE = {Categories in algebra, geometry and mathematical physics},
    SERIES = {Contemp. Math.},
    VOLUME = {431},
     PAGES = {7--30},
 PUBLISHER = {Amer. Math. Soc., Providence, RI},
      YEAR = {2007},
   MRCLASS = {53C29 (18D99 22A22 53C07 55R65)},
  MRNUMBER = {2342821},
MRREVIEWER = {Michael Murray},
       DOI = {10.1090/conm/431/08264},
       URL = {https://doi.org/10.1090/conm/431/08264},
}

\bib{}{misc}{
  doi = {10.48550/ARXIV.0908.2843},
  
  url = {https://arxiv.org/abs/0908.2843},
  
  author = {Block, Jonathan}
  author = {Smith, Aaron M.},
  
  keywords = {Algebraic Topology (math.AT), Differential Geometry (math.DG), FOS: Mathematics, FOS: Mathematics, 53C29, 35Q15, 57R15},
  
  title = {A Riemann--Hilbert correspondence for infinity local systems},
  
  publisher = {arXiv},
  
  year = {2009},
  
  copyright = {arXiv.org perpetual, non-exclusive license}
}

\bib{MR425956}{article}{,
	author={Bousfield, A. K.}
	author={Gugenheim, V. K. A. M.},
	title={On {${\rm PL}$} de {R}ham theory and rational homotopy type},
	journal={Mem. Amer. Math. Soc.},
	fjournal={Memoirs of the American Mathematical Society},
	volume={8},
	year={1976},
	number={179},
	pages={ix+94},
	issn={0065-9266},
	mrclass={55D15 (58A10)},
	mrnuber={425956},
	mrreviewer={Jean-Michel Lemaire},
	doi={10.1090/memo/0179},
	url={https://doi.org/10.1090/memo/0179},
}

\bib{MR380859}{article}{,
	AUTHOR = {Chen, Kuo-Tsai},
	title={Iterated integrals of differential forms and loop space homology},
	journal={Ann. of Math. (2)},
	fjournal={Annals of Mathematics. Second Series},
	volume={97},
	year={1973},
	pages={217--246},
	issn={0003-486X},
	mrclass={58A10 (57D99)},
	mrnumber={380859},
	mrreviewer={H. H. Johnson},
	doi={10.2307/1970846},
	url={https://doi.org/10.2307/1970846},
}


\bib{MR454968}{article}{,
    AUTHOR = {Chen, Kuo-Tsai},
     TITLE = {Iterated path integrals},
   JOURNAL = {Bull. Amer. Math. Soc.},
  FJOURNAL = {Bulletin of the American Mathematical Society},
	volume={83},
	year={1977},
	number={5},
	pages={831--879},
	issn={0002-9904},
	mrclass={55D35 (58A99)},
	mrnumber={454968},
	mrreviewer={Jean-Michel Lemaire},
	doi={10.1090/S0002-9904-1977-14320-6},
	url={https://doi.org/10.1090/S0002-9904-1977-14320-6},
}

\bib{MR3636693}{article}{,
    AUTHOR = {Cirio, Lucio Simone}
    AUTHOR = {Martins, Jo\~{a}o Faria},
     TITLE = {Categorifying the {$\germ{sl}(2,\Bbb C)$}
              {K}nizhnik-{Z}amolodchikov connection via an infinitesimal
              2-{Y}ang-{B}axter operator in the string {L}ie-2-algebra},
   JOURNAL = {Adv. Theor. Math. Phys.},
  FJOURNAL = {Advances in Theoretical and Mathematical Physics},
    VOLUME = {21},
      YEAR = {2017},
    NUMBER = {1},
     PAGES = {147--229},
      ISSN = {1095-0761},
   MRCLASS = {81R10},
  MRNUMBER = {3636693},
MRREVIEWER = {Peter Browne R\o nne},
       DOI = {10.4310/ATMP.2017.v21.n1.a3},
       URL = {https://doi.org/10.4310/ATMP.2017.v21.n1.a3},
}

\bib{MR3607208}{article}{,
    AUTHOR = {Faonte, Giovanni},
     TITLE = {Simplicial nerve of an {$\mathcal{A}_\infty$}-category},
   JOURNAL = {Theory Appl. Categ.},
  FJOURNAL = {Theory and Applications of Categories},
    VOLUME = {32},
      YEAR = {2017},
     PAGES = {Paper No. 2, 31--52},
   MRCLASS = {18G30 (18E30)},
  MRNUMBER = {3607208},
MRREVIEWER = {R. H. Street},
}

\bib{MR1270931}{inproceedings}{,
    AUTHOR = {Fukaya, Kenji},
     TITLE = {Morse homotopy, {$A_{\infty}$}-category, and {F}loer homologies},
 BOOKTITLE = {Proceedings of {GARC} {W}orkshop on {G}eometry and {T}opology
              '93 ({S}eoul, 1993)},
    SERIES = {Lecture Notes Ser.},
    VOLUME = {18},
     PAGES = {1--102},
 PUBLISHER = {Seoul Nat. Univ., Seoul},
      YEAR = {1993},
   MRCLASS = {57R57 (55N35 57N10 58E05)},
  MRNUMBER = {1270931},
MRREVIEWER = {Karl Friedrich Siburg},
}

\bib{MR1925734}{incollection}{,
    AUTHOR = {Fukaya, Kenji},
     TITLE = {Floer homology and mirror symmetry. {II}},
 BOOKTITLE = {Minimal surfaces, geometric analysis and symplectic geometry
              ({B}altimore, {MD}, 1999)},
    SERIES = {Adv. Stud. Pure Math.},
    VOLUME = {34},
     PAGES = {31--127},
 PUBLISHER = {Math. Soc. Japan, Tokyo},
      YEAR = {2002},
   MRCLASS = {53D40 (14J32 53D45)},
  MRNUMBER = {1925734},
MRREVIEWER = {David E. Hurtubise},
       DOI = {10.2969/aspm/03410031},
       URL = {https://doi.org/10.2969/aspm/03410031},
}

\bib{MR2521116}{article}{,
    AUTHOR = {Getzler, Ezra},
     TITLE = {Lie theory for nilpotent {$L_\infty$}-algebras},
   JOURNAL = {Ann. of Math. (2)},
  FJOURNAL = {Annals of Mathematics. Second Series},
    VOLUME = {170},
      YEAR = {2009},
    NUMBER = {1},
     PAGES = {271--301},
      ISSN = {0003-486X},
   MRCLASS = {17B55 (18G55 55U10)},
  MRNUMBER = {2521116},
MRREVIEWER = {Elena Poletaeva},
       DOI = {10.4007/annals.2009.170.271},
       URL = {https://doi.org/10.4007/annals.2009.170.271},
}

\bib{MR1825257}{article}{,
    AUTHOR = {Gomi, Kiyonori}
    AUTHOR = {Terashima, Yuji},
     TITLE = {Higher-dimensional parallel transports},
   JOURNAL = {Math. Res. Lett.},
  FJOURNAL = {Mathematical Research Letters},
    VOLUME = {8},
      YEAR = {2001},
    NUMBER = {1-2},
     PAGES = {25--33},
      ISSN = {1073-2780},
   MRCLASS = {53C29 (14F43 55R65)},
  MRNUMBER = {1825257},
MRREVIEWER = {Jos\'{e} Ignacio Burgos Gil},
       DOI = {10.4310/MRL.2001.v8.n1.a4},
       URL = {https://doi.org/10.4310/MRL.2001.v8.n1.a4},
}

\bib{MR727818}{article}{,
    AUTHOR = {Hain, Richard M.},
     TITLE = {Iterated integrals and homotopy periods},
   JOURNAL = {Mem. Amer. Math. Soc.},
  FJOURNAL = {Memoirs of the American Mathematical Society},
    VOLUME = {47},
      YEAR = {1984},
    NUMBER = {291},
     PAGES = {iv+98},
      ISSN = {0065-9266},
   MRCLASS = {55P62 (57T30)},
  MRNUMBER = {727818},
MRREVIEWER = {Daniel Tanr\'{e}},
       DOI = {10.1090/memo/0291},
       URL = {https://doi.org/10.1090/memo/0291},
}

\bib{}{misc}{
  doi = {10.48550/ARXIV.0912.0249},
  
  url = {https://arxiv.org/abs/0912.0249},
  
  author = {Igusa, Kiyoshi},
  
  keywords = {Algebraic Topology (math.AT), FOS: Mathematics, FOS: Mathematics, 58C99; 57R22},
  
  title = {Iterated integrals of superconnections},
  
  publisher = {arXiv},
  
  year = {2009},
  
  copyright = {arXiv.org perpetual, non-exclusive license}
}

\bib{}{article}{
  author = {Aise Johan de Jong},
  title = {The Stacks Project, Chapter 09PD}
  url = {https://stacks.math.columbia.edu/tag/09PD}
}

\bib{MR4177087}{article}{,
    AUTHOR = {Kim, Hyungrok}
    AUTHOR = {Saemann, Christian},
     TITLE = {Adjusted parallel transport for higher gauge theories},
   JOURNAL = {J. Phys. A},
  FJOURNAL = {Journal of Physics. A. Mathematical and Theoretical},
    VOLUME = {53},
      YEAR = {2020},
    NUMBER = {44},
     PAGES = {445206, 52},
      ISSN = {1751-8113},
   MRCLASS = {81T13 (81T30)},
  MRNUMBER = {4177087},
       DOI = {10.1088/1751-8121/ab8ef2},
       URL = {https://doi.org/10.1088/1751-8121/ab8ef2},
}


\bib{}{misc}{
  doi = {10.48550/ARXIV.1502.06166},
  
  url = {https://arxiv.org/abs/1502.06166},
  
  author = {Kapranov, Mikhail},
  
  keywords = {Differential Geometry (math.DG), FOS: Mathematics, FOS: Mathematics},
  
  title = {Membranes and higher groupoids},
  
  publisher = {arXiv},
  
  year = {2015},
  
  copyright = {arXiv.org perpetual, non-exclusive license}
}

\bib{MR4151724}{article}{,
    AUTHOR = {Kohno, Toshitake},
     TITLE = {Higher holonomy maps for hyperplane arrangements},
   JOURNAL = {Eur. J. Math.},
  FJOURNAL = {European Journal of Mathematics},
    VOLUME = {6},
      YEAR = {2020},
    NUMBER = {3},
     PAGES = {905--927},
      ISSN = {2199-675X},
   MRCLASS = {20F36 (52C35 55P62 58H05)},
  MRNUMBER = {4151724},
MRREVIEWER = {He Wang},
       DOI = {10.1007/s40879-019-00382-z},
       URL = {https://doi.org/10.1007/s40879-019-00382-z},
}

\bib{MR3571383}{article}{,
    AUTHOR = {Kohno, Toshitake},
     TITLE = {Higher holonomy of formal homology connections and braid
              cobordisms},
   JOURNAL = {J. Knot Theory Ramifications},
  FJOURNAL = {Journal of Knot Theory and its Ramifications},
    VOLUME = {25},
      YEAR = {2016},
    NUMBER = {12},
     PAGES = {1642007, 14},
      ISSN = {0218-2165},
   MRCLASS = {57R19 (18D05 55P62)},
  MRNUMBER = {3571383},
MRREVIEWER = {Alexander I. Suciu},
       DOI = {10.1142/S0218216516420074},
       URL = {https://doi.org/10.1142/S0218216516420074},
}


\bib{MR1327129}{article}{,
    AUTHOR = {Lada, Tom},
    AUTHOR = {Markl, Martin},
     TITLE = {Strongly homotopy {L}ie algebras},
   JOURNAL = {Comm. Algebra},
  FJOURNAL = {Communications in Algebra},
    VOLUME = {23},
      YEAR = {1995},
    NUMBER = {6},
     PAGES = {2147--2161},
      ISSN = {0092-7872},
   MRCLASS = {16S30 (17B35 18G99)},
  MRNUMBER = {1327129},
MRREVIEWER = {Stanis\l aw Betley},
       DOI = {10.1080/00927879508825335},
       URL = {https://doi.org/10.1080/00927879508825335},
}

\bib{MR2941784}{book}{,
    AUTHOR = {Lipsky, David},
     TITLE = {Cocycle constructions for topological field theories},
      NOTE = {Thesis (Ph.D.)--University of Illinois at Urbana-Champaign},
 PUBLISHER = {ProQuest LLC, Ann Arbor, MI},
      YEAR = {2010},
     PAGES = {54},
      ISBN = {978-1124-58563-5},
   MRCLASS = {Thesis},
  MRNUMBER = {2941784},
       URL =
              {http://gateway.proquest.com/openurl?url_ver=Z39.88-2004&rft_val_fmt=info:ofi/fmt:kev:mtx:dissertation&res_dat=xri:pqdiss&rft_dat=xri:pqdiss:3452234},
}

\bib{MR2223034}{article}{,
    AUTHOR = {Lyubashenko, Volodymyr}
    AUTHOR = {Manzyuk, Oleksandr},
     TITLE = {Free {$A_\infty$}-categories},
   JOURNAL = {Theory Appl. Categ.},
  FJOURNAL = {Theory and Applications of Categories},
    VOLUME = {16},
      YEAR = {2006},
     PAGES = {No. 9, 174--205},
   MRCLASS = {18D05 (18D20 18G55 55P48 55U15)},
  MRNUMBER = {2223034},
MRREVIEWER = {Beno\^{\i}t Fresse},
}

\bib{Lyu2}{article}{
   author={Lyubashenko, Volodymyr},
   author={Manzyuk, Oleksandr},
   title={Unital $A_{\infty}$-categories},
   note={preprint (2008), arXiv:0802.2885v1},
}

\bib{MR2661492}{article}{,
    AUTHOR = {Martins, Jo\~{a}o Faria}
    AUTHOR = {Picken, Roger},
     TITLE = {On two-dimensional holonomy},
   JOURNAL = {Trans. Amer. Math. Soc.},
  FJOURNAL = {Transactions of the American Mathematical Society},
    VOLUME = {362},
      YEAR = {2010},
    NUMBER = {11},
     PAGES = {5657--5695},
      ISSN = {0002-9947},
   MRCLASS = {53C29 (18D05 53C08)},
  MRNUMBER = {2661492},
MRREVIEWER = {Christopher L. Rogers},
       DOI = {10.1090/S0002-9947-2010-04857-3},
       URL = {https://doi.org/10.1090/S0002-9947-2010-04857-3},
}

\bib{MR2784299}{article}{,
    AUTHOR = {Martins, Jo\~{a}o Faria}
    AUTHOR = {Picken, Roger},
     TITLE = {The fundamental {G}ray 3-groupoid of a smooth manifold and
              local 3-dimensional holonomy based on a 2-crossed module},
   JOURNAL = {Differential Geom. Appl.},
  FJOURNAL = {Differential Geometry and its Applications},
    VOLUME = {29},
      YEAR = {2011},
    NUMBER = {2},
     PAGES = {179--206},
      ISSN = {0926-2245},
   MRCLASS = {53C29 (18D05 53C08)},
  MRNUMBER = {2784299},
MRREVIEWER = {Scott O. Wilson},
       DOI = {10.1016/j.difgeo.2010.10.002},
       URL = {https://doi.org/10.1016/j.difgeo.2010.10.002},
}

\bib{MR3415507}{article}{,
    AUTHOR = {Parzygnat, Arthur J.},
     TITLE = {Gauge invariant surface holonomy and monopoles},
   JOURNAL = {Theory Appl. Categ.},
  FJOURNAL = {Theory and Applications of Categories},
    VOLUME = {30},
      YEAR = {2015},
     PAGES = {Paper No. 42, 1319--1428},
   MRCLASS = {53C29 (18F15 70S15)},
  MRNUMBER = {3415507},
MRREVIEWER = {Ambar N. Sengupta},
}

\bib{MR3792516}{incollection}{
    AUTHOR = {Riehl, Emily},
     TITLE = {Complicial sets, an overture},
 BOOKTITLE = {2016 {MATRIX} annals},
    SERIES = {MATRIX Book Ser.},
    VOLUME = {1},
     PAGES = {49--76},
 PUBLISHER = {Springer, Cham},
      YEAR = {2018},
   MRCLASS = {18G35},
  MRNUMBER = {3792516},
MRREVIEWER = {R. H. Street},
}

\bib{MR2742762}{incollection}{,
    AUTHOR = {Sati, Hisham}
    AUTHOR = {Schreiber, Urs}
    AUTHOR = {Stasheff, Jim},
     TITLE = {{$L_\infty$}-algebra connections and applications to {S}tring-
              and {C}hern-{S}imons {$n$}-transport},
 BOOKTITLE = {Quantum field theory},
     PAGES = {303--424},
 PUBLISHER = {Birkh\"{a}user, Basel},
      YEAR = {2009},
   MRCLASS = {53C08 (16E45 55P20 55R45)},
  MRNUMBER = {2742762},
MRREVIEWER = {Christopher L. Rogers},
       DOI = {10.1007/978-3-7643-8736-5\_17},
       URL = {https://doi.org/10.1007/978-3-7643-8736-5_17},
}

\bib{MR2803871}{article}{,
    AUTHOR = {Schreiber, Urs},
    AUTHOR = {Waldorf, Konrad},
     TITLE = {Smooth functors vs. differential forms},
   JOURNAL = {Homology Homotopy Appl.},
  FJOURNAL = {Homology, Homotopy and Applications},
    VOLUME = {13},
      YEAR = {2011},
    NUMBER = {1},
     PAGES = {143--203},
      ISSN = {1532-0073},
   MRCLASS = {53C08 (18D05)},
  MRNUMBER = {2803871},
MRREVIEWER = {Antonio M. Cegarra},
       DOI = {10.4310/HHA.2011.v13.n1.a7},
       URL = {https://doi.org/10.4310/HHA.2011.v13.n1.a7},
}

\bib{MR3084724}{article}{,
    AUTHOR = {Schreiber, Urs},
    AUTHOR = {Waldorf, Konrad},
     TITLE = {Connections on non-abelian gerbes and their holonomy},
   JOURNAL = {Theory Appl. Categ.},
  FJOURNAL = {Theory and Applications of Categories},
    VOLUME = {28},
      YEAR = {2013},
     PAGES = {476--540},
   MRCLASS = {53C08 (18D05 55R65)},
  MRNUMBER = {3084724},
MRREVIEWER = {Christopher L. Rogers},
}

\bib{MR646078}{article}{,
    AUTHOR = {Sullivan, Dennis},
     TITLE = {Infinitesimal computations in topology},
   JOURNAL = {Inst. Hautes \'{E}tudes Sci. Publ. Math.},
  FJOURNAL = {Institut des Hautes \'{E}tudes Scientifiques. Publications
              Math\'{e}matiques},
    NUMBER = {47},
      YEAR = {1977},
     PAGES = {269--331 (1978)},
      ISSN = {0073-8301},
   MRCLASS = {57D99 (55D99 58A10)},
  MRNUMBER = {646078},
MRREVIEWER = {J. F. Adams},
       URL = {http://www.numdam.org/item?id=PMIHES_1977__47__269_0},
}

\bib{}{article}{
  doi = {10.48550/ARXIV.MATH/0604414},
  url = {https://arxiv.org/abs/math/0604414},
  author = {Verity, Dominic},
  keywords = {Category Theory (math.CT), Algebraic Topology (math.AT), FOS: Mathematics, FOS: Mathematics, 18D05, 55U10 (Primary) 18D15, 18D20, 18D35, 18F99, 18G30 (Secondary)},
  title = {Weak complicial sets, a simplicial weak omega-category theory. Part I: basic homotopy theory},
  publisher = {arXiv},
  year = {2006},
  copyright = {Assumed arXiv.org perpetual, non-exclusive license to distribute this article for submissions made before January 2004}
}
\end{biblist}
\end{bibdiv}

\end{document}